\def\z{{\bf z}}
\title[The DtN-operator for the $1$-Laplacian]{
  The Dirichlet-to-Neumann operator\\ associated with the $1$-Laplacian\\ and evolution problems}
\author{Daniel Hauer} \address[Daniel Hauer]{School of Mathematics and
  Statistics, The University of Sydney, NSW 2006, Australia}
\email{\href{mailto:daniel.hauer@sydney.edu.au}{\nolinkurl{daniel.hauer@sydney.edu.au}}}
\author{Jos\'e M. Maz\'on} \address[Jos\'e M. Maz\'on]{Departamento
  de An\'{a}lisis Matem\'{a}tico, Universitat de Val\`encia, Valencia, Spain}
\email{\href{mailto:mazon@uv.es}{\nolinkurl{mazon@uv.es}}}
\thanks{The first author is very grateful for the kind invitation to
  the Universitat de Val\`encia and their hospitality. In the second
  half of the research project, he was supported by the Australian Research Council grant DP200101065.
The second author was partially supported  by the Spanish MCIU and FEDER, project PGC2018-094775-B-100.}
\subjclass[2010]{35K65, 35J25, 35J92, 35B40}
\keywords{Sub-differential, nonlinear semigroups, $L^1$ regularity
  theory, total variational
  flow, $1$-Laplacian, least gradient, Dirichlet-to-Neumann operator.}
\numberwithin{equation}{section}
\newtheorem{theorem}{Theorem}[section]
\newtheorem{proposition}[theorem]{Proposition}
\newtheorem{lemma}[theorem]{Lemma}
\newtheorem{corollary}[theorem]{Corollary}
\theoremstyle{definition}
\newtheorem{definition}[theorem]{Definition}
\newtheorem{notation}[theorem]{Notation}
\newtheorem{remark}[theorem]{Remark}
\newtheorem{example}[theorem]{Example}
\newcommand\R{{\mathbb{R}}}
\newcommand\N{\mathbb{N}}
\newcommand\E{\mathcal{E}}
\newcommand\dx{\mathrm{d}x }
\newcommand\dr{\mathrm{d}r }
\newcommand\ds{\mathrm{d}s }
\newcommand\dmu{\mathrm{d}\mu}
\newcommand\dt{\mathrm{d}t }
\newcommand\td{\mathrm{d} }
\newcommand\dH{\mathrm{d}\mathcal{H}}
\DeclareMathOperator*{\divi}{div}
\DeclareMathOperator{\sign}{sign}
\DeclareMathOperator{\Rg}{Rg}
\DeclareMathOperator{\T}{\mathit{Tr}}
\newcommand\abs[1]{\lvert#1\rvert}
\newcommand\labs[1]{\left\lvert#1\right\rvert}
\newcommand\norm[1]{\lVert#1\rVert}
\newcommand\lnorm[1]{\left\lVert#1\right\rVert}
\definecolor{darkred}{rgb}{0.7,0.1,0.1}
\def\1{\raisebox{2pt}{\rm{$\chi$}}}
\begin{document}
\date{\today}
\begin{abstract}
  We present first results on the Dirichlet-to-Neumann operator
  associated with the $1$-Laplace operator in $L^1$. In particular,
  we show that this operator can be realized as a sub-differential
  operator in $L^1\times L^{\infty}$ of a homogeneous convex,
  continuous functional with effective domain $L^1$. Even though the
  Dirichlet problem associated with the $1$-Laplace operator loses the
  property that weak solutions for boundary data in $L^1$ are unique,
  we prove a type of stability/compactness result with respect to the
  boundary data in $L^1$ of this problem. We apply our results for the
  stationary Dirichlet problem to evolution problems governed by the
  Dirichlet-to-Neumann operator, which can equivalently be formulated
  as singular coupled elliptic-parabolic initial boundary-value
  problems. For initial data in $L^q$, $1\le q\le \infty$, we obtain
  well-posedness, that every mild solution is, indeed, a strong
  solution, and establish long-time stability of the semigroup
  generated by the negative Dirichlet-to-Neumann operator associated
  with the $1$-Laplace operator.
\end{abstract}
\maketitle

\tableofcontents

%
%

\section{Introduction and Main Results}

The goal of this paper is to provide a first insight on the
\emph{Dirichlet-to-Neumann} operator
\begin{displaymath}
  \displaystyle\Lambda : h_{\vert\partial\Omega}\mapsto
  \left[\dfrac{Du}{\abs{Du}}\cdot\nu\right]_{\vert\partial\Omega}\text{
  (by a \emph{weak solution} $u$ of~\eqref{eq:21} below)}
\end{displaymath}
associated with the \emph{$1$-Laplace operator}
\begin{displaymath}
  \Delta_{1}u:=\divi\left(\frac{Du}{\abs{Du}}\right)
\end{displaymath}
on bounded domains $\Omega\subseteq \R^{d}$ with a $C^1$-boundary
$\partial\Omega$, $d\ge 2$. Here, $\nu$ denotes the outward pointing unit
normal vector on $\partial\Omega$. In other words, $\Lambda$ assigns
Dirichlet data $h$ to the co-normal derivative $Du\cdot\nu/\abs{Du}$
on $\partial\Omega$ of an extension $u$ on $\Omega$ of $h$, which is a
\emph{weak solution} of the
singular \emph{Dirichlet problem} for the $1$-Laplace operator
\begin{equation}
  \label{eq:21}
  \begin{cases}
    \hspace{1.18cm}\displaystyle-\divi\left(\frac{Du}{\abs{Du}}\right)=0&
    \quad\text{in $\Omega$,}\\
     \hspace{3.45cm} u=h& \quad\text{on $\partial\Omega$,}
  \end{cases}
\end{equation}
in the following sense.

\begin{definition}
  \label{def:sols-DP-Intro}
  For given $h\in L^{1}(\partial\Omega)$, we call a function
  $u\in BV(\Omega)$ a \emph{weak solution} of Dirichlet
  problem~\eqref{eq:21} if there is a vector field
  $\z_{h}\in L^{\infty}(\Omega;\R^{d})$ generalizing $Du/\abs{Du}$
  through the three conditions
\begin{align}
  \label{z21intro} \norm{\z_h}_\infty&\le 1,\\
  \label{z22intro} -\divi(\z_h)&=0\qquad \text{in
                            $\mathcal{D}^\prime(\Omega)$, and}\\
    \label{z32intro} (\z_h,Du)&=|Du|\qquad \text{as Radon measures}
\end{align}
and the \emph{weak trace} $[\z_{h},\nu]$ on $\partial\Omega$  (see
Definition~\ref{def:weak-trace-of-z}) of the generalized co-normal
derivative $\z_{h}\cdot\nu$ satisfies
\begin{equation}
  \label{eq:98}
    [\z_{h},\nu]\in \sign(h-\T(u)) \qquad\text{$\mathcal{H}^{d-1}$-a.e. on
        $\partial\Omega$.}
\end{equation}
\end{definition}

It is well-known that for every $h\in L^{1}(\partial\Omega)$, there
exist a weak solution $u$ of Dirichlet problem~\eqref{eq:21}. But
difficulties in deriving properties of the Dirichlet-to-Neumann
operator $\Lambda$ arise, for instance, from the fact that the notion
of weak solutions $u$ of~\eqref{eq:21} merely requires that the Dirichlet boundary data
$u=h$ on $\partial\Omega$ is satisfied in the \emph{very weak} sense~\eqref{eq:98}.
Because of this, the Dirichlet problem~\eqref{eq:21} might have
infinitely many weak solutions $u$ (see
Remark~\ref{rem:uniqueness-of-DP} for more details). But, in addition, for
each weak solutions $u$ of~\eqref{eq:21}, there might be infinitely
many vector fields $\z_{h}\in L^{\infty}(\Omega;\R^{d})$
satisfying~\eqref{z21intro}-\eqref{eq:98}. In Section~\ref{sec:DP}, we
provide a brief review of the literature about the current state of
existence and (non)-uniqueness of weak solutions to Dirichlet
problem~\eqref{eq:21}. Thus the following realization of the
Dirichlet-to-Neumann operator $\Lambda$ in $L^{1}(\partial\Omega)$
defines a possibly multi-valued operator.

In the following, let $\overline{B}_{L^{\infty}(\partial\Omega)}$ denote the closed
unit ball of $L^{\infty}(\partial\Omega)$ centered at $h=0$.


\begin{definition}
  \label{def:LambdaL1-informal}
  Let $\Lambda $ be the set of all pairs
  $(h,g)\in \subseteq L^{1}(\partial\Omega)\times
  \overline{B}_{L^{\infty}(\partial\Omega)}$ with the property that
  there is a weak solution $u_{h}\in BV(\Omega)$ of Dirichlet
  problem~\eqref{eq:21} for Dirichlet data $h$ and there is a vector
  field $\z_{h}\in L^{\infty}(\Omega;\R^{d})$ satisfying
  \eqref{z21intro}-\eqref{eq:98} with $u_{h}$ and
    \begin{equation}
      \label{eq:92}
      [\z_{h},\nu]=g\qquad\text{$\mathcal{H}^{d-1}$-a.e. on $\partial\Omega$.}
    \end{equation}
   Then, we call $\Lambda$ the \emph{Dirichlet-to-Neumann operator in
      $L^{1}(\partial\Omega)$ associated with the
    $1$-Laplace operator} $\Delta_{1}$.
\end{definition}

Now, our first main result reads as follows.  Here, we write
$L^{\infty}_{\sigma}(\partial\Omega)$ for the space
$L^{\infty}(\partial\Omega)$ equipped with the
weak$\mbox{}^{\ast}$-topology
$\sigma(L^{\infty}(\partial\Omega),L^{1}(\partial\Omega))$.

\begin{theorem}\label{thm:main1}
  The Dirichlet-to-Neumann operator $\Lambda$ in
      $L^{1}(\partial\Omega)$ associated with the
  $1$-Lapla\-ce operator $\Delta_{1}$ admits the following properties.
  \begin{enumerate}[(1.)]
    \item \label{thm:main1-claim1} $\Lambda $ is $m$-completely accretive in
     $L^{1}(\partial\Omega)$ and has effective domain
    $D(\Lambda)=L^{1}(\partial\Omega)$;
   \item \label{thm:main1-claim2} $\Lambda$ is homogeneous of order
     zero;
   \item \label{thm:main1-claim3} $\Lambda$ is closed in
    $L^{1}(\partial\Omega)\times L^{\infty}_{\sigma}(\partial\Omega)$;
   \item \label{thm:main1-claim4} $\Lambda$ can be characterized by
     \begin{equation}
       \label{eq:104}
      \Lambda=\partial_{L^{1}\times L^{\infty}(\partial\Omega)}\varphi
    \end{equation}
   for the sub-differential operator $\partial_{L^{1}\times L^{\infty}(\partial\Omega)}\varphi$ in
    $L^{1}\times L^{\infty}(\partial\Omega)$ of the convex, even,
    homogeneous of order one, and continuous
    functional $\varphi : L^{1}(\partial\Omega)\to [0,\infty)$ defined
    by
    \begin{equation}
      \label{eq:4}
      \varphi(h)=\int_{\partial\Omega}[\tilde{\z}_{h},\nu]\,h\,\dH^{d-1}
    \end{equation}
    for every $h\in L^{1}(\partial\Omega)$, where
    $\tilde{\z}_{h}\in L^{\infty}(\Omega;\R^{d})$ is a vector field
    satisfying \eqref{z21intro}-\eqref{z32intro} for some weak
    solution $u_{h}\in BV(\Omega)$ of Dirichlet problem~\eqref{eq:21}
    with boundary data $u_{h}=h$.
  \end{enumerate}
\end{theorem}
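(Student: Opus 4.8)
The plan is to prove item~\eqref{thm:main1-claim4} first and then deduce the remaining three items from the variational structure it provides. The cornerstone is the identity, valid for every $h\in L^{1}(\partial\Omega)$,
\begin{equation*}
  \varphi(h)=\int_{\Omega}\abs{Du_{h}}+\int_{\partial\Omega}\abs{h-\T(u_{h})}\,\dH^{d-1}
  =\min_{v\in BV(\Omega)}\left[\int_{\Omega}\abs{Dv}+\int_{\partial\Omega}\abs{h-\T(v)}\,\dH^{d-1}\right],
\end{equation*}
which I would establish by the Anzellotti--Green integration-by-parts formula for the pairing $(\z_{h},Du_{h})$. Since $-\divi(\z_{h})=0$ and $(\z_{h},Du_{h})=\abs{Du_{h}}$, Green's formula gives $\int_{\Omega}\abs{Du_{h}}=\int_{\partial\Omega}[\z_{h},\nu]\,\T(u_{h})\,\dH^{d-1}$; combining this with the splitting $[\z_{h},\nu]\,h=[\z_{h},\nu]\,\T(u_{h})+[\z_{h},\nu]\,(h-\T(u_{h}))$ and the sign condition~\eqref{eq:98}, which forces $[\z_{h},\nu]\,(h-\T(u_{h}))=\abs{h-\T(u_{h})}$, yields the first equality. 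That this common value is the minimum of the energy $\E$ is the standard variational characterization of weak solutions of~\eqref{eq:21}; in particular, $\varphi$ is well defined, independently of the choice of $u_{h}$ and $\tilde{\z}_{h}$.

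From the min-representation the qualitative properties of $\varphi$ are immediate. Writing $F(h,v):=\int_{\Omega}\abs{Dv}+\int_{\partial\Omega}\abs{h-\T(v)}\,\dH^{d-1}$, the map $(h,v)\mapsto F(h,v)$ is jointly convex, so its partial minimization $\varphi(h)=\min_{v}F(h,v)$ is convex on $L^{1}(\partial\Omega)$; replacing $v$ by $-v$ gives $\varphi(-h)=\varphi(h)$, and replacing $v$ by $\lambda v$ for $\lambda>0$ gives $\varphi(\lambda h)=\lambda\varphi(h)$, so $\varphi$ is even and positively homogeneous of order one. Continuity (indeed, $1$-Lipschitz continuity) on $L^{1}(\partial\Omega)$ follows by using the minimizer $u_{h_{2}}$ for $h_{2}$ as a competitor for $h_{1}$, which gives $\varphi(h_{1})-\varphi(h_{2})\le\int_{\partial\Omega}\abs{h_{1}-h_{2}}\,\dH^{d-1}$, and symmetrically.

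For~\eqref{eq:104} I would argue the two inclusions separately. For the inclusion $\Lambda\subseteq\partial_{L^{1}\times L^{\infty}}\varphi$, let $(h,g)\in\Lambda$ with $g=[\z_{h},\nu]$. The identity of the first paragraph already reads $\varphi(h)=\int_{\partial\Omega}g\,h\,\dH^{d-1}$, so by the Euler identity for a positively $1$-homogeneous convex functional it suffices to check $\int_{\partial\Omega}g\,\hat{h}\,\dH^{d-1}\le\varphi(\hat{h})$ for every $\hat{h}\in L^{1}(\partial\Omega)$. Applying Green's formula to $\z_{h}$ against a minimizer $u_{\hat{h}}$ for $\hat{h}$ and using $\norm{\z_{h}}_{\infty}\le 1$ and $-\divi(\z_{h})=0$ gives $\int_{\partial\Omega}g\,\T(u_{\hat{h}})\,\dH^{d-1}\le\int_{\Omega}\abs{Du_{\hat{h}}}$, while $\abs{g}\le 1$ gives $\int_{\partial\Omega}g\,(\hat{h}-\T(u_{\hat{h}}))\,\dH^{d-1}\le\int_{\partial\Omega}\abs{\hat{h}-\T(u_{\hat{h}})}\,\dH^{d-1}$; adding the two yields the required bound. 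The converse inclusion $\partial_{L^{1}\times L^{\infty}}\varphi\subseteq\Lambda$ is the main obstacle: given $g\in\partial\varphi(h)$ one must produce a weak solution $u_{h}$ and a vector field $\z_{h}$ satisfying~\eqref{z21intro}--\eqref{eq:98} whose weak trace equals $g$. I would take $u_{h}$ to be a minimizer of $\E$, recover $\z_{h}$ from the optimality/duality for this minimization (so that~\eqref{z21intro}--\eqref{z32intro} hold), and then use the two subdifferential relations $\varphi(h)=\int_{\partial\Omega}g\,h\,\dH^{d-1}$ and $\int_{\partial\Omega}g\,\hat{h}\,\dH^{d-1}\le\varphi(\hat{h})$ with perturbations $\hat{h}=h\pm t\psi$ to pin down both the boundary identity~\eqref{eq:92} and the sign condition~\eqref{eq:98}. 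Matching $g$ with $[\z_{h},\nu]$ at the level of the weak trace, in particular in the strict-inequality cases of~\eqref{eq:98}, is the delicate point.

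The remaining items follow from~\eqref{eq:104}. Item~\eqref{thm:main1-claim2} is immediate: since $\varphi$ is positively homogeneous of order one, its subdifferential is homogeneous of order zero. Item~\eqref{thm:main1-claim3} is the standard closedness of a subdifferential graph: if $h_{n}\to h$ in $L^{1}(\partial\Omega)$, $g_{n}\to g$ in $L^{\infty}_{\sigma}(\partial\Omega)$ and $g_{n}\in\partial\varphi(h_{n})$, then passing to the limit in $\varphi(\hat{h})\ge\varphi(h_{n})+\int_{\partial\Omega}g_{n}\,(\hat{h}-h_{n})\,\dH^{d-1}$, using the continuity of $\varphi$ from the second paragraph, gives $g\in\partial\varphi(h)$. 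For item~\eqref{thm:main1-claim1}, the full effective domain $D(\Lambda)=L^{1}(\partial\Omega)$ is exactly the known existence of a weak solution for every $h\in L^{1}(\partial\Omega)$ together with the fact that its co-normal trace lies in $\overline{B}_{L^{\infty}(\partial\Omega)}$. For $m$-complete accretivity I would invoke the B\'enilan--Crandall characterization: $\partial_{L^{1}\times L^{\infty}}\varphi$ is completely accretive once one verifies $\int_{\partial\Omega}(g_{1}-g_{2})\,T(h_{1}-h_{2})\,\dH^{d-1}\ge 0$ for all $(h_{i},g_{i})\in\Lambda$ and every nondecreasing test function $T$, which I would check from $g_{i}=[\z_{h_{i}},\nu]$ by a comparison argument between the two vector fields; the range condition $R(I+\lambda\Lambda)=L^{1}(\partial\Omega)$, $\lambda>0$, then follows by solving the resolvent minimization associated with the convex functional $\varphi$.
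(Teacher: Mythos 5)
Your treatment of the functional $\varphi$ (well-definedness via $\varphi(h)=\min_{v}\Phi_{h}(v)$, convexity, evenness, one-homogeneity, $1$-Lipschitz continuity), the inclusion $\Lambda\subseteq\partial_{L^{1}\times L^{\infty}(\partial\Omega)}\varphi$, and your deductions of items (2.) and (3.) from \eqref{eq:104} are all correct and close to the paper's own arguments (Theorem~\ref{thm:8}, Proposition~\ref{prop:DtN-functional}, Proposition~\ref{prop:7}). The genuine gap lies in the two steps you yourself flag, and they are not technicalities but the core of the theorem; moreover your architecture makes them circular. For the reverse inclusion $\partial_{L^{1}\times L^{\infty}(\partial\Omega)}\varphi\subseteq\Lambda$, the perturbations $\hat h=h\pm t\psi$ only give you the scalar relations $\int_{\partial\Omega}g\,h\,\dH^{d-1}=\varphi(h)$ and $\int_{\partial\Omega}g\,\hat h\,\dH^{d-1}\le\varphi(\hat h)$; nothing in these forces an arbitrary subgradient $g$ to be the weak trace $[\z_{h},\nu]$ of a divergence-free vector field satisfying \eqref{z32intro} and \eqref{eq:98} --- constructing such a $\z_{h}$ with prescribed boundary trace is an inverse problem that neither you nor the paper solves directly. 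For item (1.), the range condition $\Rg(I+\lambda\Lambda)=L^{1}(\partial\Omega)$ cannot be obtained by ``solving the resolvent minimization associated with $\varphi$'': that minimization only produces solutions of $h+\lambda\,\partial\varphi(h)\ni g$, and to convert these into elements of $\Lambda$ you need exactly the unproven inclusion $\partial\varphi\subseteq\Lambda$. So in your plan (4.) is needed for (1.) and (1.) (via maximality) would be needed for (4.), and neither is established.

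The missing idea is the paper's detour through the Robin-type problem: for $g\in L^{2}(\partial\Omega)$ and $\lambda>0$ one solves $-\Delta_{1}u=0$ in $\Omega$ with boundary condition $\frac{Du}{\abs{Du}}\cdot\nu=T_{1}(g-\lambda u)$ (Theorem~\ref{thm:6}, obtained in \cite{MR3328128} by $p$-Laplacian approximation as $p\downarrow 1$), and then observes (Proposition~\ref{prop:2}) that $u$ is a weak solution of Dirichlet problem~\eqref{eq:21} for the datum $h:=g-\lambda[\z,\nu]$. This manufactures an actual pair $(h,(g-h)/\lambda)\in\Lambda$, giving the range condition in $L^{2}$ (Proposition~\ref{prop:DtN-range-cond-in-L2}); it is then transferred to $L^{1}$ using complete accretivity (Proposition~\ref{prop:Lambda-completely-accretive}) together with the closedness statement (3.). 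Note this is why the paper must prove (3.) directly, by a BV-compactness and lower-semicontinuity argument (Proposition~\ref{prop:charact-closure-Lambda}), \emph{before} \eqref{eq:104} is available, rather than as your easy corollary of it. Once $\Lambda$ is known to be $m$-completely accretive, \eqref{eq:104} follows from maximality --- the B\'enilan--Crandall extension result (Proposition~\ref{prop:characterization-of-A-L0}), or in $L^{2}$ the maximal-monotone argument of Proposition~\ref{prop:sub-diff-L2} --- so the hard inclusion $\partial\varphi\subseteq\Lambda$ is never proved head-on, which is precisely the step your proposal leaves open.
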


In Section~\ref{sec:proofs}, we give the details of the proof of this theorem.
Further, we refer to Definition~\ref{def:completely-accretive-operators} and
Definition~\ref{def:homogeneous-operators} for the two notions of
\emph{$m$-completely accretive} operators and \emph{homogeneous
  operators of order $\alpha\in \R$}. Both
statements~\eqref{thm:main1-claim1} and \eqref{thm:main1-claim2} in
Theorem~\ref{thm:main1} follow from a careful study of the Dirichlet
problem~\eqref{eq:21} (see
Proposition~\ref{prop:Lambda-completely-accretive} and
Proposition~\ref{prop:Lambda-homogeneity}). We recall the notion of an
\emph{even} functional in Definition~\ref{def:even} and prove this
property of $\varphi$ together with the continuity, homogeneity and
convexity properties in Proposition~\ref{prop:DtN-functional}. To
prove the characterization~\eqref{eq:104}, we first show in
Proposition~\ref{prop:7} that the closure
$\overline{\Lambda}^{\mbox{}_{L^{1}\times L^{\infty}_{\sigma}}}$ of
$\Lambda$ in $L^{1}(\partial\Omega)\times
L^{\infty}_{\sigma}(\partial\Omega)$ is contained in the
sub-differential operator $\partial_{L^{1}\times
  L^{\infty}(\partial\Omega)}\varphi$. Since $\partial_{L^{1}\times
  L^{\infty}(\partial\Omega)}\varphi$ is completely accretive in $L^{1}(\Omega)$, once we have shown that
$\Lambda$ is $m$-completely accretive in $L^{1}(\Omega)$, the
characterization~\eqref{eq:104} follows from a classical result by
B\'enilan and Crandall (see
Proposition~\ref{prop:characterization-of-A-L0}). The
property that the Dirichlet-to-Neumann operator $\Lambda$ is closed in
$L^{1}(\partial\Omega)\times L^{\infty}_{\sigma}(\partial\Omega)$
(statement~\eqref{thm:main1-claim3}) is proved in
Proposition~\ref{prop:charact-closure-Lambda}, and provides the
following surprising stability/compactness result related to the
Dirichlet problem~\eqref{eq:21}.

\begin{corollary}[{stability/compactness}]\label{cor:stability-of-DP}
  For every sequence $(h_{n})_{n\ge 1}$ in
  $L^{1}(\partial\Omega)$ converging to some $h$ in
  $L^{1}(\partial\Omega)$, there is a weak solution $u_{h}$ of
  Dirichlet problem~\eqref{eq:21} satisfying boundary data $u_{h}=h$
  and a sub-sequence $(h_{k_{n}})_{n\ge 1}$ of $(h_{n})_{n\ge 1}$ such
  that the generalized co-normal derivative $[\z_{h_{k_{n}}},\nu]$
  corresponding to $h_{k_{n}}$ converges weakly$\mbox{}^{\ast}$ to
  $[\z_{h},\nu]$ in $L^{\infty}(\partial\Omega)$ and
  \begin{equation}
    \label{eq:88}
    \lim_{n\to\infty}\varphi(h_{n})=\varphi(h).
  \end{equation}
\end{corollary}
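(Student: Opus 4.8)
The plan is to read the result off from the structural properties of $\Lambda$ collected in Theorem~\ref{thm:main1}, the only genuine input being the closedness statement~\eqref{thm:main1-claim3} combined with the weak$^{\ast}$-sequential compactness of the unit ball of $L^{\infty}(\partial\Omega)$. First I would use that $D(\Lambda)=L^{1}(\partial\Omega)$ by statement~\eqref{thm:main1-claim1}: since each datum $h_{n}$ lies in the effective domain, there is a pair $(h_{n},g_{n})\in\Lambda$, and by Definition~\ref{def:LambdaL1-informal} this means that $h_{n}$ admits a weak solution $u_{h_{n}}\in BV(\Omega)$ of~\eqref{eq:21} together with a vector field $\z_{h_{n}}\in L^{\infty}(\Omega;\R^{d})$ satisfying~\eqref{z21intro}-\eqref{eq:98} whose weak trace is $g_{n}=[\z_{h_{n}},\nu]$. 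By the very definition of $\Lambda$ each $g_{n}$ belongs to the closed unit ball $\overline{B}_{L^{\infty}(\partial\Omega)}$.

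Next, because $\partial\Omega$ is a compact $C^{1}$-hypersurface, $L^{1}(\partial\Omega)$ is separable, and hence the closed unit ball of its dual $L^{\infty}(\partial\Omega)$ is sequentially compact in the weak$^{\ast}$-topology (Banach--Alaoglu together with the metrizability of the weak$^{\ast}$-topology on bounded subsets of the dual of a separable space). I can therefore extract a subsequence $(h_{k_{n}})_{n\ge 1}$ such that $g_{k_{n}}=[\z_{h_{k_{n}}},\nu]$ converges weakly$^{\ast}$ to some $g\in\overline{B}_{L^{\infty}(\partial\Omega)}$. Since the full sequence converges, $h_{k_{n}}\to h$ in $L^{1}(\partial\Omega)$ as well, so the pairs $(h_{k_{n}},g_{k_{n}})$ converge to $(h,g)$ in $L^{1}(\partial\Omega)\times L^{\infty}_{\sigma}(\partial\Omega)$. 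Now the closedness of $\Lambda$ in this mixed topology (statement~\eqref{thm:main1-claim3}) yields $(h,g)\in\Lambda$; by Definition~\ref{def:LambdaL1-informal} there is therefore a weak solution $u_{h}$ of~\eqref{eq:21} with boundary datum $u_{h}=h$ and a vector field $\z_{h}$ with $[\z_{h},\nu]=g$. This is exactly the asserted weak$^{\ast}$-convergence $[\z_{h_{k_{n}}},\nu]\to[\z_{h},\nu]$.

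It remains to prove~\eqref{eq:88}, and here the continuity of $\varphi$ on $L^{1}(\partial\Omega)$ from statement~\eqref{thm:main1-claim4} settles the matter at once: since $h_{n}\to h$ in $L^{1}(\partial\Omega)$, continuity gives $\varphi(h_{n})\to\varphi(h)$ for the whole sequence. If one prefers an argument internal to the extracted subsequence, I would instead use the Euler relation $\varphi(k)=\int_{\partial\Omega}[\z_{k},\nu]\,k\,\dH^{d-1}$, which holds for any $(k,[\z_{k},\nu])\in\Lambda$ by the characterization~\eqref{eq:104} and the degree-one homogeneity of $\varphi$, and then split
\[
\varphi(h_{k_{n}})-\varphi(h)=\int_{\partial\Omega}g_{k_{n}}\,(h_{k_{n}}-h)\,\dH^{d-1}+\int_{\partial\Omega}(g_{k_{n}}-g)\,h\,\dH^{d-1}.
\]
The first integral is bounded by $\norm{h_{k_{n}}-h}_{1}\to 0$ because $\norm{g_{k_{n}}}_{\infty}\le 1$, while the second vanishes by the weak$^{\ast}$-convergence of $g_{k_{n}}$ tested against the fixed function $h\in L^{1}(\partial\Omega)$.

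The only step that requires real care is the passage from topological to \emph{sequential} weak$^{\ast}$-compactness, which is precisely why the separability of $L^{1}(\partial\Omega)$ must be invoked; apart from that, the corollary is an immediate consequence of Theorem~\ref{thm:main1}, and I expect no further obstacle, since the closedness statement~\eqref{thm:main1-claim3} was designed exactly to license limits in the mixed $L^{1}\times L^{\infty}_{\sigma}$-topology in the presence of non-uniqueness of weak solutions and of their vector fields.
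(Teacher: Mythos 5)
Your proof is correct and follows essentially the same route as the paper: the authors obtain the corollary precisely by combining the weak$^{\ast}$-sequential compactness of $\overline{B}_{L^{\infty}(\partial\Omega)}$ with the closedness of $\Lambda$ in $L^{1}(\partial\Omega)\times L^{\infty}_{\sigma}(\partial\Omega)$ (Proposition~\ref{prop:charact-closure-Lambda}), and they derive~\eqref{eq:88} from the continuity of $\varphi$ on $L^{1}(\partial\Omega)$, exactly as you do.
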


Of course, the limit~\eqref{eq:88} follows from the continuity
property of $\varphi$, but the surprising fact here is that the two
divergence free vector field $\tilde{\z}_{h}$ in $\varphi(h)$ and
$\z_{h}$ in the limit $[\z_{h},\nu]$ don't have to be the same. In
fact, we show in Theorem~\ref{thm:9} that any two divergence free
vector fields $\z_{h}$ and $\hat{\z}_{h}$ are interchangeable among
the set of weak solutions $u_{h}$ and $\hat{u}_{h}$ of Dirichlet
problem~\eqref{eq:21}. But according to Theorem~\ref{thm:8}, the value
of $\varphi(h)$ is invariant among all divergence free vector fields
$\z_{h}$, for which there is a weak solution $u_{h}$ of Dirichlet
problem~\eqref{eq:21}. Hence, $\varphi$ given by~\eqref{eq:4} is a
well-defined mapping on $L^{1}(\partial\Omega)$. The fact that
$\varphi$ is continuous on $L^{1}(\partial\Omega)$ can easily be
deduces from its convexity property and that $\varphi$ is upper bounded
on an any bounded subset of its effective
domain $D(\varphi)=L^{1}(\partial\Omega)$ (see
Proposition~\ref{prop:DtN-functional} for more details).\medskip

Further, we establish well-posedness and comparison principles in the sense of \emph{mild}
solutions (see Definition~\ref{def:mild}), sufficient conditions
implying improved regularity properties of mild solutions, and the long-time stability (in
the case $F\equiv 0$ and $g\equiv 0$) of the inhomogeneous Cauchy
problem (in $L^{1}(\partial\Omega)$)
\begin{equation}
  \label{eq:24}
  \begin{cases}
    \dfrac{\td h}{\dt}(t)+\Lambda h(t)+F(h(t))\ni g(t)&
    \quad\text{for $t\in(0,T)$,}\\
    \hspace{2.95cm} h(0)=h_{0}.& \quad\text{on $\partial\Omega$,}
  \end{cases}
\end{equation}
for every given $g\in L^{1}(0,T;L^{1}(\partial\Omega))$ and
$h_{0}\in L^{1}(\partial\Omega)$. In the differential
inclusion~\eqref{eq:24}, the lower order term
$F : L^{1}(\partial\Omega)\to L^{1}(\partial\Omega)$ denotes the
Nemytskii operator
\begin{equation}
  \label{eq:28}
  F(h)(x):=f(x,h(x))\quad\text{for a.e. $x\in \partial\Omega$, $h\in L^{1}(\partial\Omega)$,}
\end{equation}
of a \emph{Lipschitz-Carath\'eodory function} $f :
\partial\Omega\times \R\to \R$ satisfying $f(x,0)=0$, ($x\in \partial\Omega$); that is,
\begin{equation}\label{eq:27}
  \begin{cases}
    &\text{there is an $\omega>0$ such that
      $\abs{f(x,h)-f(x,\hat{h})}\le \omega\,\abs{h-\hat{h}}$}\\
    &\text{for all $h$, $\hat{h}\in \R$, uniformly for a.e. $x\in
      \partial\Omega$, and}\\
    &\text{for all $h\in \R$, $x\mapsto f(x,h)$ is measurable on
      $\partial\Omega$.}
  \end{cases}
\end{equation}

It is worth noting that well-posedness, regularity and stability analysis of Cauchy problem~\eqref{eq:24}
are equivalent topics of the following singular \emph{elliptic-parabolic boundary value problem}
\begin{equation}
  \label{eq:119}
  \begin{cases}
    \displaystyle\hspace{1.43cm}-\divi\left(\frac{Du_{h}}{\abs{Du_{h}}}\right)=0& \quad\text{in
      $\Omega\times (0,T)$,}\\
     \hspace{3,7cm} u_{h}=h& \quad\text{on $\partial\Omega\times (0,T)$,}\\
    \displaystyle\,\partial_{t}h+\dfrac{Du_{h}}{\abs{Du_{h}}}\cdot\nu+f(\cdot,h)\ni g&
    \quad\text{on $\partial\Omega\times (0,T)$,}\\
    \hspace{3.9cm} h=h_{0}& \quad\text{on $\partial\Omega\times \{t=0\}$.}
  \end{cases}
\end{equation}
Recently, existence and uniqueness to the elliptic-parabolic
  boundary value problem
  \begin{displaymath}
  \begin{cases}
    \displaystyle\hspace{0.65cm}\lambda h-\divi\left(\frac{Du_{h}}{\abs{Du_{h}}}\right)=0& \quad\text{in
      $\Omega\times (0,T)$,}\\[7pt]
     \hspace{3,4cm} u_{h}=h& \quad\text{on $\partial\Omega\times (0,T)$,}\\
    \hspace{1.3cm}\displaystyle\,\partial_{t}h+\dfrac{Du_{h}}{\abs{Du_{h}}}\cdot\nu\ni g&
    \quad\text{on $\partial\Omega\times (0,T)$,}\\
    \hspace{3.63cm} h=h_{0}& \quad\text{on $\partial\Omega\times \{t=0\}$}
  \end{cases}
\end{displaymath}
for parameter $\lambda>0$ and with initial data $h_{0}\in L^{2}(\partial\Omega)$ was obtained in
\cite{MR3798643}. We emphasize that for $\lambda>0$, the associated
Dirichlet problem
\begin{displaymath}
  \begin{cases}
    \displaystyle\hspace{0.78cm}\lambda u_{h}-\divi\left(\frac{Du_{h}}{\abs{Du_{h}}}\right)=0&
    \quad\text{in $\Omega$,}\\
     \hspace{3.75cm} u_{h}=h& \quad\text{on $\partial\Omega$.}
  \end{cases}
\end{displaymath}
is uniquely solvable, but which is not true for the case $\lambda=0$ (that is,
Dirichlet problem~\eqref{eq:21}). This makes this singular elliptic-parabolic boundary value problem~\eqref{eq:119}
more appealing, but also complements the
research in~\cite{MR3798643}.\medskip

To study stronger regularity properties of mild solutions to
Cauchy problem~\eqref{eq:24}, we introduce the following operators.

\begin{notation}\label{not:Lambda-Lq}
  For every $1\le q\le \infty$, we write $\Lambda_{\vert L^{q}}$ for
  the restriction of $\Lambda$ on
  $L^{q}(\partial\Omega)\times
  \overline{B}_{L^{\infty}(\partial\Omega)}$. In other words,
  \begin{displaymath}
    \Lambda_{\vert L^{q}}=\Lambda\cap (L^{q}(\partial\Omega)\times
  \overline{B}_{L^{\infty}(\partial\Omega)}),
 \end{displaymath}
 and call the operator $\Lambda_{\vert L^{q}}$ the
 \emph{Dirichlet-to-Neumann operator on $L^{q}(\partial\Omega)$.}
\end{notation}

Thanks to the continuous embedding from $L^{q}(\partial\Omega)$ into
$L^{1}(\partial\Omega)$, the first three statements in the next
corollary follow immediately from \eqref{thm:main1-claim1} of Theorem~\ref{thm:main1} and
Corollary~\ref{cor:stability-of-DP}, and statement~\eqref{cor:Accretivity-in-Lq-claim4}
with the characterization~\eqref{eq:118} from Proposition~\ref{prop:sub-diff-L2}.

\begin{corollary}
  \label{cor:Accretivity-in-Lq}
  Let $1\le q\le \infty$. Then the following statements on the
  Dirichlet-to-Neumann operator $\Lambda_{\vert L^{q}}$ in $L^{q}(\partial\Omega)$ hold.
  \begin{enumerate}
  \item \label{cor:Accretivity-in-Lq-claim1} $\Lambda_{\vert L^{q}}$ is $m$-completely accretive in
    $L^{q}(\partial\Omega)$ with effective domain
    $D(\Lambda_{\vert L^{q}}))=L^{q}(\partial\Omega)$;
   \item $\Lambda_{\vert L^{q}}$ is homogeneous of
     order zero;
   \item $\Lambda_{\vert L^{q}}$ is closed in
     $L^{q}(\partial\Omega)\times
     L^{\infty}_{\sigma}(\partial\Omega)$;
   \item \label{cor:Accretivity-in-Lq-claim4} $\Lambda_{\vert L^{2}}$ can be characterized by
     \begin{equation}
       \label{eq:118}
       \Lambda_{\vert L^{2}}=\partial_{L^{2}}\varphi_{\vert L^{2}}
     \end{equation}
     where $\partial_{L^{2}}\varphi_{\vert L^{2}}$ denotes the
     sub-differential operator on $L^{2}(\partial\Omega)$ of the
     restriction $\varphi_{\vert L^{2}}$ on $L^{2}(\partial\Omega)$ of the functional
     $\varphi$ given by~\eqref{eq:4}.
  \end{enumerate}
\end{corollary}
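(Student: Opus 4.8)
The plan is to deduce all four assertions by transferring the corresponding properties of $\Lambda$ on $L^{1}(\partial\Omega)$, recorded in Theorem~\ref{thm:main1}, down to the smaller space $L^{q}(\partial\Omega)$, using throughout the continuous embedding $L^{q}(\partial\Omega)\hookrightarrow L^{1}(\partial\Omega)$, which is available since $\partial\Omega$ carries the finite measure $\mathcal{H}^{d-1}$. For the effective domain in statement~\eqref{cor:Accretivity-in-Lq-claim1}, I would argue directly: by Theorem~\ref{thm:main1}\eqref{thm:main1-claim1} one has $D(\Lambda)=L^{1}(\partial\Omega)$, so any $h\in L^{q}(\partial\Omega)\subseteq L^{1}(\partial\Omega)$ admits some $g$ with $(h,g)\in\Lambda$; since necessarily $g=[\z_{h},\nu]\in\overline{B}_{L^{\infty}(\partial\Omega)}$, this gives $(h,g)\in\Lambda_{\vert L^{q}}$ and hence $D(\Lambda_{\vert L^{q}})=L^{q}(\partial\Omega)$. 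Complete accretivity itself is inherited without any work, since the defining inequalities between the two components of a pair in the operator do not refer to the ambient $L^{p}$-space, so that $\Lambda_{\vert L^{q}}=\Lambda\cap\bigl(L^{q}(\partial\Omega)\times\overline{B}_{L^{\infty}(\partial\Omega)}\bigr)$ inherits them from $\Lambda$. By the same scale-invariance, statement~\eqref{cor:Accretivity-in-Lq-claim2} is immediate: order-zero homogeneity means $(h,g)\in\Lambda$ if and only if $(\alpha h,g)\in\Lambda$ for every $\alpha>0$, and $\alpha h\in L^{q}(\partial\Omega)$ exactly when $h\in L^{q}(\partial\Omega)$.

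The delicate point---and the step I expect to be the main obstacle---is the range condition $\Rg(I+\lambda\Lambda_{\vert L^{q}})=L^{q}(\partial\Omega)$ for every $\lambda>0$, completing statement~\eqref{cor:Accretivity-in-Lq-claim1}. Here the $L^{1}$-theory does not transfer by mere inclusion, and I would reason through the resolvent as follows. Given $h\in L^{q}(\partial\Omega)\subseteq L^{1}(\partial\Omega)$, the $m$-accretivity of $\Lambda$ in $L^{1}(\partial\Omega)$ yields $u=(I+\lambda\Lambda)^{-1}h\in L^{1}(\partial\Omega)$. Complete accretivity renders the resolvent $(I+\lambda\Lambda)^{-1}$ a contraction for every $L^{p}$-norm simultaneously, and one has $0\in\Lambda 0$ (take $u\equiv 0$ and $\z\equiv 0$ in \eqref{z21intro}-\eqref{z32intro}, which forces $[\z,\nu]=0$), so that $(I+\lambda\Lambda)^{-1}0=0$. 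Comparing $h$ with $0$ therefore gives $\norm{u}_{L^{q}}\le\norm{h}_{L^{q}}$, whence $u\in L^{q}(\partial\Omega)$. Writing $h=u+\lambda g$ with $g\in\overline{B}_{L^{\infty}(\partial\Omega)}\subseteq L^{q}(\partial\Omega)$ then exhibits $h$ as an element of the range of $I+\lambda\Lambda_{\vert L^{q}}$.

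For statement~\eqref{cor:Accretivity-in-Lq-claim3}, I would take a sequence $(h_{n},g_{n})\in\Lambda_{\vert L^{q}}$ with $h_{n}\to h$ in $L^{q}(\partial\Omega)$ and $g_{n}\to g$ weakly$\mbox{}^{\ast}$ in $L^{\infty}(\partial\Omega)$. The embedding forces $h_{n}\to h$ in $L^{1}(\partial\Omega)$ as well, so the closedness of $\Lambda$ in $L^{1}(\partial\Omega)\times L^{\infty}_{\sigma}(\partial\Omega)$ from Theorem~\ref{thm:main1}\eqref{thm:main1-claim3} (cf.\ Corollary~\ref{cor:stability-of-DP}) yields $(h,g)\in\Lambda$; since $h\in L^{q}(\partial\Omega)$ and the unit ball $\overline{B}_{L^{\infty}(\partial\Omega)}$ is weakly$\mbox{}^{\ast}$ closed, we conclude $(h,g)\in\Lambda_{\vert L^{q}}$.

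Finally, statement~\eqref{cor:Accretivity-in-Lq-claim4} is the one genuinely new ingredient and is specific to the Hilbert-space exponent $q=2$; here I would simply invoke Proposition~\ref{prop:sub-diff-L2}, which identifies the $L^{2}$-subdifferential of $\varphi_{\vert L^{2}}$ with $\Lambda_{\vert L^{2}}$. The substance of that proposition is that the characterization~\eqref{eq:104} in the duality $L^{1}\times L^{\infty}(\partial\Omega)$ restricts compatibly to the self-dual pairing on $L^{2}(\partial\Omega)$. Apart from this last point, the only step requiring genuine care is the resolvent-preservation argument for the range condition in~\eqref{cor:Accretivity-in-Lq-claim1}, since it is the sole place where the accretivity theory on $L^{1}(\partial\Omega)$ cannot be passed to $L^{q}(\partial\Omega)$ by inclusion alone.
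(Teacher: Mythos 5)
Your proposal is correct and follows essentially the same route as the paper, which deduces the first three statements from Theorem~\ref{thm:main1} and Corollary~\ref{cor:stability-of-DP} via the continuous embedding $L^{q}(\partial\Omega)\hookrightarrow L^{1}(\partial\Omega)$, and statement~\eqref{cor:Accretivity-in-Lq-claim4} from Proposition~\ref{prop:sub-diff-L2}. The only difference is that you make explicit the detail the paper treats as immediate, namely that the $L^{1}$-resolvent preserves $L^{q}(\partial\Omega)$ because it is a complete contraction fixing $0$ (using $(0,0)\in\Lambda$), which is exactly the argument needed for the range condition of $\Lambda_{\vert L^{q}}$.
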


The property that the operator $\Lambda_{\vert L^{q}}$ is
$m$-accretive in $L^{q}(\partial\Omega)$ yields the well-posedness of
Cauchy problem~\eqref{eq:24} for initial values $u_{0}$ in $L^{q}(\partial\Omega)$
and forcing term $g\in L^{1}(0,T; L^{q}(\partial\Omega))$ in the sense
of \emph{mild solutions} in $L^{q}(\partial\Omega)$.

\begin{corollary}[{Existence \& Uniqueness in $L^{q}(\partial\Omega)$}]
  \label{coro:well-posed}
  Let $1\le q\le \infty$ and suppose $F$ is given by~\eqref{eq:28} with $f$
  satisfying~\eqref{eq:27}. Then, for every
  $u_{0}\in L^{q}(\partial\Omega)$ and
  $g\in L^{1}(0,T;L^{q}(\partial\Omega))$, there is
  a unique mild solution of Cauchy problem~\eqref{eq:24} in
  $L^{q}(\partial\Omega)$.
\end{corollary}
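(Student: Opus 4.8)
The plan is to recast Cauchy problem~\eqref{eq:24} as the abstract evolution problem $h'(t)+\mathcal{A}h(t)\ni g(t)$, $h(0)=u_{0}$, governed by the operator $\mathcal{A}:=\Lambda_{\vert L^{q}}+F$ on $L^{q}(\partial\Omega)$, and then to invoke the classical generation theorem of Crandall--Liggett and B\'enilan for quasi-$m$-accretive operators. By statement~\eqref{cor:Accretivity-in-Lq-claim1} of Corollary~\ref{cor:Accretivity-in-Lq}, the operator $\Lambda_{\vert L^{q}}$ is $m$-completely accretive -- in particular $m$-accretive -- in $L^{q}(\partial\Omega)$ with effective domain $D(\Lambda_{\vert L^{q}})=L^{q}(\partial\Omega)$. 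Thus $\overline{D(\Lambda_{\vert L^{q}})}=L^{q}(\partial\Omega)$, every $u_{0}\in L^{q}(\partial\Omega)$ is an admissible initial datum, the resolvent $J_{\lambda}:=(I+\lambda\,\Lambda_{\vert L^{q}})^{-1}$ is a single-valued contraction, and the range condition $R(I+\lambda\,\Lambda_{\vert L^{q}})=L^{q}(\partial\Omega)$ holds for every $\lambda>0$.

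First I would record the elementary mapping properties of the perturbation $F$. Since $f$ is Lipschitz-Carath\'eodory with constant $\omega$ in~\eqref{eq:27} and satisfies $f(x,0)=0$, the pointwise bound $\abs{f(x,h)}=\abs{f(x,h)-f(x,0)}\le\omega\,\abs{h}$ shows that the Nemytskii operator~\eqref{eq:28} maps $L^{q}(\partial\Omega)$ into itself with $\norm{F(h)}_{q}\le\omega\,\norm{h}_{q}$, and that it is globally $\omega$-Lipschitz,
\begin{equation*}
  \norm{F(h)-F(\hat{h})}_{q}\le\omega\,\norm{h-\hat{h}}_{q}
  \qquad\text{for all } h,\hat{h}\in L^{q}(\partial\Omega),
\end{equation*}
uniformly for every $1\le q\le\infty$.

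Next I would verify that $\mathcal{A}=\Lambda_{\vert L^{q}}+F$ is quasi-$m$-accretive, that is, that $\mathcal{A}+\omega I$ is $m$-accretive. The accretivity of $\mathcal{A}+\omega I$ is the standard perturbation fact that the sum of an accretive operator with an $\omega$-Lipschitz map is $\omega$-accretive: for $(h,v),(\hat{h},\hat{v})\in\Lambda_{\vert L^{q}}$ and $\lambda>0$ one uses the identity
\begin{equation*}
  (h-\hat{h})+\lambda(v-\hat{v})+\lambda\omega(h-\hat{h})
  =(1+\lambda\omega)\Bigl[(h-\hat{h})+\tfrac{\lambda}{1+\lambda\omega}(v-\hat{v})\Bigr],
\end{equation*}
the accretivity of $\Lambda_{\vert L^{q}}$ with parameter $\tfrac{\lambda}{1+\lambda\omega}>0$, and the Lipschitz bound on $F$ to absorb the lower-order term, arriving at $\norm{h-\hat{h}}_{q}\le\norm{(h-\hat{h})+\lambda[(v+F(h)+\omega h)-(\hat{v}+F(\hat{h})+\omega\hat{h})]}_{q}$. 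For the range condition I would solve, for given $w\in L^{q}(\partial\Omega)$ and small $\lambda>0$, the inclusion $h+\lambda\mathcal{A}h\ni w$ by rewriting it as the fixed-point equation
\begin{equation*}
  h=J_{\lambda}\bigl(w-\lambda F(h)\bigr)=:\Phi(h).
\end{equation*}
Because $J_{\lambda}$ is a contraction on $L^{q}(\partial\Omega)$ and $F$ is $\omega$-Lipschitz, $\Phi$ is a strict contraction whenever $\lambda\omega<1$, so Banach's fixed-point theorem produces a unique solution $h$. Hence $R(I+\lambda(\mathcal{A}+\omega I))=L^{q}(\partial\Omega)$ for all sufficiently small $\lambda>0$, which suffices for $m$-accretivity.

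Finally, since $D(\mathcal{A})=D(\Lambda_{\vert L^{q}})=L^{q}(\partial\Omega)$ is dense and $\mathcal{A}$ is quasi-$m$-accretive, the Crandall--Liggett--B\'enilan generation theorem for the inhomogeneous problem applies: for every $u_{0}\in L^{q}(\partial\Omega)=\overline{D(\mathcal{A})}$ and every $g\in L^{1}(0,T;L^{q}(\partial\Omega))$ the abstract Cauchy problem $h'+\mathcal{A}h\ni g$, $h(0)=u_{0}$, has a unique mild solution in the sense of Definition~\ref{def:mild}, which is precisely the assertion of the corollary. The proof is thus a direct combination of Corollary~\ref{cor:Accretivity-in-Lq} with standard nonlinear semigroup theory; I expect the only genuinely delicate point to be the borderline case $q=\infty$, where $L^{\infty}(\partial\Omega)$ is neither reflexive nor separable, so that one must be careful to use only the metric ingredients (contractivity of $J_{\lambda}$ and of $\Phi$, and the purely Banach-space form of the generation theorem), none of which requires reflexivity.
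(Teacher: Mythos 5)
Your proposal is correct and follows essentially the same route as the paper: the paper likewise combines the $m$-complete accretivity and full domain of $\Lambda_{\vert L^{q}}$ from Corollary~\ref{cor:Accretivity-in-Lq} with the $\omega$-Lipschitz perturbation $F$ and then invokes the classical generation theory for $\omega$-quasi $m$-accretive operators (as recalled in Section~\ref{sec:semigroups}, citing B\'enilan--Crandall--Pazy and Barbu) to obtain a unique mild solution. The only difference is that you spell out the standard perturbation details (accretivity of the sum and the resolvent fixed-point argument for the range condition) which the paper delegates to the cited references; these details are correct as written.
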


Due to the fact that $\Lambda_{\vert L^{q}}$ is
completely accretive and by~\cite{Benilanbook} (see
also~\cite{CoulHau2017}), the following
comparison principle is available. Here, we write $[u]^{\nu}$ with
$\nu\in \{+,1\}$ for either denoting the \emph{positive part}
$[u]^{+}=\max\{0,u\}$ of $u$ or $u:=[u]^{1}$ itself.

\begin{corollary}[{Comparison principle \& Well-posedness}]
  \label{coro:comparision}
  Let $1\le q\le \infty$ and suppose $F$ is given by~\eqref{eq:28} with $f$
  satisfying~\eqref{eq:27}. Then, for every $h_{0}$ and $\hat{h}_{0}\in L^{q}(\partial\Omega)$,
  $g$, $\hat{g}\in L^{1}(0,T;L^{q}(\partial\Omega))$, and
  corresponding two mild solutions $h$ and $v$
  of Cauchy problem~\eqref{eq:24}, one has that
  \begin{displaymath}
       \norm{[h(t)-\hat{h}(t)]^{\nu}}_{q}\le e^{\omega
      t}\norm{[h(s)-\hat{h}(s)]^{\nu}}_{q} +\int_{s}^{t}e^{\omega (t-s)}\norm{[g(r)-\hat{g}(r)]^{\nu}}_{q}\,\dr
  \end{displaymath}
 for every $0\le s<t\le T$, and $\nu\in \{+,1\}$.
\end{corollary}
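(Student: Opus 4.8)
The plan is to realize the solution operator of Cauchy problem~\eqref{eq:24} as the nonlinear semigroup generated by $-A_{q}$, where $A_{q}:=\Lambda_{\vert L^{q}}+F$, and to deduce the stated two-parameter estimate from the complete-contraction theory in~\cite{Benilanbook} (see also~\cite{CoulHau2017}). First I would record the structural input: by Corollary~\ref{cor:Accretivity-in-Lq}\eqref{cor:Accretivity-in-Lq-claim1}, the operator $\Lambda_{\vert L^{q}}$ is $m$-completely accretive in $L^{q}(\partial\Omega)$, and since $f$ satisfies~\eqref{eq:27} with $f(x,0)=0$, the Nemytskii operator $F$ is globally Lipschitz with constant $\omega$ and maps $L^{q}(\partial\Omega)$ into itself. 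The decisive algebraic observation is that, because $\abs{f(x,h)-f(x,\hat h)}\le\omega\abs{h-\hat h}$, the shifted scalar map $r\mapsto f(x,r)+\omega r$ is nondecreasing for a.e.\ $x\in\partial\Omega$; hence $F+\omega I$ is an order-preserving Nemytskii operator and is itself completely accretive in $L^{q}(\partial\Omega)$.

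Next I would combine these two facts to show that $A_{q}+\omega I=\Lambda_{\vert L^{q}}+(F+\omega I)$ is $m$-completely accretive, i.e.\ that $A_{q}$ is $\omega$-quasi-$m$-completely accretive. The range condition for $A_{q}+\omega I$ follows from that for $\Lambda_{\vert L^{q}}$ together with the global Lipschitz bound on $F$ through the standard Lipschitz-perturbation argument, while complete accretivity of the sum is inherited from the complete accretivity of each summand. From this I would extract the key resolvent estimate: for $0<\lambda\omega<1$, the resolvent $J_{\lambda}:=(I+\lambda A_{q})^{-1}$ is a complete $(1-\lambda\omega)^{-1}$-contraction, that is,
\begin{displaymath}
  \norm{[J_{\lambda}f-J_{\lambda}\hat f]^{\nu}}_{q}\le\frac{1}{1-\lambda\omega}\,\norm{[f-\hat f]^{\nu}}_{q}
\end{displaymath}
for all $f,\hat f\in L^{q}(\partial\Omega)$, every $q\in[1,\infty]$ and both $\nu\in\{+,1\}$; this is exactly the complete-contraction form of $\omega$-accretivity of $A_{q}$ after rescaling the resolvent parameter ($\mu=\lambda/(1+\lambda\omega)$).

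Finally I would propagate this estimate through the implicit Euler scheme underlying the mild solution. Using the inhomogeneous Crandall--Liggett exponential formula, the two mild solutions $h$ and $\hat h$ (associated with the data $h_{0},g$ and $\hat h_{0},\hat g$, respectively) are the limits of the corresponding $\varepsilon$-discretizations built from $J_{\lambda}$ and the forcing terms; applying the resolvent estimate step by step and passing to the limit yields the sharp Duhamel bound
\begin{displaymath}
  \norm{[h(t)-\hat h(t)]^{\nu}}_{q}\le e^{\omega(t-s)}\norm{[h(s)-\hat h(s)]^{\nu}}_{q}+\int_{s}^{t}e^{\omega(t-r)}\norm{[g(r)-\hat g(r)]^{\nu}}_{q}\,\dr ,
\end{displaymath}
since $\bigl(1-\tfrac{t-s}{n}\omega\bigr)^{-n}\to e^{\omega(t-s)}$ as $n\to\infty$. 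The asserted inequality then follows from the pointwise bound $e^{\omega(t-r)}\le e^{\omega(t-s)}$ for $r\in[s,t]$. I expect the main obstacle to be the careful bookkeeping of the exponential weight and the forcing contribution through the discrete-to-continuous limit, together with verifying that \emph{complete} accretivity --- not merely accretivity in a single $L^{q}$ --- is genuinely preserved under the Lipschitz perturbation; these are precisely the points at which the general complete-contraction results of~\cite{Benilanbook} and~\cite{CoulHau2017} carry the essential weight.
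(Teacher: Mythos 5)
Your proposal is correct and follows essentially the same route as the paper: the paper obtains this corollary in one line from the complete accretivity of $\Lambda_{\vert L^{q}}$ combined with the Lipschitz-perturbation theory for completely accretive operators cited from~\cite{Benilanbook} and~\cite{CoulHau2017}. Your argument simply unpacks that citation --- monotonicity of $r\mapsto f(x,r)+\omega r$ giving complete accretivity of $F+\omega I$, additivity of complete accretivity via the $P_{0}$-characterization, the resolvent rescaling, and the Euler-scheme propagation --- and your sharper bound with weights $e^{\omega(t-s)}$ and $e^{\omega(t-r)}$ implies the stated inequality since $\omega>0$ and $s\ge 0$.
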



Our next theorem is concerned with the \emph{regularizing effect} that
a mild solution of Cauchy problem~\eqref{eq:24} is, indeed,  a \emph{strong solution}
of~\eqref{eq:24} (see Definition~\ref{def:strong}). The regularizing
effect described in the first two statements is due to the fact that the Dirichlet-to-Neumann operator $\Lambda_{\vert
  L^{2}}$ in $L^{2}(\partial\Omega)$
can be realized as a sub-differential operator (see~\eqref{eq:104}
or~\eqref{eq:118}) and hence, follows from a classic result due to
Brezis~\cite{MR0348562} (see, also~\cite{MR4041276}). The regularizing
effect stated in~\eqref{thm:main2-claim3} results from the property
that the Dirichlet-to-Neumann operator $\Lambda_{\vert L^{q}}$ is
homogeneous of order zero and so, follows from an application
of~\cite{MR4200826} (see also~\cite{MR648452}
and~\cite{MR4031770}). We give the details of the proof of this
theorem in Section~\ref{subsec:DtN-in-L1-continued}.


\begin{theorem}[{Regularizing effect}]\label{thm:main2}
  Let $F$ be given by~\eqref{eq:28} with $f$
  satisfying~\eqref{eq:27}, and $\E :
  L^{1}(\partial\Omega)\to \R$ denote the functional given by
  \begin{displaymath}
    \E(h):=\varphi(h)+\int_{0}^{h}
    f(\cdot,r)\,\dr\qquad
    \text{for every $h\in L^{1}(\partial\Omega)$,}
  \end{displaymath}
  where $\varphi$ is the functional defined by~\eqref{eq:4}.
  Then the following statements hold.
  \begin{enumerate}
  \item \label{thm:main2-claim1} (\textrm{Max. $L^2$-regularity}) For every
    $h_{0}\in L^{1}(\partial\Omega)$ and $g\in L^{2}(0,T;L^{2}(\partial\Omega))$, the mild solution $h$ of Cauchy
    problem~\eqref{eq:24} in $L^{1}(\partial\Omega)$ is a strong
    solution in $L^{1}(\partial\Omega)$ with time-derivative
      \begin{displaymath}
        \dfrac{\td h}{\dt}\in L^{2}(0,T;L^{2}(\partial\Omega))
      \end{displaymath}
      and global estimate
      \begin{equation}
        \label{eq:1}
        \tfrac{1}{2}\int_{0}^{t}\lnorm{\dfrac{\td
            h}{\ds}(s)}_{2}^{2}\,\ds+\E(h(t))
        \le\E(h_{0})+\tfrac{1}{2}\int_{0}^{t}\norm{g(s)}_{2}^{2}\ds
      \end{equation}
      for every $0\le t\le T$.
    \item \label{thm:main2-claim2} For every $h_{0}\in L^{2}(\partial\Omega)$ and
      $g\in L^{2}(0,T;L^{2}(\partial\Omega))$, the mild solution $u$
      of Cauchy problem~\eqref{eq:24} in $L^{2}(\partial\Omega)$ is a
      strong solution in $L^{2}(\partial\Omega)$ with
      \begin{displaymath}
        h\text{ and }\dfrac{\td h}{\dt}\in L^{2}(0,T;L^{2}(\partial\Omega)).
      \end{displaymath}
    \item \label{thm:main2-claim3} (\textrm{$L^1$ Aronson-B\'enilan type estimates}) Let
      $1\le q\le \infty$. Then for every
      $h_{0}\in L^q(\partial\Omega)$ and
      $g\in W^{1,1}(0,T;L^{q}(\partial\Omega))$, the mild solution $h$
      of Cauchy problem~\eqref{eq:24} in $L^{q}(\partial\Omega)$ is a
      strong solution in $L^{q}(\partial\Omega)$ satisfying
    \begin{equation}
      \label{eq:123}
      \lnorm{\frac{\td h}{\dt}_{\!\! +}\!\!(t)}_{q}
      \le\frac{1}{t}\!\! \left[a_{\omega}(t)+ \omega\!\!\int_{0}^{t}a_{\omega}(s)
        e^{\omega (t-s)}\ds\right]\quad\text{for a.e. $t\in (0,T)$,}
    \end{equation}
    where
    \begin{align*}
      a_{\omega}(t)&:= \int_{0}^{t}\norm{g'(s)}_{q}\,s\,\ds+ \bigg[\left(1+e^{\omega
                     t} \right)\,\norm{h_{0}}_{q}\bigg.\\
                   &\hspace{1cm}\left. +\int_{0}^{t}\norm{g(s)}_{q}\,\ds
                     +\omega\,\int_{0}^{t} \int_{0}^{s}e^{-\omega r} \norm{g(r)}_{q}\dr\,\ds\right].
    \end{align*}
  \end{enumerate}
\end{theorem}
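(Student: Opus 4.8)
The plan is to prove the three statements by the two mechanisms flagged in the discussion preceding the theorem: statements~\eqref{thm:main2-claim1} and~\eqref{thm:main2-claim2} exploit the variational realization $\Lambda_{\vert L^2}=\partial_{L^2}\varphi_{\vert L^2}$ from Corollary~\ref{cor:Accretivity-in-Lq}\eqref{cor:Accretivity-in-Lq-claim4}, whereas~\eqref{thm:main2-claim3} exploits the order-zero homogeneity of $\Lambda_{\vert L^q}$. In both cases the operator generating~\eqref{eq:24} is a perturbation of $\Lambda$ by the globally $\omega$-Lipschitz, everywhere-defined Nemytskii operator $F$ of~\eqref{eq:28}, which I would absorb by a standard Gronwall/fixed-point argument so that the abstract regularity theorems for the unperturbed generator apply.

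For~\eqref{thm:main2-claim2} I would start from the observation that $\varphi_{\vert L^2}$ is a proper, convex, lower semicontinuous (indeed continuous) functional on the Hilbert space $L^2(\partial\Omega)$ with $D(\varphi_{\vert L^2})=L^2(\partial\Omega)$, so that $\partial_{L^2}\varphi_{\vert L^2}=\Lambda_{\vert L^2}$ and \emph{every} $h_{0}\in L^2(\partial\Omega)$ lies in the domain of $\varphi_{\vert L^2}$. Brezis' regularity theorem~\cite{MR0348562} (in the version admitting an $L^2$-in-time forcing) then yields, for $h_{0}\in L^2(\partial\Omega)$ and $g\in L^2(0,T;L^2(\partial\Omega))$, that the mild solution is strong with $\tfrac{\td h}{\dt}\in L^2(0,T;L^2(\partial\Omega))$; since the $L^2$- and $L^1$-mild solutions coincide by complete accretivity (Corollary~\ref{coro:well-posed}), this is~\eqref{thm:main2-claim2}. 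For the energy identity~\eqref{eq:1} I would then invoke the chain rule for convex functionals along absolutely continuous curves: once $\tfrac{\td h}{\dt}\in L^2$, the map $t\mapsto\varphi(h(t))$ is absolutely continuous and $\tfrac{\td}{\dt}\varphi(h(t))=(\xi(t),\tfrac{\td h}{\dt}(t))_{L^2}$ for any selection $\xi(t)\in\partial\varphi(h(t))$. Choosing $\xi=g-F(h)-\tfrac{\td h}{\dt}$ from~\eqref{eq:24} and adding the elementary identity $\tfrac{\td}{\dt}\int_0^{h(t)}f(\cdot,r)\,\dr=(F(h),\tfrac{\td h}{\dt})_{L^2}$, the $F$-contributions cancel and leave $\tfrac{\td}{\dt}\E(h)=(g-\tfrac{\td h}{\dt},\tfrac{\td h}{\dt})_{L^2}$; integrating in time and applying Young's inequality produces exactly~\eqref{eq:1}.

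Statement~\eqref{thm:main2-claim1} I would deduce by combining the $L^2$-result with the fact, established earlier, that $\varphi$ is finite and continuous on all of $L^1(\partial\Omega)$: approximating $h_{0}\in L^1(\partial\Omega)$ by data in $L^2(\partial\Omega)$, the complete-accretive $L^1$-contraction together with the lower semicontinuity of $\E$ lets me pass the energy inequality~\eqref{eq:1} to the limit, the right-hand side being controlled through $\E(h_{0})$. Statement~\eqref{thm:main2-claim3} I would obtain by a direct appeal to the abstract Aronson--B\'enilan theory for $m$-completely accretive, order-zero homogeneous operators with Lipschitz perturbation and $W^{1,1}$-forcing developed in~\cite{MR4200826} (cf.~\cite{MR648452,MR4031770}): the operator $\Lambda_{\vert L^q}$ is $m$-completely accretive in $L^q(\partial\Omega)$ (Corollary~\ref{cor:Accretivity-in-Lq}\eqref{cor:Accretivity-in-Lq-claim1}) and homogeneous of order zero, so the quoted result delivers both strong solvability and the B\'enilan--Crandall-type bound~\eqref{eq:123}, with the factor $1/t$ reflecting homogeneity and the forcing collected in $a_{\omega}(t)$.

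The step I expect to be most delicate is the reconciliation of the $L^1$-datum in~\eqref{thm:main2-claim1} with the $L^2$-framework in which the subdifferential smoothing lives: since $\overline{D(\varphi_{\vert L^2})}^{L^2}=L^2(\partial\Omega)$, Brezis' theorem does not directly start the flow at an $h_{0}\in L^1(\partial\Omega)\setminus L^2(\partial\Omega)$, so the $L^2$-time-integrability of $\tfrac{\td h}{\dt}$ must be extracted from the dissipation identity $\tfrac{\td}{\dt}\varphi(h(t))=-\norm{\tfrac{\td h}{\dt}(t)}_2^2$ (for $g\equiv 0$, $F\equiv 0$) integrated against the finiteness of $\varphi$ on $L^1(\partial\Omega)$, and then transported to the full problem via the Lipschitz and forcing estimates; this is exactly where the finiteness of the energy $\E(h_{0})$ is indispensable and where the approximation argument has to be handled with care. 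A secondary, purely technical point is the rigorous justification of the chain rule for the non-convex lower-order part of $\E$, namely that $t\mapsto\int_{\partial\Omega}\int_0^{h(t,x)}f(x,r)\,\dr\,\dH^{d-1}(x)$ is differentiable with derivative $(F(h),\tfrac{\td h}{\dt})_{L^2}$; this follows from the bound $\abs{f(x,r)}\le\omega\abs{r}$ implied by~\eqref{eq:27} together with dominated convergence, but must be verified to legitimize the cancellation underlying~\eqref{eq:1}.
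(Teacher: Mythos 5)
Your proposal follows essentially the same route as the paper's proof: for statement~(2), Brezis' subdifferential theory applied through $\Lambda_{\vert L^{2}}=\partial_{L^{2}}\varphi_{\vert L^{2}}$, then the energy identity obtained by pairing the inclusion with $\frac{\td h}{\dt}$, the chain rule, and Young's inequality to get~\eqref{eq:1}; for statement~(1), approximation of $h_{0}\in L^{1}(\partial\Omega)$ by $L^{2}$-data combined with the $L^{1}$-contraction, the continuity of $\E$ on $L^{1}(\partial\Omega)$, and weak $L^{2}(0,T;L^{2}(\partial\Omega))$-compactness of the approximating derivatives; and for statement~(3), a direct appeal to the homogeneity-order-zero results of~\cite{MR4200826}. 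The only difference is cosmetic: you split the chain rule into the convex part $\varphi$ and the Lipschitz lower-order part of $\E$ explicitly (which is, if anything, slightly more careful than the paper's one-stroke application of the chain rule to $\E$), so the proposal is correct as it stands.
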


According to Corollary~\ref{cor:Accretivity-in-Lq}, for every
$1\le q\le \infty$, the operator $-(\Lambda_{\vert L^{q}}+F)$
generates a strongly continuous semigroup
$\{e^{-t (\Lambda_{\vert L^{q}}+F)}\}_{t\ge 0}$ of quasi-contractions
on $L^{q}(\partial\Omega)$ (see Section~\ref{sec:semigroups} for a
concise review of nonlinear semigroup theory). But since
$\partial\Omega$ is assumed to be compact, the semigroup
$\{e^{-t (\Lambda_{\vert L^{q}}+F)}\}_{t\ge 0}$ generated by
$-(\Lambda_{\vert L^{q}}+F)$ on $L^{q}(\partial\Omega)$ coincides with
the semigroup $\{e^{-t (\Lambda+F)}\}_{t\ge 0}$ generated by
$-(\Lambda+F)$ on $L^{1}(\partial\Omega)$. For this reason, it is
sufficient to consider only the semigroup
$\{e^{-t (\Lambda+F)}\}_{t\ge 0}$ on $L^{q}(\partial\Omega)$, which is
quasi-contractive on $L^{q}(\partial\Omega)$ for all
$1\le q\le \infty$. The next corollary summarizes the regularity
properties of the semigroup $\{e^{-t (\Lambda+F)}\}_{t\ge 0}$. Here,
$\Lambda^{\circ}$ denotes the \emph{minimal selection} of $\Lambda$
defined by~\eqref{eq:65} in Section~\ref{sec:semigroups}.


\begin{corollary}\label{cor:regularity-semigroup}
  Let $F$ be given by~\eqref{eq:28} with $f$ satisfying~\eqref{eq:27}
  and $1\le q\le \infty$. Then the operator $-(\Lambda+F)$ generates a
  stron\-gly continuous semigroup $\{e^{-t (\Lambda+F)}\}_{t\ge 0}$ on
  $L^{1}(\partial\Omega)$, which is $\omega$-quasi complete
  contractive on $L^{q}(\partial\Omega)$ for every $q$. Moreover,
  $\{e^{-t (\Lambda+F)}\}_{t\ge 0}$ has the following regularity
  properties:
  \begin{enumerate}
  \item \label{cor:regularity-semigroup-claim1} (\textrm{$L^1$ Aronson-B\'enilan type estimates}) For every
    $h_{0}\in L^{q}(\partial\Omega)$, the mapping
    $t\mapsto e^{-t (\Lambda+F) }h_{0}$ is differentiable in
    $L^{q}(\partial\Omega)$ at a.e. $t\in (0,\infty)$ and
    \begin{displaymath}
      \lnorm{\frac{\td }{\dt}_{\!\! +}\!\! e^{-t (\Lambda+F)}h_{0}}_{q}
    \le\frac{2+\omega\,t}{t}e^{\omega
      t}\,\norm{h_{0}}_{q}\qquad\text{for every $t>0$;}
   \end{displaymath}
 \item \label{cor:regularity-semigroup-claim2}  If $F\equiv 0$, then for every $h_{0}\in
   L^{1}(\partial\Omega)$ and $t>0$, $\frac{\td }{\dt}_{\!\! +}\!\! e^{-t
     \Lambda}h_{0}$ exists in $L^{1}(\partial\Omega)$
   and
   \begin{equation}
     \label{eq:45}
     \abs{\Lambda^{\circ}e^{-t \Lambda}h_{0}}\le
     2\dfrac{\abs{h_{0}}}{t}\qquad\text{$\mathcal{H}^{d-1}$-a.e. on $\partial\Omega$;}
   \end{equation}
    \item \label{cor:regularity-semigroup-claim3} (\textrm{Order
        preservation of the semigroup})
      For every $h_{0}$, $\hat{h}_{0}\in L^{q}(\partial\Omega)$,
      one has that $h_{0}\le \hat{h}_{0}$ yields that
      \begin{displaymath}
        e^{-t (\Lambda+F)}h_{0}\le e^{-t (\Lambda+F)}\hat{h}_{0}\qquad\text{ for all
      $t\ge 0$.}
    \end{displaymath}

    \item \label{cor:regularity-semigroup-claim4} (\textrm{Point-wise Aronson-B\'enilan type estimates}) For
      every $h_{0}\in L^{q}(\partial\Omega)$ positive, one has that
      \begin{displaymath}
        \frac{\td }{\dt}_{\!\! +}\!\! e^{-t
      (\Lambda+F)}h_{0}\le \dfrac{1}{t}e^{-t
      (\Lambda+F)}h_{0}+g_{0}(t)\quad\text{for a.e. $t>0$,}
      \end{displaymath}
      where $g_{0} : (0,\infty)\to L^{q}(\partial\Omega)$ is
      a measurable function satisfying
      \begin{displaymath}
        \norm{g_{0}(t)}_{q}\le \frac{\omega}{t}\int_{0}^{t}e^{\omega
          (t-r)}\,
        \lnorm{\frac{\td }{\dt}_{\!\! +}\!\! e^{-r
      (\Lambda+F)}h_{0}}_{q}\,\dr \quad\text{for a.e. $t>0$.}
       \end{displaymath}
\end{enumerate}
\end{corollary}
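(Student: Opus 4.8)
The plan is to read off all four assertions from the operator-theoretic properties of $\Lambda$ recorded in Theorem~\ref{thm:main1} and from the regularity statements of Theorem~\ref{thm:main2}, so that no fresh analysis of the Dirichlet problem~\eqref{eq:21} is needed. First I would note that, since $\Lambda$ is $m$-completely accretive in $L^1(\partial\Omega)$ by Theorem~\ref{thm:main1}\eqref{thm:main1-claim1} and $f(x,\cdot)+\omega\,(\cdot)$ is nondecreasing by~\eqref{eq:27}, the Nemytskii operator $F+\omega I$ is completely accretive; hence $\Lambda+F+\omega I$ is $m$-completely accretive, and the nonlinear semigroup theory of Section~\ref{sec:semigroups} (the Crandall--Liggett exponential formula together with complete accretivity) yields the strongly continuous, $\omega$-quasi-complete-contractive semigroup $\{e^{-t(\Lambda+F)}\}_{t\ge 0}$ on every $L^q(\partial\Omega)$. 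Complete accretivity makes each resolvent $(I+\mu(\Lambda+F))^{-1}$ order-preserving, and the exponential formula transfers this to the semigroup; since multiplying by the positive scalar $e^{-\omega t}$ does not affect order, statement~\eqref{cor:regularity-semigroup-claim3} follows at once.

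For statement~\eqref{cor:regularity-semigroup-claim1} I would specialise the $L^1$ Aronson--B\'enilan estimate~\eqref{eq:123} of Theorem~\ref{thm:main2}\eqref{thm:main2-claim3} to the autonomous case $g\equiv 0$, in which the orbit $t\mapsto e^{-t(\Lambda+F)}h_0$ is the mild solution of~\eqref{eq:24} with forcing $0$, which is strong and a.e.\ differentiable by Theorem~\ref{thm:main2}\eqref{thm:main2-claim3}. With $g\equiv 0$ every $g$-dependent term in $a_\omega$ drops out, leaving $a_\omega(t)=(1+e^{\omega t})\norm{h_0}_q$, and a one-line computation gives
\begin{equation*}
  a_\omega(t)+\omega\int_0^t a_\omega(s)\,e^{\omega(t-s)}\,\ds=(2+\omega t)\,e^{\omega t}\,\norm{h_0}_q,
\end{equation*}
so that dividing by $t$ reproduces precisely the asserted bound $\tfrac{2+\omega t}{t}e^{\omega t}\norm{h_0}_q$.

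The two pointwise bounds~\eqref{cor:regularity-semigroup-claim2} and~\eqref{cor:regularity-semigroup-claim4} are where the zero-homogeneity of $\Lambda$ (Theorem~\ref{thm:main1}\eqref{thm:main1-claim2}) is decisive. The clean prototype is~\eqref{cor:regularity-semigroup-claim2}: for $F\equiv 0$ the operator $\Lambda$ is $m$-completely accretive and homogeneous of order zero, so the abstract regularisation theory for homogeneous operators (\cite{MR4200826}; see also~\cite{MR648452} and~\cite{MR4031770}) applies in its complete (pointwise) form. Concretely, zero-homogeneity forces the scaling identity $e^{-\lambda t\Lambda}(\lambda h_0)=\lambda\,e^{-t\Lambda}h_0$; combined with the order preservation from~\eqref{cor:regularity-semigroup-claim3}, for $h_0\ge 0$ and $\lambda>1$ this gives $e^{-\lambda t\Lambda}h_0\le\lambda\,e^{-t\Lambda}h_0$, and letting $\lambda\downarrow 1$ yields the one-sided bound $\tfrac{\td}{\dt}_+e^{-t\Lambda}h_0\le\tfrac1t\,e^{-t\Lambda}h_0$. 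Complete accretivity then upgrades this, through the lattice/truncation structure, to the two-sided pointwise estimate $\abs{\Lambda^\circ e^{-t\Lambda}h_0}\le 2\abs{h_0}/t$ of~\eqref{eq:45}, the constant $2=2/\abs{1-0}$ being exactly the one dictated by order zero.

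Statement~\eqref{cor:regularity-semigroup-claim4} is the step I expect to be the main obstacle, because $F$ breaks the scaling. The device I would use is to regard $-F$ as a forcing for the purely homogeneous operator $\Lambda$: writing $u(t)=e^{-t(\Lambda+F)}h_0$, which by~\eqref{cor:regularity-semigroup-claim1} is a strong solution with $\tfrac{\td}{\dt}u\in L^q$, the function $u$ solves $\partial_t u+\Lambda u\ni g$ with $g:=-F(u)$. Since $f$ is $\omega$-Lipschitz in its second variable and $u$ is a.e.\ differentiable, $g\in W^{1,1}_{\loc}$ with the pointwise bound $\abs{g'(r)}\le\omega\,\abs{\partial_t u(r)}$. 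Feeding this into the complete (pointwise) version of the homogeneous Aronson--B\'enilan estimate from~\cite{MR4200826} produces the leading term $\tfrac1t u(t)$ coming from zero-homogeneity together with a correction $g_0$ governed by the time-derivative of the forcing, namely $\norm{g_0(t)}_q\le\tfrac1t\int_0^t e^{\omega(t-r)}\norm{g'(r)}_q\,\dr\le\tfrac{\omega}{t}\int_0^t e^{\omega(t-r)}\norm{\tfrac{\td}{\dt}_+u(r)}_q\,\dr$, which is exactly the claimed bound, and whose measurability is automatic from the construction. The two delicate points are establishing the \emph{complete}, rather than merely norm-wise, form of this perturbed estimate --- here complete accretivity (not just accretivity) is indispensable, since it is what lets the Duhamel/comparison step be run in the lattice sense against the sign-definite datum $-h_0\le 0$ --- and handling the a posteriori character of $g=-F(u)$, which I would resolve by first proving the estimate on the implicit-Euler scheme (using the resolvent scaling $(I+\mu\Lambda)^{-1}(\lambda f)=\lambda\,(I+\tfrac{\mu}{\lambda}\Lambda)^{-1}f$ forced by order zero) and then passing to the limit along the scheme, whose convergence is already guaranteed by $m$-complete accretivity.
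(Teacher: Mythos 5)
Your proposal is correct and takes essentially the same route as the paper: statements (1) and (3) are obtained exactly as in the paper from Theorem~\ref{thm:main2}(\ref{thm:main2-claim3}) with $g\equiv 0$ (your computation $a_\omega(t)+\omega\int_0^t a_\omega(s)e^{\omega(t-s)}\,\ds=(2+\omega t)e^{\omega t}\norm{h_0}_q$ checks out) together with the comparison principle of Corollary~\ref{coro:comparision}, while statements (2) and (4) rest on the homogeneous-operator regularization theory of \cite{MR4200826} and \cite{MR4031770}, which are precisely the results the paper invokes. The only difference is that where the paper cites \cite[Theorems~2.9 and 4.14]{MR4200826} as black boxes, you sketch their internal mechanism (the order-zero scaling identity $e^{-\lambda t\Lambda}(\lambda h_0)=\lambda e^{-t\Lambda}h_0$ combined with order preservation, and the perturbation-as-forcing argument), and that sketch is consistent with those references.
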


The statements~\eqref{cor:regularity-semigroup-claim1} and \eqref{cor:regularity-semigroup-claim3} follow from
Theorem~\ref{thm:main2} and
Corollary~\ref{coro:comparision}. Statements~\eqref{cor:regularity-semigroup-claim2}
and \eqref{cor:regularity-semigroup-claim4} are direct applications
of~\cite[Theorem~2.9 and Theorem~4.14]{MR4200826} (see
also~\cite{hauer2020perturbation} including a correction, and \cite{MR4031770} for
inequality~\eqref{eq:45}).

\begin{remark}
  \begin{enumerate}[(a)]
    \item Even though, the semigroup
      $\{e^{-(\Lambda_{p}+F)}\}_{t\ge 0}$ generated by the negative
      Dirichlet-to-Neumann operator $-\Lambda_{p}$ associated with the
      $p$-Laplace operator $\Delta_{p}$, $1<p<\infty$, admits an
      \emph{$L^{q}$-$L^{r}$ regularization effect} for $1\le q<r\le \infty$ of the form
      \begin{displaymath}
        \norm{e^{-(\Lambda+F)}h_{0}}_{r}\lesssim
        \dfrac{\norm{h_{0}}_{q}^{\gamma}}{t^{\delta}},\qquad t>0,
      \end{displaymath}
      with exponents $\gamma:=\gamma(p,d,r,q)$,
      $\delta:=\delta(p,d,r,q)>0$, we stress that
      one can not expect a similar regularization effect for the semigroup
      $\{e^{-(\Lambda+F)}\}_{t\ge 0}$ generated by the negative
      Dirichlet-to-Neumann operator $-\Lambda$ associated with the
      $p$-Laplace operator $\Delta_{1}$ since the
      \emph{trace-Sobolev} inequality on $BV(\Omega)$, merely maps
      into $L^{1}(\partial\Omega)$. We refer the interested reader to the
      monograph~\cite{CoulHau2017} for further discussion on this
      topic.\medskip
      
  \item We recall from the \emph{linear semigroup theory} the
    following striking theorem by Lotz~\cite{MR797538} (see also
    ~\cite[Corollary~4.3.19]{MR2798103}):\medskip

    {\bfseries Theorem.}\;\emph{If $\{e^{-tA}\}_{t\ge 0}$ is a
      strongly continuous linear semigroup on the Banach space
      $L^{\infty}(\Sigma,\mu)$, where $(\Sigma,\mu)$ is a measure
      space, then the infinitesimal generator $-A$ has to be a bounded
      linear operator on $L^{\infty}(\Sigma,\mu)$.}\medskip

    Despite the linearity, the Dirichlet-to-Neumann operator
    $\Lambda_{\vert L^{\infty}}$ maps boun\-ded sets of
    $L^{\infty}(\partial\Omega)$ into (possibly several) subsets of
    the closed unit ball $\overline{B}_{L^{\infty}(\partial\Omega)}$
    in $L^{\infty}(\partial\Omega)$.  Thus, this operator provides a
    first example that Lotz's theorem might have a valid analogue in
    the nonlinear semigroup theory.
  \end{enumerate}
\end{remark}

Our last theorem is dedicated to the long-time stability of the semigroup $\{e^{-t
  (\Lambda_{\vert L^{q}}+F)}\}_{t\ge 0}$ generated by
$-(\Lambda_{\vert L^{q}(\partial\Omega)}+F)$ on $L^{q}(\partial\Omega)$.

\begin{theorem}\label{thm:stability}
 Let $1\le q\le \infty$ and $F$ be given by~\eqref{eq:28} with $f$
  satisfying~\eqref{eq:27}. Then the following statements hold.
  \begin{enumerate}
     \item \label{thm:stability-claim1} (\textrm{Energy decreasing}) For every $h_{0}\in
         L^{1}(\partial\Omega)$, the energy functional $\varphi$ given
         by~\eqref{eq:4} is monotonically decreasing along the
         trajectory
         \begin{displaymath}
           \{e^{-t (\Lambda+F)}h_{0}\,\vert\,t\ge 0\}.
         \end{displaymath}
         In particular, one has that
         \begin{displaymath}
           \varphi_{\infty}:=\lim_{n\to \infty} \varphi(e^{-t
             (\Lambda+F)}h_{0})\qquad\text{exists.}
         \end{displaymath}

        \item \label{thm:stability-claim2} (\textrm{Conservation of mass}) If $F\equiv 0$, then
          one has that
          \begin{displaymath}
            \int_{\partial\Omega}e^{-t
              \Lambda}h_{0}\,\dH^{d-1}=\overline{h}_{0}
            :=\tfrac{1}{\mathcal{H}^{d-1}(\partial\Omega)}\int_{\partial\Omega}h_{0}\,\dH^{d-1}
            \qquad\text{ for all $t\ge 0$}
          \end{displaymath}
          and all $h_{0}\in L^{1}(\partial\Omega)$.

          \item \label{thm:stability-claim3} (\textrm{Long-time stability in $L^{q}(\partial\Omega)$}) If $F\equiv 0$, then
          for every $h_{0}\in L^{q}(\partial\Omega)$ and $q<\infty$,
          then one has that
          \begin{displaymath}
            \lim_{t\to \infty}e^{-t \Lambda}h_{0}=\overline{h}_{0}
            \qquad\text{ in $L^{q}(\partial\Omega)$}
          \end{displaymath}
          and $\varphi_{\infty}=\varphi(\overline{h}_{0})=0$.

          \item \label{thm:stability-claim4} (Entropy-Transport inequality)
            If $F\equiv 0$, then there is a $C>0$ such that
    \begin{displaymath}
      \norm{e^{-t  \Lambda}h_{0}-\overline{h_{0}}}_{1}\le C\,
      \varphi(e^{-t  \Lambda}h_{0})\qquad\text{for all $t>0$;}
    \end{displaymath}

    \item \label{thm:stability-claim5} For every $h_{0}\in
      L^{2}(\partial\Omega)$, one has that
      \begin{displaymath}
      \varphi(e^{-t  \Lambda}h_{0})\le
      2\dfrac{\norm{h_{0}}_{2}^{2}}{t}
      \qquad\text{for all $t>0$.}
    \end{displaymath}
  \end{enumerate}
\end{theorem}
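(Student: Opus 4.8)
The plan is to exploit the characterization $\Lambda_{\vert L^{2}}=\partial_{L^{2}}\varphi_{\vert L^{2}}$ from Corollary~\ref{cor:Accretivity-in-Lq}~\eqref{cor:Accretivity-in-Lq-claim4}, which identifies $u(t):=e^{-t\Lambda}h_{0}$ as the gradient flow of the convex functional $\varphi$ emanating from $h_{0}$. For $h_{0}\in L^{2}(\partial\Omega)$, Theorem~\ref{thm:main2}~\eqref{thm:main2-claim2} (equivalently, Brezis's regularizing effect) guarantees that this mild solution is in fact a strong solution with $u(t)\in D(\partial_{L^{2}}\varphi)$ and $\Lambda^{\circ}u(t)=-\tfrac{\td}{\dt}_{\!\!+}u(t)\in\partial_{L^{2}}\varphi(u(t))$ for every $t>0$. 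The first key observation is the \emph{Euler identity}: since $\varphi$ is positively homogeneous of order one (Theorem~\ref{thm:main1}~\eqref{thm:main1-claim4}), every $w\in\partial_{L^{2}}\varphi(x)$ satisfies $\langle w,x\rangle_{2}=\varphi(x)$, where $\langle\cdot,\cdot\rangle_{2}$ denotes the inner product of $L^{2}(\partial\Omega)$. This follows by inserting $y=\lambda x$ into the subgradient inequality $\varphi(y)\ge\varphi(x)+\langle w,y-x\rangle_{2}$ and letting $\lambda\to1^{\pm}$. Applying it with $w=\Lambda^{\circ}u(t)$ and $x=u(t)$ yields the pointwise-in-time bound
\[
  \varphi(u(t))=\langle\Lambda^{\circ}u(t),\,u(t)\rangle_{2}\le\norm{\Lambda^{\circ}u(t)}_{2}\,\norm{u(t)}_{2}.
\]

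It then remains to estimate the two factors on the right. For the velocity, I would invoke the pointwise Aronson--B\'enilan estimate~\eqref{eq:45} from Corollary~\ref{cor:regularity-semigroup}~\eqref{cor:regularity-semigroup-claim2} (valid since $F\equiv0$), namely $\abs{\Lambda^{\circ}u(t)}\le 2\abs{h_{0}}/t$ $\mathcal{H}^{d-1}$-a.e., and take $L^{2}$-norms to get $\norm{\Lambda^{\circ}u(t)}_{2}\le\tfrac{2}{t}\norm{h_{0}}_{2}$. For the position, note that $\varphi\ge0=\varphi(0)$ forces $0\in\partial_{L^{2}}\varphi(0)$, so the constant $0$ is a fixed point of the $L^{2}$-contraction semigroup $\{e^{-t\Lambda}\}_{t\ge0}$; hence $\norm{u(t)}_{2}=\norm{e^{-t\Lambda}h_{0}-e^{-t\Lambda}0}_{2}\le\norm{h_{0}}_{2}$. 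Combining these with the displayed inequality gives $\varphi(e^{-t\Lambda}h_{0})\le\tfrac{2}{t}\norm{h_{0}}_{2}^{2}$, which is exactly the claim.

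The only genuinely delicate point is the rigorous justification of the pointwise-in-time manipulations: one must know that $t\mapsto u(t)$ is right-differentiable in $L^{2}(\partial\Omega)$ with right derivative $-\Lambda^{\circ}u(t)$ lying in the subdifferential, and that this minimal selection is the object controlled by~\eqref{eq:45}. Both facts are supplied by the sub-differential structure together with Theorem~\ref{thm:main2} and Corollary~\ref{cor:regularity-semigroup}, so no new work is needed. As a remark, the Euler identity combined with the monotonicity of $t\mapsto\varphi(u(t))$ from~\eqref{thm:stability-claim1} yields an entirely self-contained alternative that avoids the Aronson--B\'enilan estimate: since $\varphi(u(t))=-\tfrac12\tfrac{\td}{\dt}\norm{u(t)}_{2}^{2}$, integrating over $(0,t)$ and using that $\varphi(u(\cdot))$ is non-increasing gives $t\,\varphi(u(t))\le\int_{0}^{t}\varphi(u(s))\,\ds\le\tfrac12\norm{h_{0}}_{2}^{2}$, hence the sharper bound $\varphi(u(t))\le\tfrac{1}{2t}\norm{h_{0}}_{2}^{2}$, which in particular implies the stated estimate.
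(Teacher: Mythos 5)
Your argument addresses only statement~\eqref{thm:stability-claim5} of the theorem; statements~\eqref{thm:stability-claim1}--\eqref{thm:stability-claim4} are not touched at all, and they do not follow from the machinery you set up. This is the genuine gap. Concretely: \eqref{thm:stability-claim1} (energy decreasing, with general $F$ and $h_{0}\in L^{1}(\partial\Omega)$) is proved in the paper by integrating the energy identity~\eqref{eq:34} for $L^{2}$-data and then passing to $L^{1}$-data via the quasi-contractivity of the semigroup and the continuity of $\E$ on $L^{1}(\partial\Omega)$; \eqref{thm:stability-claim2} (conservation of mass) requires testing the strong equation against the constant function $\mathds{1}_{\partial\Omega}$ and using the generalized integration by parts formula together with $\divi(\z)=0$, so that $\int_{\Omega}(\z_{h(t)},D\mathds{1})=0$; \eqref{thm:stability-claim3} (convergence to $\overline{h}_{0}$ as $t\to\infty$) cannot be extracted from any pointwise-in-time bound --- the paper uses Bruck's theorem on asymptotic convergence of gradient flows of \emph{even} convex functionals in Hilbert space to handle $h_{0}\in L^{2}$, identifies the limit via conservation of mass, and then reaches general $L^{q}$ by an $\varepsilon/3$-density argument and interpolation with the $L^{\infty}$-contractivity; and \eqref{thm:stability-claim4} (entropy--transport inequality) rests on the representation of $\varphi$ from Theorem~\ref{thm:8}, the trace--Poincar\'e inequality~\eqref{eq:61} for $BV$-functions, and again conservation of mass. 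None of these four arguments is a corollary of the Euler identity or of the Aronson--B\'enilan estimate.

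For the part you do prove, your argument is correct and is essentially the paper's own proof of \eqref{thm:stability-claim5}: the paper likewise writes $\varphi(e^{-t\Lambda}h_{0})=\int_{\partial\Omega}[\z_{e^{-t\Lambda}h_{0}},\nu]\,e^{-t\Lambda}h_{0}\,\dH^{d-1}$ (which is your Euler identity, valid by order-one homogeneity of $\varphi$) and then invokes~\eqref{eq:45} together with H\"older and contractivity. Your concluding remark is a genuine improvement worth noting: combining the Euler identity with the a.e.\ differentiability of the gradient flow gives $\varphi(u(t))=-\tfrac12\tfrac{\td}{\dt}\norm{u(t)}_{2}^{2}$, and integrating plus monotonicity of $t\mapsto\varphi(u(t))$ yields the sharper constant $\varphi(u(t))\le\tfrac{1}{2t}\norm{h_{0}}_{2}^{2}$ without any appeal to Corollary~\ref{cor:regularity-semigroup}. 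But as submitted, the proposal proves one fifth of the theorem.
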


The statements of Theorem~\ref{thm:stability} are established
in the
Propositions~\ref{prop:energy-decreasing}-\ref{propo:convergence} in
Section~\ref{subsec:long-time-stability}.

\begin{remark}{(Conjecture)}
  We conjecture that for every $q>>1$ large enough and $h_{0}\in
  L^{q}(\partial\Omega)$, the trajectory $t\mapsto
  e^{-t\Lambda}h_{0}-\overline{h_{0}}$ extincts in finite time.
\end{remark}

We conclude this first section with some interesting remarks and
historical development on the $1$-Laplace operator and the
Dirichlet-to-Neumann operator.

\begin{remark}\label{rem:thm-main2}
  \begin{enumerate}[(a)]
      \item \label{rem:thm-main2-b} It is well-known that for given $h\in
      L^{2}(\partial\Omega)$, there is a vector field
    $\z_{h}\in L^{\infty}(\Omega,\R^{d})$ satisfying \eqref{z21intro}-\eqref{z32intro} for some weak
    solution $u_{h}\in BV(\Omega)$ of Dirichlet problem~\eqref{eq:21}.
   By definition of $\Lambda_{\vert
      L^2}$, it is clear that $(h,[\z_{h},\nu])\in
    \Lambda_{\vert L^2}$. Now, on the one hand, an integrating by parts
    (Proposition~\ref{prop:ibp}) gives that
    \begin{displaymath}
      \int_{\partial\Omega}[\z_{h},\nu]\,u_{h}\,\dH^{d-1}=\int_{\Omega}\abs{Du_{h}}.
    \end{displaymath}
    But on the other hand, the integral equality
    \begin{equation}
      \label{eq:18}
      \int_{\partial\Omega} [\z,\nu]\,h\,\dH^{d-1}=\int_{\Omega}\abs{Du}
    \end{equation}
    is, in general, not true since for the notion of \emph{weak
    solutions} $u_{h}$ of Dirichlet problem~\eqref{eq:21N}, it is not
  required that the Dirichlet
  boundary condition $u_{h}=h$ on $\partial\Omega$ is satisfied in the \emph{trace sense}:
  that is, there is a $H\in BV(\Omega)$ such that $\T(H)=h$ and
  $H-u_{h}\in BV_{0}(\Omega)$. Here, we denote by $BV_{0}(\Omega)$ the closure
  $\overline{C^{\infty}_{c}(\Omega)}^{\mbox{}_{BV(\Omega)}}$ of the
  set of test functions $C^{\infty}_{c}(\Omega)$ in $BV(\Omega)$.\medskip

\item Our comment in \eqref{rem:thm-main2-b} of this remark provides a
  strong reasoning, but not an explicit proof, for why the recently
  developed theory~\cite{MR3465809} of \emph{$j$-elliptic functionals}
  can not be applied to the functional
        \begin{equation}
          \label{eq:16}
          \hat{\varphi}(u)=\int_{\Omega} \abs{Du},\qquad(u\in V_2(\Omega)),
        \end{equation}
        where
        $V_{2}(\Omega):= \big\{ u\in BV(\Omega) \;\Big\vert\; \T(u)\in
        L^{2}(\partial\Omega) \big\}$, in order to obtain
        well-posedness of the Cauchy problem~\eqref{eq:24} in
        $L^{2}(\partial\Omega)$. To be more precise, we briefly recall
        from~\cite{MR3465809} that the $j$-subdiffer\-ential operator
        $\partial_{j}\hat{\varphi}$ in $L^{2}(\partial\Omega)$ for
        $\hat{\varphi}$ given by~\eqref{eq:16} and
        $j=\T_{\vert V_{2}}$ the standard trace operator
        $\T : BV(\Omega)\to L^{1}(\partial\Omega)$ restricted on
        $V_{2}$ is  defined by the
        set of all pairs
        $(h,g)\in L^{2}(\partial\Omega)\times
        \overline{B}_{L^{\infty}(\partial\Omega)}$, for which there are
        a weak solution $u_{h}\in V_{2}$ of Dirichlet
        problem~\eqref{eq:21} satisfying $u_{h}=h$ in the traces sense, and a vector field
        $\z_{h}\in L^{\infty}(\Omega;\R^{d})$ satisfying
        \eqref{z21intro}-\eqref{eq:98} with $u_{h}$
        and~\eqref{eq:92}. Clearly, one has that $\partial_{\T_{\vert
            V_{2}}}\hat{\varphi}\subseteq \Lambda_{\vert
          L^{2}}$. But we claim that the equation
        \begin{equation}
          \label{eq:17}
          \Lambda_{\vert L^{2}}=\partial_{\T_{\vert V_{2}}}\hat{\varphi}
        \end{equation}
        can not be true in general. To see this, we recall that Sprandlin
        and Tamasan~\cite{MR3298723} (cf. \cite{MR3713816})
        constructed a boundary function
        $h_{0}\in L^{\infty}(\mathcal{S}_{1})$ on the the unit circle
        $\mathcal{S}_{1}$ in the plane $\R^{2}$, for there is no
        solution $u_{h_{0}}\in BV(D_{1})$ of the
        minimization problem
        \begin{equation}
          \label{eq:25}
          \inf\Big\{ \int_{D_{1}} \abs{Dv}\, \Big\vert\, v \in
            BV(D_{1}),\;  v= h_{0}\text{ in the weak \emph{sense of traces}} \Big\},
        \end{equation}
        where we write $D_{1}$ to denote the open unit disc in
        $\R^{2}$. Hence, one has that
        $h_{0}\notin D(\partial_{\T_{\vert V_{2}}}\hat{\varphi})$. But
        on the other hand, since the effective domain
        $D(\Lambda_{\vert L^{2}})$ of $\Lambda_{\vert L^{2}}$ is
        $L^{2}(\partial\Omega)$, and since $\mathcal{S}_{1}$ is
        compact, we have that $h_{0}\in D(\Lambda_{\vert L^{2}})$,
        showing~\eqref{eq:17} can't be true.\medskip

    \item Suppose $\Omega$ is a bounded Lipschitz domain in $\R^{d}$ whose boundary
      $\partial\Omega$ satisfies the following two conditions:
      \begin{enumerate}[(i)]
      \item For every $x\in \partial\Omega$ there exists a $r_{0}>0$
        such that for every set $A\subset\subset B(x,r_{0})$ of finite
        perimeter (that is, $P(A,\Omega):=\abs{D\mathds{1}_{A}}(\Omega)$ is finite), one has that
        \begin{displaymath}
          P(\Omega,\R^{d})\le P(\Omega\cup A,\R^{d});
        \end{displaymath}
        \item For every $x\in \partial\Omega$ and every $r>0$
        there is a set $A\subset\subset B(x,r)$ of finite
        perimeter such that
        \begin{displaymath}
          P(\Omega, B(x,r))> P(\Omega\setminus A, B(x,r)).
        \end{displaymath}
      \end{enumerate}
      Then by~\cite[Theorem~3.7 \& Corollary~4.2]{MR1172906} and by the
        characterization~\cite[Theorem~1.1]{MR3263922} of functions of
        least gradients and weak solutions to Dirichlet
        problem~\eqref{eq:21}, for every boundary data
        $h\in C(\partial\Omega)$ there is a unique weak solution
        $u\in BV(\Omega)$ of~\eqref{eq:21} satisfying $u=h$ a.e. on
        $\partial\Omega$. Due to this existence and uniqueness result,
        we know that at least in this situation, the integral
        equation~\eqref{eq:18} holds for every boundary data
        $h\in C(\partial\Omega)$.
  \end{enumerate}
\end{remark}

The $1$-Laplace operator $\Delta_{1}$ is not only interesting from his
geometric perspectives and its applications to engineering sciences,
but also by his mathematical challenges. For a given
$u\in BV(\Omega)$, $\Delta_{1}u$ is the \emph{scalar mean
  curvature} of the level sets of $u$. Thus, every level surface
$\{u= t\}$ of a function $u$ of least gradient has mean
curvature zero; a necessary condition for functions $u$ whose
super-level sets $\{u\ge t\}$ are area-minimizing. Functions of
least gradient do not have too much regularity, in the sense, that
even though $u$ might be essentially bounded, necessarily,
$u$ need not admit a continuous representative on
$\overline{\Omega}$. In fact, in some applications, this property of
functions of least gradient is strongly desired, for example, in image
processing (see~\cite{MR2033382} and the references therein); if the
nonlinear diffusion process associated with $\Delta_{1}$ is used to
recover a blurred picture $u_{0} : \Omega\to [0,1]$,
($\Omega\subseteq \R^{2}$), then the contours in $u_{0}$ are
maintained and not smoothened as compared to diffusion processes
involving linear or degenerate differential operators. But the
operator $\Delta_{1}$ also appears in other engineering fields. For
example in free material design (see~\cite{MR3596671}), or
conductivity imaging (see~\cite{MR3739314}).\medskip

If $\Omega$ represents, for example, an electricity conducting medium,
then the operator $\Lambda$ associated with the classical Laplace
operator $\Delta u:=\sum_{i=1}^{d}D_{ii}u$ appears in a
natural way in measuring the current through the boundary for given
voltages on the boundary. Thus the operator $\Lambda$ is the main object in
Calder\'on's inverse problem \cite{MR590275}. The Dirichlet-to-Neumann operator
$\Lambda$ can be constructed with various kind of differential
operators (linear, nonlinear, singular, or degenerate) provided the
corresponding Dirichlet problem admits a solution; for $1<p<\infty$,
the Dirichlet-to-Neumann operator $\Lambda$
associated with the $p$-Laplace operator
$\Delta_{p}u:=\divi\left(\abs{Du}^{p-2}Du\right)$ is
also referred to as the interior capacity operator
(cf~\cite{MR1036731}) and was studied intensively by many authors
including by D{\'{\i}}az and Jim\'{e}nez~\cite{MR938995},
Ammar, Andreu and Toledo~\cite{MR2294196}, Arendt and Ter
Elst~\cite{MR2823661}, Salo and Zhong~\cite{MR3023384},
Brander~\cite{MR3415587}, the first author~\cite{MR3369257}, and with
co-authors~\cite{MR3465809,CoulHau2017,MR4041276}.

\section{Preliminaries.}
\label{sec:prelim}

We begin by summarizing some fundamental notions,
definitions, and results which we will apply later in this paper.

%
%

\subsection{Functions of bounded variation}\label{subsec:bv}
We begin by recalling some fundamental facts about functions of
bounded variation. For more details on this topic, we refer the
interested reader to~\cite{MR1857292}, or~\cite{MR1014685}.\medskip

Let $\Omega$ an open subset of $\R^d$, $d\ge 1$. Then, a function
$u \in L^1(\Omega)$ is said to be a \emph{function of bounded
  variation in $\Omega$}, if the distributional partial derivatives
$D_{1}u:=\tfrac{\partial u}{\partial x_{1}}$, $\dots,$
$D_{d}u:=\tfrac{\partial u}{\partial x_{d}}$ are finite Radon measures in $\Omega$, that is, if
\begin{displaymath}
  \int_{\Omega} u\,D_{i}\varphi\,\dx= -
  \int_{\Omega}\varphi\,\textrm{d} D_{i}u
\end{displaymath}
for all $\varphi\in C^{\infty}_{c}(\Omega)$, $i=1, \dots, d$. The
linear vector space of functions $u \in L^1(\Omega)$ of bounded
variation in $\Omega$ is denoted by $BV(\Omega)$. Further, we set
$Du=(D_{1}u,\dots,D_{d}u)$ for the \emph{distributional gradient} of
$u$. Then, $Du$ belongs to the class $M^{b}(\Omega,\R^{d})$ of $\R^{d}$-valued bounded
Radon measure on $\Omega$, and 
throughout this paper, we either write $\abs{Du}(\Omega)$ or
$\int_\Omega \vert Du \vert$ to denote the \emph{total variation
  measure} of $Du$. The space $BV(\Omega)$ equipped with the norm
\begin{displaymath}
  \norm{u}_{BV(\Omega)}:= \norm{u}_{L^1(\Omega)} +
  \abs{Du}(\Omega),
\end{displaymath}
forms a Banach space. For $u \in L^{1}_{loc}(\Omega)$, the
\emph{variation of $u$ in $\Omega$} is defined by
\begin{displaymath}
  V(u,\Omega):=\sup \left\{ \int_{\Omega} \, u \,
    {\rm div} \, \z \, dx\;\Big\vert\; \z \in C^{\infty}_{0}(\Omega,
    \R^d), |\z(x)| \leq 1 \, \, \hbox{for $x \in \Omega$}
  \right\}
\end{displaymath}
and if $u$ is continuously differentiable, then an integration by
parts shows that $V(u,\Omega)=\int_{\Omega}\abs{\nabla u}\,\dx$. The
variation $V(\cdot,\Omega)$ is directly related to $BV(\Omega)$ via
the property (cf~\cite[Proposition~3.6]{MR1857292}), that a function
$u\in L^{1}(\Omega)$ belongs to $BV(\Omega)$ if and only if
$V(u,\Omega)$ is finite. In addition, it is worth noting that
$V(u,\Omega)=\abs{Du}(\Omega)$ for $u\in BV(\Omega)$ and
$u\mapsto V(u,\Omega)$ is lower semicontinuous with respect to the
$L^{1}_{loc}$-topology.

By Riesz's theorem (cf~\cite[Theorem~6.19]{MR924157}), the
dual space $(C_{0}(\Omega))^{\ast}$ is isometrically isomorphic with
the space $M^{b}(\Omega)$ of bounded Radon-measures. Thus, for a sequence
$(\mu_n)_{n\ge 1}$ and $\mu$ in $M^{b}(\Omega)$, $(\mu_n)_{n\ge 1}$ is said to be
\emph{weakly$\mbox{}^{\ast}$-convergent to $\mu$ in $M^{b}(\Omega)$} if
\begin{displaymath}
  \int_{\Omega}\xi \textrm{d}\mu_{n}\to \int_{\Omega}\xi
  \textrm{d}\mu\qquad\text{for every $\xi\in C_{0}(\Omega)$.}
\end{displaymath}
Following this definition, one calls a sequence $(u_n)_{n\ge 1}$ in $BV(\Omega)$ to be
\emph{weakly$\mbox{}^{\ast}$-convergent to $u$ in $BV(\Omega)$} if
$u_n\to u$ in $L^{1}(\Omega)$ as $n\to +\infty$ and $Du_{n}$
weakly$\mbox{}^{\ast}$-converges to $Du$ in
$M^{b}(\Omega;\R^{d})$ as $n\to +\infty$. For this type of convergence
the following compactness result holds.

\begin{theorem}[{\cite[Theorem~3.23]{MR1857292}}]\label{thm:BVcompactness}
  Let $\Omega$ be a bounded domain with a Lipschitz continuous
  boundary. Then, every bounded sequence $(u_{n})_{n\ge 1}$ in
  $BV(\Omega)$ admits a weakly$\mbox{}^{\ast}$-convergent subsequence
  in $BV(\Omega)$.
\end{theorem}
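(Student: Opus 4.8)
The plan is to split the desired weak$\mbox{}^{\ast}$ convergence in $BV(\Omega)$ into its two defining components and establish each along a successively refined subsequence. Concretely, I would first extract a subsequence that converges strongly in $L^{1}(\Omega)$, exploiting the compact embedding $BV(\Omega)\hookrightarrow L^{1}(\Omega)$; then pass to a further subsequence along which the gradient measures $Du_{n}$ converge weakly$\mbox{}^{\ast}$ in $M^{b}(\Omega;\R^{d})$; and finally identify this weak$\mbox{}^{\ast}$ limit with the gradient of the $L^{1}$-limit. Throughout, the boundedness hypothesis $\sup_{n}\norm{u_{n}}_{BV(\Omega)}<\infty$ supplies both a uniform $L^{1}$-bound $\sup_{n}\norm{u_{n}}_{L^{1}(\Omega)}<\infty$ and a uniform total-variation bound $\sup_{n}\abs{Du_{n}}(\Omega)<\infty$, and these are the only pieces of information I would use.

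For the first component I would prove the compact embedding via the translation estimate combined with the Riesz--Fr\'echet--Kolmogorov compactness criterion in $L^{1}$. The starting point is the elementary inequality
\begin{equation*}
  \int_{\Omega_{\delta}}\abs{u(x+y)-u(x)}\,\dx\le \abs{y}\,\abs{Du}(\Omega)
  \qquad(u\in BV(\Omega),\ \abs{y}<\delta),
\end{equation*}
where $\Omega_{\delta}:=\{x\in\Omega\,\vert\,\mathrm{dist}(x,\partial\Omega)>\delta\}$, obtained first for smooth $u$ by writing $u(x+y)-u(x)=\int_{0}^{1}\nabla u(x+ty)\cdot y\,\dt$ and then for general $u\in BV(\Omega)$ by approximation. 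Applied to the uniformly bounded family $(u_{n})$, this shows the translates of $(u_{n})$ are uniformly small in $L^{1}$, so Riesz--Fr\'echet--Kolmogorov yields relative compactness in $L^{1}(\Omega')$ for every $\Omega'\subset\subset\Omega$; a diagonal argument over an exhaustion of $\Omega$ then gives a subsequence converging in $L^{1}_{\loc}(\Omega)$.

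Upgrading this local compactness to compactness in $L^{1}(\Omega)$ is the step where the Lipschitz regularity of $\partial\Omega$ is essential, and I expect it to be the main obstacle. Here I would invoke a bounded linear extension operator $E\colon BV(\Omega)\to BV(\R^{d})$, whose existence is guaranteed precisely by the Lipschitz hypothesis, with $Eu$ supported in a fixed bounded neighbourhood of $\overline{\Omega}$ and $\norm{Eu}_{BV(\R^{d})}\le C\,\norm{u}_{BV(\Omega)}$. Then $(Eu_{n})$ is bounded in $BV(\R^{d})$ with supports in one fixed compact set, so the global translation estimate together with Riesz--Fr\'echet--Kolmogorov on $\R^{d}$ (tightness being automatic from the common compact support) produces a subsequence converging in $L^{1}(\R^{d})$; restricting to $\Omega$ delivers $u_{n_{k}}\to u$ in $L^{1}(\Omega)$. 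Alternatively one controls the mass of $u_{n}$ in a boundary collar uniformly by flattening $\partial\Omega$ in finitely many Lipschitz charts, but the extension route is cleaner.

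For the second component, the bound $\sup_{n}\abs{Du_{n}}(\Omega)<\infty$ means $(Du_{n})$ is bounded in $M^{b}(\Omega;\R^{d})=(C_{0}(\Omega;\R^{d}))^{\ast}$; since $C_{0}(\Omega;\R^{d})$ is separable, the sequential Banach--Alaoglu theorem provides a further subsequence, not relabelled, with $Du_{n_{k}}\to\mu$ weakly$\mbox{}^{\ast}$ in $M^{b}(\Omega;\R^{d})$ for some $\R^{d}$-valued bounded Radon measure $\mu$. It then remains to identify $\mu=Du$: for every $\varphi\in C^{\infty}_{c}(\Omega)$ and each index $i$, the $L^{1}$-convergence $u_{n_{k}}\to u$ together with the weak$\mbox{}^{\ast}$ convergence of the gradients gives
\begin{align*}
  \int_{\Omega}u\,D_{i}\varphi\,\dx
  &=\lim_{k\to\infty}\int_{\Omega}u_{n_{k}}\,D_{i}\varphi\,\dx
  =-\lim_{k\to\infty}\int_{\Omega}\varphi\,\td D_{i}u_{n_{k}}\\
  &=-\int_{\Omega}\varphi\,\td\mu_{i},
\end{align*}
so that $D_{i}u=\mu_{i}$ holds in the distributional sense. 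Hence $Du=\mu$ is a finite Radon measure, which both certifies $u\in BV(\Omega)$ and shows $Du_{n_{k}}\to Du$ weakly$\mbox{}^{\ast}$ in $M^{b}(\Omega;\R^{d})$. Combined with $u_{n_{k}}\to u$ in $L^{1}(\Omega)$, this is exactly weak$\mbox{}^{\ast}$ convergence of $(u_{n_{k}})$ to $u$ in $BV(\Omega)$, completing the argument.
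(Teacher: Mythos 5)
Your proposal is correct, but note that the paper does not prove this statement at all: it is quoted, with attribution, from Ambrosio--Fusco--Pallara \cite[Theorem~3.23]{MR1857292}, so there is no internal argument to compare against; the relevant comparison is with the proof in that reference. Your argument is essentially the standard one, organized slightly differently. Ambrosio--Fusco--Pallara first replace $u_{n}$ by smooth functions $v_{n}$ with $\norm{u_{n}-v_{n}}_{L^{1}(\Omega)}<1/n$ and $\int_{\Omega}\abs{\nabla v_{n}}\,\dx\le \abs{Du_{n}}(\Omega)+1$ (strict approximation), and then invoke compactness of the embedding $W^{1,1}(\Omega)\hookrightarrow L^{1}(\Omega)$ for bounded Lipschitz domains; you instead work directly at the $BV$ level via the translation estimate, the Riesz--Fr\'echet--Kolmogorov criterion, and a bounded extension operator $E:BV(\Omega)\to BV(\R^{d})$. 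These are the same mechanism (the $W^{1,1}$ compactness is itself proved by extension plus Riesz--Fr\'echet--Kolmogorov), so nothing essential changes, and your identification of the limit measure with $Du$ via testing against $C^{\infty}_{c}(\Omega)$ is exactly right. Two minor streamlinings: first, your local step (compactness on $\Omega'\Subset\Omega$ plus a diagonal argument) is subsumed by the extension argument and could be deleted; second, the Banach--Alaoglu extraction for the gradients is harmless but not needed, since once $u_{n_{k}}\to u$ in $L^{1}(\Omega)$, lower semicontinuity of the variation gives $u\in BV(\Omega)$, and then the distributional identity together with the uniform bound $\sup_{n}\abs{Du_{n}}(\Omega)<\infty$ and density of $C^{\infty}_{c}(\Omega)$ in $C_{0}(\Omega)$ already forces $Du_{n_{k}}\to Du$ weakly$\mbox{}^{\ast}$ in $M^{b}(\Omega;\R^{d})$ along the same subsequence; this is the content of Proposition~3.13 in the cited monograph, namely that $L^{1}$-convergence plus boundedness in $BV(\Omega)$ implies weak$\mbox{}^{\ast}$ convergence in $BV(\Omega)$.
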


The following result due to Modica~\cite{MR921549} is crucial for the
minimization problem related to the Dirichlet problem for the
$1$-Laplace operator.

\begin{proposition}[{\cite[Proposition~1.2]{MR921549}}]\label{prop:4}
  Let $\Omega$ be a bounded domain with a boundary
  $\partial\Omega$ of class $C^1$, and $\tau : \partial\Omega\times \R\to \R$ be a
  contraction in the second variable, uniformly with
  respect to the first one. Then, the functional $F : BV(\Omega)\to
  \R$ given by
  \begin{displaymath}
    F(u)=\int_{\Omega}\abs{Du}+\int_{\partial\Omega}\tau(x,\T(u))\,\dH^{d-1}
  \end{displaymath}
  is lower semicontinuous on $BV(\Omega)$ with respect
  to the topology of $L^1(\Omega)$.
\end{proposition}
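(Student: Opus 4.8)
The plan is to split $F=V+B$ into the total variation part $V(u)=\int_{\Omega}\abs{Du}$ and the boundary part $B(u)=\int_{\partial\Omega}\tau(x,\T(u))\,\dH^{d-1}$. The first is already lower semicontinuous for the $L^{1}(\Omega)$-topology (indeed for $L^{1}_{\loc}$), since $V(\cdot,\Omega)=\abs{Du}(\Omega)$ is lower semicontinuous for $L^{1}_{\loc}$-convergence, as recalled above. Hence the whole difficulty sits in $B$, and the essential obstruction is that the trace operator $\T:BV(\Omega)\to L^{1}(\partial\Omega)$ is \emph{not} continuous for the $L^{1}(\Omega)$-topology: along a sequence $u_{n}\to u$ in $L^{1}(\Omega)$ the traces $\T(u_{n})$ may fail to converge to $\T(u)$, so $B$ can jump \emph{downwards} in the limit. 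The heart of the proof is to show that any such loss of boundary energy is paid for by total variation of the $u_{n}$ concentrating in a thin collar at $\partial\Omega$, and that the $C^{1}$-regularity of $\partial\Omega$ renders this exchange asymptotically lossless.

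First I would treat the model contraction $\tau(x,s)=\abs{s-\phi(x)}$ with $\phi\in C^{1}(\overline\Omega)$, where a clean gluing argument works. Choosing a bounded open set $\Omega'\supset\supset\overline\Omega$ and extending $\phi$ to $C^{1}(\overline{\Omega'})$, glue each competitor $u\in BV(\Omega)$ with $\phi$ to form $\tilde u:=u$ on $\Omega$ and $\tilde u:=\phi$ on $\Omega'\setminus\Omega$. For a $C^{1}$ (or Lipschitz) interface the total variation of the glued function satisfies
\[
 \int_{\Omega'}\abs{D\tilde u}=\int_{\Omega}\abs{Du}+\int_{\partial\Omega}\abs{\T(u)-\phi}\,\dH^{d-1}+\int_{\Omega'\setminus\Omega}\abs{\nabla\phi}\,\dx ,
\]
so that $F(u)=\int_{\Omega'}\abs{D\tilde u}-c_{\phi}$ with the fixed constant $c_{\phi}:=\int_{\Omega'\setminus\Omega}\abs{\nabla\phi}\,\dx$. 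Since $u_{n}\to u$ in $L^{1}(\Omega)$ forces $\tilde u_{n}\to\tilde u$ in $L^{1}(\Omega')$ (they agree with $\phi$ outside $\Omega$), lower semicontinuity of the total variation on the \emph{fixed larger domain} $\Omega'$ gives $F(u)\le\liminf_{n}F(u_{n})$ at once. Passing from $\phi\in C^{1}(\overline\Omega)$ to $\phi\in L^{1}(\partial\Omega)$ is then a routine density argument, using that $B$ is $1$-Lipschitz in $\phi$ for the $L^{1}(\partial\Omega)$-norm.

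For a general contraction $\tau$ the gluing device is unavailable, because the isotropic total variation across an interface can only generate a \emph{distance}-type cost $\abs{s-p}$, never an arbitrary $1$-Lipschitz $\tau(x,\cdot)$. Here I would run the localization/blow-up method: after reducing to $\liminf_{n}F(u_{n})=L<\infty$ and (using $\tau(x,s)\ge\tau(x,0)-\abs{s}$ to control signs) extracting a weak$^{\ast}$ limit $\mu$ on $\overline\Omega$ of the energy measures $\mu_{n}:=\abs{Du_{n}}\res\Omega+\tau(\cdot,\T(u_{n}))\,\mathcal{H}^{d-1}\res\partial\Omega$ with $\mu(\overline\Omega)\le L$, it suffices to prove two Radon--Nikodym lower density bounds: the interior bound $\tfrac{d\mu}{d\abs{Du}}\ge1$ at $\abs{Du}$-a.e.\ point of $\Omega$ (standard, from lower semicontinuity of the total variation), and the boundary bound $\tfrac{d\mu}{d\mathcal{H}^{d-1}\res\partial\Omega}(x_{0})\ge\tau\big(x_{0},\T(u)(x_{0})\big)$ at $\mathcal{H}^{d-1}$-a.e.\ $x_{0}\in\partial\Omega$. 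The boundary bound is the crux, and it is where $\partial\Omega\in C^{1}$ enters decisively: blowing up at a point $x_{0}$ where $\partial\Omega$ admits a tangent hyperplane and $\T(u)$ has an approximate limit, I would combine the sharp trace inequality in the tubular neighbourhood (whose constant tends to $1$ as the collar thickness vanishes, precisely because the boundary is $C^{1}$) with the $1$-Lipschitz bound $\abs{\tau(x,\T(u_{n}))-\tau(x,\T(u)(x_{0}))}\le\abs{\T(u_{n})-\T(u)(x_{0})}$ to estimate the rescaled traces of the $u_{n}$ by the interior variation accumulating in the collar. The main obstacle throughout is exactly this boundary density estimate: naive triangle-inequality bounds on $\abs{B(u)-B(u_{n})}$ double-count the interior energy $\int_{\Omega}\abs{Du}$ and therefore fail, so one must account \emph{simultaneously} for the boundary integral and the collar variation, which is precisely what both the gluing argument (in the model case) and the blow-up argument (in general) are engineered to do.
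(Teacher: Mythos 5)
The paper does not prove Proposition~\ref{prop:4} at all; it quotes it from Modica \cite{MR921549}, so your argument has to stand on its own. Its first half does: for $\tau(x,s)=\abs{s-\phi(x)}$ the gluing construction, the formula for the total variation of the glued function across the interface, lower semicontinuity of the total variation on the enlarged domain $\Omega'$, and the $1$-Lipschitz-in-$\phi$ uniform approximation give a complete and correct proof of the model case. The problem is that this covers only contractions of the form $\abs{s-g(x)}$ (plus terms independent of $s$), and your treatment of the general contraction is where the proposal fails.

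The fatal step is the very first one of the blow-up scheme: extracting a weak$^{\ast}$ limit of the energy measures $\mu_{n}:=\abs{Du_{n}}\res\Omega+\tau(\cdot,\T(u_{n}))\,\mathcal{H}^{d-1}\res\partial\Omega$. These are \emph{signed} measures, and a sequence of signed Radon measures admits a weak$^{\ast}$ convergent subsequence only if $\sup_{n}\abs{\mu_{n}}(\overline\Omega)<\infty$; the bound $F(u_{n})\le L$ does not imply this, because a large negative boundary part can be cancelled by a large interior total variation. Concretely, let $\Omega$ be the unit ball, $\tau(x,s)=-\abs{s}$ (an admissible contraction), and $u_{n}(x)=\sqrt{n}\,\bigl(1-n\,\mathrm{dist}(x,\partial\Omega)\bigr)^{+}$. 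Then $u_{n}\to0$ in $L^{1}(\Omega)$ and $F(u_{n})\to 0=F(0)$ (in fact $F(u_{n})<0$ for large $n$), while $\abs{Du_{n}}(\Omega)$ and $\int_{\partial\Omega}\abs{\tau(\cdot,\T(u_{n}))}\,\dH^{d-1}$ both grow like $\sqrt{n}\,\mathcal{H}^{d-1}(\partial\Omega)$; hence $\abs{\mu_{n}}(\overline\Omega)\to\infty$ and no subsequence of $(\mu_{n})_{n\ge1}$ converges weakly$^{\ast}$. Your parenthetical remedy, ``use $\tau(x,s)\ge\tau(x,0)-\abs{s}$ to control signs'', does not help: $\int_{\partial\Omega}\abs{\T(u_{n})}\,\dH^{d-1}$ is precisely the quantity that blows up. Moreover, even when compactness does hold, the concluding step of the method---deducing $\mu\ge\abs{Du}\res\Omega+\tau(\cdot,\T(u))\,\mathcal{H}^{d-1}\res\partial\Omega$ from the two density estimates---is valid only for \emph{nonnegative} limit measures (for signed $\mu$ the part singular with respect to both reference measures may be negative), so the scheme would need restructuring even then. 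This is not a peripheral technicality: sequences whose variation concentrates in a vanishing collar and is cancelled by a negative boundary term are exactly where the sharp contraction constant and the $C^{1}$ hypothesis are tested; on a merely Lipschitz domain the statement is in fact false (on the unit square, with $\tau(x,s)=-\abs{s}$, the plateaus $u_{n}=n^{3/2}$ on the corner triangles $\{x_{1}+x_{2}<1/n\}$ satisfy $u_{n}\to0$ in $L^{1}$ but $F(u_{n})=n^{1/2}(\sqrt{2}-2)\to-\infty$). Any proof of the general case must therefore handle such sequences with exact constants, which the blow-up framework as you set it up cannot do.
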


According to \cite[Theorem~5.3.1]{MR1158660} and
\cite[Theorem~3.87]{MR1857292}, if $\Omega$ is an open and bounded
subset of $\R^{d}$ with a Lipschitz continuous boundary
$\partial\Omega$, then there is a bounded linear mapping
$\T : BV(\Omega)\to L^{1}(\partial\Omega)$ assigning to each
$u\in BV(\Omega)$ an element $\T(u)\in L^{1}(\partial\Omega)$ such
that for $\mathcal{H}^{d-1}$-almost every $x \in \partial \Omega$, one
has that $\T(u)(x)\in \R$ and
\begin{displaymath}
  \lim_{\rho \downarrow 0} \rho^{-d}\int_{\Omega \cap B_\rho(x)}
  \vert u(y) - \T(u)(x) \vert dy = 0.
\end{displaymath}
Moreover, $\T$ is surjective, and for every $u\in BV(\Omega)$,
\begin{equation}
  \label{eq:2}
  \int_{\Omega}u\,\textrm{div} \xi\,\dx=-\int_{\Omega}\xi\cdot
  \textrm{d}Du
  +\int_{\partial\Omega} (\xi\cdot\nu)\,\T(u)\,\td\mathcal{H}^{d-1}
\end{equation}
for all $\xi\in C^1(\R^{d},\R^{d})$, where $\nu$ denotes the outer
unit normal vector on $\partial\Omega$. We call $\T(u)$ the (weak)
\emph{trace} of $u$ and $\T$ the \emph{trace operator} on
$BV(\Omega)$. Note, if there is no danger of confusion, we sometimes
also write simply $u$.

An important notion of convergence of measures in $M^{b}(\Omega)$
is the \emph{strict convergence}; we say that  a sequence $(u_n)_{n\ge
  1}$ in $BV(\Omega)$ \emph{converges strictly} to some $u\in
BV(\Omega)$ if $\int_\Omega \vert Du_{n} \vert$ converges to $\int_\Omega
\vert Du \vert$ and $u_{n}$ converges to $u$ in
$L^1(\Omega)$. We have the following useful result.

\begin{proposition}[{\cite[Theorem~3.88]{MR1857292}}]
  \label{propo:continuity-of-trace}
  Let $\Omega$ be an open bounded subset of $\R^d$ with a Lipschitz
  continuous boundary $\partial\Omega$. Then, the trace operator
  $\T : BV(\Omega)\to L^{1}(\partial\Omega)$ is continuous from
  $BV(\Omega)$ equipped with the strict topology to
  $L^1(\partial\Omega)$, and surjective. Moreover, there exists a
  constant $C >0$ such that
  \begin{equation}\label{BounTrac}
  \norm{\T(u)}_1 \le \norm{u}_{BV(\Omega)} \quad \text{for all $u \in BV(\Omega)$.}
  \end{equation}
\end{proposition}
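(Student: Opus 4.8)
The plan is to reconstruct the proof of this classical trace theorem after reducing everything, via a finite atlas of boundary charts and a subordinate partition of unity, to the model situation of the half-space $\R^{d}_{+}=\{(x',x_{d}):x_{d}>0\}$ with trace on $\{x_{d}=0\}$; the $C^{1}$ flattening diffeomorphisms contribute only bounded Jacobian factors, which is the source of the constant $C$. The engine of the whole argument is the one-dimensional slicing structure of $BV$ functions: for $u\in BV(\Omega)$ the restriction $t\mapsto u(x',t)$ is (in flattened coordinates) of bounded variation on a.e.\ transversal segment, the trace $\T u(x')$ is its one-sided limit as $t\downarrow 0$ for a.e.\ $x'$, and the one-dimensional fundamental theorem of calculus gives $\abs{\T u(x')-u(x',t)}\le\abs{D_{d}u(x',\cdot)}\big((0,t)\big)$. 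Writing $\Omega_{t}:=\{x\in\Omega:\mathrm{dist}(x,\partial\Omega)<t\}$ and integrating in $x'$ yields the basic one-sided estimate
\[
  \norm{\T u-u(\cdot,t)}_{L^{1}(\partial\Omega)}\le C\,\abs{Du}(\Omega_{t})\qquad(u\in BV(\Omega)),
\]
which holds directly for $BV$ functions without any smooth approximation. Averaging this over $t\in(0,1)$ and using $\int_{0}^{1}\norm{u(\cdot,t)}_{L^{1}}\,\dt\le\norm{u}_{L^{1}(\Omega)}$ immediately produces the bound~\eqref{BounTrac}, $\norm{\T u}_{1}\le C\norm{u}_{BV(\Omega)}$.

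The delicate assertion, and the main obstacle, is continuity for the strict topology. Strict convergence $u_{n}\to u$ (that is, $u_{n}\to u$ in $L^{1}(\Omega)$ together with $\abs{Du_{n}}(\Omega)\to\abs{Du}(\Omega)$) does \emph{not} control $\abs{D(u_{n}-u)}(\Omega)$, so one cannot simply feed the difference into~\eqref{BounTrac} --- indeed the trace is discontinuous for mere weak$^{\ast}$ convergence. The resolution is to exploit that strict convergence forbids concentration of the measures $\abs{Du_{n}}$ at $\partial\Omega$: by Reshetnyak's continuity theorem, strict convergence gives $\abs{Du_{n}}\rightharpoonup^{\ast}\abs{Du}$ \emph{narrowly} on $\Omega$, so for every $t$ for which the inner level set $\{\mathrm{dist}(\cdot,\partial\Omega)=t\}$ is $\abs{Du}$-null (all but countably many $t$) one has $\abs{Du_{n}}(\Omega_{t})\to\abs{Du}(\Omega_{t})$. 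I would then split
\[
  \T u_{n}-\T u=\big(\T u_{n}-u_{n}(\cdot,t)\big)+\big(u_{n}(\cdot,t)-u(\cdot,t)\big)+\big(u(\cdot,t)-\T u\big),
\]
fix such a $t$ so small that $\abs{Du}(\Omega_{t})<\varepsilon$ --- which bounds the third summand by $C\varepsilon$ and, for $n$ large, the first by $2C\varepsilon$ through the one-sided estimate --- and observe that the middle summand tends to $0$ as $n\to\infty$ for a.e.\ $t$, since $\int_{0}^{1}\norm{u_{n}(\cdot,t)-u(\cdot,t)}_{L^{1}}\,\dt=\norm{u_{n}-u}_{L^{1}(\Omega_{1})}\to0$. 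A subsequence argument then upgrades this to $\T u_{n}\to\T u$ in $L^{1}(\partial\Omega)$.

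Finally, for surjectivity I would produce a bounded right inverse in the spirit of Gagliardo: given $g\in L^{1}(\partial\Omega)$, in each boundary chart set
\[
  u(x',x_{d}):=\int_{\R^{d-1}}g(x'-x_{d}\,y')\,\phi(y')\,\td y',
\]
i.e.\ the mollification of $g$ at the scale $x_{d}$ for a fixed mollifier $\phi$. Then $u(\cdot,x_{d})\to g$ in $L^{1}$ as $x_{d}\downarrow0$, so $\T u=g$; and the cancellation $\int_{\R^{d-1}}\nabla\phi=0$ yields the Gagliardo estimate $\norm{u}_{W^{1,1}}\lesssim\norm{g}_{L^{1}}$, so that $u\in W^{1,1}\subset BV$. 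Patching the chart contributions with the partition of unity gives the desired $u\in BV(\Omega)$ with $\T u=g$, completing the proof. I expect the non-concentration step of the second paragraph to be the only genuinely nontrivial point; the rest is the standard localization-and-slicing machinery.
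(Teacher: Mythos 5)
The paper itself offers no proof of this proposition: it is quoted verbatim from \cite[Theorem~3.88]{MR1857292}, so the only meaningful comparison is with the standard proof given there. Your first two parts reconstruct that argument essentially correctly: the chart-and-slicing derivation of the collar estimate $\norm{\T u-u(\cdot,t)}_{L^1(\partial\Omega)}\le C\abs{Du}(\Omega_t)$, the averaging in $t$ that yields \eqref{BounTrac}, and, for strict continuity, the observation that strict convergence upgrades $Du_n\rightharpoonup^{\ast} Du$ to narrow convergence of $\abs{Du_n}$ to $\abs{Du}$ (so that $\abs{Du_n}(\Omega_t)\to\abs{Du}(\Omega_t)$ whenever $\abs{Du}(\{\mathrm{dist}=t\})=0$), combined with a.e.-$t$ convergence of $u_n(\cdot,t)$ and a subsequence-of-subsequences argument. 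This is exactly the non-concentration mechanism used in the cited proof, and it is sound.

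The surjectivity paragraph, however, contains a genuine error. Your extension $u(x',x_d)=\int g(x'-x_d y')\,\phi(y')\,\mathrm{d}y'$ is \emph{linear} in $g$, so the claimed bound $\norm{u}_{W^{1,1}}\lesssim\norm{g}_{L^1}$ would produce a bounded linear right inverse of the trace operator $W^{1,1}\to L^1(\partial\Omega)$, which by Peetre's theorem does not exist; Gagliardo's theorem gives surjectivity with a norm bound but \emph{not} through a linear operator. Concretely, the cancellation $\int\nabla\phi=0$ only yields
\begin{displaymath}
  \norm{\nabla_{x'}u(\cdot,t)}_{L^1}\le \frac{C}{t}\,\sup_{\abs{h}\le Rt}\norm{g(\cdot-h)-g}_{L^1},
\end{displaymath}
and the right-hand side is integrable over $t\in(0,1)$ only under a Dini-type condition on the $L^1$-modulus of continuity of $g$; a general $g\in L^1(\partial\Omega)$ (e.g.\ one with modulus $\sim 1/\log(1/t)$) fails this, and then your $u$ is not even $BV$ near the boundary, so it has no trace at all. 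The repair is Gagliardo's genuinely nonlinear construction: choose smooth $g_k\to g$ in $L^1(\partial\Omega)$ with $\sum_k\norm{g_{k+1}-g_k}_{1}<\infty$, and define $u$ on the layers $\{t_{k+1}<x_d<t_k\}$ by interpolating linearly in $x_d$ between $g_{k}$ and $g_{k+1}$, with $t_k\downarrow 0$ chosen (depending on $g$) so fast that $\sum_k (t_k-t_{k+1})\norm{\nabla g_k}_{1}<\infty$. Then the vertical derivative contributes $\sum_k\norm{g_{k+1}-g_k}_{1}$, the horizontal one the preceding sum, and $u\in W^{1,1}(\Omega)\subset BV(\Omega)$ with $\T(u)=g$. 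With that replacement, and your chart/partition-of-unity patching, the proof is complete.
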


The next proposition on Poincar\'e's inequality for $BV$-functions
can be deduced from \cite[Lemma~4.1.3]{MR1014685}. Here, we use the
notation $\overline{h}$ to denote the \emph{mean value} of a function $h\in
L^{1}(\partial\Omega)$, defined by
\begin{displaymath}
  \overline{h}=\tfrac{1}{\mathcal{H}^{d-1}(\partial\Omega)}\int_{\partial\Omega}h\,\dH^{d-1}.
\end{displaymath}

\begin{proposition}
  \label{prop:poincare}
   Let $\Omega$ be an open bounded subset of $\R^d$ with a Lipschitz
  continuous boundary $\partial\Omega$. Then, there is a constant $C>0$
  such that
  \begin{equation}
    \label{eq:61}
    \norm{\T(u)-\overline{\T(u)}}_{1}\le C\,\int_{\Omega}\abs{Du}\qquad\text{for
      all $u\in BV(\Omega)$.}
  \end{equation}
\end{proposition}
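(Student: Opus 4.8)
The plan is to deduce the boundary Poincar\'e inequality~\eqref{eq:61} from the classical interior Poincar\'e--Wirtinger inequality for $BV$-functions, combined with the trace bound~\eqref{BounTrac} of Proposition~\ref{propo:continuity-of-trace}. The crucial elementary observation that makes everything fit together is that \emph{both} sides of~\eqref{eq:61} are invariant under adding a constant to $u$: replacing $u$ by $u+c$ leaves $Du$ untouched, while $\T(u+c)=\T(u)+c$ shows that $\T(u)-\overline{\T(u)}$ is likewise unchanged. This freedom is what will let me reconcile the interior mean furnished by the standard inequality with the boundary mean $\overline{\T(u)}$ appearing in the statement.

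First I would invoke \cite[Lemma~4.1.3]{MR1014685} in its Poincar\'e--Wirtinger form: there is a constant $C_{0}>0$, depending only on $\Omega$, with $\norm{u-\overline{u}^{\Omega}}_{L^{1}(\Omega)}\le C_{0}\int_{\Omega}\abs{Du}$ for every $u\in BV(\Omega)$, where $\overline{u}^{\Omega}:=\tfrac{1}{\abs{\Omega}}\int_{\Omega}u\,\dx$ denotes the interior mean. Setting $w:=u-\overline{u}^{\Omega}$, so that $Dw=Du$, this yields $\norm{w}_{L^{1}(\Omega)}\le C_{0}\int_{\Omega}\abs{Du}$ and hence $\norm{w}_{BV(\Omega)}\le (C_{0}+1)\int_{\Omega}\abs{Du}$. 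Feeding $w$ into~\eqref{BounTrac} then gives $\norm{\T(w)}_{1}\le\norm{w}_{BV(\Omega)}\le (C_{0}+1)\int_{\Omega}\abs{Du}$, which already controls the full trace of $w$ by the total variation of $u$.

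It then remains to replace $\T(w)$ by its boundary fluctuation. Since the modulus of an average is at most the average of the modulus, $\labs{\overline{\T(w)}}\le \tfrac{1}{\mathcal{H}^{d-1}(\partial\Omega)}\norm{\T(w)}_{1}$, so $\norm{\overline{\T(w)}}_{1}=\labs{\overline{\T(w)}}\,\mathcal{H}^{d-1}(\partial\Omega)\le\norm{\T(w)}_{1}$, and the triangle inequality gives $\norm{\T(w)-\overline{\T(w)}}_{1}\le 2\norm{\T(w)}_{1}$. By the translation invariance noted above, $\T(w)-\overline{\T(w)}=\T(u)-\overline{\T(u)}$ (the constant $\overline{u}^{\Omega}$ cancels), and chaining the estimates produces~\eqref{eq:61} with the explicit constant $C=2(C_{0}+1)$.

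The argument is essentially routine, and I do not expect a genuine obstacle: the only delicate point is the mismatch between the interior mean $\overline{u}^{\Omega}$ delivered by the interior inequality and the boundary mean $\overline{\T(u)}$ demanded by the conclusion, and the translation-invariance remark dissolves exactly this. (One could alternatively argue by contradiction, normalising a hypothetical sequence of counterexamples and extracting a weak$^{\ast}$-convergent subsequence via Theorem~\ref{thm:BVcompactness}; but that route requires more machinery and does not yield an explicit constant, so I would prefer the direct estimate above.)
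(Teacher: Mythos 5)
Your proof is correct, and it takes a genuinely different route from the paper, which in fact offers no written argument at all: the proposition is simply attributed to Ziemer's Lemma~4.1.3, an abstract compactness-type criterion for Poincar\'e inequalities whose proof runs along exactly the normalisation/contradiction lines you sketch in your closing parenthesis. Your argument is instead a direct chain: the interior Poincar\'e--Wirtinger inequality, then the trace bound~\eqref{BounTrac} applied to $w=u-\overline{u}^{\Omega}$, then the elementary estimate $\norm{\overline{\T(w)}}_{1}\le\norm{\T(w)}_{1}$ together with the translation identity $\T(w)-\overline{\T(w)}=\T(u)-\overline{\T(u)}$; each of these steps checks out. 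What your route buys is an explicit constant and a proof assembled entirely from facts the paper has already quoted (plus one classical interior inequality); what the paper's route buys is that the abstract lemma yields this and a whole family of similar inequalities in one stroke, with the compactness argument hidden in the reference. Two small caveats are worth recording. First, \eqref{BounTrac} as printed (with constant $1$) cannot be literally correct --- take $u\equiv 1$ on a flat ``pancake'' domain where $\mathcal{H}^{d-1}(\partial\Omega)>\abs{\Omega}$ --- so the trace bound should read $\norm{\T(u)}_{1}\le C_{T}\norm{u}_{BV(\Omega)}$ and your final constant is $2C_{T}(C_{0}+1)$; this is the paper's slip, not yours. Second, connectedness of $\Omega$ is indispensable both for the interior Poincar\'e--Wirtinger inequality you invoke and for the proposition itself (on two disjoint balls, a function equal to distinct constants on the components has $Du=0$ but non-constant trace), so the statement's ``open bounded subset'' must be read with the paper's standing convention that $\Omega$ is a bounded domain; it is useful that your proof makes visible exactly where that hypothesis enters.
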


Next, we recall the following embedding theorems as stated in \cite[Theorem
6.5.7/1, Theorem 9.5.7]{MR2777530} and \cite{MR3683462}.

\begin{theorem}\label{mazinq}
  Suppose that $\Omega \subset \R^d$ is an open bounded set with
  Lipchitz boundary.  Then for every
  $1 \le p <\infty$, there is a constant $C_{p,d} > 0$ such that
  \begin{displaymath}\label{inmaz0}
    \norm{u}_{L^{\frac{pd}{d-p}}(\Omega)} \le C_{p,d}
    \left[\norm{\nabla u}_{L^p(\Omega)} + \norm{\T(u)}_{L^p(\partial \Omega)}\right]
  \end{displaymath}
  function $u\in W^{1,p}(\Omega)$. Moreover,
  \begin{equation}
    \label{eq:7}
    \norm{u}_{L^{\frac{d}{d-1}}(\Omega)} \le C_d
    \left[\abs{Du}(\Omega)
      + \norm{\T(u)}_{L^1(\partial \Omega)}\right]
  \end{equation}
  for every $u \in BV (\Omega)$.
\end{theorem}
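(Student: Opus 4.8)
The plan is to reduce both inequalities to the classical Gagliardo--Nirenberg--Sobolev inequality on the whole space $\R^{d}$, namely $\norm{v}_{L^{d/(d-1)}(\R^{d})}\le C_{d}\,\abs{Dv}(\R^{d})$ for $v\in BV(\R^{d})$, and its $W^{1,p}$ analogue $\norm{v}_{L^{pd/(d-p)}(\R^{d})}\le C\,\norm{\nabla v}_{L^{p}(\R^{d})}$ for $1\le p<d$ (note that the exponent $pd/(d-p)$ in fact forces the restriction $p<d$). Since $\Omega$ is bounded with Lipschitz boundary, the real content is to transfer such a global estimate to $\Omega$ while keeping on the right-hand side only the gradient on $\Omega$ and the boundary trace, with no full norm $\norm{u}_{L^{p}(\Omega)}$ appearing. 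I would treat the endpoint $BV$-estimate~\eqref{eq:7} and the $W^{1,p}$-estimate separately: the former admits a clean extension-by-zero argument, while the latter needs one additional Poincar\'e-type step.

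For~\eqref{eq:7} I would extend $u\in BV(\Omega)$ by zero, setting $\bar u:=u$ on $\Omega$ and $\bar u:=0$ on $\R^{d}\setminus\Omega$. Using the Gauss--Green formula~\eqref{eq:2}, for every $\xi\in C^{1}_{c}(\R^{d};\R^{d})$ with $\abs{\xi}\le 1$ one has
\[
  \int_{\R^{d}}\bar u\,\divi\xi\,\dx=\int_{\Omega}u\,\divi\xi\,\dx=-\int_{\Omega}\xi\cdot\td Du+\int_{\partial\Omega}(\xi\cdot\nu)\,\T(u)\,\dH^{d-1},
\]
and taking the supremum over all such $\xi$ shows that $\bar u\in BV(\R^{d})$ with $\abs{D\bar u}(\R^{d})=\abs{Du}(\Omega)+\int_{\partial\Omega}\abs{\T(u)}\,\dH^{d-1}$; the boundary contribution is attained by choosing $\xi$ to approximate $\sign(\T(u))\,\nu$ near $\partial\Omega$. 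Applying the global $BV$-Sobolev inequality to $\bar u$ and restricting back to $\Omega$ then yields~\eqref{eq:7} at once.

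For the first inequality with $1\le p<d$, extension by zero fails to remain in $W^{1,p}$ because of the jump across $\partial\Omega$, so I would instead invoke a bounded linear extension operator $E:W^{1,p}(\Omega)\to W^{1,p}(\R^{d})$ (available since $\Omega$ is Lipschitz) and combine it with the global Sobolev inequality to get $\norm{u}_{L^{pd/(d-p)}(\Omega)}\le C\big(\norm{u}_{L^{p}(\Omega)}+\norm{\nabla u}_{L^{p}(\Omega)}\big)$. The remaining---and genuinely delicate---task is to absorb the term $\norm{u}_{L^{p}(\Omega)}$, i.e.\ to prove the Poincar\'e--trace inequality $\norm{u}_{L^{p}(\Omega)}\le C\big(\norm{\nabla u}_{L^{p}(\Omega)}+\norm{\T(u)}_{L^{p}(\partial\Omega)}\big)$. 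I would establish this by contradiction and compactness: were it false, there would exist $u_{n}\in W^{1,p}(\Omega)$ with $\norm{u_{n}}_{L^{p}(\Omega)}=1$ and $\norm{\nabla u_{n}}_{L^{p}(\Omega)}+\norm{\T(u_{n})}_{L^{p}(\partial\Omega)}\to 0$; by Rellich--Kondrachov a subsequence converges strongly in $L^{p}(\Omega)$, and since the gradients vanish in the limit the convergence is in fact in $W^{1,p}(\Omega)$ with constant limit $u$, whence continuity of the trace forces $\T(u)=0$ and thus $u\equiv 0$, contradicting $\norm{u}_{L^{p}(\Omega)}=1$. Chaining the two estimates produces the asserted inequality.

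The hard part is precisely this last point. Any off-the-shelf extension controls only the full norm $\norm{u}_{W^{1,p}(\Omega)}$, whereas the asserted inequalities must keep the full $L^{p}$-mass of $u$ off the right-hand side; on a fixed bounded domain neither estimate is scale-invariant, so the constants unavoidably depend on $\Omega$, and it is the compactness-based Poincar\'e--trace step (for $W^{1,p}$) together with the sharp extension-by-zero identity for the total variation (for $BV$) that close this gap. For the $BV$ endpoint one must, in addition, verify carefully that the supremum defining $\abs{D\bar u}(\R^{d})$ genuinely attains the boundary term $\int_{\partial\Omega}\abs{\T(u)}\,\dH^{d-1}$; this can be justified by approximating $u$ in the strict topology by smooth functions and using the trace continuity of Proposition~\ref{propo:continuity-of-trace}.
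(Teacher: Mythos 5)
Your proof is correct, but note that the paper does not prove Theorem~\ref{mazinq} at all: it is quoted as a known result from Maz'ya's monograph \cite{MR2777530} and from Rondi \cite{MR3683462}, so your argument is a self-contained substitute for a citation rather than a variant of an argument appearing in the text. Both halves of your proposal are sound. For the $BV$ inequality~\eqref{eq:7}, extension by zero is the classical route and has the additional merit of producing a constant $C_d$ depending only on the dimension (the Gagliardo--Nirenberg--Sobolev constant), which is exactly the strength of the Friedrichs--Maz'ya inequality. For the $W^{1,p}$ inequality, the combination of a bounded extension operator with the compactness-contradiction proof of the Poincar\'e--trace inequality is correct (Rellich--Kondrachov and the continuity of the trace $W^{1,p}(\Omega)\to L^p(\partial\Omega)$ both hold on bounded Lipschitz domains); the resulting constant depends on $\Omega$, not only on $p,d$ as the paper's notation suggests, but you rightly observe that this is unavoidable: with the trace measured in $L^p(\partial\Omega)$, $p>1$, the two sides scale differently under dilation, as testing with $u\equiv 1$ shows. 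You also correctly flag that the exponent forces $p<d$, a restriction the paper's statement omits.

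One simplification: the final paragraph of your proposal, on verifying that the supremum defining $\abs{D\bar u}(\R^d)$ genuinely attains the boundary term, is unnecessary. The Gauss--Green formula~\eqref{eq:2} gives, for every $\xi\in C^1_c(\R^d;\R^d)$ with $\abs{\xi}\le 1$,
\[
  \labs{\int_{\R^d}\bar u\,\divi\xi\,\dx}\le \abs{Du}(\Omega)+\int_{\partial\Omega}\abs{\T(u)}\,\dH^{d-1},
\]
hence $V(\bar u,\R^d)\le \abs{Du}(\Omega)+\norm{\T(u)}_{L^1(\partial\Omega)}$, and this one-sided bound is all that the chain
\[
  \norm{u}_{L^{d/(d-1)}(\Omega)}=\norm{\bar u}_{L^{d/(d-1)}(\R^d)}
  \le C_d\,\abs{D\bar u}(\R^d)
  \le C_d\Big[\abs{Du}(\Omega)+\norm{\T(u)}_{L^1(\partial\Omega)}\Big]
\]
uses. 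The reverse inequality (attainment), which is where the delicate approximation of $\sign(\T(u))\,\nu$ by admissible vector fields would enter, plays no role in the Sobolev estimate.
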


Further, we later need the following \emph{chain rule} for $BV$-functions.

\begin{theorem}[{\cite[Theorem~3.96]{MR1857292}, Chain rule in $BV$}]
  \label{thm:BV-chain-rule}
  Let $u\in BV(\Omega)$ and $T : \R\to \R$ a Lipschitz continuous
  function with Lipschitz constant $M$ and suppose $T(0)=0$ if the
  Lebesgue measure $\abs{\Omega}$ of $\Omega$ is infinite. Then
  $v=T\circ u\in BV(\Omega)$ satisfying
  \begin{displaymath}
    \abs{Dv}(A)\le M\,\abs{Du}(A)\qquad\text{for every open subset
      $A\subseteq \Omega$}.
  \end{displaymath}
\end{theorem}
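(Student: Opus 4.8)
The plan is to prove only the inequality $\abs{Dv}(A)\le M\,\abs{Du}(A)$ (together with $v\in BV(\Omega)$), which is considerably softer than the full structural chain rule of \cite[Theorem~3.96]{MR1857292}: I would reduce everything, by smooth approximation, to the classical Lipschitz chain rule and then recover the bound from lower semicontinuity of the variation. First I would dispose of integrability. Since $T$ is Lipschitz with constant $M$, one has $\abs{T(s)}\le\abs{T(0)}+M\abs{s}$ for all $s\in\R$, so when $\abs{\Omega}<\infty$ the estimate $\int_{\Omega}\abs{T\circ u}\,\dx\le\abs{T(0)}\,\abs{\Omega}+M\norm{u}_{L^{1}(\Omega)}$ shows $v:=T\circ u\in L^{1}(\Omega)$; when $\abs{\Omega}=\infty$ the hypothesis $T(0)=0$ gives $\abs{T\circ u}\le M\abs{u}$ and the same conclusion.

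For the variation estimate I would first treat the case $A=\Omega$ and then obtain a general open $A$ by restriction. By the smooth-approximation theorem for $BV$-functions (\cite[Theorem~3.9]{MR1857292}) there is a sequence $u_{n}\in C^{\infty}(\Omega)\cap BV(\Omega)$ with $u_{n}\to u$ in $L^{1}(\Omega)$ and $\int_{\Omega}\abs{\nabla u_{n}}\,\dx\to\abs{Du}(\Omega)$; that is, $u_{n}\to u$ strictly. Each $v_{n}:=T\circ u_{n}$ is locally Lipschitz, hence differentiable a.e.\ by Rademacher's theorem, and $v_{n}\to v$ in $L^{1}(\Omega)$ because $\abs{v_{n}-v}\le M\abs{u_{n}-u}$. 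The lower semicontinuity of the variation $V(\cdot,\Omega)$ with respect to the $L^{1}_{\loc}$-topology (recalled in the text) then yields
\[
  \abs{Dv}(\Omega)=V(v,\Omega)\le\liminf_{n\to\infty}\int_{\Omega}\abs{\nabla v_{n}}\,\dx\le M\,\liminf_{n\to\infty}\int_{\Omega}\abs{\nabla u_{n}}\,\dx=M\,\abs{Du}(\Omega),
\]
the last equality being a genuine limit thanks to strict convergence; in particular $v\in BV(\Omega)$.

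To pass to an arbitrary open $A\subseteq\Omega$ I would simply apply the case just proved with $\Omega$ replaced by $A$. Indeed $u\vert_{A}\in BV(A)$ with $\abs{D(u\vert_{A})}(A)=\abs{Du}(A)$, and $T\circ(u\vert_{A})=(T\circ u)\vert_{A}=v\vert_{A}$, so the global estimate applied on $A$ reads $\abs{Dv}(A)=\abs{D(v\vert_{A})}(A)\le M\,\abs{D(u\vert_{A})}(A)=M\,\abs{Du}(A)$, which is exactly the asserted inequality. This restriction trick is what makes the local statement costless once the global one is in hand.

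The step I expect to demand the most care is the pointwise gradient bound $\abs{\nabla v_{n}}\le M\abs{\nabla u_{n}}$ a.e., which feeds the first display. The naive route through the formula $\nabla(T\circ u_{n})=T'(u_{n})\,\nabla u_{n}$ runs into the classical difficulty that $T'$ is only defined off a Lebesgue-null set $N\subset\R$, while $u_{n}^{-1}(N)$ need not be negligible. I would avoid referring to $T'$ at all: at any point $x$ where both $u_{n}$ and $v_{n}$ are differentiable and $\nabla v_{n}(x)\neq 0$, choosing the direction $e=\nabla v_{n}(x)/\abs{\nabla v_{n}(x)}$ and using the difference-quotient bound $\abs{v_{n}(x+te)-v_{n}(x)}\le M\abs{u_{n}(x+te)-u_{n}(x)}$ gives $\abs{\nabla v_{n}(x)}=\abs{\partial_{e}v_{n}(x)}\le M\abs{\partial_{e}u_{n}(x)}\le M\abs{\nabla u_{n}(x)}$, with the case $\nabla v_{n}(x)=0$ being trivial. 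This purely metric argument, relying only on the Lipschitz property of $T$ and on Rademacher differentiability of the composition, is what keeps the proof clean.
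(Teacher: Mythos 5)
Your proof is correct, and it is worth noting that the paper itself offers no proof of this statement: it is imported verbatim as a citation of \cite[Theorem~3.96]{MR1857292}, whose proof establishes the much finer structural chain rule (the exact decomposition of $D(T\circ u)$ into absolutely continuous, Cantor, and jump parts, with formulas involving $T'$ and the one-sided traces $u^{\pm}$ on the jump set). What you prove is precisely the softer total-variation bound that the paper actually uses, and your route is genuinely more elementary: strict smooth approximation (\cite[Theorem~3.9]{MR1857292}), the pointwise bound $\abs{\nabla (T\circ u_{n})}\le M\abs{\nabla u_{n}}$ a.e., lower semicontinuity of $V(\cdot,\Omega)$ under $L^{1}_{\loc}$ convergence, and localization by restriction to open subsets (which is costless since $D(v\vert_{A})$ is just the restriction of the measure $Dv$ to $A$). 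Two details deserve the care you gave them: first, your directional difference-quotient argument correctly sidesteps the classical trap of writing $\nabla(T\circ u_{n})=T'(u_{n})\nabla u_{n}$ when $T'$ is undefined on a null set whose preimage under $u_{n}$ need not be null; second, you implicitly use that a locally Lipschitz function lies in $W^{1,1}_{\loc}$ with weak gradient equal to its a.e.\ pointwise gradient, so that $V(v_{n},\Omega)=\int_{\Omega}\abs{\nabla v_{n}}\,\dx$ --- this is standard but is the one hinge your write-up leaves tacit. The trade-off between the two approaches is clear: the cited theorem buys structural information about $Dv$ that your argument cannot see (and which a finer analysis of, say, jump parts would require), while your argument buys self-containedness and brevity, and it suffices for every application of this lemma in the paper (namely, showing $p(u-\hat u)\in BV(\Omega)$ in Proposition~\ref{prop:Lambda-completely-accretive}).
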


%
%


For the rest of this subsection, we recall several results from \cite{MR750538} (see also
cf~\cite{MR2033382}). Let $\Omega \subset \R^d$ be a bounded domain with a
Lipschitz continuous boundary $\partial\Omega$.\medskip

For $1\le p\le d$ and $p^{\mbox{}_{\prime}}$ given by
  $1=\tfrac{1}{p}+\tfrac{1}{p^{\prime}}$, we introduce the following spaces
\begin{align*}
X_p(\Omega) &:= \Big\{ \z \in L^{\infty}(\Omega, \R^d) \Big\vert \; \divi(\z)\in L^p(\Omega) \Big\}, \text{ and}\\
BV(\Omega)_{p^\prime} & :=BV(\Omega)\cap L^{p^{\mbox{}_{\prime}}}(\Omega).
\end{align*}
Then, by the Maz'ya-Sobolev embedding~\eqref{eq:7}, one has that
\begin{displaymath}
  BV(\Omega) =BV(\Omega)_{d/(d-1)}.
\end{displaymath}

Now, for given $w\in C^{1}(\Omega)$,
$\z\in L^{\infty}(\Omega;\R^{d})$, and  open subset $A$ of $\Omega$, the integral
\begin{equation}
  \label{eq:11}
\mu(A):=\int_{A} \z\cdot\nabla w\,\dx
\end{equation}
defines a signed Radon measure on $\Omega$. Inspired by~\eqref{eq:11}, one can define a
bilinear mapping
$(\cdot,D\cdot) : X_p(\Omega)\times BV(\Omega)_{p^{\mbox{}_{\prime}}}\to
M^{b}(\Omega)$ by
\begin{equation}
    \label{eq:22}
\langle (\z,Dw),\varphi \rangle = - \int_{\Omega} w \, \varphi \,
{\rm div}(\z) \, \dx - \int_{\Omega} w \, \z \cdot \nabla \varphi
\, \dx
\end{equation}
for all $\varphi\in C^{\infty}_{0}(\Omega)$, $\z \in X_p(\Omega)$ and
$w \in BV(\Omega)_{p^{\mbox{}_{\prime}}}$. From~\eqref{eq:22}, one obtains the following.

\begin{proposition}[{\cite[Theorem~1.5]{MR750538}}],
 For every open set $A\subseteq \Omega$ and for all $\varphi\in
 C^{\infty}_{0}(A)$, one has that
 \begin{equation}
   \label{eq:124}
    \abs{\langle (\z,Dw),\varphi \rangle}\le
    \norm{\varphi}_{L^{\infty}(A)}\,\norm{\z}_{L^{\infty}(A)}\int_{A}\abs{D
      w}.
  \end{equation}
  In particular, for given
  $\z \in X_p(\Omega)$ and $w \in BV(\Omega)_{p^{\mbox{}_{\prime}}}$,
  the linear functional
  $(\z,Dw): C^{\infty}_{0}(\Omega) \rightarrow \R$ is a signed Radon
  measure in $\Omega$ with total variation measure $\abs{(\z,Dw)}$.
\end{proposition}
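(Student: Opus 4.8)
The plan is to reduce~\eqref{eq:124} to the elementary case in which $w$ is smooth and then recover the general case by mollification. Fix $\varphi\in C^\infty_0(A)$ and put $K=\supp\varphi\Subset A$. When $w\in C^1(\Omega)$, the Leibniz rule $\divi(\varphi\z)=\varphi\,\divi(\z)+\z\cdot\nabla\varphi$ lets me rewrite the two integrals defining the pairing in~\eqref{eq:22} as $-\int_\Omega w\,\divi(\varphi\z)\,\dx$, and an integration by parts (with no boundary contribution, since $\varphi\z$ is compactly supported in $A$) turns this into
\begin{displaymath}
  \langle (\z,Dw),\varphi\rangle=\int_{A}\varphi\,\z\cdot\nabla w\,\dx .
\end{displaymath}
Estimate~\eqref{eq:124} is then immediate from H\"older's inequality together with $\int_A\abs{\nabla w}\,\dx=\int_A\abs{Dw}$.

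For general $w\in BV(\Omega)_{p'}$ I would approximate by the mollifications $w_\varepsilon:=w\ast\rho_\varepsilon$, which are smooth on a neighbourhood of $K$ once $\varepsilon$ is small, satisfy $\nabla w_\varepsilon=(Dw)\ast\rho_\varepsilon$, and converge to $w$ both in $L^1$ and in $L^{p'}$ near $K$. The smooth case applied to $w_\varepsilon$ gives
\begin{displaymath}
  \abs{\langle (\z,Dw_\varepsilon),\varphi\rangle}\le
  \norm{\varphi}_{L^\infty(A)}\,\norm{\z}_{L^\infty(A)}\int_{K}\abs{\nabla w_\varepsilon}\,\dx ,
\end{displaymath}
and a Fubini computation bounds the last integral by $\abs{Dw}(K_\varepsilon)\le\abs{Dw}(A)$, where $K_\varepsilon$ is the $\varepsilon$-neighbourhood of $K$, contained in $A$ for small $\varepsilon$. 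It then remains to pass to the limit on the left-hand side through the defining formula~\eqref{eq:22}. This is the one genuinely delicate point: the term $\int_\Omega w_\varepsilon\,\z\cdot\nabla\varphi\,\dx$ converges because $w_\varepsilon\to w$ in $L^1$ and $\z\cdot\nabla\varphi\in L^\infty$, whereas the term $\int_\Omega w_\varepsilon\,\varphi\,\divi(\z)\,\dx$ converges exactly because of the matched integrability $\divi(\z)\in L^p$ against $w_\varepsilon\to w$ in $L^{p'}$ --- which is where the hypotheses $\z\in X_p(\Omega)$ and $w\in BV(\Omega)_{p'}$ enter. Combining these limits delivers~\eqref{eq:124}.

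For the \emph{in particular} assertion, I would specialise~\eqref{eq:124} to $A=\Omega$, obtaining
\begin{displaymath}
  \abs{\langle (\z,Dw),\varphi\rangle}\le
  \norm{\z}_{L^\infty(\Omega)}\,\abs{Dw}(\Omega)\,\norm{\varphi}_{L^\infty(\Omega)}
\end{displaymath}
for every $\varphi\in C^\infty_0(\Omega)$. Thus $(\z,Dw)$ is a linear functional on $C^\infty_0(\Omega)$ that is bounded for the uniform norm; by density of $C^\infty_0(\Omega)$ in $C_0(\Omega)$ it extends uniquely to a bounded linear functional on $C_0(\Omega)$, and the Riesz representation theorem then identifies it with a signed Radon measure on $\Omega$, whose total variation is the measure $\abs{(\z,Dw)}$. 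The main obstacle throughout is the interchange of limits in the second paragraph, where the $L^p$--$L^{p'}$ duality is essential; everything else is bookkeeping.
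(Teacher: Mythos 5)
The paper gives no proof of this proposition at all: it is quoted from Anzellotti (\cite[Theorem~1.5]{MR750538}), so the only comparison available is with the standard argument from that source, which your proof essentially reproduces. Your structure is sound: for smooth $w$ the pairing \eqref{eq:22} collapses (after the Leibniz rule and an integration by parts with no boundary term) to $\int_A \varphi\,\z\cdot\nabla w\,\dx$, where H\"older gives the estimate; for general $w\in BV(\Omega)_{p^{\mbox{}_{\prime}}}$ you mollify, use the Fubini bound $\int_K\abs{\nabla w_\varepsilon}\,\dx\le \abs{Dw}(K_\varepsilon)\le\abs{Dw}(A)$, and pass to the limit in \eqref{eq:22}; and the ``in particular'' claim follows, as you say, from density of $C^{\infty}_{0}(\Omega)$ in $C_{0}(\Omega)$ and Riesz representation. (The integration by parts in the smooth step does deserve a word, since $\varphi\z$ is merely bounded with $L^{p}$ divergence rather than smooth, but a cutoff-and-mollify argument settles it routinely.)

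There is, however, one genuine slip, located exactly at the point you yourself flag as delicate. The standing hypotheses allow $p=1$, in which case $p^{\mbox{}_{\prime}}=\infty$, and mollifications do \emph{not} converge to $w$ in $L^{\infty}$ near $K$ unless $w$ is uniformly continuous there; so the claimed ``matched integrability'' limit, $w_\varepsilon\to w$ in $L^{p^{\mbox{}_{\prime}}}(K)$ against $\divi(\z)\in L^{p}$, fails for this endpoint, which is included in the statement. The repair is one line: on $K$ one has $\abs{w_\varepsilon}\le\norm{w}_{L^{\infty}(\Omega)}$ and $w_\varepsilon\to w$ a.e.\ (at every Lebesgue point), so
\begin{displaymath}
  \lim_{\varepsilon\to 0}\int_{\Omega} w_\varepsilon\,\varphi\,\divi(\z)\,\dx
  =\int_{\Omega} w\,\varphi\,\divi(\z)\,\dx
\end{displaymath}
by dominated convergence, the dominating function being $\norm{w}_{L^{\infty}(\Omega)}\abs{\varphi}\,\abs{\divi(\z)}\in L^{1}(\Omega)$. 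For $1<p\le d$ your argument stands as written, since then $p^{\mbox{}_{\prime}}<\infty$ and $w\in L^{p^{\mbox{}_{\prime}}}(\Omega)$ does give $w_\varepsilon\to w$ in $L^{p^{\mbox{}_{\prime}}}$ on compact subsets. With this amendment the proof of \eqref{eq:124}, and hence of the proposition, is complete.
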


We shall denote by
\begin{displaymath}
 \int_{A}(\z,Dw) \quad\text{and}\quad
\int_{A}\abs{(\z,Dw)}
\end{displaymath}
the value of the measures $(\z,Dw)$ and $\abs{(\z,Dw)}$ on Borel subsets $A$
of $\Omega$. In fact,
the measure $(\z,Dw)$ represents an extension
of~\eqref{eq:11}; namely, one has that
\begin{equation}
  \label{eq:41}
  \int_{\Omega} (\z,Dw) = \int_{\Omega} \z \cdot \nabla w \, \dx
\end{equation}
for every $w \in W^{1,1}(\Omega) \cap L^{\infty}(\Omega)$ and $\z \in
X_p(\Omega)$.

\begin{proposition}[{\cite[Corollary~1.6]{MR750538}}]\label{prop:absolutely-cont-measures}
  Let $\Omega$ be a bounded domain with a Lipschitz-continuous boundary
 $\partial\Omega$ and for $1\le p\le d$ and $p^{\mbox{}_{\prime}}$ given by
  $1=\tfrac{1}{p}+\tfrac{1}{p^{\prime}}$, let $u\in
  BV(\Omega)_{p^{\mbox{}_{\prime}}}$ and $\z\in X_{p}(\Omega)$. Then
  the measures $(\z,Du)$ and $\abs{(\z,Du)}$ are absolutely continuous
  with respect to the measure $\abs{Du}$ in $\Omega$ and
  \begin{equation}
    \label{eq:32}
    \labs{\int_{B} (\z,Du)} \le \int_{B}
    \abs{(\z,Du)} \le \norm{\z}_{L^{\infty}(A;\R^{d})} \int_{B} \vert Du \vert
  \end{equation}
  for every Borel set $B$ and all open sets $A$ such that $B\subseteq A \subseteq \Omega$.
\end{proposition}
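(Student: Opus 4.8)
The plan is to upgrade the estimate~\eqref{eq:124}, which controls the pairing $(\z,Du)$ tested against individual functions $\varphi\in C^{\infty}_{0}(A)$, into the measure-theoretic bound~\eqref{eq:32}. The left-hand inequality in~\eqref{eq:32} is the general fact that $\labs{\int_{B}(\z,Du)}\le\int_{B}\abs{(\z,Du)}$ holds for the total variation measure $\abs{(\z,Du)}$ of any signed Radon measure $(\z,Du)$; since the preceding proposition already guarantees that $(\z,Du)$ is such a measure, all the content lies in the right-hand inequality and in the absolute continuity assertion.

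First I would record the estimate on open sets. Fixing an open set $A'\subseteq\Omega$, I would use that the total variation of the Radon measure $(\z,Du)$ on $A'$ is given by
\[
  \abs{(\z,Du)}(A')=\sup\Big\{\langle(\z,Du),\varphi\rangle \;\Big\vert\; \varphi\in C^{\infty}_{0}(A'),\ \norm{\varphi}_{L^{\infty}(A')}\le 1\Big\},
\]
where I invoke the density of $C^{\infty}_{0}(A')$ in $C_{c}(A')$ for the supremum norm with control of supports. Applying~\eqref{eq:124} to each admissible $\varphi$ and passing to the supremum then yields
\[
  \abs{(\z,Du)}(A')\le \norm{\z}_{L^{\infty}(A';\R^{d})}\int_{A'}\abs{Du}.
\]

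Next I would pass to Borel sets and deduce absolute continuity. Given a Borel set $B$ and an open set $A$ with $B\subseteq A\subseteq\Omega$, I would note that for any open $U\supseteq B$ the set $U\cap A$ is open with $B\subseteq U\cap A\subseteq A$, so the previous step together with the monotonicity $\norm{\z}_{L^{\infty}(U\cap A;\R^{d})}\le\norm{\z}_{L^{\infty}(A;\R^{d})}$ gives $\abs{(\z,Du)}(U\cap A)\le \norm{\z}_{L^{\infty}(A;\R^{d})}\,\abs{Du}(U\cap A)$. Since $\abs{(\z,Du)}$ and $\abs{Du}$ are finite Radon measures, hence outer regular, I would take the infimum over all open $U\supseteq B$. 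The family $\{U\cap A\}$ is cofinal among the open subsets of $A$ containing $B$, so $\inf_{U}\abs{Du}(U\cap A)=\abs{Du}(B)$, while $\abs{(\z,Du)}(U\cap A)\ge\abs{(\z,Du)}(B)$ by monotonicity; combining these bounds yields
\[
  \int_{B}\abs{(\z,Du)}=\abs{(\z,Du)}(B)\le\norm{\z}_{L^{\infty}(A;\R^{d})}\,\abs{Du}(B)=\norm{\z}_{L^{\infty}(A;\R^{d})}\int_{B}\abs{Du},
\]
which is the right-hand inequality in~\eqref{eq:32}. Specializing to $A=\Omega$ shows that $\abs{Du}(B)=0$ forces $\abs{(\z,Du)}(B)=0$, whence $\abs{(\z,Du)}\ll\abs{Du}$ and a fortiori $(\z,Du)\ll\abs{Du}$.

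The step I expect to require the most care is the bookkeeping of the supremum norm of $\z$ in the passage to Borel sets: it is precisely the hypothesis $B\subseteq A$ that keeps the approximating open sets $U\cap A$ inside $A$, so that the constant in~\eqref{eq:32} may be taken as $\norm{\z}_{L^{\infty}(A;\R^{d})}$ and not merely $\norm{\z}_{L^{\infty}(\Omega;\R^{d})}$. Equivalently, one could phrase the conclusion through the density $\theta:=\frac{\mathrm{d}\abs{(\z,Du)}}{\mathrm{d}\abs{Du}}$, which exists $\abs{Du}$-a.e.\ by the Radon--Nikodym theorem; the open-set estimate evaluated on small balls forces $\theta\le\norm{\z}_{L^{\infty}(A;\R^{d})}$ $\abs{Du}$-a.e.\ on $A$ via the Besicovitch differentiation theorem, and integrating this pointwise bound over $B$ reproduces~\eqref{eq:32}.
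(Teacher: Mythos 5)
Your proof is correct. Note that the paper does not prove this proposition at all: it is quoted directly from Anzellotti (\cite[Corollary~1.6]{MR750538}), so there is no internal proof to compare against; your argument --- deducing the open-set bound from the test-function estimate~\eqref{eq:124} via the supremum characterization of the total variation, passing to Borel sets by outer regularity of $\abs{Du}$ and monotonicity, and obtaining absolute continuity by taking $A=\Omega$ in~\eqref{eq:32} --- is the standard derivation and essentially the one in the cited source.
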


Thus, there is a density function $\theta(\z, Dw, \cdot) \in L^{1}(\Omega, |Dw|)$ satisfying
\begin{equation}
  \label{value-DN-derivative}
  \theta(\z, Dw, \cdot)=\frac{\td (\z,Dw)}{\td |Dw|} \;\text{ with
  }\;|\theta(\z, Dw, x)|=1\text{ for $|Dw|$-a.e. $x\in \Omega$.}
\end{equation}
The function $\theta(\z, Dw, \cdot)$ is called the Radon--Nikod\'ym
derivative of $(\z,Dw)$ with respect to $|Dw|$.  Moreover, the
following results holds.

\begin{proposition}[{\cite{MR750538}, Chain rule for $(\z,D\cdot)$}]\label{prop:chain-rule}
  Let $\Omega$ be a bounded domain with a Lipschitz-continuous boundary
 $\partial\Omega$ and for $1\le p\le d$ and $p^{\mbox{}_{\prime}}$ given by
  $1=\tfrac{1}{p}+\tfrac{1}{p^{\prime}}$, let $\z \in X_p(\Omega)$ and
$w \in BV(\Omega)_{p^{\mbox{}_{\prime}}}$. Then, for every Lipschitz continuous,
monotonically increasing function $T : \R \rightarrow \R$, one has that
  \begin{equation}\label{E1paring12}
    \theta(\z, D(T \circ w),x) = \theta (\z, Dw, x)\qquad\text{for
      $\abs{Dw}$-a.e. $x\in \Omega$.}
  \end{equation}
\end{proposition}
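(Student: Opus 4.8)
The plan is to turn the chain rule into a statement about the level sets of $w$, by proving a coarea-type formula for the pairing measure $(\z, D\cdot)$ and then exploiting the rigidity forced by $\abs{\theta}=1$ in \eqref{value-DN-derivative}. First I would establish, for any $u\in BV(\Omega)_{p'}$, the coarea formula
\begin{equation*}
 (\z, Du) = \int_{\R} (\z, D\mathds{1}_{\{u>t\}})\,\dt,
\end{equation*}
to be read together with the classical Fleming--Rishel identity $\abs{Du} = \int_{\R}\abs{D\mathds{1}_{\{u>t\}}}\,\dt$. To prove it, fix $\varphi\in C^{\infty}_{0}(\Omega)$, insert the layer-cake decomposition $u = \int_{0}^{\infty}\mathds{1}_{\{u>t\}}\,\dt - \int_{-\infty}^{0}(1-\mathds{1}_{\{u>t\}})\,\dt$ into the defining form \eqref{eq:22}, and apply Fubini (legitimate since $u\in L^{p'}$, $\divi(\z)\in L^{p}$ and $\varphi$ is bounded). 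The constant $1$ contributes $-\int_{\Omega}\divi(\varphi\z)\,\dx = 0$ because $\varphi\z$ has compact support, so both ranges of $t$ collapse to $\langle(\z, D\mathds{1}_{\{u>t\}}),\varphi\rangle$; by the bound \eqref{eq:124} the resulting identity of functionals extends from $C^{\infty}_{0}$ to $C_{0}(\Omega)$ and hence holds as an identity of Radon measures.

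Next comes the rigidity step for $w$. Writing $\theta_{t}:=\theta(\z, D\mathds{1}_{\{w>t\}},\cdot)$, the two coarea identities say that the measure $\lambda$ on $\Omega\times\R$ given by $\int\Phi\,\td\lambda = \int_{\R}\int_{\Omega}\Phi(x,t)\,\td\abs{D\mathds{1}_{\{w>t\}}}(x)\,\dt$ has $\Omega$-marginal $\abs{Dw}$, while $(\z, Dw)(B) = \int_{B\times\R}\theta_{t}(x)\,\td\lambda$ for every Borel $B\subseteq\Omega$. Comparing with $(\z, Dw)(B) = \int_{B\times\R}\theta(\z, Dw,x)\,\td\lambda$ and disintegrating $\lambda$ over $\abs{Dw}$ as $\lambda = \int\delta_{x}\otimes\kappa_{x}\,\td\abs{Dw}(x)$ with $\kappa_{x}$ a probability measure, we obtain $\int_{\R}\theta_{t}(x)\,\td\kappa_{x}(t) = \theta(\z, Dw,x)$ for $\abs{Dw}$-a.e. $x$. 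Since \eqref{value-DN-derivative} forces the scalar densities to lie in $\{-1,+1\}$, a probability average of $\pm1$ can hit the extreme value $\theta(\z, Dw,x)$ only if $\theta_{t}(x)\equiv\theta(\z, Dw,x)$; thus for a.e. $t$ one has $\theta(\z, D\mathds{1}_{\{w>t\}},x) = \theta(\z, Dw,x)$ for $\abs{D\mathds{1}_{\{w>t\}}}$-a.e. $x$. The joint measurability of $(x,t)\mapsto\theta_{t}(x)$ needed to form $\lambda$ and this integral is a routine measurable-selection matter.

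Finally I would transfer this to $v:=T\circ w$, which lies in $BV(\Omega)_{p'}$ by Theorem~\ref{thm:BV-chain-rule}. Since $T$ is increasing and continuous, $\{v>s\} = \{w>t_{s}\}$ for a.e. $s$, where $t_{s}:=\inf\{r:T(r)>s\}$ is the generalized inverse. Applying the coarea formula to $v$ and substituting the level sets,
\begin{equation*}
 (\z, Dv)(B) = \int_{\R}(\z, D\mathds{1}_{\{w>t_{s}\}})(B)\,\ds = \int_{B}\theta(\z, Dw,x)\,\td\abs{Dv}(x),
\end{equation*}
where the second equality uses the $w$-step for every $s$ with $t_{s}$ a ``good'' value, together with Fubini and the coarea identity $\abs{Dv} = \int_{\R}\abs{D\mathds{1}_{\{w>t_{s}\}}}\,\ds$. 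Comparing with $(\z, Dv) = \theta(\z, Dv,\cdot)\,\abs{Dv}$ yields $\theta(\z, D(T\circ w),x) = \theta(\z, Dw,x)$ on the support of $\abs{D(T\circ w)}$, which is \eqref{E1paring12}.

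The one genuinely delicate point is hidden in the word ``good'': one must know that the exceptional null set $N$ of $t$'s from the $w$-step pulls back to an $s$-null set $\{s:t_{s}\in N\}$. This is exactly where the Lipschitz hypothesis on $T$ enters --- it makes $s\mapsto t_{s}$ expanding, $\abs{t_{s_{1}}-t_{s_{2}}}\ge M^{-1}\abs{s_{1}-s_{2}}$, so its inverse is Lipschitz and maps $N$ to a null set; without Lipschitzness a monotone reparametrization could fail to preserve null sets, and the argument would break. I therefore expect the coarea-plus-rigidity machinery to be straightforward, and this null-set bookkeeping to be the step requiring the most care.
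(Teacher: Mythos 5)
The paper offers no proof of Proposition~\ref{prop:chain-rule}; it is quoted verbatim from Anzellotti~\cite{MR750538}, so your proposal has to stand on its own. Its skeleton — coarea formula for the pairing, rigidity of the density on level sets, monotone reparametrization — is indeed Anzellotti's route, and two of your three steps are sound: the layer-cake/Fubini proof of $(\z,Dw)=\int_{\R}(\z,D\mathds{1}_{\{w>t\}})\,\dt$ is correct (the constant term does drop out because $\int_{\Omega}\divi(\varphi\z)\,\dx=0$ for compactly supported $\varphi$), and your observation that Lipschitz continuity of $T$ makes $s\mapsto t_{s}$ expanding, so that the exceptional $t$-null set pulls back to an $s$-null set, is exactly the right bookkeeping. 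The gap is in the rigidity step, and it is precisely where the real content of Anzellotti's theorem lies. Your extremality argument needs $\abs{\theta(\z,Dw,x)}=1$ for $\abs{Dw}$-a.e.\ $x$. You take this from \eqref{value-DN-derivative}, but that claim of the paper is itself mis-stated: for general $\z\in X_{p}(\Omega)$ one only has $\abs{\theta(\z,Dw,\cdot)}\le \norm{\z}_{\infty}$ (Anzellotti's Corollary~1.6). Already $\z\equiv 0$ gives $\theta\equiv 0$; more to the point, $\abs{\theta}=1$ holds essentially only when $(\z,Dw)=\pm\abs{Dw}$, i.e.\ for the special pairs satisfying \eqref{z32}. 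The proposition, however, is stated — and used — for arbitrary $\z\in X_{p}(\Omega)$ and $w\in BV(\Omega)_{p'}$: in the proof of Proposition~\ref{prop:Lambda-completely-accretive} it is applied with $w=u-\hat{u}$, where $\theta(\z,D(u-\hat{u}),\cdot)$ is the density of $\abs{Du}-(\z,D\hat{u})$ with respect to $\abs{D(u-\hat{u})}$ and is in general not $\pm 1$.

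Once $\abs{\theta(\z,Dw,x)}$ may be strictly less than $1$, your disintegration identity $\int_{\R}\theta_{t}(x)\,\td\kappa_{x}(t)=\theta(\z,Dw,x)$ carries no pointwise information: a probability average of numbers in $[-1,1]$ can equal any value in $(-1,1)$ in infinitely many ways, so you cannot conclude $\theta_{t}(x)=\theta(\z,Dw,x)$, and the transfer step for $T\circ w$ collapses with it. Knowing $\abs{\theta_{t}}=1$ would not help either (and is equally false in general), since averages of $\pm 1$ fill all of $[-1,1]$. The statement you need — $\theta(\z,D\mathds{1}_{\{w>t\}},\cdot)=\theta(\z,Dw,\cdot)$ holding $\abs{D\mathds{1}_{\{w>t\}}}$-a.e.\ for a.e.\ $t$ — is true for general $\z$, but proving it is the hard part of \cite{MR750538}: it requires identifying, for a.e.\ $t$, the pairing density on the level set with the weak normal trace of $\z$ on the reduced boundary $\partial^{*}\{w>t\}$, and using the De Giorgi/Fleming--Rishel structure theory to align that boundary with the polar vector of $Dw$; bare convexity/extremality does not suffice. (This also supersedes the secondary debt you acknowledged, the joint measurability of $(x,t)\mapsto\theta_{t}(x)$, which your disintegration needs even to get started.) So the outline identifies the right strategy, but as written the proof is valid only in the unimodular case $(\z,Dw)=\abs{Dw}$, which is strictly narrower than what the proposition asserts and than what the paper actually uses.
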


Further, there is a unique linear extension
$\gamma : X_p(\Omega) \rightarrow L^{\infty}(\partial \Omega)$
satisfying
\begin{equation}
  \label{eq:64}
  \norm{\gamma(\z)}_{\infty} \le \norm{\z}_{\infty}
\end{equation}
and
  \begin{displaymath}
    \gamma(\z)(x) = \z(x)\cdot\nu(x)\text{ for every $x \in
      \partial\Omega$ and $\z \in C^1(\overline{\Omega},\R^d)$.}
  \end{displaymath}

\begin{definition}[{\cite{MR750538}}]\label{def:weak-trace-of-z}
  For every $\z\in X_p(\Omega)$, we write $[\z, \nu]$ for $\gamma(\z)$
  and call $[\z, \nu]$ the \emph{weak trace} of the normal component of $\z$.
\end{definition}

With these preliminaries in mind, we can now state the \emph{generalized integration
  by parts formula} for functions $w\in BV(\Omega)$.

\begin{proposition}[{\cite{MR750538}, Generalized integration by parts}]\label{prop:ibp}
  Let $\Omega$ be a bounded domain with a Lipschitz-continuous boundary
 $\partial\Omega$ and let $1\le p\le d$ and $p^{\mbox{}_{\prime}}$ be given by
  $1=\tfrac{1}{p}+\tfrac{1}{p^{\prime}}$. Then
  \begin{equation}\label{Green0}
    \int_{\Omega} w \,\divi(\z)\, \dx + \int_{\Omega} (\z, Dw) =
    \int_{\partial \Omega} [\z, \nu] w \, \dH^{d-1}.
  \end{equation}
  for every $\z \in X_p(\Omega)$ and $w \in BV(\Omega)_{p^{\mbox{}_{\prime}}}$.
\end{proposition}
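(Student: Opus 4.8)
The plan is to fix $\z\in X_p(\Omega)$ and study the linear functional $T_{\z}(w):=\int_{\Omega} w\,\divi(\z)\,\dx+\int_{\Omega}(\z,Dw)$ on $BV(\Omega)_{p'}$, whose value must be identified with $\int_{\partial\Omega}[\z,\nu]\,w\,\dH^{d-1}$. The base case in which $\z$ is smooth is immediate: for $\z\in C^{1}(\overline{\Omega};\R^{d})$ one checks directly from the definition~\eqref{eq:22} that the measure $(\z,Dw)$ coincides with $\z\cdot Dw$, so that the classical trace formula~\eqref{eq:2} reads exactly $T_{\z}(w)=\int_{\partial\Omega}(\z\cdot\nu)\,w\,\dH^{d-1}$. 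The whole difficulty is therefore to remove the regularity of $\z$ while keeping track of the boundary term, and my strategy is to characterise that boundary term abstractly as a bounded functional of the trace $\T(w)$ and then match it with the weak trace $\gamma(\z)=[\z,\nu]$ supplied before Definition~\ref{def:weak-trace-of-z}.

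First I would show that $T_{\z}$ annihilates functions with vanishing trace. For $\varphi\in C^{\infty}_{c}(\Omega)$, \eqref{eq:41} gives $\int_{\Omega}(\z,D\varphi)=\int_{\Omega}\z\cdot\nabla\varphi\,\dx$, hence $T_{\z}(\varphi)=\int_{\Omega}\divi(\varphi\z)\,\dx=0$. Given $w\in BV(\Omega)_{p'}$ with $\T(w)=0$, I would approximate it by a sequence $\varphi_{n}\in C^{\infty}_{c}(\Omega)$ converging to $w$ both strictly in $BV(\Omega)$ and in $L^{p'}(\Omega)$, which is possible for zero-trace functions. Then $\int_{\Omega}\varphi_{n}\,\divi(\z)\to\int_{\Omega}w\,\divi(\z)$ by H\"older's inequality (using $\divi(\z)\in L^{p}(\Omega)$), while the continuity of the Anzellotti pairing under strict convergence from~\cite{MR750538} yields $\int_{\Omega}(\z,D\varphi_{n})\to\int_{\Omega}(\z,Dw)$; thus $T_{\z}(w)=\lim_{n}T_{\z}(\varphi_{n})=0$. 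By linearity $T_{\z}(w)$ then depends only on $\T(w)$, so it descends to a linear functional $\ell_{\z}$ on $\T(BV(\Omega)_{p'})\subseteq L^{1}(\partial\Omega)$. Choosing, for prescribed boundary data, an extension concentrated in a thin collar of $\partial\Omega$ and combining the estimate~\eqref{eq:32} with H\"older's inequality, I would bound $\abs{\ell_{\z}(\T(w))}\le C\,\norm{\z}_{\infty}\,\norm{\T(w)}_{L^{1}(\partial\Omega)}$, so that $\ell_{\z}$ extends to a bounded functional on $L^{1}(\partial\Omega)$ and is represented by some $g_{\z}\in L^{\infty}(\partial\Omega)$; i.e. $T_{\z}(w)=\int_{\partial\Omega}g_{\z}\,w\,\dH^{d-1}$ for all $w\in BV(\Omega)_{p'}$.

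It remains to identify $g_{\z}$ with $[\z,\nu]$. The assignment $\z\mapsto g_{\z}$ is linear, satisfies $\norm{g_{\z}}_{\infty}\le C\norm{\z}_{\infty}$, and by the base case agrees with $\z\cdot\nu$ whenever $\z\in C^{1}(\overline{\Omega};\R^{d})$. Approximating an arbitrary $\z\in X_{p}(\Omega)$ by mollification with smooth fields $\z_{k}$ such that $\z_{k}\to\z$ in $L^{p'}(\Omega)$ and $\divi(\z_{k})\to\divi(\z)$ in $L^{p}(\Omega)$, one sees that $T_{\z_{k}}(w)\to T_{\z}(w)$ for each fixed $w$, so $g_{\z_{k}}\to g_{\z}$ weakly$\mbox{}^{\ast}$ in $L^{\infty}(\partial\Omega)$; since $g_{\z_{k}}=\z_{k}\cdot\nu=\gamma(\z_{k})$ on the dense class $C^{1}(\overline{\Omega};\R^{d})$ and $\gamma$ enjoys the same weak$\mbox{}^{\ast}$ stability via~\eqref{eq:64}, the uniqueness of the extension $\gamma$ forces $g_{\z}=\gamma(\z)=[\z,\nu]$, which proves~\eqref{Green0}. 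I expect the two genuine obstacles to be, first, the continuity of the pairing $w\mapsto\int_{\Omega}(\z,Dw)$ under strict $BV$-convergence used in the zero-trace step, where the special structure of the pairing through its density~\eqref{value-DN-derivative} is essential; and second, the limiting step identifying $g_{\z}$ with the abstractly given weak trace $\gamma(\z)$, which requires that smooth fields be dense in $X_{p}(\Omega)$ in a topology for which both boundary functionals are weak$\mbox{}^{\ast}$ continuous.
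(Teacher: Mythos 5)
The paper itself offers no proof of Proposition~\ref{prop:ibp}: it is quoted, together with the construction of the weak trace $\gamma$, from Anzellotti~\cite{MR750538}, so your attempt can only be measured against that original argument. Your architecture --- show that $T_{\z}(w):=\int_\Omega w\,\divi(\z)\,\dx+\int_\Omega(\z,Dw)$ annihilates zero-trace functions, factor it through $\T$, bound it by $\norm{\T(w)}_{L^1(\partial\Omega)}$ via extensions supported in a thin collar, and represent the resulting functional by some $g_{\z}\in L^\infty(\partial\Omega)$ --- is precisely the architecture of Anzellotti's construction (see also~\cite{MR2033382}), so up to that point the proposal is a faithful reconstruction. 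It does, however, lean on three facts that are each nontrivial lemmas of exactly the kind proved in~\cite{MR750538} rather than free observations: strict $BV$ approximation of zero-trace functions by $C_c^\infty(\Omega)$ on a Lipschitz domain, continuity of $w\mapsto\int_\Omega(\z,Dw)$ under strict plus $L^{p'}$ convergence, and approximation of a general $\z\in X_p(\Omega)$ by fields smooth up to $\overline{\Omega}$ with $\divi(\z_k)\to\divi(\z)$ in $L^p(\Omega)$.

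The genuine gap is the final identification $g_{\z}=\gamma(\z)$. Your argument is that $g_{\z_k}=\z_k\cdot\nu=\gamma(\z_k)$ for the mollified fields, that $g_{\z_k}\to g_{\z}$ weakly$\mbox{}^{\ast}$ in $L^\infty(\partial\Omega)$, and that ``the uniqueness of the extension $\gamma$'' together with~\eqref{eq:64} forces the conclusion. But \eqref{eq:64} only makes $\gamma$ continuous for the sup-norm on $\z$, and your approximants converge to $\z$ a.e.\ and in $L^q(\Omega;\R^d)$ for every $q<\infty$ but emphatically \emph{not} in $L^\infty(\Omega;\R^d)$ (an essential-sup limit of continuous fields has a continuous representative, which a general $\z\in X_p(\Omega)$ does not); for the same reason $C^1(\overline\Omega;\R^d)$ is not sup-norm dense in $X_p(\Omega)$. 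Hence nothing the paper records about $\gamma$ --- linearity, the bound~\eqref{eq:64}, and agreement with $\z\cdot\nu$ on $C^1$ fields --- yields $\gamma(\z_k)\to\gamma(\z)$ in any topology, nor does it single out $\gamma$ among all operators with those three properties: two such operators may differ on non-smooth fields. What closes this loop in~\cite{MR750538} is that the weak trace is there \emph{characterized} by the very Green formula you are trying to prove (equivalently, by its continuity along approximations $\z_k\to\z$ a.e.\ with $\divi(\z_k)\to\divi(\z)$ in $L^p$). So the honest output of your construction is the \emph{existence} of a bounded boundary function $g_{\z}$ satisfying~\eqref{Green0} --- which is Anzellotti's theorem, with $[\z,\nu]$ defined to be $g_{\z}$ --- whereas equality with an independently given $\gamma(\z)$ requires that finer continuity property of $\gamma$, which cannot be extracted from~\eqref{eq:64} alone and which your proof never establishes.
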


The next density result is quite useful.

\begin{lemma}[{\cite[Lemma~1.8 \& Lemma~5.2]{MR750538}}]\label{lem:2}
 Let $\Omega$ be a bounded domain with a Lipschitz-continuous boundary
 $\partial\Omega$. Then for
every $v\in BV(\Omega)_{2}$, there is sequence $(v_{n})_{n\ge 1}$ of
functions $v_{n}\in C^{\infty}(\Omega)\cap BV(\Omega)$ with trace
$\T(v_{n})=\T(v)$,
\begin{align*}
  \lim_{n\to \infty}v_{n}&= v\qquad\text{ in $L^{2}(\Omega)$, and}\\
  \lim_{n\to\infty}\int_{\Omega}(\z,Dv_{n})&= \int_{\Omega}(\z,Dv).
\end{align*}
\end{lemma}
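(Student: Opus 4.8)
The plan is to split the statement into two tasks that can be handled separately: the construction of a smooth sequence $(v_{n})$ with the prescribed trace, and the convergence of the pairings $\int_{\Omega}(\z,Dv_{n})$. The decisive observation is that the second task is \emph{automatic} once the first is done, and needs no control on the gradients $Dv_{n}$; moreover a single sequence $(v_{n})$ works simultaneously for every $\z\in X_{2}(\Omega)$. Indeed, $v\in BV(\Omega)_{2}$ forces $p'=2$, so that $p=2$ and every admissible $\z\in X_{2}(\Omega)$ has $\divi(\z)\in L^{2}(\Omega)$. Suppose we are given $(v_{n})_{n\ge 1}$ in $C^{\infty}(\Omega)\cap BV(\Omega)$ with $v_{n}\to v$ in $L^{2}(\Omega)$ and $\T(v_{n})=\T(v)$; then each $v_{n}$ lies in $BV(\Omega)_{2}$, so the generalized integration by parts formula~\eqref{Green0} of Proposition~\ref{prop:ibp} applies to both $v_{n}$ and $v$. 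Subtracting the two identities and using $\T(v_{n})=\T(v)$ to cancel the boundary integrals leaves
\begin{displaymath}
  \int_{\Omega}(\z,Dv_{n})-\int_{\Omega}(\z,Dv)
  =-\int_{\Omega}(v_{n}-v)\,\divi(\z)\,\dx,
\end{displaymath}
whose right-hand side tends to $0$ by the Cauchy--Schwarz inequality. Thus everything reduces to constructing the sequence $(v_{n})$.

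For the construction I would use the Anzellotti--Giaquinta mollification scheme. Cover $\partial\Omega$ by finitely many charts in which $\Omega$ is the region above a Lipschitz graph, fix a subordinate partition of unity together with an interior cut-off, and build a locally finite smoothing of $v$ in which the radius of the convolution kernel shrinks to $0$ as the base point approaches $\partial\Omega$; in each boundary chart one first translates inward in the transverse direction, by an amount dominating the Lipschitz oscillation of the graph, so that the convolution only samples points of $\Omega$. Patching these local smoothings with the partition of unity yields $v_{n}\in C^{\infty}(\Omega)$ with $v_{n}\to v$ in $L^{1}(\Omega)$ and strict convergence $\abs{Dv_{n}}(\Omega)\to\abs{Dv}(\Omega)$, hence $v_{n}\in C^{\infty}(\Omega)\cap BV(\Omega)$. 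Upgrading $L^{1}(\Omega)$-convergence to $L^{2}(\Omega)$-convergence uses $v\in L^{2}(\Omega)$ together with the $L^{2}$-boundedness of the mollifiers and a truncation argument, and then $v_{n}\in BV(\Omega)_{2}$ as required.

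The step I expect to be the genuine obstacle is to secure the trace preservation \emph{exactly}, $\T(v_{n})=\T(v)$, rather than merely in the limit: the continuity of the trace under strict convergence (Proposition~\ref{propo:continuity-of-trace}) by itself only gives $\T(v_{n})\to\T(v)$ in $L^{1}(\partial\Omega)$. The mechanism I would exploit is that, because the smoothing radius vanishes at $\partial\Omega$, the measure-theoretic trace of $v_{n}$ at a boundary point is determined by values of $v$ in an infinitesimally thin inner region and therefore coincides with $\T(v)$. Making this rigorous amounts to a Lebesgue-point estimate in the boundary charts, bounding $\rho^{-d}\int_{\Omega\cap B_{\rho}(x)}\abs{v_{n}-v}$ by the tail of $\abs{Dv}$ near $\partial\Omega$ and letting $\rho\downarrow 0$ to identify $\T(v_{n})$ with $\T(v)$ $\mathcal{H}^{d-1}$-almost everywhere. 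Once $\T(v_{n})=\T(v)$ and $v_{n}\to v$ in $L^{2}(\Omega)$ are in hand, the reduction of the first paragraph completes the proof.
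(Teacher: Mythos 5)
The paper offers no proof of this lemma at all --- it is quoted verbatim from Anzellotti \cite{MR750538}, whose route is: build $v_n\in C^{\infty}(\Omega)\cap BV(\Omega)$ with $\T(v_n)=\T(v)$, $v_n\to v$ in $L^{2}(\Omega)$ \emph{and} strict convergence $\int_{\Omega}\abs{Dv_n}\to\int_{\Omega}\abs{Dv}$ (his Lemma~5.2), and then pass to the limit in the pairings using the continuity of $w\mapsto\int_{\Omega}(\z,Dw)$ under strict convergence (his Lemma~1.8; compare the argument reproved in Proposition~\ref{prop:3}). Your reduction replaces that second step: from $\T(v_n)=\T(v)$, $v_n\to v$ in $L^{2}(\Omega)$ and two applications of \eqref{Green0} you obtain
\begin{displaymath}
  \int_{\Omega}(\z,Dv_n)-\int_{\Omega}(\z,Dv)=-\int_{\Omega}(v_n-v)\,\divi(\z)\,\dx\;\longrightarrow\;0,
\end{displaymath}
with no control on the total variations and with one sequence serving every $\z\in X_{2}(\Omega)$. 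This is correct and genuinely shorter, but it carries a caveat you should state: in \cite{MR750538} the validity of \eqref{Green0} for a general $w\in BV(\Omega)_{2}$ is itself obtained by approximating $w$ by smooth functions with the same trace, i.e.\ by this very lemma (for Sobolev functions the formula is essentially the construction of $[\z,\nu]$). Inside the present paper, where Proposition~\ref{prop:ibp} is an independently cited black box, your argument stands; as a self-contained proof of Anzellotti's result it would be circular, and one would have to apply \eqref{Green0} only to the smooth $v_n$ and treat $v$ by a strict-convergence argument after all.

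The substance of the lemma is therefore the construction, which you only sketch, and there your mechanism for the exact trace equality has a real hole. With a mollification radius $\varepsilon(y)\lesssim d(y,\partial\Omega)$, the estimate your sketch produces is
\begin{displaymath}
  \rho^{-d}\int_{\Omega\cap B_{\rho}(x)}\abs{v_n-v}\,\dy\;\le\;C\,\rho^{1-d}\,\abs{Dv}\big(\Omega\cap B_{C\rho}(x)\big),
\end{displaymath}
and the right-hand side does \emph{not} visibly tend to $0$ as $\rho\downarrow 0$: you need, in addition, that the $(d-1)$-dimensional upper density of the finite measure $\abs{Dv}$ (a measure living on the open set $\Omega$, so charging no mass to $\partial\Omega$) vanishes at $\mathcal{H}^{d-1}$-a.e.\ point of $\partial\Omega$. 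This is true, but only via the density comparison theorem for Hausdorff measures, which your sketch never invokes. A cleaner finish, and essentially what Anzellotti--Giaquinta do: choose the mollification parameters $\varepsilon_i$ in the partition-of-unity scheme so that $v_n-v=\sum_i w_i$ with each $w_i\in BV(\Omega)$ compactly supported in $\Omega$ and with partial sums converging in the $BV$-norm (this uses the cancellation $\sum_i v\,\nabla\varphi_i\equiv 0$ and the fact that $\abs{Dv}(\{0<d(\cdot,\partial\Omega)<t\})\to 0$ as $t\downarrow 0$); then the trace bound \eqref{BounTrac} gives $\T(v_n-v)=\lim_{N}\T\big(\sum_{i\le N}w_i\big)=0$, i.e.\ $\T(v_n)=\T(v)$ exactly, while $L^{2}$-convergence is secured by the same choice of the $\varepsilon_i$ since $v\in L^{2}(\Omega)$.
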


We conclude this section on $BV$-functions with the following
proposition on convergence results.

\begin{proposition}
  \label{prop:3}
  Let $\Omega$ be a bounded domain with a Lipschitz-continuous boundary
 $\partial\Omega$ and for $1\le p\le d$ and $p^{\mbox{}_{\prime}}$ given by
  $1=\tfrac{1}{p}+\tfrac{1}{p^{\mbox{}_{\prime}}}$, suppose $(\z_{n})_{n\ge 1}$
  and $\z$ are elements of $X_{p}(\Omega)$ such that
  \begin{align}
    \label{eq:76}
    \lim_{n\to \infty}\z_{n}=&\;\z\hspace{1.75cm}\text{weakly$\mbox{}^{\ast}$ in
                               $L^{\infty}(\Omega;\R^{d})$, and}\\
    \label{eq:77}
    \lim_{n\to \infty}\divi(\z_{n})=&\;\divi(\z)\qquad\text{weakly in
                               $L^{p}(\Omega)$.}
  \end{align}
  Then, the following statements hold.
  \begin{enumerate}[(1.)]
    \item For every $v\in BV(\Omega)_{p^{\mbox{}_{\prime}}}$,
    \begin{equation}
      \label{eq:78}
      \lim_{n\to \infty}(\z_{n},Dv)=(\z,Dv)\qquad\text{weakly$\mbox{}^{\ast}$ in
                                      $M^{b}(\Omega)$}
    \end{equation}
    \item \label{prop:3-claim2} For $v\in BV(\Omega)_{p^{\mbox{}_{\prime}}}$,
      \eqref{eq:78} implies that
      \begin{equation}
      \label{eq:80}
      \lim_{n\to \infty}\int_{\Omega}(\z_{n},Dv)=\int_{\Omega}(\z,Dv).
    \end{equation}

    \item \label{prop:3-claim3} If, in addition, there is an $C>0$ such that
      \begin{equation}
        \label{eq:127}
        \sup_{n\ge 1}\norm{\divi(\z_{n})}_{\infty}\le C
      \end{equation}
      and if there are $(v_{n})_{n\ge 1}$, $v$ in
      $BV(\Omega)$ such that
      \begin{equation}
        \label{eq:125}
        \lim_{n\to \infty}v_{n}=\;v\qquad\text{weakly$\mbox{}^{\ast}$ in
                               $BV(\Omega)$,}
      \end{equation}
      then
      \begin{equation}
        \label{eq:126}
      \lim_{n\to \infty}(\z_{n},Du_{n})=(\z,Dv)\qquad\text{weakly$\mbox{}^{\ast}$ in
                                      $M^{b}(\Omega)$.}
      \end{equation}
  \end{enumerate}
\end{proposition}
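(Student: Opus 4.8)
The plan is to prove all three statements by testing the measures against $\varphi\in C^{\infty}_{0}(\Omega)$ using the explicit bilinear form~\eqref{eq:22}, and then upgrading the resulting convergence of the pairings to weak${}^{\ast}$ convergence in $M^{b}(\Omega)$ by means of a uniform bound on the total variations together with the density of $C^{\infty}_{0}(\Omega)$ in $C_{0}(\Omega)$. First I would record that weak${}^{\ast}$ convergent sequences in a dual space are norm bounded, so~\eqref{eq:76} gives $M:=\sup_{n}\norm{\z_{n}}_{\infty}<\infty$ (and $\norm{\z}_{\infty}\le M$). For statement~(1.), fix $v\in BV(\Omega)_{p^{\prime}}$ and $\varphi\in C^{\infty}_{0}(\Omega)$. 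By~\eqref{eq:22},
\begin{displaymath}
\langle(\z_{n},Dv),\varphi\rangle=-\int_{\Omega}v\,\varphi\,\divi(\z_{n})\,\dx-\int_{\Omega}v\,\z_{n}\cdot\nabla\varphi\,\dx.
\end{displaymath}
In the first integral $v\varphi\in L^{p^{\prime}}(\Omega)$, so the weak $L^{p}$-convergence~\eqref{eq:77} passes to the limit; in the second integral $v\,\nabla\varphi\in L^{1}(\Omega;\R^{d})$, so the weak${}^{\ast}$ $L^{\infty}$-convergence~\eqref{eq:76} applies. Hence $\langle(\z_{n},Dv),\varphi\rangle\to\langle(\z,Dv),\varphi\rangle$ for each such $\varphi$. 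Since~\eqref{eq:32} gives $\abs{(\z_{n},Dv)}(\Omega)\le M\int_{\Omega}\abs{Dv}$ uniformly in $n$, and $C^{\infty}_{0}(\Omega)$ is dense in $C_{0}(\Omega)$, a standard three-$\varepsilon$ argument upgrades this to~\eqref{eq:78}.

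For statement~(2.), the point is that weak${}^{\ast}$ convergence in $M^{b}(\Omega)$ need not preserve total mass, since mass may escape to $\partial\Omega$; this is ruled out here by a tightness argument. By~\eqref{eq:32} every $(\z_{n},Dv)$ is absolutely continuous with respect to the fixed finite measure $\abs{Dv}$, with density bounded by $M$. Choosing cut-offs $\eta_{k}\in C^{\infty}_{0}(\Omega)$ with $0\le\eta_{k}\le1$ and $\eta_{k}\uparrow1$ pointwise, one has
\begin{displaymath}
\labs{\int_{\Omega}(\z_{n},Dv)-\int_{\Omega}\eta_{k}\,(\z_{n},Dv)}\le M\int_{\Omega}(1-\eta_{k})\,\abs{Dv}
\end{displaymath}
uniformly in $n$ (the same bound holds for the limit measure), and the right-hand side tends to $0$ as $k\to\infty$ by dominated convergence. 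Combining this uniform tail estimate with~\eqref{eq:78} applied to the fixed test functions $\eta_{k}$ lets me interchange the limits in $n$ and $k$, which yields~\eqref{eq:80}.

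Statement~(3.) follows the scheme of~(1.), but now both factors in~\eqref{eq:22} vary, so each integral must be split into a \emph{strong times bounded} piece and a \emph{fixed times weak} piece. Here the weak${}^{\ast}$ $BV$-convergence~\eqref{eq:125} provides $v_{n}\to v$ strongly in $L^{1}(\Omega)$ and $\sup_{n}\norm{v_{n}}_{BV}<\infty$, while the extra hypothesis~\eqref{eq:127} upgrades~\eqref{eq:77} to weak${}^{\ast}$ convergence $\divi(\z_{n})\to\divi(\z)$ in $L^{\infty}(\Omega)$ (convergence against the dense subspace $L^{\infty}(\Omega)\subseteq L^{p^{\prime}}(\Omega)$ plus the uniform bound gives the full weak${}^{\ast}$ statement). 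For $\varphi\in C^{\infty}_{0}(\Omega)$ I would write
\begin{displaymath}
\int_{\Omega}v_{n}\varphi\,\divi(\z_{n})\,\dx=\int_{\Omega}(v_{n}-v)\varphi\,\divi(\z_{n})\,\dx+\int_{\Omega}v\varphi\,\divi(\z_{n})\,\dx,
\end{displaymath}
where the first term is bounded by $\norm{v_{n}-v}_{1}\norm{\varphi}_{\infty}C\to0$ using~\eqref{eq:127}, and the second converges because $v\varphi\in L^{1}(\Omega)$ pairs with the weak${}^{\ast}$ $L^{\infty}$-limit; the term $\int_{\Omega}v_{n}\,\z_{n}\cdot\nabla\varphi\,\dx$ is treated identically using $\sup_{n}\norm{\z_{n}}_{\infty}\le M$ and $v\,\nabla\varphi\in L^{1}(\Omega;\R^{d})$. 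The uniform bound $\abs{(\z_{n},Dv_{n})}(\Omega)\le M\sup_{n}\int_{\Omega}\abs{Dv_{n}}<\infty$ from~\eqref{eq:32} then upgrades the convergence of the pairings to~\eqref{eq:126} exactly as in~(1.). The main obstacle is precisely this handling of a product of two merely weakly convergent sequences: neither factor converges strongly in a way that pairs directly, and it is the two supplementary hypotheses~\eqref{eq:125} (giving strong $L^{1}$-convergence of $v_{n}$) and~\eqref{eq:127} (giving the $L^{\infty}$-bound, hence the weak${}^{\ast}$ $L^{\infty}$-upgrade that can be paired with the only $L^{1}$-function $v\varphi$) that make the splitting close.
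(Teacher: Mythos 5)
Your proposal is correct and follows essentially the same route as the paper's proof: testing against $C^{\infty}_{0}(\Omega)$ via the bilinear formula~\eqref{eq:22}, passing to the limit using~\eqref{eq:76}--\eqref{eq:77}, and upgrading to weak$\mbox{}^{\ast}$ convergence in $M^{b}(\Omega)$ through the uniform total-variation bound from~\eqref{eq:32} together with density of $C^{\infty}_{0}(\Omega)$ in $C_{0}(\Omega)$; your tightness argument for~\eqref{eq:80} (cut-offs $\eta_{k}\uparrow 1$ with a uniform tail estimate) is the same $2\varepsilon$-idea the paper implements with a single $\varepsilon$-dependent cut-off. In fact, for statement~\eqref{prop:3-claim3} your splitting of each integral into a ``strong times bounded'' and a ``fixed times weak'' piece, together with the weak$\mbox{}^{\ast}$-$L^{\infty}$ upgrade of $\divi(\z_{n})$, spells out precisely the limit passage that the paper asserts without detail.
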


The first limit~\eqref{eq:78} is obtained by a light modification of
the proof of~\cite[Proposition~2.1]{MR750538} and for the proofs
of~\eqref{eq:80}, we were inspired by the proof
of~\cite[Lemma~1.8]{MR750538}. For convenience, we give here the
details.

\begin{proof}
  Let $v\in BV(\Omega)_{p^{\mbox{}_{\prime}}}$. By~\eqref{eq:76}, one has that
  \begin{equation}
    \label{eq:83}
    \sup_{n\ge
      1}\norm{\z_{n}}_{\infty}=:M\qquad\text{is finite and }\norm{\z}_{\infty}\le M.
  \end{equation}
  Applying~\eqref{eq:83} to~\eqref{eq:32},
  one sees that
  \begin{equation}
    \label{eq:86}
    \labs{\int_{\Omega}(\z_{n},Dv)}\le \int_{\Omega}\abs{(\z_{n},Dv)}\le M\,\int_{\Omega}\abs{Dv}.
  \end{equation}
  Thus and by~\eqref{eq:124}, for verifying that~\eqref{eq:78} holds; that is,
  \begin{equation}
    \label{eq:79}
    \lim_{n\to\infty}\langle (\z_{n},Dv),\varphi\rangle
    =\langle (\z,Dv),\varphi\rangle
  \end{equation}
  for every $\varphi\in C_{0}(\Omega)$, it is sufficient to check this
  limit holds for every test functions $\varphi\in
  C_{c}^{\infty}(\Omega)$. But for $\varphi\in
  C_{c}^{\infty}(\Omega)$, \eqref{eq:22} holds, and so
  by~\eqref{eq:76} and~\eqref{eq:77}, one has that
  \begin{align*}
    \langle (\z_{n},Dv),\varphi\rangle&= - \int_{\Omega} v \, \varphi \,
    \divi(\z_{n}) \, \dx - \int_{\Omega} v \, \z_{n} \cdot \nabla \varphi\,
                                        \dx\\
    &\to - \int_{\Omega} v \, \varphi \,
    \divi(\z) \, \dx - \int_{\Omega} v \, \z \cdot \nabla \varphi\,
                                        \dx\\
    &=\langle (\z,Dv),\varphi\rangle
  \end{align*}
  as $n\to\infty$, which proves~\eqref{eq:79}. Next, to see that~\eqref{eq:80} holds, we perform a
  $2\varepsilon$-argument. For this, let $\varepsilon>0$. Since the
  total variational measure $\abs{Dv}$ is a bounded Radon measure on
  $\Omega$, there is a subset $U\Subset \Omega$ such that
  \begin{equation}
    \label{eq:81}
    \int_{\Omega\setminus U}\abs{Dv}\le \frac{\varepsilon}{4 M}
  \end{equation}
  and for every $\varphi\in C_{c}^{\infty}(\Omega)$, there is an $N(\varepsilon,\varphi)\in \N$ such that
  \begin{equation}
    \label{eq:82}
    \abs{\langle (\z_{n},Dv),\varphi\rangle-\langle
                                                          (\z,Dv),\varphi\rangle}<\frac{\varepsilon}{2}
  \end{equation}
  for all $n\ge N(\varepsilon,\varphi)$. Now, we choose a test
  function $\varphi\in C_{c}^{\infty}(\Omega)$ with the properties that
  $\varphi\equiv 1$ on $\overline{U}$ and $0\le \varphi\le 1$ on $\Omega$. Then,
  by~\eqref{eq:32},~\eqref{eq:83}, \eqref{eq:81} and~\eqref{eq:82}, one finds that
  \begin{align*}
    \labs{\int_{\Omega}(\z_{n},Dv)-\int_{\Omega}(\z,Dv)}
      & \le \abs{\langle (\z_{n},Dv),\varphi\rangle-\langle
                                                          (\z,Dv),\varphi\rangle}\\
    &\qquad \int_{\Omega}(1-\varphi)\,\td\abs{(\z_{n},Dv)}+
      \int_{\Omega}(1-\varphi)\,\td\abs{(\z,Dv)}\\
    & \le \frac{\varepsilon}{2}+\int_{\Omega\setminus U}\abs{(\z_{n},Dv)}+
      \int_{\Omega\setminus U}\abs{(\z,Dv)}\\
    & \le \frac{\varepsilon}{2}+2M\int_{\Omega\setminus U}\abs{Dv}\\
    & \le \frac{\varepsilon}{2}+2M\frac{\varepsilon}{4 M}=\varepsilon
  \end{align*}
  for all $n\ge N(\varepsilon,\varphi)$, proving~\eqref{eq:80}.

  To see that the final claim~\eqref{prop:3-claim3} holds, we first
  note that by~\eqref{eq:22}, the limits~\eqref{eq:76},
  \eqref{eq:77},~\eqref{eq:127} and~\eqref{eq:125} yield that
  \begin{align*}
    \langle (\z_{n},Dv_{n}),\varphi \rangle &= - \int_{\Omega} v_{n} \, \varphi \,
{\rm div}(\z_{n}) \, \dx - \int_{\Omega} v_{n} \, \z _{n}\cdot \nabla \varphi
                                              \, \dx\\
    &\to - \int_{\Omega} v \, \varphi \,
{\rm div}(\z) \, \dx - \int_{\Omega} v \, \z\cdot \nabla \varphi
      \, \dx\\
    &=\langle (\z,Dv),\varphi \rangle
  \end{align*}
  for every $\varphi\in C_{0}(\Omega)$, showing that $((\z_{n},Dv_{n}))_{n\ge
    1}$ converges to $(\z,Dv)$ in the distributional sense. But since $(v_{n})_{n\ge 1}$ is
  bounded in $BV(\Omega)$,~\eqref{eq:86} applied to $w=v_{n}$ gives
  that
  \begin{displaymath}
    \labs{\int_{\Omega}(\z_{n},Dv_{n})}\le \int_{\Omega}\abs{(\z_{n},Dv_{n})}\le
    M\,\sup_{n\ge 1}\int_{\Omega}\abs{Dv_{n}}\le M\,C.
  \end{displaymath}
  Thus and by~\eqref{eq:124}, convergence of
  $((\z_{n},Dv_{n}))_{n\ge 1}$ in the distributional sense
  yields~\eqref{eq:126}.
\end{proof}

%
%
%
%

\subsection{A Primer on Nonlinear Semigroups}
\label{sec:semigroups}

Throughout this second part of the preliminary Section~\ref{sec:prelim},
suppose that $X$ is a Banach space with norm $\norm{\cdot}_{X}$,
$X^{\prime}$ its dual space,
$\langle\cdot,\cdot\rangle_{X^{\prime},X}$ the duality brackets on
$X^{\prime}\times X$, and let $I$
denote the \emph{identity} on $X$.\medskip

In this framework, an operator $A$ on $X$ is a possibly nonlinear
and multivalued mapping $A : X\to 2^{X}$. It is standard to identify
an operator $A$ on $X$ with its \emph{graph}
\begin{displaymath}
  A:=\Big\{(u,v)\in X\times X\,\Big\vert\, v\in Au\Big\}\quad
  \text{in $X\times X$}
\end{displaymath}
and so, one sees $A$ as a subset of $X\times X$. The set
$D(A):=\{u\in X\,\vert\,Au\neq \emptyset\}$ is called the
\emph{domain} of $A$, and
$\textrm{Rg}(A):=\bigcup_{u\in D(A)}Au\;\subseteq H$ the \emph{range}
of $A$.  Further, for an operator $A$ on $H$, the \emph{minimal
  section} $A^{\circ}$ of $A$ is given by
\begin{displaymath}
  A^{\circ} := \Big\{(u,v) \in A\,\Big\vert \,
  \norm{v}_{X}=\displaystyle\min_{w\in Au}\norm{w}_{X} \Big\}.
\end{displaymath}

\begin{definition}\label{def:quasi-accretive}
  For $\omega\in \R$, an operator $A$ on $X$ is called
  \emph{$\omega$-quasi $m$-accretive operator} on $X$ if $A+\omega I$
  is \emph{accretive}, that is, for every $(u,v)$,
  $(\hat{u},\hat{v})\in A$ and every $\lambda\ge 0$,
  \begin{displaymath}
    \norm{u-\hat{u}}_{X}\le \norm{u-\hat{u}+\lambda (\omega (u-\hat{u})+ v-\hat{v})}_{X}
  \end{displaymath}
  and if for some (or equiv., all) $\lambda>0$ satisfying
  $\lambda\,\omega<1$, the \emph{range condition}
  \begin{equation}
    \label{eq:29}
    Rg(I+\lambda A)=X
  \end{equation}
  holds.
\end{definition}

It is worth mentioning that in the case $X=H$ is a Hilbert
  space with inner product $(\cdot,\cdot)_{H}$, the notion of $A$
  being accretive is equivalent to $A$ being \emph{monotone}; that is,
 \begin{displaymath}
  (\hat{v}-v,\hat{u}-u)_{H}\ge 0\qquad\text{ for all
    $(u,v)$, $(\hat{u},\hat{v})\in A$.}
\end{displaymath}
If $A$ is a monotone operator on $H$ then $A$ is called \emph{maximal
  monotone} if $A$ is monotone and, in addition, the range condition~\eqref{eq:29}
holds.

Another important class of operators is given by the \emph{sub-differential operator}
\begin{equation}
  \label{eq:9}
  \partial_{X\times X^{\prime}}\phi:=
  \Bigg\{(u,x')\in X\times X^{\prime}\,\Bigg\vert\,
  \begin{array}[c]{c}
\langle
    x',v-u\rangle_{X^{\prime},X}\le \phi(v)-\phi(u)\\
    \hspace{2.4cm}\text{ for all }v\in X
  \end{array}
\Bigg\}
\end{equation}
of a proper, convex and lower semicontinuous function
$\phi : X\to (-\infty,+\infty]$ on Banach space $X$. If $X=H$ is a Hilbert space, then
after identifying the dual space $H^{\prime}$ with $H$, the
sub-differential operator
$\partial_{H\times H^{\prime}}\phi$ becomes a
maximal monotone operator $A$ on $H$. In this setting we simply write
$\partial_{H}\phi$ for the operator $\partial_{H\times H^{\prime}}\phi$. In fact, $\partial_{H}\phi$
satisfies the following stronger type of monotonicity.

\begin{definition}
  An operator $A$ on $H$ is called \emph{cyclically monotone} if for every
  finite sequence $((u_{i},v_{i}))_{i=0}^{n}\subseteq A$, one has that
  \begin{displaymath}
    (u_{0}-u_{n},v_{n})_{H}+\sum_{j=1}^{n}(u_{j}-u_{j-1},v_{j-1})_{H}\le 0.
  \end{displaymath}
\end{definition}

If $A=\partial_{H}\phi$ has a sub-differential structure, then $A$ is cyclically monotone
(cf.,~\cite[(2.1) in Section~2]{MR193549}). We recall this result in
the next theorem.

\begin{theorem}[{Rockafellar~\cite{MR193549}, cf.,\cite[Th\'eor\`eme~2.5 \&
    Corollaire~2.8]{MR0348562}}]
  \label{thm:cycl-monotone}
  Let $A$ be a monotone operator on a Hilbert space $H$. Then, the following
  statements hold.
  \begin{enumerate}[(1.)]
  \item $A$ is cyclically monotone if and only if there is a proper,
    convex and lower semicontinuous function
    $\phi : H\to (-\infty,+\infty]$ such that
    $A\subseteq \partial_{H}\phi$.
  \item\label{thm:cycl-monotone-2} $A$ is maximal cyclically monotone if and
    only if there is a proper, convex and lower semicontinuous
    function $\phi : H\to (-\infty,+\infty]$ such that
    \begin{equation}
      \label{eq:108}
      A= \partial_{H}\phi.
    \end{equation}
    Moreover, the function $\phi$ in~\eqref{eq:108} is unique up to an
    arbitrary additive constant.
  \end{enumerate}
\end{theorem}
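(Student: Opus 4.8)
The plan is to prove the two equivalences separately, the constructive ``$\Rightarrow$'' of~(1) carrying the real content and the rest following by telescoping sums and the soft interplay between cyclic and ordinary maximal monotonicity. I begin with the easy directions of~(1). If $A\subseteq\partial_H\phi$, then for any finite chain $((u_j,v_j))_{j=0}^n\subseteq A$ the subgradient inequalities $(v_{j-1},u_j-u_{j-1})_H\le\phi(u_j)-\phi(u_{j-1})$, $j=1,\dots,n$, together with $(v_n,u_0-u_n)_H\le\phi(u_0)-\phi(u_n)$, may be added; the right-hand sides telescope to zero and leave exactly the cyclic monotonicity inequality in the statement. Hence every sub-differential is cyclically monotone, which is the ``$\Leftarrow$'' of~(1).

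For the constructive direction I fix a base point $(u_0,v_0)\in A$ (the case $A=\emptyset$ being trivial) and introduce Rockafellar's functional
\begin{equation*}
  \phi(u):=\sup\left\{(u-u_n,v_n)_H+\sum_{j=1}^n(u_j-u_{j-1},v_{j-1})_H\right\},
\end{equation*}
where the supremum runs over all $n\ge 0$ and all chains $(u_1,v_1),\dots,(u_n,v_n)\in A$ appended to $(u_0,v_0)$, the value for $n=0$ being $(u-u_0,v_0)_H$. Each admissible expression is affine and continuous in $u$, so $\phi$ is convex and lower semicontinuous. Evaluating at $u=u_0$, cyclic monotonicity forces every chain value to be $\le 0$ while the trivial chain attains $0$; hence $\phi(u_0)=0$ and $\phi$ is proper. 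To obtain $A\subseteq\partial_H\phi$, I take $(w,z)\in A$ and, given any chain realizing a value close to $\phi(w)$, append the pair $(w,z)$ and re-evaluate at an arbitrary $u$; a direct telescoping computation shows the new chain value equals $(z,u-w)_H$ plus the old one, whence $\phi(u)\ge\phi(w)+(z,u-w)_H$ for all $u\in H$. This is precisely the defining inequality of $\partial_H\phi$, and it simultaneously shows $\phi(w)<+\infty$.

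Statement~(2) then follows with little extra work. For ``$\Rightarrow$'', if $A$ is maximal cyclically monotone then part~(1) provides a proper, convex, lower semicontinuous $\phi$ with $A\subseteq\partial_H\phi$; since $\partial_H\phi$ is itself cyclically monotone, maximality of $A$ forces $A=\partial_H\phi$. For ``$\Leftarrow$'', $A=\partial_H\phi$ is cyclically monotone by~(1) and, as recalled earlier in this section, is maximal monotone; any cyclically monotone extension of $A$ is in particular monotone, so maximal monotonicity already forces it to coincide with $A$, and $A$ is maximal cyclically monotone.

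It remains to prove uniqueness of $\phi$ up to an additive constant, which I expect to be the main obstacle. Given $A=\partial_H\phi=\partial_H\psi$ and a common base point $(u_0,v_0)\in A$, the subgradient inequalities for $\phi$ bound every chain value evaluated at $u$ by $\phi(u)-\phi(u_0)$, so the Rockafellar functional $f$ built from $A$ satisfies $f\le\phi-\phi(u_0)$ and, symmetrically, $f\le\psi-\psi(u_0)$. The delicate point is the reverse inequality $f\ge\phi-\phi(u_0)$, that is, that the supremum over chains actually recovers $\phi$; this requires a Br{\o}ndsted--Rockafellar type approximation of an arbitrary point of $D(\phi)$ by points of $D(\partial_H\phi)$ along which the values of $\phi$ converge, so that suitable two-element chains attain $\phi(u)-\phi(u_0)$ in the limit. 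Granting this recovery, $f=\phi-\phi(u_0)=\psi-\psi(u_0)$, and therefore $\phi-\psi$ equals the constant $\phi(u_0)-\psi(u_0)$, completing the proof.
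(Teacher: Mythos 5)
The paper itself offers no proof of this theorem: it is quoted as a classical result of Rockafellar (with Br\'ezis's book as secondary reference), and the only trace of an argument in the paper is that the sup-formula you use is recalled, again with a citation, inside the proof of the homogeneous refinement (Theorem~\ref{thm:cycl-monotone-homogeneous}). Measured against that classical argument, your treatment of the two equivalences is correct and complete: the telescoping argument showing that $A\subseteq\partial_{H}\phi$ forces cyclic monotonicity, the Rockafellar functional anchored at a base point $(u_{0},v_{0})\in A$, the verification $\phi(u_{0})=0$ (hence properness and lower semicontinuity as a supremum of continuous affine functions), the append-a-pair computation giving $\phi(u)\ge\phi(w)+(z,u-w)_{H}$ for every $(w,z)\in A$, and the two soft maximality arguments in (2.), where for the direction ``$\Leftarrow$'' you legitimately invoke the maximal monotonicity of $\partial_{H}\phi$, a fact the paper recalls earlier in the same section.

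The genuine gap is the ``Moreover'' clause: uniqueness of $\phi$ up to an additive constant, which you do not prove. Your reduction is fine as far as it goes ($f\le\phi-\phi(u_{0})$ and $f\le\psi-\psi(u_{0})$ by telescoping), but the entire content of the assertion is the reverse inequality $f\ge\phi-\phi(u_{0})$, i.e.\ that chains through $D(\partial_{H}\phi)$ recover the values of $\phi$ on all of $D(\phi)$, and this you explicitly ``grant''. It cannot be extracted from what you already have: you do know that $\partial_{H}f=\partial_{H}\phi=\partial_{H}\psi$ (all three are monotone extensions of the maximal monotone $A$), but passing from equal subdifferentials to functions differing by a constant is precisely the uniqueness theorem being proved, so that route is circular; and a Br{\o}ndsted--Rockafellar argument along segments is genuinely delicate in infinite dimensions, since the segment from $u_{0}$ to $u$ need not meet $D(\partial_{H}\phi)$ at all. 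The standard way to close the gap---essentially the argument in the cited part of Br\'ezis's book---is via Moreau--Yosida regularization: $\partial_{H}\phi=\partial_{H}\psi$ forces the resolvents, hence the Yosida approximations, to coincide; the Yosida approximation is the Fr\'echet gradient of the Moreau envelope, so $\nabla\phi_{\lambda}=\nabla\psi_{\lambda}$ on all of $H$ and $\phi_{\lambda}-\psi_{\lambda}\equiv c_{\lambda}$; evaluating at a point of $D(A)$ shows $c_{\lambda}$ converges, and letting $\lambda\downarrow 0$, where $\phi_{\lambda}\uparrow\phi$ and $\psi_{\lambda}\uparrow\psi$ pointwise, yields $\phi=\psi+c$ everywhere. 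As written, your proposal establishes statements (1.) and (2.) but not the uniqueness assertion, which the paper actually uses later (in the proof of Theorem~\ref{thm:cycl-monotone-homogeneous}).
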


One can sharpen the statement~\eqref{thm:cycl-monotone-2} in
Theorem~\ref{thm:cycl-monotone} for homogeneous operators $A$ of
degree $\alpha\in \R$.

\begin{definition}\label{def:homogeneous-operators}
  An operator $A$ on $X$ is called \emph{homogeneous of order
    $\alpha\in \R$} if $(0,0)\in A$ and
  for every $u\in D(A)$ and $\lambda\ge 0$, one has that
  $\lambda u\in D(A)$ and
\begin{equation}
  \label{eq:10}
  A(\lambda u)=\lambda^{\alpha}Au.
\end{equation}
Similarly, we call a functional $\phi : X\to (-\infty,\infty]$
\emph{homogeneous of order $\alpha\in \R$} if $0\in D(\phi)$ with
$\phi(0)=0$ and for every $u\in D(\varphi)$ and $\lambda\ge 0$, one
has that $\lambda u\in D(\phi)$ and
\begin{displaymath}
  \phi(\lambda u)=\lambda^{\alpha}\,\phi(u).
\end{displaymath}
\end{definition}

\begin{theorem}
  \label{thm:cycl-monotone-homogeneous}
  Let $A$ be a homogeneous operator on a Hilbert space $H$ of order $\alpha\in
  \R$. Then $A$ is maximal cyclically monotone if and
    only if \eqref{eq:108} holds for a unique proper, convex, lower semicontinuous
   $\phi : H\to [0,+\infty]$ satisfying
    \begin{equation}
      \label{eq:109}
      \phi(0)=0\quad\text{ and }\quad
      \phi(\lambda u)=\lambda^{\alpha+1}\phi(u)\quad\text{ for
        all $u\in D(A)$.}
    \end{equation}
\end{theorem}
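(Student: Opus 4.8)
The plan is to treat the two implications separately, with essentially all of the work lying in the forward direction and in pinning $\phi$ down uniquely. For the \emph{if} direction, suppose $A=\partial_{H}\phi$ for some proper, convex, lower semicontinuous $\phi:H\to[0,+\infty]$ satisfying~\eqref{eq:109}. Then $A$ is maximal cyclically monotone immediately by the second statement of Theorem~\ref{thm:cycl-monotone}; the scaling properties in~\eqref{eq:109} play no role here. So I would concentrate on the \emph{only if} direction.

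Assume therefore that $A$ is maximal cyclically monotone. By the second statement of Theorem~\ref{thm:cycl-monotone} there is a proper, convex, lower semicontinuous $\phi:H\to(-\infty,+\infty]$ with $A=\partial_{H}\phi$, unique up to an additive constant. Since $A$ is homogeneous of order $\alpha$, one has $(0,0)\in A=\partial_{H}\phi$, so the subgradient inequality~\eqref{eq:9} at base point $0$ reads $\phi(v)\ge\phi(0)$ for all $v\in H$; hence $\phi$ attains its minimum at $0$. Subtracting the constant $\phi(0)$, I normalize so that $\phi(0)=0$, which forces $\phi\ge 0$ and fixes the additive constant. This already yields the uniqueness assertion, since any $\phi$ satisfying~\eqref{eq:108} and~\eqref{eq:109} has $\phi(0)=0$ and thus coincides with this normalization.

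It remains to derive the homogeneity $\phi(\lambda u)=\lambda^{\alpha+1}\phi(u)$ for $u\in D(A)$ and $\lambda\ge 0$. Fix $u\in D(A)$ and $v\in Au$, set $c:=\langle v,u\rangle_{H}$, and consider $f(\lambda):=\phi(\lambda u)$ on $[0,\infty)$, which is convex, nonnegative, and finite (finite because $\lambda u\in D(A)\subseteq D(\phi)$ by the homogeneity of $A$). Homogeneity of $A$ gives $\lambda^{\alpha}v\in A(\lambda u)=\partial_{H}\phi(\lambda u)$, so testing the subgradient inequality against $\mu u$ produces $f(\mu)\ge f(\lambda)+\lambda^{\alpha}c\,(\mu-\lambda)$ for all $\mu,\lambda\ge 0$; that is, $\lambda^{\alpha}c\in\partial f(\lambda)$ for every $\lambda>0$. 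I would record two auxiliary facts: first, applying~\eqref{eq:9} at base point $u$ with test point $0$ gives $c\ge\phi(u)\ge 0$; second, lower semicontinuity of $\phi$ together with convexity of $f$ forces right-continuity of $f$ at $0$ (convexity yields $\limsup_{\lambda\to 0^{+}}f(\lambda)\le f(0)$, while lower semicontinuity gives the reverse inequality), so that the value $f(0)=0$ is a genuine one-sided limit.

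Finally I split according to the sign of $\alpha$, and this dichotomy, together with the passage to the limit at $0$, is the one delicate point. If $\alpha\ge 0$, then $\lambda\mapsto\lambda^{\alpha}c$ is continuous on $(0,\infty)$ and, being sandwiched between the one-sided derivatives of the convex function $f$, equals $f'(\lambda)$ at every point of differentiability; integrating $f'(\tau)=\tau^{\alpha}c$ over a compact subinterval and letting the left endpoint tend to $0$ (legitimate because $\alpha>-1$ makes $\tau^{\alpha}$ integrable near $0$ and $f$ is right-continuous there) gives $f(\lambda)=\tfrac{c}{\alpha+1}\lambda^{\alpha+1}$, and evaluating at $\lambda=1$ identifies $c=(\alpha+1)\phi(u)$, whence $f(\lambda)=\lambda^{\alpha+1}\phi(u)$. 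If instead $\alpha<0$, then monotonicity of $\partial f$ applied to the subgradients $\lambda_{1}^{\alpha}c$ and $\lambda_{2}^{\alpha}c$ for $\lambda_{1}<\lambda_{2}$ forces $(\lambda_{1}^{\alpha}-\lambda_{2}^{\alpha})c\le 0$; since $\lambda_{1}^{\alpha}>\lambda_{2}^{\alpha}>0$ this gives $c\le 0$, which combined with $c\ge\phi(u)\ge 0$ yields $c=0$ and $\phi(u)=0$. The subgradient inequality then reduces to $f(\mu)\ge f(\lambda)$ for all $\mu,\lambda\ge 0$, so $f$ is constant and $f(0)=0$ gives $f\equiv 0=\lambda^{\alpha+1}\phi(u)$. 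In either case~\eqref{eq:109} holds, completing the proof; I expect the main obstacle to be precisely this organisation of the sign dichotomy and the justification of the limit at $0$ through the interplay of convexity and lower semicontinuity.
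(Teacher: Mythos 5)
Your proof is correct, but it takes a genuinely different route from the paper's. The paper works directly with Rockafellar's explicit formula for the potential, $\phi(u)=\sup\big\{(u-u_{n},v_{n})_{H}+\sum_{j=1}^{n}(u_{j}-u_{j-1},v_{j-1})_{H}\big\}$, the supremum running over finite chains $((u_{i},v_{i}))_{i=0}^{n}\subseteq A$; since for $\lambda>0$ the homogeneity of $A$ makes the scaling $(u_{i},v_{i})\mapsto(\lambda u_{i},\lambda^{\alpha}v_{i})$ a bijection of chains in $A$ onto chains in $A$, each defining expression for $\phi(\lambda u)$ is exactly $\lambda^{\alpha+1}$ times one for $\phi(u)$, and both inequalities $\phi(\lambda u)\le\lambda^{\alpha+1}\phi(u)$ and $\phi(\lambda u)\ge\lambda^{\alpha+1}\phi(u)$ drop out at once, uniformly in $\alpha\in\R$ and with no case distinction. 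You instead take the potential abstractly from Theorem~\ref{thm:cycl-monotone}, normalize it by $\phi(0)=0$ (legitimate, since $(0,0)\in A=\partial_{H}\phi$ makes $0$ a minimizer), and then run a one-dimensional convex-analysis argument along each ray: the scaled subgradient relation $\lambda^{\alpha}\langle v,u\rangle_{H}\in\partial f(\lambda)$ for $f(\lambda):=\phi(\lambda u)$ determines $f$ by integration of $f'(\tau)=\tau^{\alpha}\langle v,u\rangle_{H}$ when $\alpha\ge 0$, while for $\alpha<0$ the monotonicity of $\partial f$ forces $\langle v,u\rangle_{H}=0$ and hence $f\equiv 0$. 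Both arguments are complete — your handling of the limit at $\lambda=0$ via convexity plus lower semicontinuity, and of the sign dichotomy, is sound — and the uniqueness argument (the additive constant is fixed by $\phi(0)=0$) is the same in both. What the paper's route buys is brevity and uniformity in $\alpha$; what yours buys is that it never needs the explicit sup-formula, it exposes the structural fact that for $\alpha<0$ the potential must vanish on the cone spanned by $D(A)$ (so only normal-cone-type operators can arise), and along the way it establishes $\langle v,u\rangle_{H}=(\alpha+1)\phi(u)$ for $(u,v)\in A$, which is precisely the Euler-type identity \eqref{eq:110} that the paper proves separately in the proposition following this theorem.
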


\begin{proof}
  By Theorem~\ref{thm:cycl-monotone}, we have that $A$ is cyclically
  monotone if and only if there is a proper, convex and lower semicontinuous
    function $\phi : H\to [0,+\infty]$ such that \eqref{eq:108}
    holds. Moreover, the functional $\phi$ is given by
  \begin{displaymath}
    \phi(u):=\sup_{n\in \N}\sup_{((u_{i},v_{i}))_{i=0}^{n}\subseteq A}
    \Big\{(u-u_{n},v_{n})_{H}+\sum_{j=1}^{n}(u_{j}-u_{j-1},v_{j-1})_{H}\Big\},
    u\in H,
  \end{displaymath}
  (cf. Rockafellar~\cite{MR193549}). Thus, it remains to
 verify that this functional $\phi$ satisfies
  $\phi\ge 0$ on $H$ and~\eqref{eq:109}. To see this, let
  $(u_{0},v_{0})=(0,0)$ in the definition of $\phi$. Then by the
  cyclic monotonicity of $A$, one has that $\phi(0)=0$. Moreover, since $(0,0)\in
  \partial_{H}\phi$, it follows from the convexity of $\phi$ that
  $\phi\ge 0$ on $H$. It is left to verify that
  \begin{equation}
    \label{eq:14}
    \phi(\lambda u)=\lambda^{\alpha+1}\phi(u)\quad\text{ for
        all $u\in D(A)$.}
  \end{equation}
  Note, since $D(A)\subseteq D(\phi)$, it follows from the
  homogeneity of $A$ that for every $u\in D(A)$ and $\lambda\ge 0$,
  one has $\lambda u\in D(\phi)$. Now, fix $u\in D(A)$ and $\lambda
  >0$. Since for every finite sequence
  $((u_{i},v_{i}))_{i=0}^{n}\subseteq A$, one has that $((\lambda
  u_{i},\lambda^{\alpha}v_{i}))_{i=0}^{n}\subseteq A$, it follows that
  \begin{align*}
    &(\lambda u-\lambda u_{n},\lambda^{\alpha+1}v_{n})_{H}
    +\sum_{j=1}^{n}(\lambda u_{j}-\lambda
      u_{j-1},\lambda^{\alpha+1}v_{j-1})_{H}\\
    &\qquad=\lambda^{\alpha+1}
       \,\Big\{(u-u_{n},v_{n})_{H}+\sum_{j=1}^{n}(u_{j}-u_{j-1},v_{j-1})_{H}\Big\}\\
    &\qquad \le \lambda^{\alpha+1}\,\phi(u)
  \end{align*}
  for every finite sequence
  $((u_{i},v_{i}))_{i=0}^{n}\subseteq A$. Hence, by taking the
  supremum over all $((u_{i},v_{i}))_{i=0}^{n}\subseteq A$ in the
  above inequality yields that
  \begin{displaymath}
    \phi(\lambda u)\le \lambda^{\alpha+1}\,\phi(u).
  \end{displaymath}
  On the other hand, for every finite sequence
  $((u_{i},v_{i}))_{i=0}^{n}\subseteq A$,
  \begin{align*}
    & \lambda^{\alpha+1}\,\Big\{(u-u_{n},v_{n})_{H}+\sum_{j=1}^{n}(u_{j}-u_{j-1},v_{j-1})_{H}\Big\}\\
    &\qquad
    =(\lambda u-\lambda u_{n},\lambda^{\alpha+1}v_{n})_{H}
    +\sum_{j=1}^{n}(\lambda u_{j}-\lambda u_{j-1},\lambda^{\alpha+1}v_{j-1})_{H}\\
    &\qquad \le \phi(\lambda u).
  \end{align*}
  Taking again the supremum over all
  $((u_{i},v_{i}))_{i=0}^{n}\subseteq A$ in this inequality leads to
  the reverse inequality
  $\lambda^{\alpha+1}\,\phi(u)\le \phi(\lambda u)$. The
  uniqueness of a convex, proper, lower semicontinuous functional
  $\phi$ satisfying~\eqref{eq:109} follows from the fact that
  $\phi(0)=0$. This completes
  the proof of this theorem.
\end{proof}

Convex functionals, which are homogeneous of order $\alpha+1$, $\alpha\in \R$,
admit the following important property.

\begin{proposition}
  Let $\phi : X \to [0,+\infty]$ be a convex, proper, and lower
  semicontinuous functional on a Banach space $X$ and suppose, there
  is an $\alpha\in \R$ such that \eqref{eq:109} holds. Then, one has that
  \begin{equation}
    \label{eq:110}
    (\alpha+1)\phi(u)=\langle x',u\rangle_{X^{\prime},X}\qquad
    \text{for every $(u,x^{\prime})\in \partial_{X\times X^{\prime}}\phi$.}
  \end{equation}
\end{proposition}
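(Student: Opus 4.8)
The plan is to feed the homogeneity relation~\eqref{eq:109} into the defining inequality~\eqref{eq:9} of the sub-differential, evaluated along the ray $v=\lambda u$, $\lambda>0$, and then to extract the identity by letting $\lambda\to 1$ from both sides. Conceptually, \eqref{eq:110} is just the Euler relation for positively homogeneous functions of degree $\alpha+1$, transported from the classical gradient to the sub-gradient setting, so the proof should be short and essentially computational.

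First I would fix a pair $(u,x^{\prime})\in\partial_{X\times X^{\prime}}\phi$ and record that the sub-gradient inequality in~\eqref{eq:9} can hold only if $\phi(u)<\infty$, since otherwise its right-hand side $\phi(v)-\phi(u)$ would be $-\infty$; hence $u\in D(\phi)$ and $\phi(u)$ is a finite, nonnegative real number, to which the homogeneity~\eqref{eq:109} applies. Choosing $v=\lambda u$ in~\eqref{eq:9} and rewriting $\phi(\lambda u)=\lambda^{\alpha+1}\phi(u)$ then gives
\begin{equation*}
  (\lambda-1)\,\langle x^{\prime},u\rangle_{X^{\prime},X}
  =\langle x^{\prime},\lambda u-u\rangle_{X^{\prime},X}
  \le \phi(\lambda u)-\phi(u)
  =(\lambda^{\alpha+1}-1)\,\phi(u)
\end{equation*}
for every $\lambda>0$.

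Second, I would split the parameter range at $\lambda=1$. For $\lambda>1$ I divide by $\lambda-1>0$ and let $\lambda\downarrow 1$; for $0<\lambda<1$ I divide by $\lambda-1<0$, which reverses the inequality, and let $\lambda\uparrow 1$. In both passages the difference quotient converges, namely $\lim_{\lambda\to 1}\tfrac{\lambda^{\alpha+1}-1}{\lambda-1}=\alpha+1$, since this is exactly the derivative of $t\mapsto t^{\alpha+1}$ at $t=1$. The first passage yields $\langle x^{\prime},u\rangle_{X^{\prime},X}\le(\alpha+1)\phi(u)$, the second yields the reverse inequality, and together they give~\eqref{eq:110}.

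I do not expect a genuine obstacle here; the delicate points are merely bookkeeping. The essential one is that a single side of $\lambda=1$ only bounds $\langle x^{\prime},u\rangle_{X^{\prime},X}$ from one direction, and it is precisely the sign change of $\lambda-1$ that is responsible for pinning down the equality — so one must test on both sides. The degenerate case $\phi(u)=0$ requires no separate treatment, as the displayed inequality then forces $(\lambda-1)\langle x^{\prime},u\rangle_{X^{\prime},X}\le 0$ for all $\lambda>0$, hence $\langle x^{\prime},u\rangle_{X^{\prime},X}=0$, again consistent with~\eqref{eq:110}. Note that convexity and lower semicontinuity of $\phi$ are not used beyond what is already encoded in the definition of $\partial_{X\times X^{\prime}}\phi$.
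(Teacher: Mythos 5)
Your proof is correct and follows essentially the same route as the paper's own: the paper tests the sub-differential inequality at $w=(1+t)u$ for $t\in(-1,1]$, obtains $t\,\langle x',u\rangle_{X^{\prime},X}\le\big((1+t)^{\alpha+1}-1\big)\,\phi(u)$, and then divides by $t$ on either side of $t=0$ and passes to the limit, which under the substitution $\lambda=1+t$ is exactly your argument. Your additional bookkeeping (that $(u,x^{\prime})\in\partial_{X\times X^{\prime}}\phi$ forces $u\in D(\phi)$, and that the case $\phi(u)=0$ needs no separate treatment) is correct and harmless.
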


\begin{proof}
  Let $(u,x')\in \partial_{X\times X^{\prime}}\phi$. Then, by the definition of the
  sub-differential $\partial_{X\times X^{\prime}}\phi$, one has that
  \begin{displaymath}
    \langle x',w-u\rangle_{X^{\prime},X}\le \phi(w)-\phi(u)
  \end{displaymath}
  for every $w\in H$. For $t\in (-1,1]$, let $w=(1+t)u$. Then
  by~\eqref{eq:109}, $w\in D(\phi)$, the previous inequality
  reduces to
  \begin{displaymath}
    t\,\langle x',u\rangle_{X^{\prime},X}\le \Big((1+t)^{\alpha+1}-1\Big)\,\varphi(u).
  \end{displaymath}
  From this, we can deduce that~\eqref{eq:110} holds by first taking $t>
  0$ then dividing by $t$ and subsequently sending $t\to 0+$, and the
  proceed in a similar way for $t<0$.
\end{proof}

Further, we need to introduce the notion of \emph{even} functionals.

\begin{definition}
  \label{def:even}
  We call a proper functional $\phi : X \to (-\infty,+\infty]$ with
  effective domain $D(\phi):=\{x\in X\,\vert\,\phi(x)<\infty\}$ in
  a Banach space $X$ to be \emph{even} if for every $x\in D(\phi)$,
  also $-x\in D(\phi)$ and $\phi(x)=\phi(-x)$.
\end{definition}

If $A$ is $\omega$-quasi $m$-accretive operator on a Banach space $X$, then by the classical
existence theory (see, e.g.,~\cite[Theorem~6.5]{Benilanbook},
or~\cite[Corollary~4.2]{MR2582280}), the first-order Cauchy problem
\begin{equation}
  \label{eq:30}
  \begin{cases}
    \displaystyle\frac{\td u}{\dt}+A(u(t))\ni g(t) &\text{on $(0,T)$,}\\
    \phantom{\displaystyle\frac{\td u}{\dt}+A(}u(0)=u_{0};
  \end{cases}
\end{equation}
is well-posed for every $u_{0}\in \overline{D(A)}^{\mbox{}_{X}}$, and
$g\in L^{1}(0,T;X)$ in the following \emph{mild sense}.

\begin{definition}\label{def:mild}
  For given $u_{0}\in \overline{D(A)}^{\mbox{}_{X}}$ and
  $g\in L^{1}(0,T;X)$, a function $u\in C([0,T];X)$ is called a
\emph{mild solution} of Cauchy problem~\eqref{eq:30} if $u(0)=u_{0}$
and for every
$\varepsilon>0$, there is a \emph{partition}
$\tau_{\varepsilon} : 0=t_{0}<t_{1}<\cdots < t_{N}=T$ and a \emph{step
  function}
  \begin{displaymath}
    u_{\varepsilon,N}(t)=u_{0}\,\mathds{1}_{\{t=0\}}(t)+\sum_{i=1}^{N}u_{i}\,\mathds{1}_{(t_{i-1},t_{i}]}(t)
    \qquad\text{for every $t\in [0,T]$}
  \end{displaymath}
  satisfying
  \begin{align*}
   \bullet\qquad &t_{i}-t_{i-1}<\varepsilon\qquad\text{ for all $i=1,\dots,N$,}\\
    \bullet\qquad &\sum_{N=1}^{N}\int_{t_{i-1}}^{t_{i}}\norm{g(t)-\overline{g}_{i}}\,\dt<\varepsilon\qquad
      \text{where
      $\overline{g}_{i}:=\frac{1}{t_{i}-t_{i-1}}\int_{t_{i-1}}^{t_{i}}g(t)\,\dt$,}\\
   \bullet\qquad & \frac{u_{i}-u_{i-1}}{t_{i}-t_{i-1}}+A u_{i}\ni
                   \overline{g}_{i}
                   \qquad\text{ for all $i=1,\dots,N$,}
  \end{align*}
  and
  \begin{displaymath}
    \sup_{t\in [0,T]}\norm{u(t)-u_{\varepsilon,N}(t)}_{X}<\varepsilon.
  \end{displaymath}
\end{definition}

Mild solutions are limits of step functions which are not necessarily
differentiable in time. This leads to the notion of \emph{strong
  solution} to Cauchy problem~\eqref{eq:30}.

\begin{definition}
  \label{def:strong}
  For given $u_{0}\in \overline{D(A)}^{\mbox{}_{X}}$ and
  $h\in L^{1}(0,T;X)$, a function $u\in C([0,T];X)$ is called a
\emph{strong solution} of Cauchy problem~\eqref{eq:30} if
$u(0)=u_{0}$, and for a.e. $t\in (0,T)$, $u$ is
differentiable at $t$, $u(t)\in D(A)$, and $Au(t)\ni h(t)-\frac{\td u}{\dt}(t)$.
\end{definition}

Further, if $A$ is quasi $m$-accretive, then the family
$\{e^{-t A}\}_{t=0}^{T}$ of mappings $e^{-t A} :
\overline{D(A)}^{\mbox{}_{X}}\times L^{1}(0,T;X)\to
\overline{D(A)}^{\mbox{}_{X}}$ defined by
\begin{equation}
  \label{eq:46}
  e^{-t A}(u_{0},g):=u(t)\qquad\text{for every $t\in [0,T]$, $u_{0}\in
    \overline{D(A)}^{\mbox{}_{X}}$, $g\in L^{1}(0,T;X)$,}
\end{equation}
where $u$ is the unique mild solution of Cauchy problem~\eqref{eq:30},
belongs to the following class.

\begin{definition}\label{def:semigroup}
   Given a subset $C$ of $X$, a family $\{e^{-t A}\}_{t=0}^{T}$ of mapping $e^{-t A} : C\times L^{1}(0,T;X)\to
  C$ is called a \emph{strongly continuous semigroup of
    quasi-contractive mappings $e^{-t A}$} if $\{e^{-t A}\}_{t=0}^{T}$
  satisfies the following three properties:
  \begin{itemize}
  \item (\emph{semigroup property}) for every $(u_{0},f)\in \overline{D(A)}^{\mbox{}_{X}}\times L^{1}(0,T;X)$,
    \begin{displaymath}
      e^{-(t+s) A}(u_{0},h)=e^{-t A}(T_{s}(u_{0},h),h(s+\cdot))
    \end{displaymath}
    for every $t$, $s\in [0,T]$ with $t+s\le T$;
  \item (\emph{strong continuity}) for every
    $(u_{0},h)\in \overline{D(A)}^{\mbox{}_{X}}\times L^{1}(0,T;X)$,
    \begin{displaymath}
      \textrm{$t\mapsto e^{-t A}(u_{0},h)$ belongs to $C([0,T];X)$};
    \end{displaymath}
  \item (\emph{$\omega$-quasi contractivity}) $e^{-t A}$
    satisfies
    \begin{align*}
  &\norm{e^{-t A}(u_{0},g)-e^{-t A}(\hat{u}_{0},\hat{g})}_{X}\le
  e^{\omega t}\norm{u_{0}-\hat{u}_{0}}_{X}\\
  &\hspace{5cm}+\int_{0}^{t}e^{\omega (t-s)}\norm{g(s)-\hat{g}(s)}_{X}\,\ds
\end{align*}
  \end{itemize}
\end{definition}

Taking $g\equiv 0$ and only varying $u_{0}\in
\overline{D(A)}^{\mbox{}_{X}}$, defines by
\begin{equation}
  \label{eq:47}
  e^{-t A}u_{0}=e^{-t A}(u_{0},0)\qquad\text{for every $t\ge 0$,}
\end{equation}
a strongly continuous semigroup  $\{e^{-t A}\}_{t\ge 0}$ on $\overline{D(A)}^{\mbox{}_{X}}$ of
$\omega$-quasi contractions $e^{-t A} : \overline{D(A)}^{\mbox{}_{X}}\to \overline{D(A)}^{\mbox{}_{X}}$.
Given a family $\{e^{-t A}\}_{t\ge 0}$ of $\omega$-quasi contractions
$e^{-t A}$ on $\overline{D(A)}^{\mbox{}_{X}}$, then the operator
\begin{displaymath}
  A_{0}:=\Bigg\{(u_{0},v)\in X\times X\Bigg\vert\;\lim_{h\downarrow
    0}\frac{T_{h}(u_{0},0)-u_{0}}{h}=v\text{ in $X$}\Bigg\}
\end{displaymath}
is an $\omega$-quasi accretive well-defined mapping
$A_{0} : D(A_{0})\to X$ and called the \emph{infinitesimal generator}
of $\{e^{-t A}\}_{t\ge 0}$. If the Banach space $X$ and its dual space
$X^{\ast}$ are both uniformly convex
(see~\cite[Proposition~4.3]{MR2582280}), then one has that
\begin{displaymath}
  -A_{0}=A^{\!\circ},
\end{displaymath}
where $A^{\!\circ}$ is the \emph{minimal selection} of $A$ defined
by
\begin{equation}
  \label{eq:65}
  A^{\! \circ}:=\Big\{(u,v)\in
  A\,\Big\vert\big. \norm{v}_{X}=\inf_{\hat{v}\in Au}\norm{\hat{v}}_{X}\Big\}.
\end{equation}
For simplicity, we ignore the additional geometric
assumptions on the Banach space $X$, and refer to
the two families $\{e^{-t A}\}_{t=0}^{T}$ defined by~\eqref{eq:46} on
$\overline{D(A)}^{\mbox{}_{X}}\times L^{1}(0,T;X)$ and
$\{e^{-t A}\}_{t\ge 0}$ defined by~\eqref{eq:47} on
$\overline{D(A)}^{\mbox{}_{X}}$ as the \emph{semigroup generated by $-A$}.

%
%

\subsubsection{Completely accretive operators}
\label{sec:c-accretive-operators}

Here, we briefly recall the notion of completely accretive operators,
which was introduced by B\'enilan and Crandall~\cite{MR1164641} and
further developed in~\cite{CoulHau2017}.\medskip

We begin by introducing the framework of completely accretive
operators. Let $(\Sigma, \mathcal{B}, \mu)$ be a $\sigma$-finite
measure space, and $M(\Sigma,\mu)$ the space of $\mu$-a.e. equivalent
classes of measurable functions $u : \Sigma\to \R$. For
$u\in M(\Sigma,\mu)$, we write $[u]^+$ to denote $\max\{u,0\}$ and
$[u]^-=-\min\{u,0\}$.  We denote by
$L^{q}(\Sigma,\mu)$, $1\le q\le \infty$, the corresponding standard
Lebesgue space with norm
\begin{displaymath}
  \norm{\cdot}_{q}=
  \begin{cases}
    \displaystyle\left(\int_{\Sigma}\abs{u}^{q}\,\textrm{d}\mu\right)^{1/q} &
    \text{if $1\le q<\infty$,}\\[7pt]
    \inf\Big\{k\in [0,+\infty]\;\Big\vert\;\abs{u}\le k\text{
      $\mu$-a.e. on $\Sigma$}\Big\}
    & \text{if $q=\infty$.}
  \end{cases}
\end{displaymath}
For $1\le q<\infty$, we
identify the dual space $(L^{q}(\Sigma,\mu))'$ with
$L^{q^{\mbox{}_{\prime}}}(\Sigma,\mu)$,
where $q^{\mbox{}_{\prime}}$ is the conjugate exponent of $q$ given by
$1=\tfrac{1}{q}+\tfrac{1}{q^{\mbox{}_{\prime}}}$.

Now, let
  \begin{displaymath}
    J_0:= \Big\{ j : \R \rightarrow
    [0,+\infty]\;\Big\vert\big. \text{$j$ is convex, lower
      semicontinuous, }j(0) = 0 \Big\}.
  \end{displaymath}
  Then, for every $u$, $v\in M(\Sigma,\mu)$, we write
  \begin{displaymath}
  u\ll v \quad \text{if and only if}\quad \int_{\Sigma} j(u)
  \,\td\mu \le \int_{\Sigma} j(v) \, \td\mu\quad\text{for all $j\in J_{0}$.}
\end{displaymath}

With these preliminaries in mind, we can now state the following
definitions.

\begin{definition}
  A mapping $S : D(S)\to M(\Sigma,\mu)$ with domain $D(S)\subseteq
  M(\Sigma,\mu)$ is called a \emph{complete contraction} if
  \begin{displaymath}
    Su-S\hat{u}\ll u-\hat{u}\qquad\text{for every $u$, $\hat{u}\in D(S)$.}
  \end{displaymath}
\end{definition}

Now, we can state the
definition of completely accretive operators.

\begin{definition}
  \label{def:completely-accretive-operators}
  An operator $A$ on $M(\Sigma,\mu)$ is called \emph{completely
    accretive} if for every $\lambda>0$, the resolvent operator
  $J_{\lambda}$ of $A$ is a complete contraction, or equivalently, if
  for every $(u_1,v_1)$, $(u_{2},v_{2}) \in A$ and $\lambda >0$, one
  has that
  \begin{displaymath}
    u_1 - u_2 \ll u_1 - u_2 + \lambda (v_1 - v_2).
  \end{displaymath}
  If $X$ is a linear subspace of $M(\Sigma,\mu)$ and $A$ an operator
  on $X$, then $A$ is \emph{$m$-completely accretive on $X$} if $A$ is
  completely accretive and satisfies the \emph{range
  condition}
\begin{displaymath}
  \textrm{Rg}(I+\lambda A)=X\qquad\text{for some (or equivalently, for
    all) $\lambda>0$.}
\end{displaymath}
Further, for $\omega\in \R$, an operator $A$ on a linear subspace
$X\subseteq M(\Sigma,\mu)$ is called \emph{$\omega$-quasi
  ($m$)-completely accretive} in $X$ if $A+\omega I$ is
($m$)-completely accretive in $X$. Finally, an operator $A$ on a
linear subspace $X\subseteq M(\Sigma,\mu)$ is called \emph{quasi
  $m$-completely accretive} if there is some $\omega\in \R$ such that
$A+\omega I$ is $m$-completely accretive in $X$.
\end{definition}

Before stating a useful characterization of completely accretive
operators, we first need to introducing the following function
spaces. Let
\begin{displaymath}
    L^{1\cap \infty}(\Sigma,\mu):=L^{1}(\Sigma,\mu)\cap
    L^{\infty}(\Sigma,\mu)
    \text{ and }L^{1+\infty}(\Sigma,\mu):= L^1(\Sigma,\mu) +
  L^{\infty}(\Sigma,\mu)
\end{displaymath}
be the \emph{intersection} and the \emph{sum} space of
$L^1(\Sigma,\mu)$ and $L^{\infty}(\Sigma,\mu)$, which respectively
equipped with the norms
\begin{align*}
   \norm{u}_{1 \cap \infty}&:= \max\big\{ \norm{u}_{1},
                             \norm{u}_{\infty}\big\},\\
  \norm{u}_{1+\infty}&:= \inf \left\{ \Vert u_1 \Vert_1 + \Vert
  u_2 \Vert_{\infty} \Big\vert u = u_1 + u_2, \ u_1 \in L^1(\Sigma,\mu), u_2
  \in L^{\infty}(\Sigma,\mu) \right\}
\end{align*}
are Banach spaces. In fact, $L^{1\cap \infty}(\Sigma,\mu)$ and
$L^{1+\infty}(\Sigma,\mu)$ are respectively the smallest and the
largest of the rearrangement-invariant Banach function spaces
(cf~\cite[Chapter~3.1]{MR928802}). If $\mu(\Sigma)$ is finite, then
$L^{1+\infty}(\Sigma,\mu)=L^{1}(\Sigma,\mu)$ with equivalent norms, but
if $\mu(\Sigma)=\infty$ then
\begin{displaymath}
 \bigcup_{1\le q\le\infty}L^{q}(\Sigma,\mu) \subset L^{1+\infty}(\Sigma,\mu).
\end{displaymath}
Further, we will employ
the space
\begin{displaymath}
  L_0(\Sigma,\mu):= \left\{ u \in M(\Sigma,\mu) \;\Big\vert\,\Big.
    \int_{\Sigma} \big[\abs{u}
   - k\big]^+\,\td\mu < \infty \text{ for all $k > 0$} \right\},
\end{displaymath}
which equipped with the $L^{1+\infty}$-norm is a closed subspace of
$L^{1+\infty}(\Sigma,\mu)$. In fact, one has that (cf~\cite{MR1164641})
\begin{math}
  L_0(\Sigma,\mu) = \overline{L^1(\Sigma,\mu) \cap
  L^{\infty}(\Sigma,\mu)}^{\mbox{}_{1+\infty}}.
\end{math}
Since for every $k> 0$, the function
$T_{k}(s):=[\abs{s}-k]^+$ is a Lipschitz mapping $T_{k} : \R\to\R$
satisfying $T_{k}(0)=0$, and
by using Chebyshev's inequality, it is not difficult to see that
$L^{q}(\Sigma,\mu)\hookrightarrow L_0(\Sigma,\mu)$ for every
$1\le q<\infty$ (and $q=\infty$ if the measure $\mu(\Sigma)$ is finite).

%
%

\begin{proposition}[{\cite{MR1164641} for the case $\omega=0$, \cite{CoulHau2017}}]
  \label{prop:completely-accretive}
   Let $P_{0}$ denote the set of all functions $p\in C^{\infty}(\R)$
   satisfying $0\le T'\le 1$,
    $p'$  is compactly supported, and $x=0$ is not contained in the
   support $\textrm{supp}(p)$ of $p$. Then for $\omega\in \R$,
   an operator $A \subseteq L_{0}(\Sigma,\mu)\times
  L_{0}(\Sigma,\mu)$ is $\omega$-quasi completely accretive if and only if
   \begin{displaymath}
     \int_{\Sigma}p(u-\hat{u})(v-\hat{v})\,\dmu+\omega \int_{\Sigma}p(u-\hat{u})(u-\hat{u})\,\dmu\ge 0
   \end{displaymath}
   for every $p\in P_{0}$ and every $(u,v)$, $(\hat{u},\hat{v})\in A$.
 \end{proposition}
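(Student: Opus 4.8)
The plan is to first reduce the general statement to the case $\omega=0$, and then to prove that case by relating the order relation $\ll$ to the family $P_{0}$ through a convex-analytic argument. For the reduction, recall that $A$ is $\omega$-quasi completely accretive precisely when $B:=A+\omega I$ is completely accretive, and that $B\subseteq L_{0}(\Sigma,\mu)\times L_{0}(\Sigma,\mu)$ since $L_{0}(\Sigma,\mu)$ is a linear space. A pair lies in $B$ if and only if it has the form $(u,v+\omega u)$ with $(u,v)\in A$; hence, writing $w:=u-\hat{u}$ for two elements $(u,v),(\hat{u},\hat{v})\in A$, the quantity tested for $B$ is
\[
  \int_{\Sigma}p(w)\big[(v-\hat v)+\omega\,w\big]\,\dmu
  =\int_{\Sigma}p(w)(v-\hat v)\,\dmu+\omega\int_{\Sigma}p(w)\,w\,\dmu,
\]
which is exactly the expression in the statement. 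Thus it suffices to prove the equivalence for $\omega=0$, namely that $A$ is completely accretive if and only if $\int_{\Sigma}p(w)(v-\hat v)\,\dmu\ge 0$ for all $p\in P_{0}$ and all $(u,v),(\hat u,\hat v)\in A$.

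Before that, I would record two elementary facts used throughout. First, for $p\in P_{0}$ the primitive $j_{p}(s):=\int_{0}^{s}p(r)\,\dr$ belongs to $J_{0}$: it is smooth, $j_{p}(0)=0$, convex because $p$ is nondecreasing, and nonnegative because $p\le 0$ on $(-\infty,-\delta]$ and $p\ge 0$ on $[\delta,\infty)$ while $p\equiv 0$ on $(-\delta,\delta)$ (as $0\notin\supp(p)$). Second, since $p$ vanishes near $0$, the set $\{p(w)\neq 0\}$ is contained in $\{\abs{w}\ge\delta\}$, which has finite measure because $w\in L_{0}(\Sigma,\mu)$; as every $L_{0}$-function is integrable on a set of finite measure, the integral $\int_{\Sigma}p(w)(v-\hat v)\,\dmu$ is well defined and finite, and so is $\int_{\Sigma}j_{p}(w)\,\dmu$ (using $j_{p}(w)\le\norm{p}_{\infty}[\abs{w}-\delta]^{+}$). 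Writing $z:=v-\hat v$, complete accretivity means $w\ll w+\lambda z$, i.e. $\int_{\Sigma}j(w)\,\dmu\le\int_{\Sigma}j(w+\lambda z)\,\dmu$ for every $j\in J_{0}$ and every $\lambda>0$.

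For the forward implication I would differentiate in $\lambda$. Taking $j=j_{p}$ yields $\int_{\Sigma}[j_{p}(w+\lambda z)-j_{p}(w)]\,\dmu\ge 0$ for all $\lambda>0$; dividing by $\lambda$, the difference quotient converges pointwise to $p(w)z$, and by convexity it is squeezed, for $\lambda\in(0,1]$, between the integrable bounds $p(w)z$ and $j_{p}(w+z)-j_{p}(w)$. Dominated convergence then gives $\int_{\Sigma}p(w)z\,\dmu\ge 0$. For the converse I would use the tangent-line inequality $j_{p}(w+\lambda z)\ge j_{p}(w)+\lambda\,p(w)z$, so that integrating and invoking the hypothesis produces $\int_{\Sigma}j_{p}(w+\lambda z)\,\dmu\ge\int_{\Sigma}j_{p}(w)\,\dmu$. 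This establishes $\int j(w)\le\int j(w+\lambda z)$ for every $j=j_{p}$ with $p\in P_{0}$; since the hypothesis is stable under multiplying $p$ by positive constants, it extends to all nonnegative multiples, and a monotone approximation shows every $j\in J_{0}$ is an increasing pointwise limit of such primitives. Monotone convergence then upgrades the inequality to all $j\in J_{0}$, giving $w\ll w+\lambda z$ and hence complete accretivity.

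The step I expect to be most delicate is the approximation in the converse direction: one must verify that the cone generated by $P_{0}$ (under positive scaling and monotone limits of the associated primitives $j_{p}$) exhausts $J_{0}$ — in particular that the normalisation $0\le p'\le 1$ imposes no genuine restriction once positive multiples are admitted, and that functions $j\in J_{0}$ which fail to vanish near the origin or grow superlinearly are nonetheless captured by a suitably nested family of truncations. The integrability bookkeeping in the forward direction, justifying differentiation under the integral sign from $w,z\in L_{0}(\Sigma,\mu)$ together with the finite measure of the super-level sets, is routine but must be carried out with care.
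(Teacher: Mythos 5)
The paper itself gives no proof of this proposition---it is recalled verbatim from \cite{MR1164641} (case $\omega=0$) and \cite{CoulHau2017}---so your argument has to stand on its own; its skeleton is indeed the standard one from those references. Two of your three steps are complete and correct. The reduction to $\omega=0$ is immediate from the definition ($A$ is $\omega$-quasi completely accretive iff $B=A+\omega I$ is completely accretive, and testing $B$ produces exactly the displayed expression). The forward implication is also sound: for $p\in P_{0}$ the primitive $j_{p}\in J_{0}$, the set $\{\abs{w}\ge\delta\}$ has finite measure by Chebyshev since $w\in L_{0}(\Sigma,\mu)$, the bounds $p(w)z\le \tfrac{1}{\lambda}\bigl(j_{p}(w+\lambda z)-j_{p}(w)\bigr)\le j_{p}(w+z)-j_{p}(w)$ for $\lambda\in(0,1]$ follow from convexity, both bounds are integrable, and dominated convergence gives $\int_{\Sigma}p(w)z\,\dmu\ge 0$.

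The gap is exactly where you flag it, and it is not cosmetic: the converse is the substantive half of the proposition, and your proof of it rests entirely on the unproved assertion that every $j\in J_{0}$ is an increasing pointwise limit of nonnegative multiples of primitives $j_{p}$, $p\in P_{0}$. As literally stated this is not even quite the right claim to aim for: the cone $\{c\,j_{p}\,:\,c>0,\ p\in P_{0}\}$ is not closed under maxima, and a general $j\in J_{0}$ may be $+\infty$-valued, nonsmooth, and strictly positive arbitrarily close to $0$, so realizing it as a monotone limit of \emph{single} cone elements is not routine. What the argument needs, and what must actually be proved, is that the class of $j$ for which $\int_{\Sigma}j(w)\,\dmu\le\int_{\Sigma}j(w+\lambda z)\,\dmu$ holds is closed under increasing pointwise limits (monotone convergence, values in $[0,\infty]$), contains the cone, and that $J_{0}$ is reached from the cone by iterating such limits. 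Concretely: (i) write $j=\sup_{i}\ell_{i}$ as a countable supremum of affine minorants (Fenchel--Moreau), and set $h_{m}:=[\max(\ell_{1},\dots,\ell_{m},0)-\tfrac{1}{m}]^{+}$; each $h_{m}$ is piecewise affine, convex, belongs to $J_{0}$, vanishes on a neighborhood of $0$ (it is continuous with $h_{m}(0)=0$), and $h_{m}\nearrow j$ pointwise; (ii) each such piecewise affine function is an increasing limit of cone elements, obtained by rounding its finitely many kinks from below at scales $\epsilon_{n}\downarrow 0$: the rounded function is smooth, convex, still vanishes near $0$, and its second derivative is supported in finitely many compact intervals, hence it equals $c\,j_{p}$ for some $p\in P_{0}$ and $c>0$. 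Two applications of monotone convergence then close the argument. (Alternatively, the route of \cite{MR1164641}: split $j$ into its parts on $[0,\infty)$ and $(-\infty,0]$, represent each part as $a[s]^{+}+\int_{(0,\infty)}[s-k]^{+}\,\td\nu(k)$ via its second-derivative measure $\nu$, and reduce by Tonelli to the hinge functions $[\pm s-k]^{+}$, which are easily monotone limits of cone elements.) Until one of these constructions is carried out, the ``if'' direction---and hence the proposition---remains unproved.
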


 The next proposition is quite useful for characterizing operators.

 \begin{proposition}[{\cite{MR1164641}}]\label{prop:characterization-of-A-L0}
   Let $X\subseteq L_{0}(\Sigma,\mu)$ be a normal Banach space and $A$
   a completely accretive operator in $X$ and let
   $\overline{A}^{\mbox{}_{L_{0}}}$ be the closure of $A$ in $L_{0}(\Sigma,\mu)$. If there is an
   $\lambda_{0}$ such that the range $\Rg(I+\lambda_{0}A)$ is dense in
   $L_{0}(\Sigma,\mu)$, then the operator
   \begin{displaymath}
     A_{X}:=\overline{A}^{\mbox{}_{L_{0}}}\cap (X\times X)
   \end{displaymath}
   is the unique $m$-completely accretive extension of $A$ in
   $X$. Moreover, $A_{X}$ can be characterized by
   \begin{displaymath}
     A_{X}=\Big\{(u,v)\in X\times X\,\Big\vert\, u - \hat{u} \ll u -
     \hat{u} + \lambda (v - \hat{v})\text{ for all
     }(\hat{u},\hat{v})\in A,\;\lambda>0\Big\}.
   \end{displaymath}
 \end{proposition}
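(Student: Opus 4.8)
The plan is to construct the $m$-completely accretive extension inside the large space $L_{0}(\Sigma,\mu)$ first, and only afterwards to ``descend'' to $X$ by exploiting its normality. First I would record that complete accretivity is stable under $L_{0}$-closure: since the relation $\ll$ is closed with respect to convergence in $L^{1+\infty}$ (from $a_{n}\to a$, $b_{n}\to b$ in $L_{0}$ with $a_{n}\ll b_{n}$ one deduces $a\ll b$, e.g.\ by passing to an a.e.-convergent subsequence and combining Fatou's lemma with the equi-integrability encoded in $\ll$), the operator $\overline{A}^{\mbox{}_{L_{0}}}$ is again completely accretive. Consequently its resolvent $J_{\lambda}=(I+\lambda A)^{-1}$ is a complete contraction, hence nonexpansive for $\norm{\cdot}_{1+\infty}$, on the set $\Rg(I+\lambda A)$.

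Next I would promote the density hypothesis to a genuine range condition in $L_{0}$. Because $J_{\lambda_{0}}$ is nonexpansive on the dense subset $\Rg(I+\lambda_{0}A)$ of $L_{0}(\Sigma,\mu)$, it extends uniquely to a nonexpansive map $\overline{J}_{\lambda_{0}} : L_{0}\to L_{0}$ whose graph lies in $\overline{A}^{\mbox{}_{L_{0}}}$; thus $\Rg(I+\lambda_{0}\overline{A}^{\mbox{}_{L_{0}}})=L_{0}(\Sigma,\mu)$, and by the standard equivalence for accretive operators the range condition then holds for every $\lambda>0$. The decisive step, and the one where the normality of $X$ is indispensable, is to show that this solvability persists on $X$. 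Given $f\in X$, put $u:=\overline{J}_{\lambda_{0}}f$ and fix any $(w_{0},z_{0})\in A$ (note $A\neq\emptyset$, since its resolvent range is dense in the nontrivial space $L_{0}$); then $g_{0}:=w_{0}+\lambda_{0}z_{0}\in X$ because $A\subseteq X\times X$. The complete-contraction inequality yields $u-w_{0}=\overline{J}_{\lambda_{0}}f-\overline{J}_{\lambda_{0}}g_{0}\ll f-g_{0}$, and since $f-g_{0}\in X$, normality forces $u-w_{0}\in X$; hence $u\in X$ and $v:=\lambda_{0}^{-1}(f-u)\in X$. Therefore $(u,v)\in\overline{A}^{\mbox{}_{L_{0}}}\cap(X\times X)=A_{X}$ solves $u+\lambda_{0}v=f$, so $\Rg(I+\lambda_{0}A_{X})=X$ and $A_{X}$ is $m$-completely accretive in $X$.

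For the characterization, write $B$ for the set on the right-hand side. The inclusion $A_{X}\subseteq B$ is immediate from the complete accretivity of $\overline{A}^{\mbox{}_{L_{0}}}\supseteq A$. Conversely, let $(u,v)\in B$; by the range condition choose $(w,z)\in A_{X}$ with $w+\lambda_{0}z=u+\lambda_{0}v$. Approximating $(w,z)\in\overline{A}^{\mbox{}_{L_{0}}}$ by elements of $A$ and using the closedness of $\ll$, the defining inequality for $B$ upgrades so as to hold against $(w,z)$, whence $u-w\ll u-w+\lambda_{0}(v-z)=0$; testing with $j(s)=\abs{s}$ gives $u=w$ and then $v=z$, so $(u,v)\in A_{X}$.

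Finally, for uniqueness, any $m$-completely accretive extension $\tilde{A}\supseteq A$ in $X$ satisfies $\tilde{A}\subseteq B=A_{X}$ by its own complete accretivity, and its range condition combined with the complete accretivity of $A_{X}$ forces the reverse inclusion exactly as in the previous paragraph, giving $\tilde{A}=A_{X}$. I expect the transfer of the range condition from $L_{0}$ to $X$ to be the real obstacle: the stability of $\ll$ under closure and the algebraic manipulations are routine bookkeeping, whereas that step is precisely where the normality hypothesis must be spent to keep the resolvent value $\overline{J}_{\lambda_{0}}f$ inside $X$.
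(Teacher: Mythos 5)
The paper itself offers no proof of this proposition: it is quoted from B\'enilan--Crandall \cite{MR1164641}, so the only meaningful comparison is with that reference, and your argument is in substance the standard one from there. Your architecture is sound and each load-bearing step is correct: the $L_{0}$-closure $\overline{A}^{\mbox{}_{L_{0}}}$ is completely accretive because $\ll$ is closed under $L^{1+\infty}$-convergence; the complete contraction $J_{\lambda_{0}}$ extends from the dense set $\Rg(I+\lambda_{0}A)$ to all of $L_{0}(\Sigma,\mu)$, whence $\Rg(I+\lambda_{0}\overline{A}^{\mbox{}_{L_{0}}})=L_{0}(\Sigma,\mu)$; normality of $X$ is spent exactly where it must be, namely in passing from $\overline{J}_{\lambda_{0}}f-w_{0}\ll f-g_{0}\in X$ to $\overline{J}_{\lambda_{0}}f\in X$, which transfers the range condition to $X$; and both the characterization and the uniqueness follow from the resolve-and-compare step $u-w\ll u-w+\lambda_{0}(v-z)=0$, killed by testing with $j(s)=\abs{s}$.

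The one place where your justification is genuinely too thin is the closedness of $\ll$, which you invoke three times and which therefore carries the whole proof. Fatou's lemma along an a.e.-convergent subsequence gives lower semicontinuity on \emph{both} sides, i.e. $\int_{\Sigma}j(a)\,\dmu\le\liminf_{n}\int_{\Sigma}j(a_{n})\,\dmu$ and likewise for $b$; but to conclude $a\ll b$ from $a_{n}\ll b_{n}$ you need the upper-semicontinuity statement $\liminf_{n}\int_{\Sigma}j(b_{n})\,\dmu\le\int_{\Sigma}j(b)\,\dmu$, which Fatou does not provide and which can fail for a general lower semicontinuous convex $j$ without further input. The correct repair, and the one in \cite{MR1164641}, is to reduce to the truncations $j_{k}(s)=(s-k)^{+}$ and $j_{k}(s)=(s+k)^{-}$, $k>0$, which generate the relation $\ll$ by monotone approximation: writing $b_{n}-b=f_{n}+g_{n}$ with $\norm{f_{n}}_{1}+\norm{g_{n}}_{\infty}\to 0$, the $L^{\infty}$-part of the error is absorbed by shifting the truncation level $k$, the $L^{1}$-part is added on, and dominated convergence closes the chain (here the hypothesis $a,b\in L_{0}(\Sigma,\mu)$ matters, since it makes $(b-\kappa)^{+}$ integrable for $\kappa>0$). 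With that lemma either proved this way or simply cited from \cite{MR1164641}, your proof is complete and coincides with the original argument.
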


%
%
%
%

\section{The Dirichlet Problem for the $1$-Laplace operator}
\label{sec:DP}

In this section, we review the current state of knowledge about
existence and uniqueness to the singular Dirichlet problem
\begin{equation}\label{eq:21N}
    \begin{cases}
      \displaystyle -\divi\Big(\frac{Du}{|Du|}\Big)=0 & \text{in $\Omega$,}\\
      \phantom{-\divi\Big(\frac{Du}{|Du|}\Big)}  u= h & \text{on $\partial\Omega$,}
    \end{cases}
\end{equation}
for given boundary data $h\in L^{1}(\partial\Omega)$. As mentioned at
the beginning of this paper, we always assume, if nothing else is
said, that $\Omega$ is a bounded Lipschitz domain in $\R^{d}$,
$d\ge 2$.

In order to obtain existence of solutions to Dirichlet problem~\eqref{eq:21N},
it is natural, to study the existence of a minimizer of the famous
 \emph{least gradient problem}
\begin{equation}
  \label{eq:20}
  \inf\Big\{\int_{\Omega}\abs{D v}\,\Big\vert\,v\in
  BV(\Omega),\;v=h\text{ on $\partial\Omega$}\Big\}.
\end{equation}

Existence of solutions to the minimizing problem~\eqref{eq:20} was
obtained by Parks \cite{MR0458304,MR831890} under the hypotheses
$\Omega$ is strictly convex and the boundary data $h$ satisfies the
bounded slope condition. Sternberg, Williams and
Ziemer~\cite{MR1172906} improved this result by establishing existence
and uniqueness of a minimizer
$u\in BV(\Omega)\cap C(\overline{\Omega})$ of~\eqref{eq:20} for
boundary data $u\in C(\partial\Omega)$ on bounded domains $\Omega$
with a Lip\-schitz boundary $\partial\Omega$ of non-negative mean
curvature (in the weak sense) and not being locally
area-minimizing.

On $BV(\Omega)$, there is a continuous trace operator
$\T : BV(\Omega)\to L^{1}(\partial\Omega)$ available
(see~Proposition~\ref{propo:continuity-of-trace}). Thus Sternberg,
Williams and Ziemer called in~\cite{MR1246349} a function
$u\in BV(\Omega)$ to be of \emph{least gradient} if
\begin{displaymath}
  \int_{\Omega}\abs{D u}=\min\Big\{\int_{\Omega}\abs{D v}\,\Big\vert\,v\in
  BV(\Omega),\; \T(u)=\T(v)\Big\}.
\end{displaymath}
Since for given $h\in L^{1}(\partial\Omega)$, there is a
$H\in BV(\Omega)$ satisfying $\T(H)=h$, a function $u\in BV(\Omega)$
satisfies the boundary constrain
\begin{equation}
  \label{eq:5}
  u=h\quad \text{ on $\partial\Omega$}
\end{equation}
in the \emph{traces sense} if $\T(u)=\T(H)$. In many elliptic
boundary-value problems (as for example, the Dirichlet problem
associated with the $p$-Laplace operator, see,
e.g.,~\cite{MR3369257}), it is standard that the solution attains the boundary
condition~\eqref{eq:5} merely in the sense of traces. However, by using
this weak notion of attaining the boundary condition~\eqref{eq:5},
a function $u\in BV(\Omega)$ is a minimizer of~\eqref{eq:20} if $u$ minimizes
the total variation $\int_{\Omega}\abs{Dv}$ on the affine space
$\T(H)+BV_{0}(\Omega)$ (cf.~\cite[Theorem~2.2]{MR1246349}), where
$BV_{0}(\Omega)$ is the closure of the $BV$-norm of the set of test
functions $C^{\infty}_{c}(\Omega)$. But this last problem has the two
challenges that the trace operator $\T$ is only continuous with
respect to the strict topology and of missing compactness results on $BV(\Omega)$.
Thus, to establish existence and uniqueness of a minimizer
to~\eqref{eq:20} and related problems, the continuity condition on the boundary data $h$
was used by many authors, including
Miranda~\cite{MR0222735}, Parks and Ziemer~\cite{MR808421}, Bombieri,
De Giorgi, Giusti~\cite{MR0250205}, or more recently, Jerrard,
Moradifam, and Nachman~\cite{MR3739314}.

Recently, Spradlin and Tamasan~\cite{MR3298723} constructed an
essentially bounded boundary function $h$ on the unit circle $S^{1}$
in $\R^{2}$ for which the minimizing problem~\eqref{eq:20} has no
solution $u\in BV(\Omega)$ satisfy~\eqref{eq:5} in the sense of
traces. If the set of discontinuities is countable, then in the planar
case, G\'orny~\cite{MR3813249} (see also
\cite{MR3596671,2018arXiv181111138G}, and Rybka and
Sabra~\cite{MR3596671}) could establish existence of a minimizer to
problem~\eqref{eq:20}.

This suggests that for discontinuous boundary data
$h\in L^{1}(\partial\Omega)$, the notion of traces for the boundary
condition~\eqref{eq:5} might not be the right one for establishing
existence of a minimizer to problem~\eqref{eq:20}. Thus, Rossi, Segura
and the second author~\cite{MR3263922} studied for given
$h\in L^{1}(\partial\Omega)$, the following \emph{relaxed} functional
\begin{math}
  \Phi_h: L^{\frac d{d-1}}(\Omega)
  \rightarrow (-\infty,+\infty]
\end{math}
given by
\begin{equation}
  \label{eq:101}
  \Phi_h(v) =
    \begin{cases} \displaystyle
      \int_{\Omega}\abs{Dv} + \int_{\partial \Omega} \abs{h-v}
      \, \dH^{d-1} & \text{if $v\in BV(\Omega)$,}\\[7pt]
      +\infty & \text{if $v \in  L^{\frac{d}{d-1}}(\Omega) \setminus BV(\Omega)$}.
\end{cases}
\end{equation}
The functional $\Phi_h$ is convex, lower semicontinuous on
$L^{\frac d{d-1}}(\Omega)$, and thanks to the Sobolev
inequality~\eqref{eq:7}, $\Phi_h$ is coercive. Thus, there is a
$u\in BV(\Omega)$ solving the variational problem
\begin{equation}
  \label{eq:8}
  \min_{v \in BV(\Omega)} \Phi_h(v).
\end{equation}

One easily verifies that if $u\in BV(\Omega)$ is a function of least
gradient satisfying the boundary condition~\eqref{eq:5} in the sense
of traces, then $u$ is a minimizer of problem~\eqref{eq:8}.  Moreover,
every minimizer $u$ of~\eqref{eq:8} satisfies the following inclusion of the
first variation
\begin{equation}
  \label{eq:23}
  0\in \partial_{L^{\frac{d}{d-1}}\times L^{d}(\Omega)}\Phi_h(u)\qquad\text{in
    $L^{\frac{d}{d-1}}(\Omega)\times L^{d}(\Omega)$,}
\end{equation}
which is directly related to notion of \emph{weak solutions} to
Dirichlet problem~\eqref{eq:21N}.

By characterizing the sub-differential
$\partial_{L^{\frac{d}{d-1}}\times L^{d}(\Omega)}\Phi_h$, Rossi,
Segura and the second author~\cite{MR3263922} discovered that for
boundary data $h\in L^{1}(\partial\Omega)$, a minimizer
$u_{h}$ of~\eqref{eq:8} satisfies the Dirichlet boundary
condition~\eqref{eq:5} in problem~\eqref{eq:21N} merely in the following
\emph{weaker sense}: there is a divergence free vector field $\z_{h}\in L^\infty(\Omega;\R^d)$
such that $\norm{\z_{h}}_{\infty}\le 1$ and
\begin{equation}
    \label{z42}
    [\z_{h},\nu] \in \sign(h - \T(u))\quad
    \text{$\mathcal{H}^{d-1}$-a.e. on }\partial\Omega,
  \end{equation}
where $[\z_{h},\nu]$ denotes Anzellotti's generalized \emph{normal derivative},
$\nu$ the outward-pointing unit normal vector (see
Section~\ref{subsec:bv}), and $\sign(\cdot)$ is the accretive graph in
$\R^2$ of the \emph{signum} given by
\begin{displaymath}
  \sign(r):=
  \begin{cases}
    1 & \text{if $r>0$,}\\
    [-1,1] & \text{if $r=0$,}\\
    -1 & \text{if $r<0$.}
  \end{cases}
\end{displaymath}
More precisely, they obtained the following one.

\begin{proposition}[{\cite[Theorem~2.5]{MR3263922}}]\label{prop:6}
  For $h\in L^{1}(\partial\Omega)$ and $u \in BV(\Omega)$, the
  following statements are equivalent:
\begin{itemize}
\item[(i)] $0\in \partial_{L^{\frac{d}{d-1}}\times L^{d}(\Omega)}\Phi_h(u)$.

\item[(ii)] 
  there exists a vector field
  $\z_{h}\in L^\infty(\Omega;\R^d)$ satisfying~\eqref{z42},
  \begin{align}
    \label{z21} \norm{\z_h}_\infty&\le 1,\\
    \label{z22} -\divi(\z_h)&=0\qquad \text{in
                            $\mathcal{D}^\prime(\Omega)$, and}\\
    \label{z32} (\z_h,Du)&=|Du|\qquad \text{as Radon measures.}
  \end{align}
\end{itemize}
\end{proposition}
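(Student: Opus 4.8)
The plan is to read condition~(i) concretely: since $\Phi_h$ is convex, proper and lower semicontinuous on $L^{\frac{d}{d-1}}(\Omega)$, the inclusion $0\in\partial_{L^{\frac{d}{d-1}}\times L^{d}(\Omega)}\Phi_h(u)$ holds precisely when $u$ minimizes $\Phi_h$ over $L^{\frac{d}{d-1}}(\Omega)$, equivalently over $BV(\Omega)$. I would then prove the two implications separately, handling (ii)$\Rightarrow$(i) by a direct computation and (i)$\Rightarrow$(ii) by convex duality.

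For (ii)$\Rightarrow$(i), suppose $\z_h$ satisfies \eqref{z42}--\eqref{z32}. Applying the generalized Green formula \eqref{Green0} to $\z_h$ and any $v\in BV(\Omega)$ and using $\divi(\z_h)=0$ gives $\int_{\partial\Omega}[\z_h,\nu]\,\T(v)\,\dH^{d-1}=\int_\Omega(\z_h,Dv)$. Since $\norm{\z_h}_\infty\le1$, inequality \eqref{eq:32} of Proposition~\ref{prop:absolutely-cont-measures} yields $\int_\Omega(\z_h,Dv)\le\int_\Omega\abs{Dv}$, while \eqref{z32} gives equality for $v=u$. Combining these,
\begin{equation*}
  \int_\Omega\abs{Dv}-\int_\Omega\abs{Du}\ge\int_{\partial\Omega}[\z_h,\nu]\,(\T(v)-\T(u))\,\dH^{d-1}.
\end{equation*}
The pointwise subgradient inequality for the convex map $r\mapsto\abs{h-r}$, together with $[\z_h,\nu]\in\sign(h-\T(u))$ from \eqref{z42}, gives $\abs{h-\T(u)}-[\z_h,\nu]\,(\T(v)-\T(u))\le\abs{h-\T(v)}$ $\mathcal{H}^{d-1}$-a.e. on $\partial\Omega$. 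Adding these two facts shows $\Phi_h(u)\le\Phi_h(v)$ for every $v$, which is~(i).

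For (i)$\Rightarrow$(ii) I would pass through the dual representation
\begin{equation*}
  \Phi_h(v)=\sup\Bigl\{\int_\Omega v\,(-\divi\z)\,\dx+\int_{\partial\Omega}h\,[\z,\nu]\,\dH^{d-1}\Bigr\},
\end{equation*}
the supremum running over $\z\in X_d(\Omega)$ with $\norm{\z}_\infty\le1$; this is derived from \eqref{Green0} by optimizing the interior pairing against $\int_\Omega\abs{Dv}$ and the boundary term against $\int_{\partial\Omega}\abs{h-\T(v)}$. Minimizing $\Phi_h$ and exchanging infimum and supremum (Fenchel--Rockafellar duality) turns the inner infimum over $v\in L^{\frac{d}{d-1}}(\Omega)$ into the requirement that the coefficient $-\divi\z$ vanish, i.e.\ the constraint $\divi\z=0$, which is exactly \eqref{z22}. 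Attainment of the dual then produces an admissible $\z_h$ with no duality gap, so that $\int_\Omega\abs{Du}+\int_{\partial\Omega}\abs{h-\T(u)}\,\dH^{d-1}=\int_{\partial\Omega}h\,[\z_h,\nu]\,\dH^{d-1}$. Rewriting the right-hand side by \eqref{Green0} (using $\divi\z_h=0$) as $\int_\Omega(\z_h,Du)+\int_{\partial\Omega}[\z_h,\nu]\,(h-\T(u))\,\dH^{d-1}$ leads to
\begin{equation*}
  \Bigl(\int_\Omega\abs{Du}-\int_\Omega(\z_h,Du)\Bigr)+\int_{\partial\Omega}\bigl(\abs{h-\T(u)}-[\z_h,\nu]\,(h-\T(u))\bigr)\,\dH^{d-1}=0.
\end{equation*}
Both grouped terms are nonnegative, the first by \eqref{eq:32} with $\norm{\z_h}_\infty\le1$ and the second pointwise since $\abs{[\z_h,\nu]}\le1$, so each must vanish. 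Vanishing of the first yields \eqref{z32}, and vanishing of the boundary integrand forces $[\z_h,\nu]\in\sign(h-\T(u))$, which is \eqref{z42}.

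The main obstacle is the strong-duality step in (i)$\Rightarrow$(ii): justifying the inf--sup exchange without a gap and, above all, securing attainment of the dual supremum by an \emph{admissible} field. Coercivity of $\Phi_h$ from the Maz'ya--Sobolev inequality \eqref{eq:7} guarantees a primal minimizer and a finite value; for the dual I would exploit that the admissible set $\{\z\in X_d(\Omega):\norm{\z}_\infty\le1,\ \divi\z=0\}$ is bounded in $L^\infty(\Omega;\R^d)$ and weak-$*$ closed, hence weak-$*$ compact by Banach--Alaoglu, while the objective is weak-$*$ upper semicontinuous, using Proposition~\ref{prop:3} to control the pairing $(\z,Du)$ and the normal trace $[\z,\nu]$ under weak-$*$ convergence. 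Verifying a constraint qualification so that Fenchel--Rockafellar applies, and confirming that the weak-$*$ limit field remains in $X_d(\Omega)$ with a well-defined weak normal trace, is where the technical effort concentrates.
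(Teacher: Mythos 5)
The paper never proves Proposition~\ref{prop:6}: it is imported verbatim from \cite[Theorem~2.5]{MR3263922}, so there is no in-paper argument to measure you against, only the literature and the surrounding machinery. Your direction (ii)$\Rightarrow$(i) is correct and complete: since $\divi(\z_h)=0$, the field $\z_h$ lies in $X_d(\Omega)$ and $BV(\Omega)=BV(\Omega)_{d/(d-1)}$, so \eqref{Green0} applies, and combining it with \eqref{eq:32}, \eqref{z32}, and the pointwise subgradient inequality for $r\mapsto\abs{h-r}$ at $r=\T(u)$ (valid because $[\z_h,\nu]\in\sign(h-\T(u))$) gives $\Phi_h(v)\ge\Phi_h(u)$ for all $v\in BV(\Omega)$, which is exactly (i).

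The genuine gap is in (i)$\Rightarrow$(ii), and it sits precisely where you park the ``technical effort'': the inf--sup exchange. Fenchel--Rockafellar is not available in any routine form here, because the map $v\mapsto(Dv,\T(v))$ is not a bounded linear operator on $L^{d/(d-1)}(\Omega)$ (its domain $BV(\Omega)$ is a non-reflexive proper subspace), so the standard $F(\Lambda v)+G(v)$ framework with an interior-point qualification does not apply; and a minimax theorem fails as well, since on your dual set $\{\z\in X_d(\Omega):\norm{\z}_\infty\le 1\}$ the Lagrangian $\int_\Omega v\,(-\divi\z)\,\dx+\int_{\partial\Omega}h\,[\z,\nu]\,\dH^{d-1}$ is not weak-$*$ upper semicontinuous in $\z$: weak-$*$ convergence of $\z_n$ gives no control of $\divi(\z_n)$ paired against a general $v\in L^{d/(d-1)}(\Omega)$ (Proposition~\ref{prop:3} requires $\divi(\z_n)\rightharpoonup\divi(\z)$ weakly in $L^{p}$, which your admissible set does not enforce). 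In addition, your sup-representation of $\Phi_h$ is only trivial in the direction $\sup\le\Phi_h$; the reverse inequality requires constructing fields that simultaneously nearly saturate the interior pairing and the boundary term, which is itself a nontrivial relaxation/biconjugacy statement. By contrast, your dual-attainment step is sound: on the divergence-free unit ball, weak-$*$ compactness plus Proposition~\ref{prop:3} yields convergence of $[\z_n,\nu]$ weakly-$*$ in $L^\infty(\partial\Omega)$, exactly the mechanism of the paper's Proposition~\ref{prop:charact-closure-Lambda}. Finally, note the circularity risk: in this paper (Theorem~\ref{thm:8}) and in \cite[Theorem~1.2]{MR3820240}, the no-gap identity $\min_{v\in BV(\Omega)}\Phi_h(v)=\int_{\partial\Omega}[\z_h,\nu]\,h\,\dH^{d-1}$ is \emph{deduced from} Proposition~\ref{prop:6}, not used to prove it; known non-circular proofs (as in \cite{MR3263922}, building on \cite{MR2033382}) characterize the subdifferential directly via the Anzellotti pairing and approximation (e.g.\ by $p$-Laplacian problems) rather than by a primal--dual gap argument. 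As written, the hard direction assumes the theorem's analytic core rather than proving it.
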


Having this characterization in mind, every solution
$u\in BV(\Omega)$ of the constrained least gradient
problem~\eqref{eq:20} is a \emph{weak solution}
to the Dirichlet problem~\eqref{eq:21N}, and vice versa.

\begin{definition}\label{def:sols-DP}
  For given $h\in L^{1}(\partial\Omega)$, we call a function
  $u\in BV(\Omega)$ a \emph{weak solution} to Dirichlet
  problem~\eqref{eq:21N} if there exists a vector field
  $\z_{h}\in L^\infty(\Omega;\R^d)$ satisfying~\eqref{z42}--\eqref{z32}.
\end{definition}

By using Definition~\ref{def:sols-DP}, further examples could be constructed
showing the phenomenon of non-unique\-ness in Dirichlet problem~\eqref{eq:21N}.

\begin{example}\label{rem:uniqueness-of-DP}
  In~\cite{MR3263922}, the following counter example to the uniqueness of solutions to
  Dirichlet problem~\eqref{eq:21N} on the unit ball $\Omega=\{(x,y)\in \R^2 : x^2+y^2<1\}$
  was constructed for discontinuous boundary data. Let the boundary function
  $h\in L^{\infty}(\partial\Omega)$ be given (in polar coordinates) by
 \begin{displaymath}
   h(\theta):= \left\{
     \begin{array}{ll}
       \cos (2 \theta)+1, \quad &\hbox{if} \ \cos (2 \theta) >0;\\
       \cos (2 \theta)-1, \quad &\hbox{if} \ \cos (2 \theta) <0;
     \end{array}\right.
\end{displaymath}
for every $ \theta\in (-\pi,\pi]$. Now, for every $-1\le\lambda\le1$, let
$u^\lambda : \overline{\Omega}\to \R$ be given by
\begin{equation*}
    u^\lambda(x,y)=\left\{
      \begin{array}{ll}
        \displaystyle  2x^2\,,&\hbox{if }|x|> \frac{\sqrt{2}}{2}\,,
                                 |y|<\frac{\sqrt{2}}{2}\,;\\[6pt]
        \lambda\,,&\hbox{if }|x|<\frac{\sqrt{2}}{2}\,,
                     |y|<\frac{\sqrt{2}}{2}\,;\\[6pt]
        \displaystyle -2y^2\,,&\hbox{if }|x|<\frac{\sqrt{2}}{2}\,,
                   |y|>\frac{\sqrt{2}}{2}\,.
    \end{array}\right.
\end{equation*}
Then, each $u^\lambda$ is a weak solution of Dirichlet
problem~\eqref{eq:21N} satisfying the boundary conditions~\eqref{eq:5}
in the weaker sense~\eqref{z42} with $h$.
\end{example}

Example~\ref{rem:uniqueness-of-DP} and the one given in~\cite{MR3852558} demonstrate
well that smoothness of the boundary $\partial\Omega$ and other nice geometric
properties of $\Omega$ (as, for instance, convexity of $\Omega$) are
not sufficient to establish uniqueness of solutions to the Dirichlet
problem~\eqref{eq:21N} for discontinuous boundary data
$h\in L^{\infty}(\partial\Omega)$. This justifies the notation
of differential inclusion used in~\eqref{eq:23}. But, in particular,
shows that the Dirichlet-to-Neumann operator $\Lambda$ might be multi-valued.\medskip

Next, we turn to the following observation (cf.,~\cite[Remark~2.8]{MR3263922}).

\begin{theorem}\label{thm:9}
  For given $h\in L^{1}(\partial\Omega)$, let $u$ and $\hat{u}$ be two
  weak solutions of Dirichlet problem~\eqref{eq:21N} for the same
  boundary data $h$. If the vector
  field $\z\in L^\infty(\Omega;\R^d)$ satisfies \eqref{z42}--\eqref{z32}
  with respect to $u$ and $\hat{\z}_h\in L^\infty(\Omega;\R^d)$
  satisfies \eqref{z42}--\eqref{z32} with respect to $\hat{u}$, then
  $\hat{\z}_h$ also satisfies \eqref{z42}--\eqref{z32} with respect to
  $u$ and $\z_h$ satisfies \eqref{z42}--\eqref{z32} with respect to
  $\hat{u}$.
\end{theorem}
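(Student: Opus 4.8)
The plan is to exploit three ingredients: the generalized integration-by-parts formula (Proposition~\ref{prop:ibp}), the measure inequality $(\hat{\z}_{h},Du)\le\abs{Du}$ coming from Proposition~\ref{prop:absolutely-cont-measures}, and the fact that by Proposition~\ref{prop:6} every weak solution of~\eqref{eq:21N} is a minimizer of the convex functional $\Phi_h$ from~\eqref{eq:101}. Since conditions~\eqref{z21} and~\eqref{z22} involve only the field $\hat{\z}_{h}$ and not the solution it is paired with, it suffices to show that $\hat{\z}_{h}$ satisfies~\eqref{z42} and~\eqref{z32} with respect to $u$; the remaining claim about $\z$ and $\hat{u}$ then follows by interchanging the roles of $(u,\z)$ and $(\hat u,\hat{\z}_{h})$.

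First I would record the ``diagonal'' identities. Applying Proposition~\ref{prop:ibp} with $w=u$ and the field $\z$ (taking $p=d$, so that $u\in BV(\Omega)=BV(\Omega)_{d/(d-1)}$ and $\z\in X_d(\Omega)$ since $\divi\z=0$), together with~\eqref{z22} and~\eqref{z32}, gives
\[
\int_{\Omega}\abs{Du}=\int_{\Omega}(\z,Du)=\int_{\partial\Omega}[\z,\nu]\,\T(u)\,\dH^{d-1},
\]
and symmetrically $\int_{\Omega}\abs{D\hat u}=\int_{\partial\Omega}[\hat{\z}_{h},\nu]\,\T(\hat u)\,\dH^{d-1}$. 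Combining the first identity with the sign condition~\eqref{z42} for $(\z,u)$, which reads $[\z,\nu]\,(h-\T(u))=\abs{h-\T(u)}$ $\mathcal H^{d-1}$-a.e., I obtain
\[
\int_{\partial\Omega}[\z,\nu]\,h\,\dH^{d-1}=\int_{\Omega}\abs{Du}+\int_{\partial\Omega}\abs{h-\T(u)}\,\dH^{d-1}=\Phi_h(u),
\]
and likewise $\int_{\partial\Omega}[\hat{\z}_{h},\nu]\,h\,\dH^{d-1}=\Phi_h(\hat u)$. Since $u$ and $\hat u$ both minimize $\Phi_h$, these two boundary integrals coincide with the common minimal value $m:=\Phi_h(u)=\Phi_h(\hat u)$.

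The decisive step is the ``crossed'' computation. Applying Proposition~\ref{prop:ibp} with $w=u$ and the field $\hat{\z}_{h}$, and using $\divi\hat{\z}_{h}=0$, gives $\int_{\Omega}(\hat{\z}_{h},Du)=\int_{\partial\Omega}[\hat{\z}_{h},\nu]\,\T(u)\,\dH^{d-1}$. Subtracting this from $\int_{\partial\Omega}[\hat{\z}_{h},\nu]\,h\,\dH^{d-1}=m=\Phi_h(u)$ yields
\[
\int_{\partial\Omega}[\hat{\z}_{h},\nu]\,(h-\T(u))\,\dH^{d-1}=\int_{\Omega}\abs{Du}-\int_{\Omega}(\hat{\z}_{h},Du)+\int_{\partial\Omega}\abs{h-\T(u)}\,\dH^{d-1}.
\]
By Proposition~\ref{prop:absolutely-cont-measures} and $\norm{\hat{\z}_{h}}_{\infty}\le 1$ one has $(\hat{\z}_{h},Du)\le\abs{Du}$ as measures, so the first two terms on the right are $\ge 0$, whence $\int_{\partial\Omega}[\hat{\z}_{h},\nu]\,(h-\T(u))\,\dH^{d-1}\ge\int_{\partial\Omega}\abs{h-\T(u)}\,\dH^{d-1}$. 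On the other hand, $\norm{\hat{\z}_{h}}_{\infty}\le1$ forces the pointwise bound $[\hat{\z}_{h},\nu]\,(h-\T(u))\le\abs{h-\T(u)}$ $\mathcal H^{d-1}$-a.e.; combined with the reverse integral inequality this forces pointwise equality a.e., i.e. $[\hat{\z}_{h},\nu]\in\sign(h-\T(u))$, which is exactly~\eqref{z42} for $\hat{\z}_{h}$ relative to $u$.

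Finally, feeding the just-proved equality back into the displayed identity collapses it to $\int_{\Omega}(\hat{\z}_{h},Du)=\int_{\Omega}\abs{Du}$. Together with $(\hat{\z}_{h},Du)\le\abs{Du}$, the nonnegative measure $\abs{Du}-(\hat{\z}_{h},Du)$ has vanishing total mass and is therefore the zero measure, giving~\eqref{z32}. This verifies that $\hat{\z}_{h}$ satisfies~\eqref{z42}--\eqref{z32} relative to $u$, and symmetry finishes the proof. I expect the only genuinely delicate point to be the bookkeeping in the crossed computation---in particular recognizing that $\int_{\partial\Omega}[\z,\nu]\,h\,\dH^{d-1}$ equals $\Phi_h$ evaluated at the solution and is therefore the \emph{same} value $m$ for both minimizers; the remainder is the standard ``pointwise $\le$ plus integral $\ge$ forces a.e.\ equality'' argument.
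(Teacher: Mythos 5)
Your proof is correct, but it takes a genuinely different route from the paper's. The paper's argument never touches the variational problem: it applies the integration-by-parts formula \eqref{Green0} to $w=u-\hat{u}$ with each of the two fields, subtracts the two identities (giving \eqref{eq:31}), and observes that both resulting terms are non-negative---the bulk term $\int_\Omega(\z_{h}-\hat{\z}_{h},D(u-\hat{u}))$ by bilinearity and \eqref{eq:32}, the boundary term by monotonicity of the $\sign$ graph---so each must vanish; the crossed conditions are then extracted, much as you do, from $\norm{[\z,\nu]}_\infty\le 1$ and from the Radon--Nikod\'ym density of $(\hat{\z}_{h},Du)$ with respect to $\abs{Du}$ having modulus one. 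You instead first prove, in substance, Theorem~\ref{thm:8}: by Proposition~\ref{prop:6} both solutions minimize $\Phi_h$, so your diagonal computations pin both boundary integrals $\int_{\partial\Omega}[\z,\nu]\,h\,\dH^{d-1}$ and $\int_{\partial\Omega}[\hat{\z}_{h},\nu]\,h\,\dH^{d-1}$ to the common minimum $m$, and a single crossed integration by parts then yields an identity in which two non-negative defects sum to zero. The terminal mechanism is the same in both proofs, but the coupling between the two pairs differs: the paper couples them through the difference $u-\hat{u}$ and the monotonicity of $\sign$, whereas you couple them through the minimality of $\Phi_h$. Your route delivers the invariance statement of Theorem~\ref{thm:8} as a byproduct and decouples the two solutions, treating each crossed pair on its own (and no circularity arises, since you derive the needed identity directly rather than citing Theorem~\ref{thm:8}, which the paper proves only after Theorem~\ref{thm:9}); the paper's route is independent of the variational characterization, using nothing beyond the conditions \eqref{z42}--\eqref{z32} themselves, and produces both crossed sign conditions simultaneously from one identity.
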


From Theorem~\ref{thm:9}, by the fact that the minimization problem~\eqref{eq:8}
always admits a weak solution, and by Proposition~\ref{prop:6}, we can
conclude the following consequence
(cf.,~\cite[Theorem~1.2]{MR3820240}).

\begin{corollary}
  \label{cor:1}
  For given boundary data $h\in L^{1}(\partial\Omega)$, there is a
  divergence-free vector field $\z_{h}\in L^\infty(\Omega;\R^d)$
  satisfying $\norm{\z_{h}}_{\infty}\le 1$ such that every weak
  solution $u$ of Dirichlet problem~\eqref{eq:21N} satisfies
  \begin{equation}
    \label{eq:95}
    \left(\z_{h},\frac{Du}{\abs{Du}}\right)=1\qquad\text{as
      Radon measures,}
  \end{equation}
  and~\eqref{z42}.
\end{corollary}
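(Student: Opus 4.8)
The plan is to avoid constructing $\z_h$ from scratch and instead to exploit the interchangeability of companion vector fields furnished by Theorem~\ref{thm:9}, combined with the unconditional existence of a minimizer of the relaxed functional. The key observation is that it suffices to select the companion field of one conveniently chosen weak solution and then argue that this single field serves \emph{every} weak solution at once. Thus the corollary will follow by a short selection-plus-transfer argument, with all the analytic content delegated to Theorem~\ref{thm:9}.

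First I would produce one weak solution together with an explicit companion field. Since the relaxed functional $\Phi_h$ in~\eqref{eq:101} is convex, lower semicontinuous and coercive on $L^{\frac{d}{d-1}}(\Omega)$, the variational problem~\eqref{eq:8} admits a minimizer $u_{0}\in BV(\Omega)$, which satisfies the first-variation inclusion $0\in \partial_{L^{\frac{d}{d-1}}\times L^{d}(\Omega)}\Phi_h(u_{0})$. Invoking the characterization in Proposition~\ref{prop:6}, there is then a vector field $\z_{h}\in L^{\infty}(\Omega;\R^{d})$ satisfying \eqref{z42}--\eqref{z32} with respect to $u_{0}$; in particular $\z_{h}$ is divergence free and $\norm{\z_h}_{\infty}\le 1$. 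This fixed field $\z_{h}$ is the candidate asserted by the corollary.

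Next I would transfer this field to an arbitrary weak solution. Let $u$ be any weak solution of Dirichlet problem~\eqref{eq:21N}. By Definition~\ref{def:sols-DP} there is some companion field $\hat{\z}\in L^{\infty}(\Omega;\R^{d})$ satisfying \eqref{z42}--\eqref{z32} with respect to $u$. Applying Theorem~\ref{thm:9} to the two weak solutions $u_{0}$ and $u$, with their respective fields $\z_{h}$ and $\hat{\z}$, yields that $\z_{h}$ itself satisfies \eqref{z42}--\eqref{z32} with respect to $u$. In particular $\z_{h}$ realizes the pairing identity~\eqref{z32}, which is exactly~\eqref{eq:95} read as the statement that the Radon--Nikod\'ym density $\theta(\z_{h},Du,\cdot)=1$ for $\abs{Du}$-a.e.\ point, together with the boundary sign condition~\eqref{z42}. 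Since $u$ was arbitrary while $\z_{h}$ was chosen independently of $u$, the same divergence-free field $\z_{h}$ works for every weak solution, which is the claim.

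The genuine obstacle is packed entirely into Theorem~\ref{thm:9}: once interchangeability is granted there is no remaining difficulty here. The only point to watch is a bookkeeping one, namely that the two $u$-dependent conditions---the interior pairing~\eqref{z32} and the boundary condition~\eqref{z42}---are precisely the ones that Theorem~\ref{thm:9} transfers across weak solutions, whereas the $u$-independent conditions $\norm{\z_h}_{\infty}\le 1$ and $\divi(\z_h)=0$ already hold for $\z_{h}$ by its construction from the minimizer $u_{0}$.
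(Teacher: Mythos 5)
Your proposal is correct and follows essentially the same route as the paper, which likewise obtains Corollary~\ref{cor:1} by combining the existence of a minimizer of $\Phi_h$ (hence a weak solution with a companion field, via Proposition~\ref{prop:6}) with the interchangeability of companion fields from Theorem~\ref{thm:9}. Your reading of~\eqref{eq:95} as the pairing identity~\eqref{z32}, and your remark that $\norm{\z_h}_{\infty}\le 1$ and $\divi(\z_h)=0$ are $u$-independent, are both accurate.
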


Corollary~\ref{cor:1} says that there is a divergence-free vector
field $\z_{h}$, which determines the level set of all weak solutions $u$ of Dirichlet
problem~\eqref{eq:21N}. More precisely, since $\norm{\z_{h}}_{\infty}\le
1$ on $\Omega$, \eqref{eq:32} yields that
\begin{equation}
  \label{eq:96}
  \int_{\Omega}(\z_{h},Dw)\le 1
\end{equation}
for every $w\in BV(\Omega)$ with $\abs{Dw}=1$. Note, that~\eqref{eq:96}
can well be interpreted as a \emph{Radon-measure version} of the
point-wise inequality
\begin{displaymath}
  z_{h}\cdot\xi\le 1\qquad\text{a.e. on $\Omega$}
\end{displaymath}
holding for any vector fields $\xi\in S^{d-1}$. Thus, \eqref{eq:95} says that
for every weak solutions $u$ of Dirichlet
problem~\eqref{eq:21N}, the vector field $Dw=Du/\abs{Du}$
maximizes~\eqref{eq:96} in the sense of Radon measures. Recall, for
given vector fields $\z\in \R^{d}$ with $\abs{\z}\le 1$ and $\xi\in S^{d-1}$, the equality
$z\cdot\xi=1$ implies that $\z$ and $\xi$ are parallel to each other
and $\abs{\z}=1$. Thus, and since $\abs{Dw}(\Omega)=1$, \eqref{eq:95}
one be can understood as a condition implying that the two vector
fields $\z_{h}$ and $Dw$ are parallel to each other in some weak sense.\medskip

Further, as outlined in~\cite{MR3820240}, \eqref{z42} describes the
set of possible jumps on the boundary $\partial\Omega$ of a weak
solution $u$ of~\eqref{eq:21N}. More precisely, it follows from \eqref{z42}
that up to a set of $\mathcal{H}^{d-1}$-measure zero, one has that
\begin{align*}
  &\{x\in \partial\Omega\,\vert\,\T(u)(x)>h(x)\}\subseteq
    \{x\in \partial\Omega\,\vert\, [\z_{h},\nu]=-1\},\\
  &\{x\in \partial\Omega\,\vert\,\T(u)(x)<h(x)\}\subseteq
    \{x\in \partial\Omega\,\vert\, [\z_{h},\nu]=1\},
\end{align*}
and
\begin{displaymath}
  \{x\in \partial\Omega\,\vert\,\T(u)(x)=h(x)\}\subseteq
    \{x\in \partial\Omega\,\vert\, -1\le [\z_{h},\nu]\le 1\}.
\end{displaymath}

We now turn to the Proof of Theorem~\ref{thm:9}.

\begin{proof}[Proof of Theorem~\ref{thm:9}]
  Let $u$ and $\hat{u}$ be two solutions of Dirichlet problem
  \eqref{eq:21N} for the same given boundary function
  $h\in L^{1}(\partial\Omega)$. Further, let $\z_{h}$ and
  $\hat{\z}_{h}\in L^\infty(\Omega;\R^d)$ be two vector fields satisfying
  \eqref{z42}--\eqref{z32} with respect to $u$ and $\hat{u}$,
  respectively.

  Note, by~\eqref{z22}, the two vector fields $\z_{h}$ and $\hat{\z}_{h}$
  belong to $X_{d}(\Omega)$ and by Sobolev-inequality~\eqref{eq:7},
  $(u- \hat{u})\in BV _{d/(d-1)}(\Omega)$. Thus, the
  generalized integration by parts formula~\eqref{Green0} yields
  \begin{displaymath}
    \int_{\Omega} (\z_{h}, D(u - \hat{u}))
    -  \int_{\partial\Omega} [\z_{h},\nu] \big(\T(u) - \T(\hat{u})\big)\, \dH^{N-1} = 0
  \end{displaymath}
  and
  \begin{displaymath}
    \int_{\Omega} (\hat{\z}_{h}, D(u - \hat{u}))
    - \int_{\partial\Omega} [\hat{\z}_{h},\nu] \big(\T(u) - \T(\hat{u})\big)\, \dH^{N-1} = 0\,.
  \end{displaymath}
   Subtracting these two equations from each other and using the fact
   that the pairing $(\z_{h},Dw)$ is bilinear yields
   \begin{equation}
      \label{eq:31}
     \begin{split}
       &\int_{\Omega} (\z_{h} - \hat{\z}_{h}, D(u - \hat{u})) \\
       &\qquad+ \int_{\partial\Omega}\big([\z_{h},\nu] - [\hat{\z}_{h},\nu]\big) \big(h-
       \T(u) - (h-\T(\hat{u}))\big)\, \td\mathcal H^{d-1} = 0\,.
     \end{split}
   \end{equation}
   Since $\z_{h}$ and $\hat{z}_{h}$ satisfy $\norm{\z_{h}}_{\infty}\le 1$,
   $\norm{\hat{\z}_{h}}_{\infty}\le 1$, it follows
   from~\eqref{eq:32} that
   \begin{displaymath}
     \labs{\int_{\Omega} (\z_{h},D\hat{u})}\le \labs{D\hat{u}}(\Omega)\quad
     \text{ and }\quad  \labs{\int_{\Omega}
       (\hat{\z}_{h},Du)}\le \abs{Du}(\Omega).
   \end{displaymath}
   Thus, the bilinearity of the pairing $(\cdot,D\cdot)$ yields
   \begin{equation}\label{uni2}
     \begin{split}
       \int_{\Omega}(\z_{h}-\hat{\z}_{h}, D(u - \hat{u})) &=
       \abs{D\hat{u}}(\Omega) -\int_{\Omega}(\z_{h},D\hat{u})\\
       &\qquad \qquad+\abs{Du}(\Omega) - \int_{\Omega}(\hat{\z}_{h},Du)\ge 0.
     \end{split}
      \end{equation}
   Further, by the monotonicity of the $\sign$-graph in $\R^2$, and since $\z_{h}$
   and $\hat{\z}_{h}$ satisfy~\eqref{z42}, one has that
   \begin{displaymath}
     \big([\z_{h},\nu] - [\hat{\z}_{h},\nu]\big)\,\big((h - \T(u))- (h-\T(\hat{u}))\big)\ge 0
     \quad\text{$\mathcal{H}^{d-1}$-a.e. on $\partial\Omega$}
   \end{displaymath}
   and so,
   \begin{displaymath}
     \int_{\partial\Omega}\big([\z_{h},\nu] - [\hat{\z}_{h},\nu]\big)
     \big((h - \T(u))- (h-\T(\hat{u}))\big)\, \td\mathcal H^{d-1}\ge 0
   \end{displaymath}
   Thus, \eqref{eq:31} implies that
   \begin{displaymath}
     \int_{\Omega}(\z_{h}-\hat{\z}_{h}, D(u - \hat{u})) =0
   \end{displaymath}
   or, equivalently,
   \begin{equation}
     \label{eq:33}
     \int_{\Omega}(\z_{h},D\hat{u})=|D\hat{u}|(\Omega)\quad\text{ and }
     \quad \int_{\Omega} (\hat{\z}_{h},Du)=|Du|(\Omega),
   \end{equation}
   and
   \begin{displaymath}
     \big([\z_{h},\nu] - [\hat{\z}_{h},\nu]\big)\,\big((h - \T(u))- (h-\T(\hat{u}))\big)=0
     \quad\text{$\mathcal{H}^{d-1}$-a.e. on $\partial\Omega$.}
   \end{displaymath}
   Then,
   \begin{align*}
       0&=\big([\z_{h},\nu] - [\hat{\z}_{h},\nu]\big)\, (h - \T(u))- (h-\T(\hat{u}))\\
       &= \abs{h - \hat{u}} -
       [\z_{h},\nu](h-\T(\hat{u}))+\abs{h - \T(u)} - [\hat{\z}_{h},\nu](h-\T(u))
   \end{align*}
   $\mathcal{H}^{d-1}$-a.e. on $\partial\Omega$. Since
   $\norm{[\z_{h},\nu]}_{\infty}\le 1$ and
   $\norm{[\hat{\z}_{h},\nu]}_{\infty}\le 1$, the previous equation yields that
   \begin{displaymath}
     [\hat{\z}_{h},\nu](h-\T(u)) = \abs{h- \T(u)}\quad\text{ and }\quad
     [\z_{h},\nu](h-\T(\hat{u})) = \abs{h - \T(\hat{u})}
   \end{displaymath}
   $\mathcal{H}^{d-1}$-a.e. on $\partial\Omega$. From this, we can conclude that
   \begin{displaymath}
     [\hat{\z}_{h},\nu] \in \sign(h - \T(u))\quad\text{ and }\quad
     [\z_{h},\nu] \in \sign(h-\T(\hat{u}))\quad\text{$\mathcal{H}^{d-1}$-a.e. on
   $\partial\Omega$.}
 \end{displaymath}
Further, by recalling~\eqref{value-DN-derivative}, there are
Radon-Nikod\'ym derivatives
\begin{displaymath}
\theta(\z_{h},D\hat{u},\cdot)=\frac{\td (\z_{h},
D\hat{u})}{\td \abs{D\hat{u}}}\quad\text{ and }\quad
\theta(\hat{z}_{h},Du,\cdot) =\frac{\td (\hat{\z}_{h},Du)}{\td\abs{Du}}
\end{displaymath}
satisfying $\abs{\theta(\z_{h},D\hat{u},x)}=1$
for $\abs{D\hat{u}}$-a.e. $x\in \Omega$ and $\abs{\theta(\hat{\z}_{h},Du,x)}=1$
for $\abs{Du}$-a.e. $x\in \Omega$. Applying this to~\eqref{eq:33}
yields that
\begin{displaymath}
  \int_{\Omega}\theta(\z_{h},D\hat{u},\cdot)\,\td
  \abs{D\hat{u}}=|D\hat{u}|(\Omega) \quad\text{ and }\quad
  \int_{\Omega}\theta(\hat{\z}_{h},Du,\cdot)\,\td
  \abs{D\hat{u}}=|Du|(\Omega),
\end{displaymath}
implying that $\theta(\z_{h},D\hat{u},\cdot)=1$ and
$\theta(\hat{\z}_{h},Du,\cdot)=1$ a.e. on $\Omega$. This shows that
\begin{displaymath}
  (\z_{h},D\hat{u})=|D\hat{u}|\quad\text{ and }\quad (\hat{\z}_{h}, Du)=|Du|
\end{displaymath}
as Radon measures. This completes the proof of showing that $\hat{\z}_{h}$
satisfies \eqref{z42}-\eqref{z32} with respect to $u$ and $\z_{h}$
satisfies \eqref{z42}-\eqref{z32} with respect to $\hat{u}$.
\end{proof}

Even though that there might be infinitely many divergence-free vector
fields $\z_{h}\in L^{\infty}(\Omega;\R^{d})$ to a given boundary data
$h\in L^{1}(\partial\Omega)$, the value of the integral
\begin{displaymath}
  \int_{\partial\Omega}[\z_{h},\nu]\,h\,\dH^{d-1}
\end{displaymath}
remains the same for all vector fields $\hat{\z}_{h}$
satisfying \eqref{z42}-\eqref{z32} for some $u\in BV(\Omega)$.

\begin{theorem}
  \label{thm:8}
  For every given boundary data $h\in L^{1}(\partial\Omega)$, one has that
  \begin{equation}
    \label{eq:106}
    \int_{\partial\Omega}[\z_{h},\nu]\,h\,\dH^{d-1}=\min_{v\in BV(\Omega)}\Phi_{h}(v).
  \end{equation}
  for every vector fields $\z_{h}\in L^{\infty}(\Omega;\R^{d})$
  satisfying \eqref{z42}-\eqref{z32} for some $u\in BV(\Omega)$.
\end{theorem}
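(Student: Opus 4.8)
The plan is to show directly that the left-hand side of~\eqref{eq:106} equals $\Phi_{h}(u)$, where $u\in BV(\Omega)$ is the weak solution of~\eqref{eq:21N} to which $\z_{h}$ is attached through \eqref{z42}--\eqref{z32}; the theorem then follows because every such $u$ is a minimizer of $\Phi_{h}$. Indeed, by Proposition~\ref{prop:6} the conditions \eqref{z42}--\eqref{z32} on $(\z_{h},u)$ are equivalent to $0\in\partial_{L^{\frac{d}{d-1}}\times L^{d}(\Omega)}\Phi_{h}(u)$, and since $\Phi_{h}$ is convex this inclusion says precisely that $\Phi_{h}(u)=\min_{v\in BV(\Omega)}\Phi_{h}(v)$.

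First I would record the memberships needed to integrate by parts: $\z_{h}\in X_{d}(\Omega)$ since $\divi(\z_{h})=0\in L^{d}(\Omega)$ by~\eqref{z22}, and $u\in BV(\Omega)_{d/(d-1)}$ by the Maz'ya--Sobolev embedding~\eqref{eq:7} (exactly as in the proof of Theorem~\ref{thm:9}). The generalized integration by parts formula~\eqref{Green0} then applies with $\z=\z_{h}$ and $w=u$, and using $\divi(\z_{h})=0$ together with $(\z_{h},Du)=\abs{Du}$ from~\eqref{z32} it reduces to
\begin{equation*}
  \int_{\partial\Omega}[\z_{h},\nu]\,\T(u)\,\dH^{d-1}=\int_{\Omega}\abs{Du}.
\end{equation*}

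Next I would write $h=(h-\T(u))+\T(u)$ in the boundary integral. For the first summand I use that any $\xi\in\sign(s)$ satisfies $\xi\,s=\abs{s}$; applying this pointwise with $s=h-\T(u)$ and $\xi=[\z_{h},\nu]$, which is permitted by the sign condition~\eqref{z42}, gives $[\z_{h},\nu]\,(h-\T(u))=\abs{h-\T(u)}$ for $\mathcal{H}^{d-1}$-a.e. $x\in\partial\Omega$. Combining this with the identity from the previous step yields
\begin{equation*}
  \int_{\partial\Omega}[\z_{h},\nu]\,h\,\dH^{d-1}
  =\int_{\partial\Omega}\abs{h-\T(u)}\,\dH^{d-1}+\int_{\Omega}\abs{Du}
  =\Phi_{h}(u),
\end{equation*}
which equals $\min_{v\in BV(\Omega)}\Phi_{h}(v)$ by the first paragraph. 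Since the weak solution $u$ attached to $\z_{h}$ was arbitrary among those compatible with $\z_{h}$, the value of the boundary integral is always this minimum, independently of the choice of $\z_{h}$.

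The argument is essentially bookkeeping once the tools of Section~\ref{sec:prelim} are in place, so I do not expect a genuine obstacle. The two points deserving care are the justification of the integration by parts (the space memberships above, already exploited for Theorem~\ref{thm:9}) and the pointwise identity $[\z_{h},\nu]\,(h-\T(u))=\abs{h-\T(u)}$ extracted from the sign graph; the conceptual content---that the value is the same for all admissible $\z_{h}$---is delivered for free, since the computation always lands on the common minimum $\min_{v\in BV(\Omega)}\Phi_{h}(v)$.
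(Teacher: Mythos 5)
Your proposal is correct and follows essentially the same route as the paper's proof: both use Proposition~\ref{prop:6} to identify $u$ as a minimizer of $\Phi_{h}$, then combine the generalized integration by parts formula~\eqref{Green0} with \eqref{z22}, \eqref{z32}, and the pointwise identity $[\z_{h},\nu]\,(h-\T(u))=\abs{h-\T(u)}$ from~\eqref{z42} to show the boundary integral equals $\Phi_{h}(u)$. The only difference is cosmetic bookkeeping (you split $h=(h-\T(u))+\T(u)$ where the paper subtracts $\int_{\Omega}\abs{Du}$ first), plus your explicit verification of the memberships $\z_{h}\in X_{d}(\Omega)$ and $u\in BV(\Omega)_{d/(d-1)}$, which the paper leaves implicit here.
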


\begin{proof}[Proof of Theorem~\ref{thm:8}]
  Let $h\in L^{1}(\partial\Omega)$ and $\z_{h}\in L^{\infty}(\Omega;\R^{d})$
  satisfy \eqref{z42}-\eqref{z32} for some $u\in BV(\Omega)$. Then, $u$ is a weak
  solution of Dirichlet problem~\eqref{eq:21N} and so,
  Proposition~\ref{prop:6} says that $u$ satisfies
  \begin{equation}
    \label{eq:103}
    \min_{v\in BV(\Omega)}\Phi_{h}(v)=\int_{\Omega}\abs{Du}+\int_{\partial\Omega}\abs{h-\T(u)}\,\dH^{d-1}.
  \end{equation}

  On the other hand, by~\eqref{z32}, the generalized integration by parts
  formula~\eqref{Green0}, \eqref{z22}, and~\eqref{z42}, one sees that
\begin{align*}
  &\int_{\partial\Omega}[\z_{h},\nu]\,h\,\dH^{d-1}
    -\int_\Omega \abs{Du}\\
   &\qquad =\int_{\partial\Omega}[\z_{h},\nu]\,h\,\dH^{d-1}
    -\int_{\Omega}(\z_{h},Du)\\
  &\qquad =\int_{\partial\Omega}[\z_{h},\nu]\,h\,\dH^{d-1}
    -\int_{\partial\Omega}[\z_{h},\nu]\,\T(u)\,\dH^{d-1}\\
  &\qquad
    =\int_{\partial\Omega}[\z_{n},\nu]\,(h-\T(u))\,\dH^{d-1}\\
  &\qquad
    =\int_{\partial\Omega}\abs{h-\T(u)}\,\dH^{d-1}
\end{align*}
and so,
\begin{equation}
  \label{eq:105}
  \int_{\partial\Omega}[\z_{h},\nu]\,h\,\dH^{d-1}=
  \int_\Omega \abs{Du}+\int_{\partial\Omega}\abs{h-\T(u)}\,\dH^{d-1}.
\end{equation}
 Clearly, \eqref{eq:106} follows from combining~\eqref{eq:103} with~\eqref{eq:105}.
\end{proof}

%
%
%
%

\section{A Robin-type problem for the $1$-Laplace operator}
\label{sec:Robin}

In order to show that the Dirichlet-to-Neumann operator $\Lambda$
associated with the $1$-Laplace operator $\Delta_{1}$satisfies the
range condition~\eqref{eq:29}, we recall some recent
results obtain by the second author with collaborators~\cite{MR3328128} on the
following inhomogeneous \emph{Robin-type boundary-value problem}
\begin{equation}\label{eq:63}
    \begin{cases}
      \hspace{0.35cm}\displaystyle -\Delta_1 u =0 & \text{in $\Omega$,}\\
      \displaystyle\frac{Du}{|Du|} \cdot \nu = T_{1}(g-\alpha u) & \text{on $\partial\Omega$,}
    \end{cases}
 \end{equation}
 for the $1$-Laplace operator $\Delta_{1}$, for given $\alpha>0$ and $g\in
 L^{2}(\partial\Omega)$.\medskip

In the boundary condition of problem~\eqref{eq:63}, the function $T_{1} :
\R\to \R$ given
by $T_{1}(s)=s$ if $\abs{s}\le 1$ and $T_{1}(s)=\sign(s)$ if
$\abs{s}\ge 1$,  denotes the \emph{truncator operator}, which is
necessary to add in~\eqref{eq:63}, since it preserves the condition
\begin{equation}
  \label{eq:70}
\lnorm{\frac{Du}{|Du|} \cdot \nu}_{\infty}\le 1
\end{equation}
satisfied by every solution $u$ of problem~\eqref{eq:63} (cf.,
\eqref{eq:64} and the fact that every vector field $\z$ associated with a weak
solution $u$ of~\eqref{eq:63} satisfies $\norm{\z}_{\infty}\le 1$). We emphasize
that the use of a truncator $T_{1}$ in the Robin-type boundary
condition~\eqref{eq:63} is a phenomenon, which is exclusively
generated by the structure of the $1$-Laplace
operator $\Delta_{1}$ (and its co-normal derivative).\medskip

Another reason supporting the use of the
truncator $T_{1}$ in the singular boundary-value problem~\eqref{eq:63}
is provided by studying the correct
associated energy functional; intuitively, the natural functional
associated with problem~\eqref{eq:63} (without $T_{1}$) is given by
\begin{displaymath}
  I_{\alpha,g}(u):=\int_{\Omega}\abs{Du}+\int_{\partial\Omega}
  \Big[\tfrac{\alpha}{2}\abs{\T(u)}^2-g\,\T(u)\Big]\,\dH^{d-1}, \qquad
  u\in V_{2}(\Omega),
\end{displaymath}
where the space $V_{2}(\Omega)$ is given by
\begin{displaymath}
  V_{2}(\Omega)=\Big\{u\in BV(\Omega)\,\vert\,\T(u)\in L^{2}(\partial\Omega)\Big\}.
\end{displaymath}
But the functional $I_{\alpha,g}$ is, in general, not lower
semicontinuous with respect to the $L^{1}(\Omega)$-topology
(cf.,~\cite{MR921549}). Thus, one employs instead the $L^{1}$-lower
semicontinuous envelope
\begin{equation}
  \label{eq:69}
  \Theta_{\alpha,g}(u):=\int_{\Omega}\abs{Du}+\int_{\partial\Omega}\Gamma_{g}(x,\T(u))\,\dH^{d-1},
\end{equation}
$u\in V_{2}(\Omega)$, where $\Gamma_{g}  : \partial\Omega\times\R\to \R$ is a Borel
function, which is convex and contractive with respect to the second
variable, uniformly with respect to the first one, and satisfies
$\tfrac{\partial}{\partial u}\Gamma_{g}(x,u)=T_{1}(g(x)-\alpha
u)$. Here, for the $L^1$-lower semicontinuity of the functional
$\Theta_{g}$, the contractivity property of the mapping $u\mapsto
\Gamma_{g}(x,u)$ is crucial (cf., Proposition~\ref{prop:4}).\medskip

To find the correct notion and the existence of \emph{weak solutions}
$u$ to the inhomogeneous Robin-type problem~\eqref{eq:63}, the authors
of~\cite{MR3328128}
start from the more regular Robin-type problem associated with the
$p$-Laplace operator (for $p>1$)
\begin{equation}
  \label{eq:65}
  \begin{cases}
      \hspace{1.35cm}\displaystyle -\Delta_p u_{p} =0 & \text{in $\Omega$,}\\
      \displaystyle\abs{\nabla u_{p}}^{p-2}\nabla u_{p}\cdot \nu
      = T_{1}(g-\alpha u) & \text{on $\partial\Omega$.}
    \end{cases}
\end{equation}
It is not hard to see that for every given
$g\in L^{2}(\partial\Omega)$ and $\alpha>0$, problem~\eqref{eq:65}
admits a unique weak solution $u_{p}\in W^{1,p}(\Omega)$. After
deriving \emph{a priori}-estimates for $p\in (1,2)$, they establish
in \cite[Theorem 1.1.]{MR3328128} the existence of the following type
of solutions.

\begin{definition}\label{def:Robin}
  For given $g \in L^2(\partial \Omega)$ and $\alpha>0$, we say that
  $u\in V_{2}(\Omega)$ is a \emph{weak solution} to the inhomogeneous
  Robin-type problem~\eqref{eq:63} for the $1$-Laplace operator if for
  $u$, there is a vector field $\z\in L^\infty(\Omega;\R^d)$
  satisfying~\eqref{z21}--\eqref{z32}, and
\begin{equation}
  \label{eq:66}
  [\z,\nu]=T_1(g-\alpha u)\qquad\text{$\mathcal{H}^{d-1}$-a.e. on $\partial\Omega$.}
\end{equation}
\end{definition}

For later use, we restate the existence result \cite[Theorem
1.1.]{MR3328128} with more details to the convergence by the
approximate problem~\eqref{eq:65}.

\begin{theorem}[{\cite[Theorem~1.1.]{MR3328128}}]\label{thm:6}
  Let $\Omega$ be a bounded domain with a boundary
  $\partial\Omega$ of class $C^1$. Then, for every $g \in L^2(\partial \Omega)$ and $\alpha>0$, there is
  a weak solution $u\in V_{2}(\Omega)$ of the inhomogeneous
  Robin-type problem~\eqref{eq:63} for the $1$-Laplace operator. Moreover, for every sequence
  $(p_{n})_{n\ge 1}$ in $(1,2)$ converging to $1$, there is a
  subsequence $(p_{k_{n}})_{n\ge 1}$ and a weak solution $u\in
  V_{2}(\Omega)$ the inhomogeneous
  Robin-type problem~\eqref{eq:63} for the $1$-Laplace operator such
  that
  \begin{displaymath}
    \lim_{n\to \infty}u_{p_{k_{n}}}=u\qquad\text{in $L^{q}(\Omega)$ for
      all $1\le q<\frac{d}{d-1}$,}
  \end{displaymath}
  where $u_{p_{k_{n}}}$ is the unique solution of the Robin-type
  problem~\eqref{eq:65} associated with the $p_{k_{n}}$-Laplace operator.
\end{theorem}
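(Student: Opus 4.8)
The plan is to construct the weak solution by vanishing-$p$ approximation, passing to the limit $p\downarrow1$ in the regular Robin problems~\eqref{eq:65}. For each fixed $p\in(1,2)$ the energy $J_{p}(v)=\tfrac1p\int_{\Omega}\abs{\nabla v}^{p}\dx+\int_{\partial\Omega}\Gamma_{g}(x,\T(v))\,\dH^{d-1}$ is strictly convex, coercive and weakly lower semicontinuous on $W^{1,p}(\Omega)$, so \eqref{eq:65} has a unique weak solution $u_{p}$; I denote its flux $\z_{p}:=\abs{\nabla u_{p}}^{p-2}\nabla u_{p}\in L^{p'}(\Omega;\R^{d})$, which satisfies $\divi(\z_{p})=0$ and, by the Euler--Lagrange equation, $[\z_{p},\nu]=T_{1}(g-\alpha\,\T(u_{p}))$ on $\partial\Omega$.

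Next I would record the a~priori bounds uniform as $p\downarrow1$. Testing with $u_{p}$, using $\abs{T_{1}}\le1$ and the trace--Sobolev inequality~\eqref{eq:7}, bounds $\int_{\Omega}\abs{\nabla u_{p}}^{p}\dx$, $\norm{u_{p}}_{d/(d-1)}$ and (as in~\cite{MR3328128}) $\norm{\T(u_{p})}_{L^{2}(\partial\Omega)}$ independently of $p$; Young's inequality $\abs{\nabla u_{p}}\le\tfrac1p\abs{\nabla u_{p}}^{p}+\tfrac1{p'}$ then bounds $(u_{p})$ in $BV(\Omega)$. Theorem~\ref{thm:BVcompactness} gives $p_{k_{n}}\downarrow1$ with $u_{p_{k_{n}}}\to u$ in $L^{q}(\Omega)$, $q<d/(d-1)$, and weakly$^{\ast}$ in $BV(\Omega)$, while $\int_{\Omega}\abs{\z_{p}}^{p'}=\int_{\Omega}\abs{\nabla u_{p}}^{p}$ yields $\norm{\z_{p}}_{p'}\le C^{(p-1)/p}\to1$; a diagonal extraction produces a weak limit $\z$ in every $L^{r}(\Omega;\R^{d})$ with $\norm{\z}_{r}\le\abs{\Omega}^{1/r}$, hence $\norm{\z}_{\infty}\le1$, giving~\eqref{z21}, and $\divi(\z)=0$ follows from $\int_{\Omega}\z_{p}\cdot\nabla\phi=0$, giving~\eqref{z22}. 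Passing to the limit in the Gauss--Green formula~\eqref{eq:2} for $u_{p}$ against arbitrary $\xi\in C^{1}(\R^{d},\R^{d})$ identifies the weak $L^{2}(\partial\Omega)$ limit of $\T(u_{p})$ with $\T(u)$; in particular $u\in V_{2}(\Omega)$.

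The heart of the matter is upgrading these weak limits to the measure/trace identities~\eqref{z32} and~\eqref{eq:66}, and the key is convergence of the total variations $\int_{\Omega}\abs{\nabla u_{p}}\dx\to\int_{\Omega}\abs{Du}$. For the lower bound, Young's inequality applied to $\xi\cdot\nabla u_{p}$ for smooth $\xi$ with $\norm{\xi}_{\infty}\le1$, together with $\int_{\Omega}\xi\cdot\nabla u_{p}\to\int_{\Omega}\xi\cdot\td Du$ and the dual definition of the variation, yields $\lim_{p}\int_{\Omega}\abs{\nabla u_{p}}^{p}\ge\int_{\Omega}\abs{Du}$. For the matching upper bound I would use that $u$ minimizes the relaxed functional $\Theta_{\alpha,g}$ of~\eqref{eq:69} (via Modica's Proposition~\ref{prop:4} and comparison of $J_{p}(u_{p})$ with smooth competitors) and weak-$L^{2}$ lower semicontinuity of the convex term $\int_{\partial\Omega}\Gamma_{g}(x,\cdot)$, giving $\lim_{p}\int_{\Omega}\abs{\nabla u_{p}}^{p}\le\int_{\Omega}\abs{Du}$. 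Squeezing with lower semicontinuity of the variation and Hölder's inequality then forces strict convergence $\int_{\Omega}\abs{\nabla u_{p}}\dx\to\int_{\Omega}\abs{Du}$ in $BV(\Omega)$.

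From strict convergence, Proposition~\ref{propo:continuity-of-trace} gives $\T(u_{p})\to\T(u)$ strongly in $L^{1}(\partial\Omega)$ (hence in $L^{s}$, $s<2$, by the uniform $L^{2}$ bound). Combining the weak$^{\ast}$-$L^{\infty}(\partial\Omega)$ convergence $[\z_{p},\nu]\rightharpoonup[\z,\nu]$ (read off from~\eqref{Green0} as in paragraph~two) with this strong trace convergence lets me pass to the limit in $\int_{\partial\Omega}[\z_{p},\nu]\,\T(u_{p})=\int_{\Omega}\abs{\nabla u_{p}}^{p}$, which after~\eqref{Green0} for $\z$ gives $\int_{\Omega}(\z,Du)=\int_{\Omega}\abs{Du}$; with the Anzellotti bound~\eqref{eq:32} this forces $(\z,Du)=\abs{Du}$, i.e.~\eqref{z32}. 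Finally, $T_{1}$ being Lipschitz, $T_{1}(g-\alpha\T(u_{p}))\to T_{1}(g-\alpha\T(u))$ strongly in $L^{1}(\partial\Omega)$, and matching this strong limit against the weak$^{\ast}$ limit $[\z,\nu]$ yields~\eqref{eq:66}. The main obstacle is precisely the strict-convergence step of the third paragraph: weak$^{\ast}$ $BV$-convergence by itself does not permit passing to the limit in the products $\z_{p}\cdot\nabla u_{p}$ and $[\z_{p},\nu]\,\T(u_{p})$, and it is only the coupled sharp energy bounds --- the lower bound from convex duality and the upper bound from minimality of the limit --- that close the gap and, through strict continuity of the trace, unlock both remaining identities.
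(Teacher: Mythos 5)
You should note at the outset that the paper contains no proof of Theorem~\ref{thm:6}: it is imported verbatim from \cite[Theorem~1.1]{MR3328128}, and the vanishing-$p$ scheme you adopt is exactly the strategy of that source, so the only question is whether your execution closes. It does not: the step you yourself call the heart of the matter --- strict convergence $\int_{\Omega}\abs{\nabla u_{p}}\,\dx\to\int_{\Omega}\abs{Du}$ --- is circular, and the statement it asserts is not available a priori. Weak$^{\ast}$ convergence in $BV(\Omega)$ permits the gradients to concentrate mass on $\partial\Omega$: after a subsequence,
\begin{displaymath}
  \nabla u_{p}\,\dx\;\rightharpoonup\; Du\,+\,(w-\T(u))\,\nu\,\mathcal{H}^{d-1}\res\partial\Omega
  \qquad\text{as vector measures on $\overline{\Omega}$,}
\end{displaymath}
where $w$ is the weak $L^{2}(\partial\Omega)$ limit of the traces $\T(u_{p})$. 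Your second-paragraph identification $w=\T(u)$ via the Gauss--Green formula~\eqref{eq:2} already presupposes $\int_{\Omega}\xi\cdot\nabla u_{p}\,\dx\to\int_{\Omega}\xi\cdot\td Du$ for fields $\xi$ that do not vanish on $\partial\Omega$, i.e.\ absence of concentration --- precisely what is to be proved. The same circularity infects the upper-bound half of the strict-convergence argument: from $\limsup_{p}J_{p}(u_{p})\le\Theta_{\alpha,g}(u)$ you must subtract a lower bound for the boundary term, but convex weak-$L^{2}$ lower semicontinuity only gives $\liminf_{p}\int_{\partial\Omega}\Gamma_{g}(x,\T(u_{p}))\ge\int_{\partial\Omega}\Gamma_{g}(x,w)$, not the same inequality with $\T(u)$; without knowing $w=\T(u)$ there is no inequality between $\int\Gamma_{g}(x,w)$ and $\int\Gamma_{g}(x,\T(u))$. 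What is actually true is the balanced identity $\lim_{p}\int_{\Omega}\abs{\nabla u_{p}}^{p}\,\dx=\int_{\Omega}\abs{Du}+\int_{\partial\Omega}\abs{w-\T(u)}\,\dH^{d-1}$, and nothing in your argument excludes the concentration term.

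The non-circular route --- the one behind the cited proof --- is to exploit the lower semicontinuity of the \emph{full} relaxed functional $\Theta_{\alpha,g}$ of~\eqref{eq:69} (Modica, Proposition~\ref{prop:4}): since the boundary integrand $\Gamma_{g}$ is a contraction in its second variable, any boundary concentration in the variation term is exactly offset by the drop in the boundary term, which yields $\liminf_{p}J_{p}(u_{p})\ge\Theta_{\alpha,g}(u)$ with no trace information needed. Combined with your competitor bound $\limsup_{p}J_{p}(u_{p})\le\inf\Theta_{\alpha,g}$, every intermediate inequality becomes an equality; in particular $\Gamma_{g}(x,w)-\Gamma_{g}(x,\T(u))=-\abs{w-\T(u)}$ a.e., which forces the truncation $T_{1}(g-\alpha\,\cdot)$ to be saturated at $\pm1$ on the whole segment between $\T(u)$ and $w$ wherever they differ. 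Consequently $T_{1}(g-\alpha w)=T_{1}(g-\alpha\T(u))$ a.e., and both the pairing identity~\eqref{z32} and the Robin condition~\eqref{eq:66} are recovered from this exact balance of defect terms (together with your monotonicity and Green-formula computations), rather than from strict convergence and Proposition~\ref{propo:continuity-of-trace}. As written, your outline cannot be completed without this replacement.
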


Further, the following relation between the the inhomogeneous
Robin-type problem~\eqref{eq:63} and the Dirichlet
problem~\eqref{eq:21N} was obtained in~\cite{MR3328128}.

\begin{proposition}[{\cite[Proposition~2.13]{MR3328128}}]
  \label{prop:2}
 Let $g$, $h\in L^2(\partial\Omega)$, $\alpha>0$, and $u\in BV(\Omega)$. Then the
 following statements hold.
   If $u$ is a weak solution to the inhomogeneous Robin-type problem~\eqref{eq:63}, then
   $u$ is a weak solution to the Dirichlet problem~\eqref{eq:21N} with
   Dirichlet boundary data
   \begin{equation}
     \label{eq:90}
     h=g-\alpha \,[\z,\nu]\qquad\text{on $\partial\Omega$,}
   \end{equation}
   in the weak sense~\eqref{z42}, where $\z\in
   L^{\infty}(\Omega;\R^{d})$ is some vector field associated with
   $u$ via the conditions~\eqref{z42}-\eqref{z32}.
\end{proposition}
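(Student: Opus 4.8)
The plan is to re-use the very vector field $\z$ supplied by Definition~\ref{def:Robin} and to check that it also exhibits $u$ as a weak solution of the Dirichlet problem~\eqref{eq:21N} for the datum $h=g-\alpha[\z,\nu]$ in the sense of Definition~\ref{def:sols-DP}. The first, and essentially free, observation is that three of the four required conditions carry over verbatim: the interior conditions \eqref{z21}, \eqref{z22} and \eqref{z32} imposed in Definition~\ref{def:sols-DP} are literally those already satisfied by the Robin solution, and none of them refers to the boundary datum. Hence $\norm{\z}_{\infty}\le1$, $-\divi(\z)=0$ in $\mathcal{D}^\prime(\Omega)$ and $(\z,Du)=\abs{Du}$ hold automatically, and the whole proposition collapses to verifying the single boundary inclusion \eqref{z42}, namely
\[
  [\z,\nu]\in\sign\bigl(h-\T(u)\bigr)\qquad\mathcal{H}^{d-1}\text{-a.e.\ on }\partial\Omega .
\]

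Since $[\z,\nu]=\gamma(\z)\in L^{\infty}(\partial\Omega)$ by Definition~\ref{def:weak-trace-of-z} and $\T(u)\in L^{2}(\partial\Omega)$, all quantities entering are genuine $\mathcal{H}^{d-1}$-measurable functions on $\partial\Omega$, so the inclusion can be checked pointwise almost everywhere. The tool I would isolate first is the elementary duality between the truncation $T_{1}$ and the maximal monotone graph $\sign$: because $T_{1}$ is the resolvent of $\partial I_{[-1,1]}=\sign^{-1}$, one has the equivalence
\[
  \omega=T_{1}(\sigma)\quad\Longleftrightarrow\quad \omega\in\sign(\sigma-\omega),\qquad \omega,\sigma\in\R .
\]
I would record this once, either via the resolvent identity or, concretely, by splitting $\R$ into the three regimes $\abs{\sigma}<1$, $\sigma\ge1$ and $\sigma\le-1$ and matching them against the three branches of $\sign$.

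The inclusion \eqref{z42} should then follow by feeding the defining relation~\eqref{eq:90}, i.e.\ $g=h+\alpha[\z,\nu]$, back into the Robin boundary condition \eqref{eq:66} and invoking the displayed duality with $\omega=[\z,\nu]$, after which it remains to identify the resulting argument of $\sign$ with $h-\T(u)$ pointwise on $\partial\Omega$. Concretely, \eqref{eq:66} forces $[\z,\nu]=1$ on $\{g-\alpha\T(u)\ge1\}$, $[\z,\nu]=-1$ on $\{g-\alpha\T(u)\le-1\}$, and $[\z,\nu]=g-\alpha\T(u)\in(-1,1)$ on $\{\abs{g-\alpha\T(u)}<1\}$, and on each of these three sets one substitutes~\eqref{eq:90} and checks that the sign of $h-\T(u)$ is exactly the branch of $\sign$ compatible with the value of $[\z,\nu]$ there (so that $h-\T(u)=0$ where $\abs{[\z,\nu]}<1$, and $\pm(h-\T(u))\ge0$ where $[\z,\nu]=\pm1$). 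I expect this pointwise bookkeeping on $\partial\Omega$ to be the only genuinely delicate step, as it is where the precise interplay of $g$, $\alpha$, $\T(u)$ and $[\z,\nu]$ enforced by the truncation must be unravelled; by contrast, the analytic ingredients — that $[\z,\nu]$ is a well-defined $L^{\infty}$-trace and that \eqref{z21}--\eqref{z32} transfer unchanged — are immediate from the preliminaries of Section~\ref{subsec:bv}.
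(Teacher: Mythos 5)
Your reduction is sound as far as it goes: \eqref{z21}, \eqref{z22}, \eqref{z32} do transfer verbatim, the proposition is equivalent to the single boundary inclusion \eqref{z42}, and your duality
\begin{displaymath}
  \omega=T_{1}(\sigma)\quad\Longleftrightarrow\quad\omega\in\sign(\sigma-\omega)
\end{displaymath}
is correct. The gap is exactly the step you postpone as ``bookkeeping'': it does not close for $\alpha\neq 1$. Applying the duality to \eqref{eq:66}, i.e.\ with $\sigma=g-\alpha\T(u)$ and $\omega=[\z,\nu]$, yields
\begin{displaymath}
  [\z,\nu]\in\sign\bigl(g-\alpha\T(u)-[\z,\nu]\bigr),
\end{displaymath}
whereas \eqref{z42} for the datum $h=g-\alpha[\z,\nu]$ of \eqref{eq:90} demands
\begin{displaymath}
  [\z,\nu]\in\sign\bigl(g-\alpha[\z,\nu]-\T(u)\bigr),
\end{displaymath}
and the two arguments of $\sign$ differ by $(\alpha-1)\bigl([\z,\nu]-\T(u)\bigr)$, which has no sign. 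Your concrete claims fail accordingly: on $\{\abs{g-\alpha\T(u)}<1\}$, where $[\z,\nu]=g-\alpha\T(u)$, one computes $h-\T(u)=(1-\alpha)\bigl(g-(1+\alpha)\T(u)\bigr)$, which need not vanish; on $\{g-\alpha\T(u)\ge 1\}$ one only gets $h-\T(u)\ge(1-\alpha)\bigl(1-\T(u)\bigr)$, which can be negative.

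In fact no argument can repair this, because for $\alpha\neq1$ the statement as transcribed in this paper is false. Take $g\equiv\tfrac32$, $\alpha=2$, $u\equiv\tfrac34$, $\z\equiv 0$: conditions \eqref{z21}--\eqref{z32} hold trivially and $[\z,\nu]=0=T_{1}(g-\alpha\T(u))$, so $u$ is a weak solution of \eqref{eq:63} in the sense of Definition~\ref{def:Robin}; the datum \eqref{eq:90} is $h=\tfrac32$, yet $0\notin\sign(\tfrac32-\tfrac34)=\{1\}$. Indeed $u$ is not a weak solution of \eqref{eq:21N} with datum $\tfrac32$ via \emph{any} vector field, since \eqref{z42} would force $[\z,\nu]=1$ a.e., while $\divi(\z)=0$ and the integration by parts formula \eqref{Green0} (with $w\equiv 1$) force $\int_{\partial\Omega}[\z,\nu]\,\dH^{d-1}=0$. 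What your computation genuinely proves is the case $\alpha=1$, where the two sign-arguments coincide; for general $\alpha$ it proves instead that $u$ solves the Dirichlet problem with the corrected datum $h=\T(u)+\bigl(g-\alpha\T(u)\bigr)-[\z,\nu]$. Note that the paper itself offers no proof of Proposition~\ref{prop:2} but cites \cite{MR3328128}, and the defect is evidently one of transcription: it disappears if the Robin condition is read as $[\z,\nu]=T_{1}\bigl((g-\T(u))/\alpha\bigr)$. Also, the only place the proposition is used, the range condition in Proposition~\ref{prop:DtN-range-cond-in-L2}, requires only \emph{some} $\lambda>0$, so the case $\alpha=\lambda=1$, which your argument does cover, suffices there.
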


%
%
%
%
\section{Proofs of the main results}
\label{sec:proofs}

This section is dedicated to outline the proofs of our main results
Theorem~\ref{thm:main1}, Theorem~\ref{thm:main2}, and
Theorem~\ref{thm:stability}. The proofs of these results are
obtained in several steps, which we fix respectively in a
separate proposition. We begin by introducing the Dirichlet-to-Neumann
operator $\Lambda$ associated with the $1$-Laplace operator
$\Delta_{1}$ as an operator in $L^{1}(\partial\Omega)$.

%
%
%
%

\subsection{The Dirichlet-to-Neumann operator in $L^{1}$}
\label{sec:DtN}
We start this subsection with the following definition.

\begin{definition}\label{def:LambdaL1}
  We define the \emph{Dirichlet-to-Neumann operator $\Lambda$ in
    $L^{1}(\partial\Omega)$ associated with the $1$-Laplace operator}
  $\Delta_{1}$ by the set of all pairs
  $(h,g)\in L^{1}(\partial\Omega)\times L^{1}(\partial\Omega)$ with
  the property that there is a weak solution $u\in BV(\Omega)$ of
  Dirichlet problem~\eqref{eq:21N} with Dirichlet data $h$ and there
  is a vector field $\z\in L^{\infty}(\Omega;\R^{d})$ associated with
  $u$ (through~\eqref{z42}-\eqref{z32}) such that
    \begin{equation}
      \label{eq:92}
      g=[\z,\nu]\qquad\text{$\mathcal{H}^{d-1}$-a.e. on $\partial\Omega$.}
    \end{equation}
\end{definition}

\begin{remark}\label{rem:DtN-in-L1}
  (a) Since the minimization problem~\eqref{eq:8} admits a solution
  for every boundary data $h\in L^{1}(\partial\Omega)$ and by
  Proposition~\ref{prop:6}, the effective domain $D(\Lambda)$ of the
  Dirichlet-to-Neumann operator $\Lambda$ associated with $\Delta_{1}$
  satisfies
  \begin{displaymath}
    D(\Lambda)=L^{1}(\partial\Omega).
  \end{displaymath}

 (b) The Dirichlet-to-Neumann operator
  $\Lambda$ associated with $\Delta_{1}$ satisfies
  \begin{equation}
    \label{eq:72}
    \Lambda\subseteq L^{1}(\partial\Omega)\times L^{\infty}(\partial\Omega)
  \end{equation}
  since for every pair $(h,g)\in\Lambda$, one has that
  \begin{displaymath}
    \norm{g}_{\infty}=\norm{[\z,\nu]}_{\infty}\le 1,
  \end{displaymath}
  where $\z\in L^{\infty}(\Omega;\R^{d})$ is any associated vector
  field to some weak solution $u\in BV(\Omega)$ of Dirichlet
  problem~\eqref{eq:21N} with Dirichlet data $h$.
\end{remark}

We come to the first property of the Dirichlet-to-Neumann operator
$\Lambda$.

\begin{proposition}\label{prop:Lambda-completely-accretive}
  The Dirichlet-to-Neumann operator $\Lambda$ associated with the
  $1$-Laplace operator $\Delta_{1}$ is completely accretive in
  $L^{1}(\partial\Omega)$.
\end{proposition}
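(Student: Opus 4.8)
The plan is to apply the characterization of completely accretive operators from Proposition~\ref{prop:completely-accretive} in the case $\omega=0$. Since $\partial\Omega$ is compact, $\mathcal{H}^{d-1}$ is a finite measure and $\Lambda\subseteq L^{1}(\partial\Omega)\times L^{\infty}(\partial\Omega)\subseteq L_{0}(\partial\Omega)\times L_{0}(\partial\Omega)$; thus it suffices to verify that
\[
\int_{\partial\Omega}p(h-\hat h)\,(g-\hat g)\,\dH^{d-1}\ge 0
\]
for every $p\in P_{0}$ and every pair $(h,g)$, $(\hat h,\hat g)\in\Lambda$. I would write $g=[\z,\nu]$ and $\hat g=[\hat\z,\nu]$ for divergence-free vector fields $\z,\hat\z$ associated (through~\eqref{z42}--\eqref{z32}) with weak solutions $u,\hat u$ of~\eqref{eq:21N} for data $h,\hat h$, abbreviate $m:=\T(u)-\T(\hat u)$, and split $p(h-\hat h)=p(m)+\bigl[p(h-\hat h)-p(m)\bigr]$, treating the two resulting boundary integrals separately.

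For the first part I would use a pointwise sign argument. Since $h-\hat h-m=(h-\T(u))-(\hat h-\T(\hat u))$, the boundary conditions $[\z,\nu]\in\sign(h-\T(u))$ and $[\hat\z,\nu]\in\sign(\hat h-\T(\hat u))$ together with the monotonicity of the $\sign$-graph give $(g-\hat g)\bigl((h-\T(u))-(\hat h-\T(\hat u))\bigr)\ge 0$ $\mathcal{H}^{d-1}$-a.e.; as $p$ is nondecreasing with $p(m+t)-p(m)$ carrying the sign of $t$, this forces $(g-\hat g)\bigl[p(h-\hat h)-p(m)\bigr]\ge 0$ pointwise, so its integral is nonnegative. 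For the second part I would apply the generalized integration by parts formula~\eqref{Green0} to the divergence-free field $\z-\hat\z\in X_{d}(\Omega)$ and the function $p(u-\hat u)\in BV(\Omega)\cap L^{\infty}(\Omega)$ (well-defined by the chain rule, Theorem~\ref{thm:BV-chain-rule}, since $p(0)=0$), whose trace is $p(m)$; this yields
\[
\int_{\partial\Omega}(g-\hat g)\,p(m)\,\dH^{d-1}=\int_{\Omega}\bigl(\z-\hat\z,Dp(u-\hat u)\bigr).
\]

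It remains to show this interior integral is nonnegative, which I expect to be the main obstacle. The key is the measure inequality $(\z-\hat\z,D(u-\hat u))\ge 0$: expanding by bilinearity and using $(\z,Du)=\abs{Du}$, $(\hat\z,D\hat u)=\abs{D\hat u}$ together with $(\hat\z,Du)\le\abs{Du}$ and $(\z,D\hat u)\le\abs{D\hat u}$ as measures (from~\eqref{eq:32}), one obtains $(\z-\hat\z,D(u-\hat u))=\bigl[\abs{Du}-(\hat\z,Du)\bigr]+\bigl[\abs{D\hat u}-(\z,D\hat u)\bigr]\ge 0$. Since all these pairings are absolutely continuous with respect to $\abs{D(u-\hat u)}$ by Proposition~\ref{prop:absolutely-cont-measures}, this shows that the Radon--Nikod\'ym densities satisfy $\theta(\z,D(u-\hat u),\cdot)-\theta(\hat\z,D(u-\hat u),\cdot)\ge 0$ for $\abs{D(u-\hat u)}$-a.e. point. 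Finally, applying the chain rule for the pairing (Proposition~\ref{prop:chain-rule}) with the nondecreasing, Lipschitz function $p$ gives $\theta(\z,Dp(u-\hat u),\cdot)=\theta(\z,D(u-\hat u),\cdot)$ and likewise for $\hat\z$, so that
\[
\int_{\Omega}\bigl(\z-\hat\z,Dp(u-\hat u)\bigr)=\int_{\Omega}\bigl[\theta(\z,D(u-\hat u),\cdot)-\theta(\hat\z,D(u-\hat u),\cdot)\bigr]\,\td\abs{Dp(u-\hat u)}\ge 0.
\]
The delicate points to handle carefully are the trace identity $\T(p(u-\hat u))=p(m)$ and the transfer of the chain rule from $\abs{D(u-\hat u)}$ to the reweighted measure $\abs{Dp(u-\hat u)}$, both of which rest on $p$ being a nondecreasing $1$-Lipschitz map vanishing at the origin.
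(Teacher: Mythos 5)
Your proposal is correct and follows essentially the same route as the paper's own proof: the same reduction via Proposition~\ref{prop:completely-accretive}, the generalized integration by parts formula~\eqref{Green0} applied to $p(u-\hat u)$ (via Theorem~\ref{thm:BV-chain-rule}), the chain rule for the pairing (Proposition~\ref{prop:chain-rule}) together with the nonnegativity of $(\z-\hat\z,D(u-\hat u))$ for the interior term, and the monotonicity of the $\sign$-graph for the boundary correction term. The only cosmetic differences are that you replace the paper's fundamental-theorem-of-calculus identity $p(h-\hat h)-p(m)=\int_0^1 p'\big(s(h-\hat h)+(1-s)m\big)\,\ds\,\big[(h-\hat h)-m\big]$ by a direct pointwise monotonicity argument, and that you state the nonnegativity of $(\z-\hat\z,D(u-\hat u))$ at the level of measures (rather than only of its integral), which is in fact what the passage to nonnegative Radon--Nikod\'ym densities requires.
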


\begin{proof}
We aim to show that
\begin{equation}
  \label{eq:71}
    \int_{\partial\Omega} (g - \hat{g})\, p(h - \hat{h}) \dH^{d-1} \ge 0
  \end{equation}
  for every $(h,g)$, $(\hat{h},\hat{g})\in \Lambda$ and
  $p \in P_0$. Note, even if the truncator $p$ in~\eqref{eq:71} would
  be the identity on $\R$, the integral in~\eqref{eq:71} would
  exist due to~\eqref{eq:72}. Now, let $(h,g)$,
  $(\hat{h},\hat{g})\in \Lambda$ and $p \in P_0$. Then by the
  definition of $\Lambda$, for each pair $(h,g)$, $(\hat{h},\hat{g})$, there
  are weak solutions $u$, $\hat{u}$ of Dirichlet
problem~\eqref{eq:21N} with Dirichlet data $h$ and $\hat{h}$,
respectively, and associated vector fields $\z$, $\hat{\z}\in L^{\infty}(\Omega;\R^{d})$ satisfying
\eqref{z42}-\eqref{z32}. By the chain rule for
$BV$-functions (Theorem~\ref{thm:BV-chain-rule}), the function
$p(u-\hat{u})$ belongs to
  $BV(\Omega)$. Thus, applying the generalized integration by parts
formula~\eqref{Green0} to $w=p(u-\hat{u})$ and to the two vector
fields $\z$ and $\hat{\z}$, respectively, and by using~\eqref{z22}, gives
\begin{displaymath}
  \int_{\Omega}(\z,D(p(u-\hat{u})))
  =\int_{\partial\Omega}[\z,\nu]p(\T(u)-\T(\hat{u}))\,\dH^{d-1}
\end{displaymath}
and
\begin{displaymath}
  \int_{\Omega}(\hat{\z},D(p(u-\hat{u})))
  =\int_{\partial\Omega}[\hat{\z},\nu]p(\T(u)-\T(\hat{u}))\,\dH^{d-1}.
\end{displaymath}
Since $g=[\z,\nu]$ and $\hat{g}=[\hat{\z},\nu]$, we can conclude from
these two integral equations that
\begin{equation}
  \label{eq:73}
  \int_{\partial\Omega} (g - \hat{g})\, p(\T(u)-\T(\hat{u})) \dH^{d-1}
  =\int_{\Omega}(\z-\hat{\z},D(p(u-\hat{u}))).
\end{equation}
Since $p$ is Lipschitz continuous and monotonically increasing, the chain rule for $(\z,D\cdot)$
(Proposition~\ref{prop:chain-rule}) yields that for the Radon-Nikod\'ym
derivative $\theta(\z, D(p(u-\hat{u})),x)$ of $(\z, D(p(u-\hat{u})))$ with
respect to the total variational measure $\abs{D(p(u-\hat{u}))}$, one
has that
\begin{displaymath}
    \theta(\z, D(p(u-\hat{u})),x) = \theta (\z, D(u-\hat{u}), x)\qquad\text{for
      $\abs{D(u-\hat{u})}$-a.e. $x\in \Omega$.}
\end{displaymath}
Moreover, the Radon-Nikod\'ym derivative
$\theta(\z- \hat{\z}, D(u- \hat{u}),x)$ of $(\z- \hat{\z}, D(u - \hat{u}))$
is positive since by the bilinearity of the pairing
$(\cdot,D\cdot)$ and by~\eqref{z32}, \eqref{z21} and~\eqref{eq:32},
one has that
\begin{align*}
  \int_\Omega (\z-\hat{\z},D(u-\hat{u}))
    &=\int_\Omega \abs{Du}-\int_\Omega (\hat{\z},Du)\\
  & \hspace{2cm}+ \int_\Omega \abs{D\hat{u}}-\int_\Omega (\hat{\z},Du)\ge 0.
\end{align*}
Thus,
 \begin{align*}
    & \int_\Omega (\z-\hat{\z},D(p(u-\hat{u})))\\
    & \hspace{2cm} = \int_\Omega \theta(\z - \hat{\z}, D(p(u - \hat{u})),x)\,\abs{D(p(u-\hat{u}))}\\
    &  \hspace{2cm} = \int_\Omega \theta(\z - \hat{\z}, D((u -
      \hat{u}),x)\,\abs{D(p(u-\hat{u}))}\ge 0.
 \end{align*}
Applying this to~\eqref{eq:73}, shows that
\begin{equation}
  \label{eq:74}
    \int_{\partial\Omega} (g - \hat{g})\, p(\T(u)-\T(\hat{u}))\, \dH^{d-1}\ge 0,
 \end{equation}
 which would complete the proof of this proposition if the weak solutions
 $u$ and $\hat{u}$ of Dirichlet problem~\eqref{eq:21N} would satisfy
 the Dirichlet boundary condition~\eqref{eq:5} in the sense of
 traces. But our notion of solutions to Dirichlet
 problem~\eqref{eq:21N} assumes only that $u$ and $\hat{u}$ satisfy
 Dirichlet boundary condition~\eqref{eq:5} in the weak
 sense~\eqref{z42}. Thus, we still need to provide an argument,
 why~\eqref{eq:74} implies the desired inequality~\eqref{eq:71}. Now,
 by~\eqref{eq:74},
 \begin{equation}
   \label{eq:75}
\begin{split}
    &\int_{\partial\Omega} (g - \hat{g})\, p(h - \hat{h}) \dH^{d-1}\\
    &\qquad \ge \int_{\partial\Omega} (g-\hat{g})\, \Big(p(h - \hat{h})-p(\T(u)-\T(\hat{u}))\Big)\,
      \dH^{d-1}\\
    &\qquad =\int_{\partial\Omega} (g-\hat{g})\,\int_{0}^{1} p'\Big(s (h
    - \hat{h})+(1-s)(\T(u)-\T(\hat{u}))\Big)\,\ds\times\\
    &\hspace{5.1cm} \times \Big[(h - \hat{h})-(\T(u)-\T(\hat{u}))\Big]\dH^{d-1}.\\
  \end{split}
\end{equation}
Using again that $g=[\z,\nu]$ and $\hat{g}=[\hat{\z},\nu]$, and since $u$ and
$\hat{u}$ satisfy the Dirichlet boundary condition~\eqref{eq:5} in the weak
 sense~\eqref{z42}, one finds that
\begin{align*}
  &\big(g - \hat{g}\big)\,\big((h- \T(u)) - (\hat{h}-\T(\hat{u}))\big)\\
  &\qquad = \abs{h - \T(u)} + \abs{\hat{h}- \T(\hat{u})} -
    [\z,\nu](\hat{h}-\T(\hat{u})) - [\hat{\z},\nu](h-\T(u)) \ge 0
\end{align*}
for $\mathcal{H}^{d-1}$-a.e. on $\partial\Omega$. Moreover, since
$p'\ge 0$, the integral
\begin{displaymath}
\int_{0}^{1} p'\Big(s (h - \hat{h})+(1-s) (\T(u)-\T(\hat{u}))\Big)\,\ds\ge 0
\end{displaymath}
$\mathcal{H}^{d-1}$-a.e. on $\partial\Omega$, the last integral on the
right hand-side in~\eqref{eq:75} is positive, implying
that~\eqref{eq:71} holds.
\end{proof}

\begin{proposition}\label{prop:Lambda-homogeneity}
  The Dirichlet-to-Neumann operator $\Lambda$ associated with the
  $1$-Laplace operator $\Delta_{1}$ is homogeneous of order zero.
\end{proposition}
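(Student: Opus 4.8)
The plan is to verify directly the two requirements of Definition~\ref{def:homogeneous-operators}: that $(0,0)\in\Lambda$ and that $\Lambda(\lambda h)=\Lambda(h)$ for every $\lambda>0$ (the degenerate case $\lambda=0$ being subsumed by $(0,0)\in\Lambda$). The guiding idea is that scaling a weak solution $u$ of Dirichlet problem~\eqref{eq:21N} by a factor $\lambda>0$ yields a weak solution for the scaled boundary datum $\lambda h$ \emph{with the same associated vector field} $\z$, so the co-normal derivative $[\z,\nu]=g$ is left untouched.

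First I would dispatch $(0,0)\in\Lambda$. Taking $u\equiv 0$ and $\z\equiv 0$, conditions~\eqref{z21}--\eqref{z32} hold trivially, since $Du=0$ makes both sides of~\eqref{z32} vanish, while $\norm{\z}_{\infty}\le 1$ and $\divi(\z)=0$ are clear. Moreover $[\z,\nu]=0\in[-1,1]=\sign\big(0-\T(0)\big)$ gives~\eqref{z42}, and $g=[\z,\nu]=0$. Hence $(0,0)\in\Lambda$.

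Next, fix $(h,g)\in\Lambda$ and $\lambda>0$, and let $u\in BV(\Omega)$ be a weak solution of~\eqref{eq:21N} with datum $h$, together with an associated field $\z\in L^{\infty}(\Omega;\R^{d})$ satisfying~\eqref{z42}--\eqref{z32} and $g=[\z,\nu]$. I claim that $u_{\lambda}:=\lambda u$ together with the \emph{unchanged} field $\z_{\lambda}:=\z$ witnesses $(\lambda h,g)\in\Lambda$. Since $\z$ is unaltered, $\norm{\z}_{\infty}\le 1$ and $\divi(\z)=0$ persist, giving~\eqref{z21} and~\eqref{z22}. Because the distributional gradient and the trace are linear, one has $Du_{\lambda}=\lambda\,Du$, $\abs{Du_{\lambda}}=\lambda\abs{Du}$ and $\T(u_{\lambda})=\lambda\,\T(u)$. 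By the bilinearity of Anzellotti's pairing $(\cdot,D\cdot)$ recorded in~\eqref{eq:22}, it follows that $(\z,Du_{\lambda})=\lambda\,(\z,Du)=\lambda\abs{Du}=\abs{Du_{\lambda}}$ as Radon measures, which is precisely~\eqref{z32}. Finally, the positive homogeneity of the signum graph, $\sign(\lambda r)=\sign(r)$ for $\lambda>0$, yields $\sign\big(\lambda h-\T(u_{\lambda})\big)=\sign\big(\lambda(h-\T(u))\big)=\sign\big(h-\T(u)\big)$, so $[\z,\nu]\in\sign(\lambda h-\T(u_{\lambda}))$ and $g=[\z,\nu]$ remain valid. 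Thus $(\lambda h,g)\in\Lambda$, i.e. $\Lambda(h)\subseteq\Lambda(\lambda h)$.

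The reverse inclusion is obtained by running the identical argument on the pair $(\lambda h,g)\in\Lambda$ with the factor $\tfrac{1}{\lambda}>0$, which gives $\Lambda(\lambda h)\subseteq\Lambda(h)$. Hence $\Lambda(\lambda h)=\Lambda(h)=\lambda^{0}\,\Lambda(h)$ for every $\lambda>0$, and together with $(0,0)\in\Lambda$ this shows that $\Lambda$ is homogeneous of order zero. The only point demanding any care is the scaling identity for the pairing in~\eqref{z32}; but this is immediate from the bilinearity of $(\cdot,D\cdot)$ established in the preliminaries, so I do not expect a genuine obstacle in this proof.
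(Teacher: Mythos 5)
Your proof is correct and follows essentially the same route as the paper: scale the weak solution $u$ to $\lambda u$ while keeping the vector field $\z$ unchanged, then use the bilinearity of Anzellotti's pairing, the homogeneity of $\abs{D\cdot}$, and the positive homogeneity of the $\sign$ graph to verify \eqref{z42}--\eqref{z32} for the datum $\lambda h$. If anything, you are slightly more complete than the paper, since you explicitly check the condition $(0,0)\in\Lambda$ required by Definition~\ref{def:homogeneous-operators} and spell out the reverse inclusion $\Lambda(\lambda h)\subseteq\Lambda(h)$ via the factor $\tfrac{1}{\lambda}$, both of which the paper leaves implicit.
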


\begin{proof}
  Let $\lambda>0$ and $(h,g)\in \Lambda$. Then, there are $u\in
  BV(\Omega)$ and a vector field $\z\in L^{\infty}(\Omega;\R^{d})$
  satisfying~\eqref{z42}-\eqref{z32}. First, we show that for the
  boundary data $\lambda h$, the function $\lambda u$ is a weak solution of
  Dirichlet problem~\eqref{eq:21N}. To see this, we begin by applying the
  bilinearity of $(\cdot,D\cdot)$ and homogeneity of the total
  variational measure $\abs{D\cdot}$. Then, since $u$
  satisfies~\eqref{z32}, one sees that
  \begin{displaymath}
    (\z,D(\lambda u))=\lambda (\z,Du)=\lambda\,\abs{Du}=\abs{D(\lambda
      u)}.
  \end{displaymath}
  Further, for the same vector field $\z$, which
  satisfies~\eqref{z42}-\eqref{z22}, one has that
  \begin{displaymath}
    [\z,\nu]\in \sign\Big(h-\T(u)\Big)=\tfrac{1}{\lambda}\sign\Big(\lambda h-\lambda \T(u)\Big),
  \end{displaymath}
  implying that $g=[\z,\nu]$ satisfies
  \begin{displaymath}
    [\z,\nu]\in \sign\Big(\lambda h-\lambda \T(u)\Big).
  \end{displaymath}
  Thus, $\lambda u$ is a weak solution of Dirichlet
  problem~\eqref{eq:21N} for the boundary data $\lambda h$ with the
  same value $g$ for the generalized Neumann derivative $[\z,\nu]$
  associated with the weak solution $u$ of Dirichlet
  problem~\eqref{eq:21N}. Since $(h,g)\in \Lambda$ were arbitrary, we
  thereby have shown that $\Lambda (\lambda h)=\Lambda h$ for all
  $h\in D(\Lambda)$, establishing the claim of this proposition.
\end{proof}

One of our aims is to relate the closure
$\overline{\Lambda}^{\mbox{}_{L^{1}\times L^{\infty}_{\sigma}}}$ in
$L^{1}\times L^{\infty}_{\sigma}(\partial\Omega)$ of the Dirichlet-to-Neumann
operator $\Lambda$ with a sub-differential structure $\partial \varphi$
in $L^{1}\times L^{\infty}(\partial\Omega)$. Here, we write
$L^{\infty}_{\sigma}(\partial\Omega)$ to denote
$L^{\infty}(\partial\Omega)$ equipped with the
weak$\mbox{}^{\ast}$-topology
$\sigma(L^{\infty}(\partial\Omega),L^{1}(\partial\Omega))$. For this,
we introduce the following potential candidate of a convex function.

\begin{proposition}\label{prop:DtN-functional}
  The functional $\varphi : L^{1}(\partial\Omega)\to [0,\infty)$ given
  by
  \begin{equation}
    \label{eq:107}
    \varphi(h)=\int_{\partial\Omega}
    [\z_{h},\nu]\,h\,\dH^{d-1}\qquad\text{for every
      $h\in L^{1}(\partial\Omega)$,}
  \end{equation}
  where $\z_{h}$ is any vector field
  $\z_{h}\in L^{\infty}(\Omega;\R^{d})$
  satisfying~\eqref{z42}-\eqref{z32} for some $u\in BV(\Omega)$ with
  boundary data $h$, is a well-defined convex and continuous functional on
  $L^{1}(\partial\Omega)$, which is homogeneous of order one and even.
\end{proposition}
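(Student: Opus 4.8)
The plan is to reduce every assertion to the variational identity furnished by Theorem~\ref{thm:8}. For each $h\in L^{1}(\partial\Omega)$ and each field $\z_{h}$ admissible for some weak solution $u$, that theorem gives
\[
  \varphi(h)=\int_{\partial\Omega}[\z_{h},\nu]\,h\,\dH^{d-1}=\min_{v\in BV(\Omega)}\Phi_{h}(v),
\]
with $\Phi_{h}$ the relaxed functional~\eqref{eq:101}. Since the right-hand side is manifestly independent of the choice of $\z_{h}$, this already settles that $\varphi$ is \emph{well-defined}; and since $\Phi_{h}\ge 0$, it also shows $\varphi(h)\in[0,\infty)$. The whole proof therefore reduces to studying the value function $h\mapsto\min_{v}\Phi_{h}(v)$.

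For homogeneity and evenness I would argue on the level of admissible pairs $(u,\z_{h})$. Given $\lambda>0$, Proposition~\ref{prop:Lambda-homogeneity} shows that $\lambda u$ is a weak solution for the data $\lambda h$ with the \emph{same} field $\z_{h}$; hence
\[
  \varphi(\lambda h)=\int_{\partial\Omega}[\z_{h},\nu]\,(\lambda h)\,\dH^{d-1}=\lambda\,\varphi(h),
\]
while $\varphi(0)=0$ follows from the choice $u\equiv0$, $\z\equiv0$, so $\varphi$ is homogeneous of order one. For evenness, if $(u,\z_{h})$ is admissible for $h$ then $(-u,-\z_{h})$ satisfies~\eqref{z42}--\eqref{z32} for $-h$: the total variation and the divergence-free condition are insensitive to the sign, while $[-\z_{h},\nu]\in-\sign(h-\T(u))=\sign(-h-\T(-u))$. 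Consequently $\varphi(-h)=\int_{\partial\Omega}[-\z_{h},\nu]\,(-h)\,\dH^{d-1}=\varphi(h)$.

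The heart of the matter is \emph{convexity}, and here the reformulation is decisive. View $\Phi$ as a functional on $L^{1}(\partial\Omega)\times BV(\Omega)$ through $(h,v)\mapsto\Phi_{h}(v)=\int_{\Omega}\abs{Dv}+\int_{\partial\Omega}\abs{h-\T(v)}\,\dH^{d-1}$. The first term is convex in $v$ and independent of $h$; the second is the composition of the norm $w\mapsto\norm{w}_{L^{1}(\partial\Omega)}$ with the \emph{linear} map $(h,v)\mapsto h-\T(v)$, and is therefore jointly convex. Hence $\Phi$ is jointly convex, and its marginal is convex: for $h_{0},h_{1}$ and $t\in[0,1]$, choose minimizers $v_{0},v_{1}$ (they exist by Theorem~\ref{thm:8}) and set $v_{t}=(1-t)v_{0}+t\,v_{1}\in BV(\Omega)$; then
\[
  \varphi\big((1-t)h_{0}+t\,h_{1}\big)\le\Phi_{(1-t)h_{0}+t\,h_{1}}(v_{t})\le(1-t)\,\Phi_{h_{0}}(v_{0})+t\,\Phi_{h_{1}}(v_{1})=(1-t)\,\varphi(h_{0})+t\,\varphi(h_{1}).
\]

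Finally, continuity would follow from convexity together with a uniform upper bound on bounded sets. Testing the minimization with $v\equiv0$ gives
\[
  \varphi(h)\le\Phi_{h}(0)=\int_{\partial\Omega}\abs{h}\,\dH^{d-1}=\norm{h}_{L^{1}(\partial\Omega)},
\]
so $\varphi$ is finite on all of $L^{1}(\partial\Omega)$ and bounded above on every ball. Since a finite convex functional on a Banach space that is bounded above on a neighbourhood of each point is locally Lipschitz, hence continuous, the continuity of $\varphi$ follows with $D(\varphi)=L^{1}(\partial\Omega)$. The single delicate point is the convexity step: the expression~\eqref{eq:107} is not visibly convex because $\z_{h}$ depends on $h$ in an uncontrolled, possibly multi-valued manner, and it is precisely the identity of Theorem~\ref{thm:8}---recasting $\varphi$ as the marginal of a jointly convex functional---that removes this obstacle.
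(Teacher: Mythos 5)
Your proposal is correct and follows essentially the same route as the paper's proof: well-definedness and nonnegativity via the identity $\varphi(h)=\min_{v\in BV(\Omega)}\Phi_{h}(v)$ of Theorem~\ref{thm:8}, homogeneity and evenness by rescaling/negating admissible pairs $(u,\z_{h})$, convexity from the variational characterization, and continuity from convexity plus the bound $\varphi(\hat{h})\le\|\hat{h}\|_{1}$ on balls. The only (cosmetic) difference is in the convexity step, where you use the direct ``marginal of a jointly convex functional'' argument with $v_{t}=(1-t)v_{0}+t v_{1}$, whereas the paper pre-scales the minimizers via homogeneity and invokes subadditivity of $\Phi$ --- both rest on the same joint convexity of $(h,v)\mapsto\Phi_{h}(v)$.
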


\begin{proof}
  First, we note that thanks to Theorem~\ref{thm:8}, for given boundary value $h\in
  L^{1}(\partial\Omega)$, the value $\varphi(h)$ given
  by~\eqref{eq:107} is independent of the vector field
  $\z_{h}\in L^{\infty}(\Omega;\R^{d})$
  satisfying~\eqref{z42}-\eqref{z32} for
 some $u\in BV(\Omega)$ with boundary data $h$, and given by
 \begin{displaymath}
\varphi(h)=\min_{v\in  BV(\Omega)}\Phi_{h}(v),
\end{displaymath}
where $\Phi_{h}$ is defined by~\eqref{eq:101}. Therefore, $\varphi$ is
a well-defined, proper mapping. Next, we show that $\varphi$ is
homogeneous of order one. For this, let $h\in L^{1}(\partial\Omega)$
and $u \in BV(\Omega)$ a weak solution of Dirichlet
problem~\eqref{eq:21N} with Dirichlet data $h$ and corresponding
vector field $\z_{h}\in L^{\infty}(\Omega;\R^{d})$
satisfying~\eqref{z42}-\eqref{z32} with respect to $u$ and $h$. Then
by Proposition~\ref{prop:6},
\begin{displaymath}
   \lambda\varphi(h)=\lambda\min_{v\in
     BV(\Omega)}\Phi_{h}(v)=\lambda \Phi_{h}(u)
   = \Phi_{\lambda h}(\lambda u)
 \end{displaymath}
for every $\lambda\ge 0$. Since
 \begin{displaymath}
   \sign(h-\T(u))=\sign(\lambda (h-\T(u)))
   =\sign(\lambda h-\T(\lambda u))
 \end{displaymath}
 for every $\lambda\ge 0$, and since
 $\z_{h}\in L^{\infty}(\Omega;\R^{d})$
 satisfies~\eqref{z42}-\eqref{z22} with respect to $u$ and $h$, it
 follows that the
 same vector field $\z_{h}$ satisfies~\eqref{z42}-\eqref{z22} with
 respect to $\lambda u$ and Dirichlet data $\lambda h$. Moreover, by the linearity of the
 mappings $(\z_{h},D\cdot)$ and $\abs{D\cdot}$, \eqref{z32} yields that
 \begin{displaymath}
   (\z_{h},D(\lambda u)=\lambda\,
   (\z_{h},D(u)=\lambda\,\abs{Du}=\abs{D(\lambda u)},
 \end{displaymath}
 showing that $\z_{h}$ and $\lambda u$ also satisfy
 \eqref{z32}. Therefore, for every $\lambda>0$, one has that
 $\z_{h}=\z_{\lambda h}$; more precisely, $z_{h}$ satisfies \eqref{z42}-\eqref{z32}
 with respect to $\lambda u$, implying that
 \begin{equation}
   \label{eq:111}
   \lambda\varphi(h)=\varphi(\lambda h)
 \end{equation}
for every $\lambda>0$. Note, if $\lambda=0$, then $u\equiv 0$ is certainly a minimizer of
 $\Phi_{0}$ over $BV(\Omega)$ and a weak solution of Dirichlet problem~\eqref{eq:21N}
 with Dirichlet data $h=0$ with corresponding vector field
 $\z_{0}\equiv 0\in \R^{d}$. Therefore, $\varphi$ also
 satisfies~\eqref{eq:111} for $\lambda=0$, completing the proof of
 homogeneity of $\varphi$.

 To see that $\varphi$ is convex, let $h_{1}$,
 $h_{2}\in L^{1}(\partial\Omega)$, and $\lambda\in (0,1)$. Then, there are weak solutions
 $u_{1}$, $u_{2}\in BV(\Omega)$ of Dirichlet problem~\eqref{eq:21N}
 with Dirichlet data $h_{1}$, $h_{2}$ and corresponding vector fields
 $\z_{h_{1}}$, $\z_{h_{2}}\in L^{\infty}(\Omega;\R^{d})$
 satisfying~\eqref{z42}-\eqref{z32} with respect to $u_{1}$ and
 $u_{2}$. Then by the homogeneity of $\varphi$, we have that
 \begin{displaymath}
   \lambda\,\varphi(h_{1})=\Phi_{\lambda
     h_{1}}(\lambda u_{1})\quad\text{ and }\quad
 (1-\lambda) \, \varphi(h_{2}) =\Phi_{(1-\lambda) h_{2}}((1-\lambda) \, u_{2})
 \end{displaymath}
 and so, by the convexity of $\Phi$, and by Theorem~\ref{thm:8},
 \begin{align*}
   &\lambda\,\varphi(h_{1})+(1-\lambda) \, \varphi(h_{2})\\
   &\qquad = \lambda\,\Phi_{\lambda h_{1}}(\lambda u_{1})+(1-\lambda)
     \, \Phi_{(1-\lambda) h_{2}}((1-\lambda) \, u_{2})\\
   &\qquad\ge \Phi_{\lambda h_{1}+(1-\lambda) h_{2}}(\lambda u_{1}
     +(1-\lambda) \, u_{2})\\
   &\qquad\ge \min_{v\in BV(\Omega)}\Phi_{\lambda h_{1}+(1-\lambda) h_{2}}(v)=\varphi(\lambda h_{1}+(1-\lambda) h_{2}).
 \end{align*}

To see that the convex, proper functional $\varphi$ given
by~\eqref{eq:107} is continuous on $L^{1}(\partial\Omega)$, it is
sufficient to show (cf., \cite[Lemma~7.1]{MR1422252}) that
for every $h\in L^{1}(\partial\Omega)$, $\varphi$ is bounded on a
neighborhood of $h$. For this, let $h\in L^{1}(\partial\Omega)$, $r>0$
and $\hat{h}$ an element of the open ball $B_{L^{1}}(h,r)$ in
$L^{1}(\partial\Omega)$ centered at $h$ of radius $r$. Then, as for
every vector field $\z_{\hat{h}}\in
L^{\infty}(\Omega,\R^{d})$ related to a weak solution $u_{\hat{h}}$ of
Dirichlet problem~\eqref{eq:21N} with Dirichlet data $\hat{h}$, one
has that $\norm{[\z_{\hat{h}},\nu]}_{\infty}\le 1$, it follows that
\begin{displaymath}
  \varphi(\hat{h})\le \norm{\hat{h}}_{1}\le r+\norm{h}_{1}.
\end{displaymath}

Finally, we show that $\varphi$ is even. For this let $h\in
L^{1}(\partial\Omega)$. Since the effective domain $D(\varphi)$ is the
whole space $L^{1}(\partial\Omega)$, we also have that $-h\in
D(\varphi)$. Moreover, let $u_{h}\in BV(\Omega)$ and $\z_{h}\in L^{\infty}(\Omega;\R^{d})$
 satisfy~\eqref{z42}-\eqref{z22} with respect to $u_{h}$ and
 $h$, and set
 \begin{displaymath}
   u_{-h}:=-u_{h}\quad\text{ and }\quad \z_{-h}:=-\z_{h}.
\end{displaymath}
Then,
 obviously, $\z_{-h}$ satisfies $\norm{\z_{-h}}_{\infty}\le 1$,
 $-\divi(\z_{-h})=0$ in $\mathcal{D}'(\Omega)$ and by the bilinearity
 of the measure $(\cdot,D\cdot)$, it follows that
 \begin{displaymath}
   (\z_{-h},Du_{-h})=(-\z_{h},D(-u_{h}))=(\z_{h},Du_{h})=\abs{Du_{h}}=\abs{D(-u_{h})}=\abs{Du_{-h}}
 \end{displaymath}
 as Radon measures. In addition, by~\eqref{z42}, one has that
 \begin{displaymath}
   [\z_{-h},\nu]=-[\z_{h},\nu]\in -\sign(h - \T(u_{h}))=\sign(-h - \T(u_{-h}))
  \end{displaymath}
  $\mathcal{H}^{d-1}$-a.e. on $\partial\Omega$. Hence, we have shown that the pair $(u_{-h},\z_{-h})$
  satisfy~\eqref{z42}-\eqref{z22}. Thus, and by the linearity of the
  weak trace $\z\mapsto [\z,\nu]$, one sees that
  \begin{displaymath}
    \varphi(-h)=\int_{\partial\Omega}[\z_{-h},\nu](-h)\,\dH^{d-1}
    = int_{\partial\Omega}[\z_{h},\nu]\,h\,\dH^{d-1}=\varphi(h).
  \end{displaymath}
  This completes the proof of this proposition.
\end{proof}

Next, we turn to the relation of the closure
\begin{displaymath}
  \overline{\Lambda}^{\mbox{}_{L^{1}\times L^{\infty}_{\sigma}}}=\Bigg\{(h,g)\in
  L^{1}\times L^{\infty}(\partial\Omega)\,\Bigg\vert\,
  \begin{array}[c]{c}
  \text{there exists }\,((h_{n},g_{n}))_{n\ge 1}\subseteq \Lambda\text{
    s.t. }\\
    \displaystyle\lim_{n\to \infty} (h_{n},g_{n})= (h,g)\text{ in }L^{1}\times L^{\infty}_{\sigma}(\partial\Omega)
  \end{array}
\Bigg\}
\end{displaymath}
of the
Dirichlet-to-Neumann operator $\Lambda$ in
$L^{1}(\partial\Omega)\times L^{\infty}_{\sigma}(\partial\Omega)$ with
the sub-differential operator $\partial_{L^{1}\times L^{\infty}(\partial\Omega)}\varphi$ in
$L^{1}\times L^{\infty}(\partial\Omega)$.

\begin{proposition}\label{prop:7}
  For the closure
  $\overline{\Lambda}^{\mbox{}_{L^{1}\times L^{\infty}_{\sigma}}}$ in
  $L^{1}\times L^{\infty}_{\sigma}(\partial\Omega)$ of
  the Dirichlet-to-Neumann operator $\Lambda$ associated with the $1$-Laplace operator,
  one has that
  \begin{displaymath}
    \overline{\Lambda}^{\mbox{}_{L^{1}\times L^{\infty}_{\sigma}}}\subseteq \partial_{L^{1}\times L^{\infty}(\partial\Omega)}\varphi,
  \end{displaymath}
  where $\partial\varphi$ denotes the sub-differential operator in
  $L^{1}(\partial\Omega)\times L^{\infty}(\partial\Omega)$ of the
  functional $\varphi : L^{1}(\partial\Omega)\to [0,\infty)$ given
  by~\eqref{eq:107}.
\end{proposition}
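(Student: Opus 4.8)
The plan is to establish the inclusion in two stages: first, to prove the pointwise inclusion $\Lambda \subseteq \partial_{L^{1}\times L^{\infty}(\partial\Omega)}\varphi$, and second, to show that $\partial_{L^{1}\times L^{\infty}(\partial\Omega)}\varphi$ is already closed in $L^{1}\times L^{\infty}_{\sigma}(\partial\Omega)$. Since the closure of $\Lambda$ is the smallest set containing $\Lambda$ that is closed in this topology, combining the two facts yields $\overline{\Lambda}^{\mbox{}_{L^{1}\times L^{\infty}_{\sigma}}}\subseteq \partial_{L^{1}\times L^{\infty}(\partial\Omega)}\varphi$.

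For the first stage, let $(h,g)\in \Lambda$. By Definition~\ref{def:LambdaL1}, there is a weak solution $u\in BV(\Omega)$ of Dirichlet problem~\eqref{eq:21N} with data $h$ and a vector field $\z_{h}\in L^{\infty}(\Omega;\R^{d})$ satisfying \eqref{z42}--\eqref{z32} with $g=[\z_{h},\nu]$. Recalling the definition~\eqref{eq:9} of the subdifferential with $X=L^{1}(\partial\Omega)$ and $X'=L^{\infty}(\partial\Omega)$, I must verify that $\int_{\partial\Omega}g\,(v-h)\,\dH^{d-1}\le \varphi(v)-\varphi(h)$ for every $v\in L^{1}(\partial\Omega)$. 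On the one hand, Theorem~\ref{thm:8} gives directly $\int_{\partial\Omega}g\,h\,\dH^{d-1}=\varphi(h)$, so it suffices to prove $\int_{\partial\Omega}g\,v\,\dH^{d-1}\le \varphi(v)$ for all $v$. To this end, I would fix an arbitrary $w\in BV(\Omega)$; since $\z_{h}\in X_{d}(\Omega)$ and $w\in BV(\Omega)_{d/(d-1)}$ by the Sobolev embedding~\eqref{eq:7}, the integration by parts formula~\eqref{Green0} combined with $-\divi(\z_{h})=0$ reduces to $\int_{\Omega}(\z_{h},Dw)=\int_{\partial\Omega}g\,\T(w)\,\dH^{d-1}$, and $\norm{\z_{h}}_{\infty}\le 1$ together with~\eqref{eq:32} gives $\int_{\partial\Omega}g\,\T(w)\,\dH^{d-1}\le \int_{\Omega}\abs{Dw}$. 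Since $\norm{g}_{\infty}\le 1$, adding and subtracting $\T(w)$ then yields
\begin{displaymath}
  \int_{\partial\Omega}g\,v\,\dH^{d-1}\le \int_{\partial\Omega}\abs{v-\T(w)}\,\dH^{d-1}+\int_{\Omega}\abs{Dw}=\Phi_{v}(w),
\end{displaymath}
and minimizing over $w\in BV(\Omega)$, recalling $\varphi(v)=\min_{w}\Phi_{v}(w)$ from Proposition~\ref{prop:DtN-functional}, produces the desired estimate $\int_{\partial\Omega}g\,v\,\dH^{d-1}\le \varphi(v)$.

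For the second stage, suppose $((h_{n},g_{n}))_{n\ge 1}\subseteq \partial_{L^{1}\times L^{\infty}(\partial\Omega)}\varphi$ with $h_{n}\to h$ in $L^{1}(\partial\Omega)$ and $g_{n}\to g$ weakly$\mbox{}^{\ast}$ in $L^{\infty}(\partial\Omega)$. Writing the defining inequality $\int_{\partial\Omega}g_{n}\,(v-h_{n})\,\dH^{d-1}\le \varphi(v)-\varphi(h_{n})$ and passing to the limit termwise, I would use weak$\mbox{}^{\ast}$ convergence to obtain $\int_{\partial\Omega}g_{n}\,v\,\dH^{d-1}\to \int_{\partial\Omega}g\,v\,\dH^{d-1}$ for fixed $v\in L^{1}$, the continuity of $\varphi$ on $L^{1}(\partial\Omega)$ from Proposition~\ref{prop:DtN-functional} to get $\varphi(h_{n})\to\varphi(h)$, and the decomposition $\int g_{n}h_{n}-\int g h=\int g_{n}(h_{n}-h)+\int(g_{n}-g)h$, whose first term is bounded by $\norm{h_{n}-h}_{1}\to 0$ and whose second term vanishes by weak$\mbox{}^{\ast}$ convergence. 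This shows $\int_{\partial\Omega}g\,(v-h)\,\dH^{d-1}\le \varphi(v)-\varphi(h)$ for every $v$, that is, $(h,g)\in \partial_{L^{1}\times L^{\infty}(\partial\Omega)}\varphi$, establishing the closedness and hence the proposition.

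I expect the main subtlety, rather than a genuine obstacle, to reside in the first stage: the identity $\int_{\partial\Omega}g\,h\,\dH^{d-1}=\varphi(h)$ is precisely the content of Theorem~\ref{thm:8}, and it is exactly here that the phenomenon that the boundary datum is attained only in the very weak sense~\eqref{z42} (and not in the trace sense) is absorbed. The complementary inequality $\int_{\partial\Omega}g\,v\,\dH^{d-1}\le\varphi(v)$ should be read as a Fenchel--Young type estimate, with the integration by parts formula~\eqref{Green0} and the absolute-continuity bound~\eqref{eq:32} doing the essential work; the only care needed is to record that $\z_{h}\in X_{d}(\Omega)$ and $w\in BV(\Omega)_{d/(d-1)}$ via~\eqref{eq:7}, so that~\eqref{Green0} and~\eqref{eq:32} genuinely apply.
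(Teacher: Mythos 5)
Your proposal is correct and follows essentially the same route as the paper's own proof: your stage one is the paper's argument that $\Lambda\subseteq \partial_{L^{1}\times L^{\infty}(\partial\Omega)}\varphi$ (Theorem~\ref{thm:8} combined with the integration by parts formula~\eqref{Green0}, $\divi(\z_{h})=0$, and the bound~\eqref{eq:32}, merely rephrased as a Fenchel--Young inequality over arbitrary $w\in BV(\Omega)$ instead of plugging in the minimizer for the competitor datum), and your stage two is the paper's limit passage in the subdifferential inequality using the continuity (lower semicontinuity suffices) of $\varphi$. The only point to record explicitly is that your estimate $\labs{\int_{\partial\Omega}g_{n}(h_{n}-h)\,\dH^{d-1}}\le \norm{h_{n}-h}_{1}$ presupposes $\norm{g_{n}}_{\infty}\le 1$, which is automatic here --- either because the relevant sequences come from $\Lambda$, or because $\varphi$ is $1$-Lipschitz so every element of $\Rg(\partial_{L^{1}\times L^{\infty}(\partial\Omega)}\varphi)$ lies in $\overline{B}_{L^{\infty}(\partial\Omega)}$, or simply by uniform boundedness of weakly$\mbox{}^{\ast}$ convergent sequences.
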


\begin{proof}
  We begin by taking $(h,g)\in \Lambda$ and
  $\hat{h}\in L^{1}(\partial\Omega)$. By definition of $\Lambda$ and
  since the variational problem~\eqref{eq:8} for Dirichlet data $\hat{h}$ admits a
  solution which is characterized by Proposition~\ref{prop:6}, there
  are $u_{\hat{h}}\in BV(\Omega)$ and $\z_{h}$, $\z_{\hat{h}}\in
  L^{\infty}(\Omega;\R^{d})$ such that $(u_{\hat{h}},\z_{\hat{h}})$ satisfies
  \eqref{z42}-\eqref{z32}, and, in addition, $g$ and $\z_{h}$
  satisfy~\eqref{eq:92}. Then, multiply $g$ by $(\hat{h}-h)$ and
  integrating over $\partial\Omega$. Then by~\eqref{eq:92}, the
  definition of $\varphi$, since $\norm{g}_{\infty}\le 1$,
  and by the generalized integration by parts formula~\eqref{Green0}, one sees that
  \begin{align*}
    &\int_{\partial\Omega}g(\hat{h}-h)\,\dH^{d-1}\\
    &\qquad =\int_{\partial\Omega}g\,\hat{h}\,\dH^{d-1}-\varphi(h)\\
    &\qquad
      =\int_{\partial\Omega}g\,(\hat{h}-\T(u_{\hat{h}}))\,\dH^{d-1}+\int_{\partial\Omega}g\,
      \T(u_{\hat{h}}) \,\dH^{d-1}-\varphi(h)\\
    &\qquad
      \le
      \int_{\partial\Omega}\abs{\hat{h}-\T(u_{\hat{h}})}\,\dH^{d-1}+\int_{\Omega}(\z_{\hat{h}},D
      u_{\hat{h}}) \,\dx-\varphi(h)\\
    &\qquad
      \le
      \int_{\partial\Omega}\abs{\hat{h}-\T(u_{\hat{h}})}\,\dH^{d-1}+\int_{\Omega}\abs{D
      u_{\hat{h}}}-\varphi(h)\\
    &\qquad =\varphi(\hat{h}) -\varphi(h).
  \end{align*}
  Therefore, one has that $(h,g)\in \partial_{L^{1}\times L^{\infty}(\partial\Omega)}\varphi$, showing that
  $\Lambda \subseteq \partial_{L^{1}\times
    L^{\infty}(\partial\Omega)}\varphi$.

  Next, let
  $(h,g)\in \overline{\Lambda}^{\mbox{}_{L^{1}\times
      L^{\infty}_{\sigma}}}$, $((h_{n},g_{n}))_{n\ge 1}\subseteq
  \Lambda$ such that $h_{n}\to h$ in $L^{1}(\partial\Omega)$
  and $g_{n}\to g$ in $L^{\infty}_{\sigma}(\partial\Omega)$. Then by
  the first part of this proof, we have that each $(h_{n},g_{n})\in
  \partial_{L^{1}\times L^{\infty}(\partial\Omega)}\varphi$ and hence,
  \begin{displaymath}
    \varphi(\hat{h})-\int_{\partial\Omega}g_{n}(\hat{h}-h_{n})\,\dH^{d-1}\ge \varphi(h_{n})
  \end{displaymath}
  for every $\hat{h}\in L^{1}(\partial\Omega)$ and every $n\ge
  1$. Taking the limit inferior as $n\to \infty$ on both sides of this
  inequality and using that $\varphi$ is lower
  semicontinuous in $L^{1}(\partial\Omega)$, one finds that
  \begin{displaymath}
    \varphi(\hat{h})-\int_{\partial\Omega}g(\hat{h}-h)\,\dH^{d-1}\ge \varphi(h),
  \end{displaymath}
  showing that $(h,g)\in \partial_{L^{1}\times L^{\infty}(\partial\Omega)}\varphi$ and thereby, completing the
  proof of this proposition.
\end{proof}

With the help of the functional $\varphi$, we can now show that the
Dirichlet-to-Neumann operator $\Lambda$ is closed in $L^1 \times L^1(\partial \Omega)$.

\begin{proposition}\label{prop:charact-closure-Lambda}
  The Dirichlet-to-Neumann operator $\Lambda$ is closed in
  $L^1 \times L^{\infty}_{\sigma}(\partial \Omega)$.
\end{proposition}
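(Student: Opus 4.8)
The plan is to turn closedness into a sequential statement: I take a sequence $((h_{n},g_{n}))_{n\ge 1}\subseteq \Lambda$ with $h_{n}\to h$ in $L^{1}(\partial\Omega)$ and $g_{n}\to g$ weakly$^{\ast}$ in $L^{\infty}(\partial\Omega)$, and I must exhibit a weak solution $u\in BV(\Omega)$ of~\eqref{eq:21N} with data $h$ together with a vector field $\z$ satisfying \eqref{z42}--\eqref{z32} and $g=[\z,\nu]$, so that $(h,g)\in \Lambda$. For each $n$ I fix a weak solution $u_{n}$ and an associated field $\z_{n}$ with $[\z_{n},\nu]=g_{n}$.

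\emph{Compactness and passage to the limit in the easy conditions.} By Proposition~\ref{prop:6} each $u_{n}$ minimizes $\Phi_{h_{n}}$, whence $\Phi_{h_{n}}(u_{n})\le \Phi_{h_{n}}(0)=\norm{h_{n}}_{1}$, which is bounded since $h_{n}\to h$. This controls $\int_{\Omega}\abs{Du_{n}}$ and $\norm{\T(u_{n})}_{1}$, and then the Sobolev-type inequality~\eqref{eq:7} bounds $(u_{n})_{n}$ in $BV(\Omega)$; by Theorem~\ref{thm:BVcompactness} I pass to a subsequence with $u_{n}\to u$ weakly$^{\ast}$ in $BV(\Omega)$. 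Since $\norm{\z_{n}}_{\infty}\le 1$, a further subsequence gives $\z_{n}\to \z$ weakly$^{\ast}$ in $L^{\infty}(\Omega;\R^{d})$, so that $\norm{\z}_{\infty}\le 1$ and, passing to the limit in the distributional identity, $-\divi(\z)=0$. To identify the trace, I apply~\eqref{Green0} to $\z_{n}$ and an arbitrary $w\in BV(\Omega)=BV(\Omega)_{d/(d-1)}$: using $\divi(\z_{n})=0$ this reads $\int_{\partial\Omega}g_{n}\,\T(w)\,\dH^{d-1}=\int_{\Omega}(\z_{n},Dw)$. Letting $n\to\infty$, Proposition~\ref{prop:3} sends the right-hand side to $\int_{\Omega}(\z,Dw)=\int_{\partial\Omega}[\z,\nu]\,\T(w)\,\dH^{d-1}$, while weak$^{\ast}$ convergence sends the left-hand side to $\int_{\partial\Omega}g\,\T(w)\,\dH^{d-1}$; since $\T$ is onto $L^{1}(\partial\Omega)$, this forces $g=[\z,\nu]$.

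\emph{The limit is a weak solution.} From $\abs{h_{n}-\T(u_{n})}\ge \abs{h-\T(u_{n})}-\abs{h-h_{n}}$ I get $\Phi_{h_{n}}(u_{n})\ge \Phi_{h}(u_{n})-\norm{h-h_{n}}_{1}$. As $s\mapsto \abs{h(x)-s}$ is a contraction, Proposition~\ref{prop:4} shows $\Phi_{h}$ is lower semicontinuous on $BV(\Omega)$ for the $L^{1}(\Omega)$-topology; combining this with $u_{n}\to u$ in $L^{1}(\Omega)$, with $\Phi_{h_{n}}(u_{n})=\varphi(h_{n})$, and with the continuity of $\varphi$ (Proposition~\ref{prop:DtN-functional}), I obtain $\Phi_{h}(u)\le \lim_{n}\varphi(h_{n})=\varphi(h)=\min_{v}\Phi_{h}(v)$. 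Hence $\Phi_{h}(u)=\varphi(h)$, and $u$ solves~\eqref{eq:21N} with data $h$.

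\emph{The energy identity (the crux).} It remains to verify \eqref{z32} and the sign condition \eqref{z42} for the \emph{limit} field $\z$. Passing to the limit directly in $(\z_{n},Du_{n})=\abs{Du_{n}}$ and in $[\z_{n},\nu]\in\sign(h_{n}-\T(u_{n}))$ is the natural attempt, but it fails because the traces $\T(u_{n})$ need not converge and both the solutions and the fields are highly non-unique; this is the main obstacle. I circumvent it with a signed energy identity. Since $g_{n}\to g$ weakly$^{\ast}$ and $h_{n}\to h$ in $L^{1}$, one has $\int_{\partial\Omega}g_{n}h_{n}\,\dH^{d-1}\to\int_{\partial\Omega}g\,h\,\dH^{d-1}$; because $\varphi(h_{n})=\int_{\partial\Omega}g_{n}h_{n}\,\dH^{d-1}$ (Theorem~\ref{thm:8}) and $\varphi(h_{n})\to\varphi(h)$, this yields $\varphi(h)=\int_{\partial\Omega}g\,h\,\dH^{d-1}$. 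Feeding in $\varphi(h)=\Phi_{h}(u)=\int_{\Omega}\abs{Du}+\int_{\partial\Omega}\abs{h-\T(u)}\,\dH^{d-1}$ together with the identity $\int_{\Omega}(\z,Du)=\int_{\partial\Omega}g\,\T(u)\,\dH^{d-1}$ coming from~\eqref{Green0} (with $\divi(\z)=0$ and $[\z,\nu]=g$), I arrive at
\[
\int_{\Omega}\big(\abs{Du}-(\z,Du)\big)+\int_{\partial\Omega}\big(\abs{h-\T(u)}-g\,(h-\T(u))\big)\,\dH^{d-1}=0.
\]
By~\eqref{eq:32} and $\norm{\z}_{\infty}\le 1$ the first integrand is a non-negative measure, and since $\abs{g}\le 1$ the second integrand is non-negative $\mathcal{H}^{d-1}$-a.e.; therefore both integrals vanish. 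The first gives $(\z,Du)=\abs{Du}$ as Radon measures, i.e.\ \eqref{z32}, and the second gives $g\,(h-\T(u))=\abs{h-\T(u)}$, i.e.\ $[\z,\nu]=g\in\sign(h-\T(u))$, which is \eqref{z42}. Hence $(h,g)\in\Lambda$, so $\Lambda$ is closed. The delicate point throughout is to arrange all the limits so that this manifestly signed identity can be invoked, thereby bypassing the lack of convergence of the traces.
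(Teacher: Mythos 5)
Your proof is correct and follows essentially the same route as the paper's: weak$^{\ast}$ compactness for $(u_{n})_{n}$ in $BV(\Omega)$ and $(\z_{n})_{n}$ in $L^{\infty}(\Omega;\R^{d})$, identification of $g=[\z,\nu]$ via Proposition~\ref{prop:3} and the integration by parts formula~\eqref{Green0}, Modica's lower semicontinuity (Proposition~\ref{prop:4}) to show the limit $u$ minimizes $\Phi_{h}$, and finally the signed energy identity built from $\varphi(h)=\int_{\partial\Omega}g\,h\,\dH^{d-1}$ (using continuity of $\varphi$ and Theorem~\ref{thm:8}) to force both \eqref{z32} and \eqref{z42} for the limit pair. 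The only cosmetic differences are your cleaner $BV$-bound via $\Phi_{h_{n}}(u_{n})\le\Phi_{h_{n}}(0)=\norm{h_{n}}_{1}$ and that you exploit $\Phi_{h}(u)=\varphi(h)$ directly instead of introducing an auxiliary vector field for the limit solution, as the paper does.
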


\begin{proof}[Proof of Proof~\ref{prop:charact-closure-Lambda}]
Let $(h,g)\in \overline{\Lambda}^{\mbox{}_{L^{1}\times L^{\infty}_{\sigma}}}$ the closure of the
Dirichlet-to-Neumann operator $\Lambda$ in $L^1 \times L^{\infty}_{\sigma}(\partial \Omega)$. Then, there is
a sequence
\begin{displaymath}
((h_{n},g_{n}))_{n\ge 1}\subseteq \Lambda\quad\text{ such that }\quad
(h_n, g_n) \to (h,g)\text{ in $L^1(\partial \Omega) \times L^{\infty}_{\sigma}(\partial
\Omega)$.}
\end{displaymath}
By definition of $\Lambda$, for every pair $(h_{n},g_{n})$, there
are $u_n \in BV(\Omega)$ and $\z_n\in L^\infty(\Omega;\R^d)$ satisfying
\begin{align}
    \label{Nz21} \norm{\z_{n}}_\infty&\le 1,\\
    \label{Nz22} -\divi(\z_{n})&=0\qquad \text{in
                            $\mathcal{D}^\prime(\Omega)$, and}\\
    \label{Nz32} (\z_{n},D u_{n})&=|D u_{n}|\qquad \text{as Radon
                                   measures,}\\
    \label{Nz52} g_{n}&=[\z_{n},\nu] \qquad\text{$\mathcal{H}^{d-1}$-a.e. on $\partial\Omega$,}
  \end{align}
and
\begin{equation}
  \label{Nz42}
  [\z_n,\nu] \in \sign\Big(h_n -\T(u_n)\Big)\qquad\text{$\mathcal{H}^{d-1}$-a.e. on $\partial\Omega$.}
\end{equation}
Now, \eqref{Nz21} yields that there is a vector field $\z_{g}\in L^\infty(\Omega;\R^d)$ satisfying
$\norm{\z_{g}}_\infty\le 1$ and, after possibly passing to a
subsequence of $((h_{n},g_{n}))_{n\ge 1}$, one has that
\begin{equation}
  \label{eq:102}
  \z_n\rightharpoonup \z_{g}\qquad\text{ weakly$\mbox{}^{\ast}$ in
$L^{\infty}(\Omega,\R^{d})$.}
\end{equation}
Therefore and by~\eqref{Nz22}, it follows that also the vector field $\z_{g}$
satisfies
\begin{equation}
  \label{eq:115}
  -\divi(\z_{g})=0\qquad\text{ in $\mathcal{D}^\prime(\Omega)$.}
\end{equation}
Thanks to~\eqref{Nz52}, \eqref{Nz21}, and since
$g_{n} \to g$ weakly$\mbox{}^{\ast}$ in $L^{\infty}(\partial\Omega)$, we can pass to a
subsequence, if necessary, to conclude that
\begin{displaymath}
  \norm{g}_{L^{\infty}(\partial\Omega)}\le \liminf_{n\to\infty}\;\norm{g_{n}}_{L^{\infty}(\partial\Omega)}\le 1.
\end{displaymath}
Now, by~\eqref{Nz22}, since $\z_{g}$ satisfies \eqref{eq:115},
and~\eqref{eq:102}, it follows from Proposition~\ref{prop:3} that
\begin{equation}
  \label{eq:67}
  \lim_{n \to \infty} \int_{ \Omega}(\z_n,Dw) = \int_{ \Omega}(\z_{g}, Dw).
\end{equation}
for every $w\in BV(\Omega)$. Thus, if $\xi \in L^1(\partial \Omega)$ and
$w \in BV(\Omega)$ such that $\T(w) = \xi$, then by the
  generalized integration by parts formula~\eqref{Green0},
  by~\eqref{Nz52}, since $\z_{g}$ satisfies \eqref{eq:115}, and by~\eqref{eq:67}, one sees that
\begin{align*}
  \int_{\partial \Omega} g\, \xi\,\dH^{d-1}
  &= \lim_{n \to \infty} \int_{\partial
    \Omega} g_n\,\xi\,\dH^{d-1}\\
  &= \lim_{n \to \infty} \int_{\partial \Omega} [\z_n, \nu]\,\xi\,\dH^{d-1}\\
  &= \lim_{n \to \infty} \int_{ \Omega}(\z_n,Dw) + \int_{ \Omega}
  \divi(\z_n)\,w\,\dx\\
  &= \int_{ \Omega}(\z_{g}, Dw)\\
  &= \int_{\partial \Omega} [\z_{g},\nu]\, \xi\,\dH^{d-1}.
\end{align*}
Since $\xi \in L^1(\partial \Omega)$ was arbitrary, we have thereby shown
that
\begin{equation}
\label{eq:116}
 g=[\z_{g},\nu]\qquad\text{$\mathcal{H}^{d-1}$-a.e. on $\partial\Omega$}
\end{equation}
and
\begin{equation}
  \label{eq:12}
  \int_{ \Omega}(\z_{g}, Dw)
  = \int_{\partial \Omega} [\z_{g},\nu]\, \T(w)\,\dH^{d-1}\qquad\text{for every $w\in BV(\Omega)$.}
\end{equation}

On the other hand, since each $u_{n}$ is a weak solution of Dirichlet
problem~\eqref{eq:21N} with boundary data $h_{n}$,
Proposition~\ref{prop:6} yields that
\begin{equation}
  \label{eq:100}
 \begin{split}
     &\int_\Omega \abs{Du_{n}} + \int_{\partial \Omega}\abs{\T(u_{n}) - h_{n}}\,\dH^{d-1}\\
     &\qquad\le \int_\Omega \abs{Dw} + \int_{\partial \Omega}\abs{\T(w) - h_{n}}\,\dH^{d-1}
   \end{split}
 \end{equation}
 for every $w \in BV(\Omega)$. Combining this estimate for some fixed
 $w \in BV(\Omega)$ together with the
 triangle inequality and the fact that $(h_{n})_{n\ge 1}$ is bounded
 in $L^{1}(\partial\Omega)$, one finds a constant $M$ such that
 \begin{displaymath}
     \int_\Omega \abs{Du_{n}} + \int_{\partial
       \Omega}\abs{\T(u_{n})}\,\dH^{d-1}\le M
     \qquad\text{for all $n\ge 1$.}
 \end{displaymath}
Therefore and by the Maz'ya inequality~\eqref{eq:7}, the
 sequence $(u_{n})_{n\ge 1}$ is boun\-ded in $BV(\Omega)$. Hence, there
 is a $u_{h}\in BV(\Omega)$ such that after possibly passing to a
 subsequence, $u_{n}\to u_{h}$ weakly$\mbox{}^{\ast}$ in $BV(\Omega)$.
 Now, let $w\in BV(\Omega)$. Then by~\eqref{eq:100},
 \begin{align*}
     &\int_\Omega \abs{Du_{n}} + \int_{\partial \Omega}\abs{\T(u_{n}) -
       h}\,\dH^{d-1}\\
     &\quad\le \int_\Omega \abs{Du_{n}} + \int_{\partial \Omega}\abs{\T(u_{n}) -
       h_{n}}\,\dH^{d-1}+\int_{\partial \Omega}\abs{h_{n}-h}\,\dH^{d-1}\\
     &\quad \le \int_\Omega \abs{Dw} + \int_{\partial
       \Omega}\abs{\T(w) - h_{n}}\,\dH^{d-1}
       +\int_{\partial \Omega}\abs{h_{n}-h}\,\dH^{d-1}
 \end{align*}
 and so, by the limit $h_{n}\to h$ in $L^{1}(\partial\Omega)$ and
 by Modica's convergence result (Proposition~\ref{prop:4}), one gets that
\begin{displaymath}
     \int_\Omega \abs{Du_{h}} + \int_{\partial \Omega}\abs{\T(u_{h})-
       h}\,\dH^{d-1}
      \le \int_\Omega \abs{Dw} + \int_{\partial
       \Omega}\abs{\T(w) - h}\,\dH^{d-1}
\end{displaymath}
for every $w\in BV(\Omega)$, showing that
$u_{h}$ is a minimizer of the relaxed functional
$\Phi_{h}$ given by~\eqref{eq:101}. Thus, by Proposition~\ref{prop:6},
$u_{h}$ is a weak solution of the Dirichlet problem~\eqref{eq:21N} with
boundary data $h$. Hence, there is a vector field $\z_{h}\in
L^\infty(\Omega;\R^d)$ satisfying~\eqref{z42}-\eqref{z32} with respect
to $u_{h}$.\medskip


Now, by~\eqref{Nz22}-\eqref{Nz42} and the generalized integration by parts
formula, one sees that
\begin{align*}
  &\int_{\partial\Omega}[\z_{n},\nu]\,h_{n}\,\dH^{d-1}
    -\int_\Omega \abs{Du_{n}}\\
  &\qquad =\int_{\partial\Omega}[\z_{n},\nu]\,h_{n}\,\dH^{d-1}
    -\int_{\partial\Omega}[\z_{n},\nu]\,\T(u_{n})\,\dH^{d-1}\\
  &\qquad
    =\int_{\partial\Omega}[\z_{n},\nu]\,(h_{n}-\T(u_{n}))\,\dH^{d-1}\\
  &\qquad
    =\int_{\partial\Omega}\abs{h_{n}-\T(u_{n})}\,\dH^{d-1},
\end{align*}
or, equivalently,
\begin{displaymath}
  \int_{\partial\Omega}[\z_{n},\nu]\,h_{n}\,\dH^{d-1}=
  \int_\Omega \abs{Du_{n}}+\int_{\partial\Omega}\abs{h_{n}-\T(u_{n})}\,\dH^{d-1}.
\end{displaymath}
Note that the left-hand side in the above equation is $\varphi(h_{n})$
for the functional $\varphi$ given by~\eqref{eq:107}. Since
$(h_n, g_n) \to (h,g)$ in
$L^1(\partial \Omega) \times L^{\infty}_{\sigma}(\partial\Omega)$, and
since by Proposition~\ref{prop:DtN-functional}, $\varphi$ is
continuous, one has that

 \begin{align*}
   \int_{\partial\Omega}g\,h\,\dH^{d-1}
   &=\lim_{n\to
     \infty}\int_{\partial\Omega}g_{n}\,h_{n}\,\dH^{d-1}\\
   &=\lim_{n\to
     \infty}\int_{\partial\Omega}[\z_{n},\nu]\,h_{n}\,\dH^{d-1}\\
  &= \lim_{n\to\infty}\varphi(h_{n})
   =\varphi(h)
   =\int_{\partial\Omega}[\z_{h},\nu]\,h\,\dH^{d-1}.
 \end{align*}
Hence, we have shown that
\begin{equation}
  \label{eq:117}
  \int_{\partial\Omega}[\z_{g},\nu]\,h\,\dH^{d-1}
  =\int_{\partial\Omega}g\,h\,\dH^{d-1}=\int_{\partial\Omega}[\z_{h},\nu]\,h\,\dH^{d-1}.
\end{equation}

Next, we intend to show that
\begin{equation}
  \label{eq:85}
  (\z_{g},Du_{h})=\abs{Du_{h}}\qquad\text{as Radon measures.}
\end{equation}
To see this, recall that by~\eqref{eq:117} and since the pair
$(\z_{h},u_{h})$ satisfy~\eqref{z42} and \eqref{z32}, one has that
\begin{align*}
  \int_{\partial\Omega}[\z_{g},\nu]\,h\,\dH^{d-1}
  &=\int_{\partial\Omega}[\z_{h},\nu]\,h\,\dH^{d-1}\\
  &= \int_{\Omega}\abs{Du_{h}}+\int_{\partial\Omega}\abs{h-\T(u_{h})}\,\dH^{d-1}.
\end{align*}
On the other hand, an integration by parts gives
\begin{align*}
  \int_{\partial\Omega}[\z_{g},\nu] h \dH^{d-1}
  &=\int_{\partial\Omega}[\z_{g},\nu]
    \T(u_{h}) \dH^{d-1}+\int_{\partial\Omega}[\z_{g},\nu]\Big(h-\T(u_{h})\Big)\,\dH^{d-1}\\
  &=\int_{\Omega}(\z_{g},Du_{h}) +\int_{\partial\Omega}[\z_{g},\nu]\Big(h-\T(u_{h})\Big)\,\dH^{d-1}
\end{align*}
Combining those two equations, one finds that
\begin{align*}
  &\int_{\Omega}(\z_{g},Du_{h})
    +\int_{\partial\Omega}[\z_{g},\nu]\Big(h-\T(u_{h})\Big)\,\dH^{d-1}\\
  &\qquad =\int_{\Omega}\abs{Du_{h}}+\int_{\partial\Omega}\abs{h-\T(u_{h})}\,\dH^{d-1}
\end{align*}
or, equivalently,
\begin{equation}
  \label{eq:19}
\begin{split}
  &\int_{\Omega}(\z_{g},Du_{h})-\int_{\Omega}\abs{Du_{h}}\\
  &\qquad =\int_{\partial\Omega}\abs{h-\T(u_{h})}-[\z_{g},\nu]\Big(h-\T(u_{h})\Big)\,\dH^{d-1}.
\end{split}
\end{equation}
Now,
\begin{equation}
\label{eq:37}
  [\z_{g},\nu]\Big(h-\T(u_{h})\Big)\le \abs{h-\T(u_{h})}\qquad\text{$\mathcal{H}^{d-1}$-a.e. on $\partial\Omega$}
\end{equation}
and by~\eqref{eq:32} and $\norm{\z_{g}}_\infty\le 1$, one has that
\begin{displaymath}
  \labs{\int_{\Omega}(\z_{g},Du_{h})}\le \norm{\z_{g}}_{\infty}\,
  \int_{\Omega}\abs{Du_{h}}\le \int_{\Omega}\abs{Du_{h}}.
\end{displaymath}
Thus at both sides in~\eqref{eq:19}, one has that
\begin{align*}
 0\ge &\int_{\Omega}(\z_{g},Du_{h})-\int_{\Omega}\abs{Du_{h}}\\
  &\qquad
    =\int_{\partial\Omega}\abs{h-\T(u_{h})}-[\z_{g},\nu]\Big(h-\T(u_{h})\Big)\,\dH^{d-1}\ge
    0,
\end{align*}
which implies that
\begin{equation}
\label{eq:42}
  \int_{\Omega}(\z_{g},Du_{h})=\int_{\Omega}\abs{Du_{h}}
\end{equation}
and
\begin{equation}
  \label{eq:128}
  \int_{\partial\Omega}\abs{h-\T(u_{h})}-[\z_{g},\nu]\Big(h-\T(u_{h})\Big)\,\dH^{d-1}=0.
\end{equation}

Since $(\z_{g},Du)$ is absolutely continuous w.r.t. $\abs{Du_{h}}$,
\eqref{eq:42} implies that~\eqref{eq:85} holds.

Further, by~\eqref{eq:37}, \eqref{eq:128} implies that
\begin{displaymath}
  [\z_{g},\nu](h-\T(u_{h}))-\abs{h-\T(u_{h})}=0
  \qquad\text{$\mathcal{H}^{d-1}$-a.e. on $\partial\Omega$.}
\end{displaymath}
Since $[\z_{h},\nu]$ and $u_{h}$ satisfy \eqref{z42}, this means that
\begin{displaymath}
  [\z_{g},\nu]=[\z_{h},\nu]\qquad\text{$\mathcal{H}^{d-1}$-a.e. on
    $\{h\neq \T(u_{h})\}$.}
\end{displaymath}
Since $\norm{[\z_{g},\nu]}_{\infty}\le 1$, we have thereby shown that
\begin{equation}
  \label{eq:129}
  [\z_{g},\nu]\in \sign(h-\T(u_{h}))\qquad\text{$\mathcal{H}^{d-1}$-a.e. on $\partial\Omega$.}
\end{equation}
Summarizing, we have shown that for every
$(h,g)\in \overline{\Lambda}^{\mbox{}_{L^{1}\times
    L^{\infty}_{\sigma}}}$, there are $u_{h}\in BV(\Omega)$ and
$\z_{g}\in L^{\infty}(\Omega;\R^d)$ satisfying
$\norm{\z_{g}}_{\infty}\le 1$, \eqref{eq:115}, \eqref{eq:116},
\eqref{eq:85}, and~\eqref{eq:129}, proving that $(h,g)\in \Lambda$.
\end{proof}

To complete the proof of Theorem~\ref{thm:main1}, it remains to show
that the Dirichlet-to-Neumann
operator $\Lambda$ associated with the $1$-Laplace operator satisfies
the \emph{range condition}
\begin{equation}
  \label{eq:rang-in-L1}
    \Rg\left(I + \lambda\,\Lambda\right) = L^1(\partial \Omega)
\end{equation}
for some (or, equivalently, for all) $\lambda>0$. But for this, we
later use that the \emph{restriction $\Lambda_{L^{2}}$ of $\Lambda$ on
  $L^{2}(\partial\Omega)\times L^{\infty}(\partial\Omega)$} is
$m$-accretive in $L^{2}(\partial\Omega)$. This property of
$\Lambda_{\vert L^2}$ and the proof of its sub-differential structure is outlined in the
following subsection.


%
%
%
%

\subsection{The Dirichlet-to-Neumann operator in $L^2$}
\label{subsec:DtN-in-L2}

In this subsection, we focus on the Dirichlet-to-Neumann
operator $\Lambda_{\vert L^2}$ in $L^2(\partial\Omega)$.

\begin{definition}\label{def:DtN-in-L2-precise}
  We define the \emph{Dirichlet-to-Neumann operator $\Lambda_{\vert L^2}$ in
      $L^{2}(\partial\Omega)$ associated with the
      $1$-Laplace operator} $\Delta_{1}$ by
    \begin{displaymath}
      \Lambda_{\vert L^2}=\Lambda\cap \Big(L^{2}(\partial\Omega)\times L^{2}(\partial\Omega)\Big);
    \end{displaymath}
    or equivalent, by the set of all pairs $(h,g)\in L^{2}(\partial\Omega)\times
    L^{2}(\partial\Omega)$ with the property that there is a weak
    solution $u\in BV(\Omega)$ of Dirichlet problem~\eqref{eq:21N}
    with Dirichlet data $h$ and there is a vector field $\z\in
    L^{\infty}(\Omega;\R^{d})$ associated with $u$
    (satisfying~\eqref{z42}-\eqref{z32}) and
    \begin{displaymath}
    g=[\z,\nu]\qquad\text{$\mathcal{H}^{d-1}$-a.e. on $\partial\Omega$.}
  \end{displaymath}
\end{definition}

\begin{remark}\label{rem:DtN-in-L2}
  Since $L^{2}(\partial\Omega)\subseteq L^{1}(\partial\Omega)$, it
  follows from Remark~\ref{rem:DtN-in-L1} that the effective domain
  $D(\Lambda_{\vert L^{2}})$ of the Dirichlet-to-Neumann operator
  $\Lambda_{\vert L^{2}}$
  associated with $\Delta_{1}$ satisfies
  \begin{displaymath}
    D(\Lambda)=L^{2}(\partial\Omega)
  \end{displaymath}
  and the operator
  \begin{equation}
    \label{eq:130}
    \Lambda_{\vert L^{2}}\subseteq L^{2}(\partial\Omega)\times \overline{B}_{L^{\infty}(\partial\Omega)}.
  \end{equation}
\end{remark}

It is clear that $\Lambda_{\vert L^2}$ is completely accretive in
$L^2(\partial\Omega)$ since $\Lambda$ admits this property in
$L^1(\partial\Omega)$. We can say more about $\Lambda_{\vert L^2}$.

\begin{proposition}
  \label{prop:5}
  The Dirichlet-to-Neumann operator $\Lambda_{\vert L^2}$ in
    $L^{2}(\partial\Omega)$ associated with the $1$-Laplace operator
  $\Delta_{1}$ is cyclically monotone.
\end{proposition}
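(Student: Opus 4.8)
The plan is to reduce cyclic monotonicity of $\Lambda_{\vert L^2}$ to the subdifferential characterization already obtained in Proposition~\ref{prop:7}, and then invoke the Rockafellar-type result Theorem~\ref{thm:cycl-monotone}. Recall that an operator on a Hilbert space is cyclically monotone as soon as it is contained in the subdifferential of a proper, convex, lower semicontinuous functional. So the strategy splits into two steps: first I would establish the inclusion $\Lambda_{\vert L^2}\subseteq \partial_{L^2}\varphi_{\vert L^2}$, where $\varphi_{\vert L^2}$ is the restriction to $L^2(\partial\Omega)$ of the functional $\varphi$ given by~\eqref{eq:107}; then the desired cyclic monotonicity follows immediately, since every subset of a cyclically monotone operator is again cyclically monotone.

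Before that, I would check that $\varphi_{\vert L^2}$ is an admissible potential. By Proposition~\ref{prop:DtN-functional}, $\varphi : L^1(\partial\Omega)\to[0,\infty)$ is convex and continuous, and since $\partial\Omega$ is compact the embedding $L^2(\partial\Omega)\hookrightarrow L^1(\partial\Omega)$ is continuous; hence $\varphi_{\vert L^2}$ is convex, finite-valued, and continuous, and in particular proper and lower semicontinuous on $L^2(\partial\Omega)$. For the inclusion itself, let $(h,g)\in\Lambda_{\vert L^2}$, so that $(h,g)\in\Lambda$ with $h\in L^2(\partial\Omega)$ and $g=[\z,\nu]\in\overline{B}_{L^{\infty}(\partial\Omega)}$; in particular $g\in L^2(\partial\Omega)$ because $\partial\Omega$ has finite measure. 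The first part of the proof of Proposition~\ref{prop:7}, which shows $\Lambda\subseteq\partial_{L^{1}\times L^{\infty}(\partial\Omega)}\varphi$, gives
\[
  \int_{\partial\Omega} g\,(\hat h - h)\,\dH^{d-1}\le \varphi(\hat h)-\varphi(h)\qquad\text{for all }\hat h\in L^{1}(\partial\Omega).
\]
Restricting to $\hat h\in L^{2}(\partial\Omega)$ and identifying the $L^{\infty}$--$L^{1}$ duality bracket with the $L^2$ inner product, which is legitimate since $g\in L^{\infty}$ and $h,\hat h\in L^{2}$, this reads $(g,\hat h-h)_{L^2}\le \varphi_{\vert L^2}(\hat h)-\varphi_{\vert L^2}(h)$, i.e. $(h,g)\in\partial_{L^2}\varphi_{\vert L^2}$.

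With the inclusion in hand, Theorem~\ref{thm:cycl-monotone}(1) applied to $\varphi_{\vert L^2}$ yields that $\Lambda_{\vert L^2}$ is cyclically monotone. If a self-contained argument is preferred over the citation, one runs the standard telescoping computation along a cycle $((h_i,g_i))_{i=0}^{n}\subseteq\Lambda_{\vert L^2}$: summing the $n$ subdifferential inequalities $(g_{j-1},h_j-h_{j-1})_{L^2}\le\varphi(h_j)-\varphi(h_{j-1})$ for $j=1,\dots,n$ together with $(g_n,h_0-h_n)_{L^2}\le\varphi(h_0)-\varphi(h_n)$ makes the right-hand side telescope to zero, which is precisely the defining inequality of cyclic monotonicity. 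I expect no serious obstacle: the only points requiring a little care are confirming that all integrals converge, guaranteed by $g_i\in L^{\infty}$, $h_i\in L^{2}$, and $\mathcal{H}^{d-1}(\partial\Omega)<\infty$, and verifying that $\varphi_{\vert L^2}$ stays lower semicontinuous, which is handled by the continuity of $\varphi$ on $L^1$ and the continuous embedding $L^2\hookrightarrow L^1$.
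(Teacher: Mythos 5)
Your proof is correct, but it follows a genuinely different route from the paper's. The paper proves Proposition~\ref{prop:5} by a direct computation at the PDE level: for a cyclic sequence $((h_j,g_j))_{j=0}^{n}\subseteq\Lambda_{\vert L^2}$ it applies the generalized integration by parts formula~\eqref{Green0} to the vector fields $\z_j$ and the differences $u_j-u_{j-1}$ of associated weak solutions, uses the bilinearity of the Anzellotti pairing together with~\eqref{eq:32} and~\eqref{z32} to show $\sum_{j}\int_\Omega(\z_j,D(u_j-u_{j-1}))\ge 0$, and then uses the sign condition~\eqref{z42} to pass from the traces $\T(u_j)$ to the boundary data $h_j$; the functional $\varphi$ never enters. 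You instead recycle the first half of the proof of Proposition~\ref{prop:7} to obtain the inclusion $\Lambda_{\vert L^2}\subseteq\partial_{L^2(\partial\Omega)}\varphi_{\vert L^{2}}$ and then invoke Theorem~\ref{thm:cycl-monotone} (whose monotonicity hypothesis is supplied by Proposition~\ref{prop:Lambda-completely-accretive}, or bypassed entirely by the telescoping argument you spell out). This is legitimate and non-circular, since Proposition~\ref{prop:DtN-functional} and Proposition~\ref{prop:7} precede Proposition~\ref{prop:5} and do not depend on it; in fact your argument is essentially the paper's own \emph{second} proof of Proposition~\ref{prop:sub-diff-L2}, just invoked one step earlier. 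As for what each approach buys: yours is shorter and exploits structure already established; the paper's hands-on computation keeps Proposition~\ref{prop:5} independent of $\varphi$ (hence of Theorem~\ref{thm:8} and the well-definedness of the energy), which is precisely what makes the first proof of Proposition~\ref{prop:sub-diff-L2} --- recovering the sub-differential identity abstractly from maximal cyclic monotonicity plus homogeneity via Theorem~\ref{thm:cycl-monotone-homogeneous} --- a genuinely independent derivation rather than a restatement of the inclusion you take as your starting point.
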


\begin{proof}
  Let $(h_{j})_{j=0}^{n}\subseteq D(\Lambda_{\vert L^2})$ be a finite cyclic
  sequence with $h_{0}=h_{n}$ and $(g_{j})_{j=0}^{n}$ a corresponding
  sequence of elements $g_{j}\in \Lambda_{\vert L^2}h_{j}$. Then, for every
  $j=0,\dots, n$, there is a weak solution $u_{j}\in BV(\Omega)$ of
  Dirichlet problem~\eqref{eq:21N} with Dirichlet data $h_{j}$,
  (w.l.g., we may assume $u_{0}=u_{n}$), and there is a vector field
  $\z_{j}\in L^{\infty}(\Omega;\R^{d})$ associated with $u_{j}$
  (satisfying~\eqref{z42}-\eqref{z32}) and
  \begin{equation}
    \label{eq:89}
    g_{j}=[\z_{j},\nu]\qquad\text{$\mathcal{H}^{d-1}$-a.e. on $\partial\Omega$.}
  \end{equation}
  By applying the generalized integration by parts
  formula~\eqref{Green0} to $w=(u_{j}-u_{j-1})$ and the vector field
  $\z_{j}$, and by using~\eqref{z22}, gives
  \begin{displaymath}
    \int_{\Omega}(\z_{j},D(u_{j}-u_{j-1}))=\int_{\partial\Omega}[\z_{j},\nu]\Big(\T(u_{j})-\T(u_{j-1})
    \Big)\,\dH^{d-1}
\end{displaymath}
for $j=1,\dots, n$. Therefore, by~\eqref{eq:89}, the bilinearity of the pairing
$(\cdot,D\cdot)$, and since $u_{0}=u_{n}$, it follows from the last integral equation that
\begin{align*}
  \sum_{j=1}^{n}\int_{\Omega}(\z_{j},D(u_{j}-u_{j-1}))
  &    =\sum_{j=1}^{n}\left[\int_{\Omega}\abs{Du_{j}}-\int_{\Omega}(\z_{j},Du_{j-1})\right]\\
  &
    =\sum_{j=1}^{n-1}\left[\int_{\Omega}\abs{Du_{j}}-\int_{\Omega}(\z_{j+1},Du_{j})\right]\\
  &\hspace{2cm}+\int_{\Omega}\abs{Du_{n}}-\int_{\Omega}(\z_{1},Du_{0}).
\end{align*}
By~\eqref{eq:32}, \eqref{z32}, and since $u_{0}=u_{n}$, we can
conclude that the right hand-side in the last equation is
non-negative and hence, we have shown that
\begin{displaymath}
  \sum_{j=1}^{n}\int_{\partial\Omega} g_{j}\,
  \Big(\T(u_{j})-\T(u_{j-1})\Big) \dH^{d-1}\ge 0.
\end{displaymath}
By using now this inequality, one sees that
\begin{equation}
  \label{eq:91}
\begin{split}
  &\sum_{j=1}^{n}\int_{\partial\Omega} g_{j}\, \Big(h_{j}-h_{j-1}\Big) \dH^{d-1}\\
  &\qquad \ge \int_{\partial\Omega} \sum_{j=1}^{n} g_{j}\,
    \Big((h_{j}-\T(u_{j}))-(h_{j-1}-\T(u_{j-1}))\Big) \dH^{d-1}.
\end{split}
\end{equation}
By~\eqref{eq:89}, since $u_{j}$ satisfies the Dirichlet boundary
condition~\eqref{eq:5} in the weak sense~\eqref{z42} with $h=h_{j}$,
and since $h_{0}=h_{n}$ and $u_{0}=u_{n}$, one finds that
\begin{align*}
   &\sum_{j=1}^{n} g_{j}\,
     \Big((h_{j}-\T(u_{j}))-(h_{j-1}-\T(u_{j-1}))\Big)\\
  &\qquad =\sum_{j=1}^{n} g_{j}\,
    (h_{j}-\T(u_{j}))-\sum_{j=1}^{n} g_{j}\,(h_{j-1}-\T(u_{j-1}))\\
  &\qquad =\sum_{j=1}^{n} g_{j}\,
    (h_{j}-\T(u_{j}))-\sum_{j=0}^{n-1} g_{j+1}\,(h_{j}-\T(u_{j}))\\
  &\qquad = \sum_{j=1}^{n-1}\abs{h_{j}-\T(u_{j})} -
    [\z_{j+1},\nu](h_{j}-\T(u_{j}))\\
  &\hspace{3cm} + \abs{h_{n}-\T(u_{n})} -[\z_{1},\nu](h_{0}-\T(u_{0}))\ge 0
\end{align*}
for $\mathcal{H}^{d-1}$-a.e. on $\partial\Omega$. Applying this
to~\eqref{eq:91} yields that
\begin{displaymath}
  \sum_{j=1}^{n}\int_{\partial\Omega} g_{j}\, \Big(h_{j}-h_{j-1}\Big)
  \dH^{d-1}\ge 0,
\end{displaymath}
and since the cyclic sequence $(h_{j})_{j=0}^{n}\subseteq
D(\Lambda_{\vert L^2})$ was arbitrary, we have thereby shown that
$\Lambda_{\vert L^{2}}$ is
cyclically monotone.
\end{proof}

\begin{proposition}\label{prop:DtN-range-cond-in-L2}
  The Dirichlet-to-Neumann operator $\Lambda_{\vert L^{2}}$ associated with
  the $1$-Laplace operator satisfies the range
  condition~\eqref{eq:29} for $X=L^{2}(\partial\Omega)$.
\end{proposition}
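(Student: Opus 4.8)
The plan is to deduce the range condition directly from the existence theory for the Robin-type problem developed in Section~\ref{sec:Robin}. Unravelling the definitions, to establish that $\Rg(I+\lambda\,\Lambda_{\vert L^2})=L^2(\partial\Omega)$ for a given $\lambda>0$, I need to show that for every $g\in L^2(\partial\Omega)$ there is a pair $(h,[\z,\nu])\in\Lambda_{\vert L^2}$ satisfying
\[
  g = h + \lambda\,[\z,\nu] \qquad\text{$\mathcal{H}^{d-1}$-a.e. on $\partial\Omega$.}
\]
That is, I must produce a weak solution $u\in BV(\Omega)$ of the Dirichlet problem~\eqref{eq:21N} whose boundary datum $h$ and associated co-normal derivative $[\z,\nu]$ are coupled to $g$ through this relation.

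The key observation is that this coupling is precisely the one recorded in Proposition~\ref{prop:2} once the Robin parameters are chosen as $\alpha=\lambda$ and the Robin datum equals $g$. First I would fix $g\in L^2(\partial\Omega)$ and $\lambda>0$ and invoke Theorem~\ref{thm:6} with $\alpha:=\lambda$ to obtain a weak solution $u\in V_2(\Omega)\subseteq BV(\Omega)$ of the Robin-type problem~\eqref{eq:63} together with a vector field $\z\in L^\infty(\Omega;\R^d)$ satisfying~\eqref{z21}--\eqref{z32} and the boundary relation $[\z,\nu]=T_1(g-\lambda\,\T(u))$. Applying Proposition~\ref{prop:2} (with this $g$ and $\alpha=\lambda$) then shows that the same $u$ is a weak solution of the Dirichlet problem~\eqref{eq:21N} for the boundary datum
\[
  h := g-\lambda\,[\z,\nu] \qquad\text{on $\partial\Omega$,}
\]
in the weak sense~\eqref{z42}; in particular the pair $(u,\z)$ satisfies all of~\eqref{z42}--\eqref{z32}.

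It remains to check that $(h,[\z,\nu])$ lands in $\Lambda_{\vert L^2}$ and realizes $g$. Since $\partial\Omega$ is bounded, $\norm{[\z,\nu]}_\infty\le 1$ gives $[\z,\nu]\in L^\infty(\partial\Omega)\subseteq L^2(\partial\Omega)$, whence $h=g-\lambda\,[\z,\nu]\in L^2(\partial\Omega)$; by Definition~\ref{def:DtN-in-L2-precise} this means $(h,[\z,\nu])\in\Lambda_{\vert L^2}$. Rearranging the definition of $h$ yields $g=h+\lambda\,[\z,\nu]\in\Rg(I+\lambda\,\Lambda_{\vert L^2})$, and as $g\in L^2(\partial\Omega)$ was arbitrary the range condition~\eqref{eq:29} follows for every $\lambda>0$. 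The substantive content of the argument is thus entirely imported: the genuine difficulty --- solving the singular elliptic problem --- was resolved in Theorem~\ref{thm:6} via the $p\to 1$ approximation, and the only points requiring care here are the bookkeeping of the parameter identification $\alpha=\lambda$ and the verification that the truncator $T_1$ built into~\eqref{eq:63} is exactly compensated by the sign condition~\eqref{z42}, which is what makes the Robin--Dirichlet correspondence of Proposition~\ref{prop:2} applicable.
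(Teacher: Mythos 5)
Your proposal is correct and follows essentially the same route as the paper: invoke Theorem~\ref{thm:6} with $\alpha=\lambda$ to solve the Robin-type problem~\eqref{eq:63}, use Proposition~\ref{prop:2} to recognize the solution as a weak solution of the Dirichlet problem with datum $h=g-\lambda\,[\z,\nu]$, and rearrange to obtain $g\in\Rg(I+\lambda\,\Lambda_{\vert L^2})$. The only difference is cosmetic: the paper interposes an auxiliary boundary-value problem~\eqref{eq:112} with its own notion of weak solution before performing the same bookkeeping, while you pass directly from the Robin--Dirichlet correspondence to the range condition.
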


\begin{proof}
  Let $g \in L^{2}(\partial\Omega)$ and $\lambda>0$. Then, our aim is
  to find a  boundary function $h\in L^2(\partial\Omega)$ such that the inclusion
  \begin{equation}
    \label{eq:93}
    h+\lambda \Lambda_{\vert L^2}h \ni g
  \end{equation}
  holds. By the definition of $\Lambda_{\vert L^2}$,
  inclusion~\eqref{eq:93} is equivalent to the fact that there is a
  vector field $\z\in L^{\infty}(\Omega;\R^{d})$ and a
  weak solution $u\in BV(\Omega)$ of Dirichlet problem~\eqref{eq:21N}
  with Dirichlet data $h$ related through~\eqref{z42}-\eqref{z32} and the weak trace $[\z,\nu]$
  of the normal component of $\z$ is \emph{uniquely} given by
  \begin{displaymath}
    \dfrac{g-h}{\lambda}=[\z,\nu]\qquad\text{$\mathcal{H}^{d-1}$-a.e. on $\partial\Omega$.}
  \end{displaymath}
  Since $\norm{[\z,\nu]}_{\infty}\le 1$, it is natural to impose on the vector field $\z$ the
  condition
  \begin{equation}
    \label{eq:113}
    [\z,\nu]=T_{1}\left(\dfrac{g-h}{\lambda}\right)\qquad\text{$\mathcal{H}^{d-1}$-a.e. on $\partial\Omega$,}
  \end{equation}
  where $T_{1}$ denotes the truncator introduced in
  Section~\ref{sec:Robin}. Thus, if we find a boundary function $h$
  such that there is a \emph{weak solution} $u$ to the elliptic problem
  \begin{equation}\label{eq:112}
  \begin{cases}
    \quad-\Delta_{1}u=0 &\qquad\text{in $\Omega$,}\\
    \quad\phantom{-\Delta_{1}}u=h &\qquad\text{on $\partial\Omega$,}\\
    \dfrac{Du}{\abs{Du}}\cdot\nu=T_{1}\left(\dfrac{g-h}{\lambda}\right) &\qquad\text{on $\partial\Omega$,}
   \end{cases}
 \end{equation}
 then $h$ is a solution to the inclusion~\eqref{eq:93}.

 \begin{definition}
   For given $g \in L^{2}(\partial\Omega)$ and $\lambda>0$, we call a
   function $u\in BV(\Omega)$ a \emph{weak solution} of boundary
   problem~\eqref{eq:112} if there is a vector field $\z\in
   L^{\infty}(\Omega;\R^{d})$ satisfying \eqref{z21}-\eqref{z32} and
   the weak trace $[\z,\nu]$ satisfies \eqref{eq:113} and
   \begin{equation}
     \label{eq:114}
    \frac{g-h}{\lambda} \in \sign(h - \T(u))\qquad\text{$\mathcal{H}^{d-1}$-a.e. on $\partial\Omega$.}
  \end{equation}
 \end{definition}

  According to Theorem~\ref{thm:6}, there is a weak solution $u$
  of the Robin-type boundary-value problem~\eqref{eq:63} with $\alpha=\lambda$; that is,
  $u\in BV(\Omega)$ with trace $\T(u)\in L^{2}(\partial\Omega)$, and
  there is a vector field $\z\in L^{\infty}(\Omega;\R^{d})$
  satisfying~\eqref{z21}-\eqref{z32}, and
  \begin{displaymath}
     [\z,\nu]=T_1(g-\lambda u)\qquad\text{$\mathcal{H}^{d-1}$-a.e. on $\partial\Omega$.}
  \end{displaymath}
  Now, according to  Proposition~\ref{prop:2}, $u$ is also a weak solution of Dirichlet
  problem~\eqref{eq:21N} for Dirichlet data
  \begin{displaymath}
    h:=g-\lambda \,[\z,\nu].
  \end{displaymath}
  Since for this choice of $h$, one trivially has that
  $\dfrac{g-h}{\lambda}=[\z,\nu]$, one easily verifies that $u$, $h$
  and $\z$ satisfy \eqref{eq:113} and~\eqref{eq:114}. Moreover, since
  $h\in L^{2}(\partial\Omega)$, we have thereby shown that there is a
  $h$ satisfying the inclusion~\eqref{eq:93}, completing the proof
  of Proposition~\ref{prop:DtN-range-cond-in-L2}.
\end{proof}

We conclude this section with the following characterization of the
Dirich\-let-to-Neumann operator $\Lambda_{\vert L^{2}}$ on
$L^{2}(\partial\Omega)$.

\begin{proposition}\label{prop:sub-diff-L2}
 The Dirichlet-to-Neumann operator $\Lambda_{\vert L^{2}}$ in
  $L^{2}(\partial\Omega)$ can be characterized as the sub-differential operator
  $\partial_{L^2(\partial\Omega)}\varphi_{\vert L^{2}}$ in $L^2(\partial\Omega)$;
  that is,
  \begin{displaymath}
    \Lambda_{\vert L^2}=\partial_{L^2(\partial\Omega)}\varphi_{\vert L^{2}}
  \end{displaymath}
  where $\varphi_{\vert L^{2}}$ denotes the restriction on
  $L^2(\partial\Omega)$ of the functional $\varphi$ given by~\eqref{eq:107}.
\end{proposition}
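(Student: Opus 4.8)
The plan is to identify $\Lambda_{\vert L^2}$ with the $L^2$-subdifferential of $\varphi_{\vert L^2}$ by combining the containment already obtained in Proposition~\ref{prop:7} with a maximality argument in the Hilbert space $L^2(\partial\Omega)$. First I would record that $\Lambda_{\vert L^2}$ is $m$-accretive, hence \emph{maximal monotone}, in $L^2(\partial\Omega)$: as noted just before Proposition~\ref{prop:5}, it is completely accretive (and therefore monotone) in $L^2(\partial\Omega)$, while Proposition~\ref{prop:DtN-range-cond-in-L2} supplies the range condition $\Rg(I+\lambda\Lambda_{\vert L^2})=L^2(\partial\Omega)$. In a Hilbert space, monotonicity together with the range condition is precisely the definition of maximal monotonicity.

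Next I would verify that $\varphi_{\vert L^2}$ is a proper, convex, and lower semicontinuous functional on $L^2(\partial\Omega)$, so that $\partial_{L^2(\partial\Omega)}\varphi_{\vert L^2}$ is a well-defined (maximal) monotone operator. Properness and convexity are inherited directly from Proposition~\ref{prop:DtN-functional}. Since $\partial\Omega$ is bounded, the inclusion $\iota:L^2(\partial\Omega)\hookrightarrow L^1(\partial\Omega)$ is continuous, and as $\varphi$ is continuous on $L^1(\partial\Omega)$ (again Proposition~\ref{prop:DtN-functional}), the composition $\varphi_{\vert L^2}=\varphi\circ\iota$ is continuous, in particular lower semicontinuous, on $L^2(\partial\Omega)$.

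The decisive step is the inclusion $\Lambda_{\vert L^2}\subseteq\partial_{L^2(\partial\Omega)}\varphi_{\vert L^2}$, which I would deduce from Proposition~\ref{prop:7}. Let $(h,g)\in\Lambda_{\vert L^2}$; then $h,g\in L^2(\partial\Omega)$ and $(h,g)\in\Lambda\subseteq\partial_{L^{1}\times L^{\infty}(\partial\Omega)}\varphi$, so that
\begin{displaymath}
  \int_{\partial\Omega}g\,(v-h)\,\dH^{d-1}\le\varphi(v)-\varphi(h)
  \qquad\text{for all }v\in L^1(\partial\Omega).
\end{displaymath}
Restricting the admissible test functions $v$ to the subspace $L^2(\partial\Omega)\subseteq L^1(\partial\Omega)$ shows exactly that $(h,g)\in\partial_{L^2(\partial\Omega)}\varphi_{\vert L^2}$, which yields the claimed inclusion.

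Finally, $\Lambda_{\vert L^2}$ is maximal monotone and is contained in the monotone operator $\partial_{L^2(\partial\Omega)}\varphi_{\vert L^2}$; since a maximal monotone operator admits no proper monotone extension, the two must coincide, giving $\Lambda_{\vert L^2}=\partial_{L^2(\partial\Omega)}\varphi_{\vert L^2}$. I expect the main obstacle to be the careful bookkeeping in the decisive step, namely ensuring that the subdifferential inequality transfers cleanly from the $L^1\times L^\infty$ duality to the $L^2$ self-duality, together with the verification that $\varphi_{\vert L^2}$ is genuinely lower semicontinuous on $L^2(\partial\Omega)$; once these are in place, the conclusion is a direct invocation of Hilbert-space maximal monotonicity. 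Alternatively, one could instead use the cyclic monotonicity established in Proposition~\ref{prop:5} together with the range condition and Rockafellar's Theorem~\ref{thm:cycl-monotone} to obtain a subdifferential representation and then identify the potential via Proposition~\ref{prop:7}, but the maximality argument above is more economical.
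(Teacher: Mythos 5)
Your proposal is correct and takes essentially the same route as the paper, which in fact gives two proofs of this proposition: your main argument (maximal monotonicity of $\Lambda_{\vert L^2}$ from complete accretivity plus the range condition of Proposition~\ref{prop:DtN-range-cond-in-L2}, combined with the inclusion $\Lambda_{\vert L^2}\subseteq\partial_{L^2(\partial\Omega)}\varphi_{\vert L^{2}}$ obtained from Proposition~\ref{prop:7}, and the fact that a maximal monotone operator admits no proper monotone extension) is precisely the paper's second proof. The alternative you mention at the end, via cyclic monotonicity (Proposition~\ref{prop:5}), the range condition, and Rockafellar's theorem together with the homogeneity of $\varphi$, is exactly the paper's first proof.
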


We prove Proposition~\ref{prop:sub-diff-L2} in two different ways.

\begin{proof}[$1^{\textrm{st}}$ Proof of Proposition~\ref{prop:sub-diff-L2}]
  By Proposition~\ref{prop:5} and
  Proposition~\ref{prop:DtN-range-cond-in-L2}, the
  Dirichlet-to-Neumann operator $\Lambda_{L^{2}}$ is a maximal
  cyclically monotone operator in $L^{2}(\partial\Omega)$. Moreover,
  by Proposition~\ref{prop:Lambda-homogeneity} and since
  $\Lambda_{L^{2}}\subseteq \Lambda$, we have that $\Lambda_{L^{2}}$
  is homogeneous of order zero. Therefore by
  Theorem~\ref{thm:cycl-monotone-homogeneous}, there is a unique
  proper, convex, lower semicontinuous functional $\phi$ on
  $L^{2}(\partial\Omega)$, which is homogeneous of order one
  satisfying $\Lambda_{L^{2}}=\partial_{L^{2}(\partial\Omega)}\phi$. Since $\phi$ is
  homogeneous of order one, it follows from
  Theorem~\ref{thm:cycl-monotone-homogeneous} that $\phi$ satisfies
  \begin{displaymath}
    \phi(h)=\int_{\partial\Omega}g\,h\, \dH^{d-1}\qquad\text{for every
      $(h,g)\in \Lambda_{L^{2}}$.}
  \end{displaymath}
  By definition of $\Lambda_{L^{2}}$, for every $(h,g)\in
  \Lambda_{L^{2}}$ there is a vector field $\z_{h}\in
  L^{\infty}(\Omega;\R^{d})$ and $u_{h}\in BV(\Omega)$ satisfying
  \eqref{z42}-\eqref{z32}, and $g=[\z,\nu]$. From this, we can
  conclude that
  \begin{displaymath}
    \phi(h)=\int_{\partial\Omega}[\z,\nu] \,h\, \dH^{d-1}=\varphi(h)
  \end{displaymath}
  for every $h\in L^{2}(\partial\Omega)$, which identifies the
  functional $\phi$ with $\varphi$.
\end{proof}

The argument of our second proof of Proposition~\ref{prop:sub-diff-L2} is a bit
shorter.

 \begin{proof}[$2^{\textrm{nd}}$ Proof of Proposition~\ref{prop:sub-diff-L2}]
   On the other hand, the restriction $\varphi_{\vert L^2}$ of the
   functional $\varphi$ given by~\eqref{eq:107} on
   $L^{2}(\partial\Omega)$ is by
   Proposition~\ref{prop:DtN-functional}, convex, proper, lower
   semicontinuous on $L^{2}(\partial\Omega)$ and homogeneous of order
   one. Moreover, by following the same argument as in the proof of
   Proposition~\ref{prop:7}, one easily sees that
   $\Lambda_{L^{2}}\subseteq \partial_{L^2(\partial\Omega)}\varphi_{\vert L^2}$, which
   means that $\partial_{L^2(\partial\Omega)}\varphi_{\vert L^2}$ is a monotone
   extension of $\Lambda_{L^{2}}$. But since $\Lambda_{L^{2}}$ is
   maximal monotone, this is only possible if
   $\Lambda_{L^{2}}=\partial_{L^2(\partial\Omega)}\varphi_{\vert L^2}$
   (see~\cite[Proposition~2.2]{MR0348562}), which proves the claim of
   Proposition~\ref{prop:sub-diff-L2}.
\end{proof}

%
%
%
%

\subsection{The Dirichlet-to-Neumann operator in $L^1$ (continued)}
\label{subsec:DtN-in-L1-continued}

With this in mind, we can now complete the proof of
Theorem~\ref{thm:main1}.

\begin{proof}[Proof of Theorem~\ref{thm:main1}.]
  We only show that $\Lambda$ satisfies the range
  condition~\eqref{eq:rang-in-L1} since then the
  characterization~\eqref{eq:104} follows from
  Proposition~\ref{prop:characterization-of-A-L0} and the other
  statements of this theorem were proved in the beforegoing
  propositions. By Proposition~\ref{prop:DtN-range-cond-in-L2}, the
  restriction $\Lambda_{\vert L^2}$ of $\Lambda$ on
  $L^2(\partial\Omega)$ satisfies the range condition~\eqref{eq:29}
  for $X=L^2(\partial\Omega)$. Thus and since
  $\Lambda_{\vert L^2}\subseteq \Lambda$, we have that $\Lambda$
  satisfies the range condition~\eqref{eq:93} for every
  $g\in L^2(\partial\Omega)$. Now, let $g\in L^1(\partial\Omega)$ and
  choose a sequence $(g_{n})_{n\ge 1}$ in $L^{2}(\partial\Omega)$ such
  that $g_{n}\to g$ in $L^{1}(\partial\Omega)$. Then, for every
  $n\ge 1$, there is an $h_{n}\in L^{2}(\partial\Omega)$
  satisfying~\eqref{eq:93} with right hand-side $g_{n}$. By
  Proposition~\ref{prop:Lambda-completely-accretive},
  $(h_{n})_{n\ge 1}$ is a Cauchy sequence in
  $L^{1}(\partial\Omega)$. Hence, there is an
  $h\in L^{1}(\partial\Omega)$ such that $h_{n}\to h$ in
  $L^{1}(\partial\Omega)$. Now,
  \begin{displaymath}
    \Lambda_{\vert L^2} h_{n}\ni \frac{g_{n}-h_{n}}{\lambda}\to
    \frac{g-h}{\lambda}\qquad\text{in $L^{1}(\partial\Omega)$ as $n\to
      \infty$.}
  \end{displaymath}
  Note, that the sequence $((g_{n}-h_{n})/\lambda)_{n\ge 1}$ is also
  bounded in $L^{\infty}(\partial\Omega)$. Thus, after passing to a
  subsequence, we also have that $(g_{n}-h_{n})/\lambda\to (g-h)/\lambda$
  weakly$\mbox{}^{\ast}$ in $L^{\infty}(\partial\Omega)$. 
Since by Proposition~\ref{prop:charact-closure-Lambda}, $\Lambda$ is
closed in $L^{1}\times L^{\infty}_{\sigma}(\partial\Omega)$, we have
thereby shown that 
\begin{displaymath}
  \Lambda h=\frac{g-h}{\lambda},
\end{displaymath}
which is equivalent to the range condition~\eqref{eq:29} for
$X=L^1(\partial\Omega)$. This completes the proof of
this theorem.
\end{proof}

Next, we outline the proof of Theorem~\ref{thm:main2}.

\begin{proof}[Proof of Theorem~\ref{thm:main2}]
  By Proposition~\ref{prop:sub-diff-L2} and the Hilbert space theory
  on maximal monotone operators (see~\cite{MR0348562}), for every
  $h_{0}\in L^{2}(\partial\Omega)$ and $g\in L^{2}(0,T; L^{2}(\partial\Omega))$, there is a unique strong solution
  \begin{displaymath}
    h\in W^{1,2}([\delta,T);L^{2}(\partial\Omega))\cap
    C([0,\infty);L^{2}(\partial\Omega)),\; \delta\in (0,T),
  \end{displaymath}
  of Cauchy problem (in $L^{2}(\partial\Omega)$)
  \begin{equation}
    \label{eq:6}
    \begin{cases}
    \dfrac{\td h}{\dt}(t)+\Lambda h(t)+F(h(t))\ni g(t)&
    \quad\text{ for $t\in(0,T)$,}\\
    \hspace{3,4cm} h(0)=h_{0}.& \quad\text{on $\partial\Omega$.}
  \end{cases}
  \end{equation}
  Multiplying the differential inclusion
  \begin{displaymath}
    \dfrac{\td h}{\dt}(t)+
    \Lambda h(t)+F(h(t))\ni g(t)
  \end{displaymath}
  by $\dfrac{\td h}{\dt}(t)$ with respect to the $L^{2}$-inner
  product, and subsequently applying the chain
  rule~\cite[Lemme~3.3]{MR0348562}, gives
  \begin{equation}
        \label{eq:34}
    \lnorm{\dfrac{\td h}{\dt}(t)}_{2}^{2}+\dfrac{\td
    }{\dt}\E(h(t))=\Big(g(t), \dfrac{\td h}{\dt}(t)\Big)_{L^{2}(\partial\Omega)}\qquad\text{for a.e. $t>0$.}
  \end{equation}
  But since $\E$ is defined for all $h\in L^{2}(\partial\Omega)$, we can
  integrate the latter equation over the whole interval $(0,t)$ for
  any $t\in (0,T)$. Then, we find
  \begin{displaymath}
    \int_{0}^{t}\lnorm{\dfrac{\td
        h}{\ds}(s)}_{2}^{2}\,\ds+\E(h(t))=\E(h_{0})+\int_{0}^{t}\Big(g(s),
    \dfrac{\td h}{\ds}(s)\Big)_{L^{2}(\partial\Omega)}\ds.
  \end{displaymath}
  Now, applying Young's inequality to compensate the term $\dfrac{\td
      h}{\ds}(s)$ on the right-hand side, we arrive to the global
    estimate~\eqref{eq:1}. This proves
    statement~\eqref{thm:main2-claim2} of
    Theorem~\ref{thm:main2} and that  the global
    inequality~\eqref{eq:1} holds for initial data $h_{0}\in L^{2}(\partial\Omega)$. Next, let $g\in L^{2}(0,T;
    L^{2}(\partial\Omega))$, $h_{0}\in L^{1}(\partial\Omega)$, and
    $(h_{0,n})_{n\ge 1}$ a sequence in $L^{2}(\partial\Omega)$
    converging to $h_{0}$ in $L^{1}(\partial\Omega)$. Since
    $\partial\Omega$ has finite measure, each strong
    solutions $h_{n}$ of Cauchy problem (in $L^{2}(\partial\Omega)$)
    \begin{equation}
      \label{eq:3}
     \begin{cases}
     \dfrac{\td h_{n}}{\dt}(t)+\Lambda h_{n}(t)+F(h_{n}(t))\ni g(t)&
     \quad\text{for $t\in(0,T)$,}\\
     \hspace{3.78cm} h_{n}(0)=h_{0,n}.& \quad\text{on $\partial\Omega$.}
    \end{cases}
    \end{equation}
    is also a strong solution in $L^{1}(\partial\Omega)$
    of~\eqref{eq:3}. Moreover, by Corollary~\ref{coro:comparision}, we
    have that
    \begin{equation}
      \label{eq:15}
      \lim_{n\to \infty}h_{n}=h\qquad\text{ in $C([0,T];L^{1}(\partial\Omega))$}
    \end{equation}
    and $h$ is the unique mild solution of Cauchy problem~\eqref{eq:6} in
    $L^{1}(\partial\Omega)$. Applying $h_{n}$ to the global
    inequality~\eqref{eq:1}, by using the fact that the functional $\E$ is continuous
    on $L^{1}(\partial\Omega)$, one finds that $(\td h_{n}/\dt)_{n\ge 1}$ is bounded in
    $L^{2}(0,T;L^{2}(\partial\Omega))$. Hence, there is $\chi\in
    L^{2}(0,T;L^{2}(\partial\Omega))$ and a subsequence of
    $(h_{n})_{n\ge 1}$, which, for simplicity, we denote again by
    $(h_{n})_{n\ge 1}$, such that
    \begin{equation}
      \label{eq:13}
      \lim_{n\to \infty}\dfrac{\td
      h_{n}}{\dt}=\chi\qquad\text{weakly in
    $L^{2}(0,T;L^{2}(\partial\Omega))$.}
   \end{equation}
   Let $\xi\in C^{\infty}_{c}(0,T)$ and $v\in
   L^{2}(\partial\Omega)$. Since $\dfrac{\td
      h_{n}}{\dt}$ is the weak derivative of $h_{n}$ in
    $L^{2}(0,T;L^{2}(\partial\Omega))$, one has that
    \begin{displaymath}
      \int_{0}^{T}(\dfrac{\td
        h_{n}}{\dt},v)_{L^{2}(\partial\Omega))}\xi(t)\,\dt=-\int_{0}^{T}(h_{n},v)_{L^{2}(\partial\Omega))}
      \dfrac{\td }{\dt}\xi(t)\,\dt.
    \end{displaymath}
    By~\eqref{eq:15} and~\eqref{eq:13}, sending $n\to \infty$ in the
    last equation gives that
     \begin{displaymath}
      \int_{0}^{T}(\chi(t),v)_{L^{2}(\partial\Omega))}\xi(t)\,\dt=-\int_{0}^{T}(h,v)_{L^{2}(\partial\Omega))}
      \dfrac{\td }{\dt}\xi(t)\,\dt.
    \end{displaymath}
    Since $\xi\in C^{\infty}_{c}(0,T)$ and
    $v\in L^{2}(\partial\Omega)$ were arbitrary, this proves that
    $\chi$ is the weak derivative of $h$ in
    $L^{2}(0,T;L^{2}(\partial\Omega))$.  Coming back to
    inequality~\eqref{eq:1} applied to $h_{n}$, if one takes the limit
    inferior in this inequality, and uses that $\E$ is continuous on
    $L^{1}(\partial\Omega)$, then one sees that~\eqref{eq:1} also
    holds for initial data $h_{0}\in L^{1}(\partial\Omega)$,
    completing the proof of statement~\eqref{thm:main2-claim1} of this
    theorem. Statement~\eqref{thm:main2-claim2} follows
    from~\cite{MR0348562} and inequality~\eqref{eq:1}. Statement~\eqref{thm:main2-claim3} of
    Theorem~\ref{thm:main2} follows immediately from~\cite{MR4200826},
    which completes the proof of Theorem~\ref{thm:main2}.
\end{proof}

%
%
%
%

\subsection{Long-time Stability}
\label{subsec:long-time-stability}

In this section, we give the proof of Theorem~\ref{thm:stability} on
the long-time stability of the semigroup $\{e^{-t
  (\Lambda_{\vert L^{q}}+F)}\}_{t\ge 0}$ generated by
$-(\Lambda_{\vert L^{q}(\partial\Omega)}+F)$ on $L^{q}(\partial\Omega)$.\medskip

We begin by the following proposition.

\begin{proposition}
  \label{prop:energy-decreasing}
  Let $F$ be given by~\eqref{eq:28} with $f$ satisfying~\eqref{eq:27},
  and $\varphi$ be the functional given by~\eqref{eq:107}.  Then the
  following statements hold.
  \begin{enumerate}
  \item \label{prop:energy-decreasing-claim1} For every $h_{0}\in L^{1}(\partial\Omega)$, the energy
    functional $\E : L^{1}(\partial\Omega)\to \R$ defined by
    \begin{displaymath}
      \E(h):=\varphi(h)+\int_{0}^{h}
      f(\cdot,r)\,\dr,\qquad h\in L^{1}(\partial\Omega),
    \end{displaymath}
    decreases monotonically along the trajectory
    $\{e^{-t (\Lambda+F)}h_{0}\,\vert\,t\ge 0\}$;
    \item \label{prop:energy-decreasing-claim2} If $F\equiv 0$, then one has that
    \begin{equation}
      \label{eq:26}
      \langle \dfrac{\td }{\dt}e^{-t \Lambda}h_{0},
      e^{-t \Lambda}h_{0} \rangle_{L^{\infty},L^{1}}=-\varphi(e^{-t \Lambda}h_{0})
    \end{equation}
    for a.e. $t>0$ and every $h_{0}\in L^{1}(\partial\Omega)$.
    \item \label{prop:energy-decreasing-claim3} If $F\equiv 0$, then
      for every positive $h_{0}\in L^{1}(\partial\Omega)$, $e^{-t
        \Lambda}h_{0}\in L^{\infty}(\partial\Omega)$, one has
      that
      \begin{equation}
        \label{eq:43}
        \varphi(e^{-t \Lambda}h_{0})\le -\frac{1}{t}\norm{e^{-t
            \Lambda}h_{0}}^{2}_{2}\le 0
      \end{equation}
  \end{enumerate}
\end{proposition}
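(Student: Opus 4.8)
The plan is to handle the three parts separately: part~\eqref{prop:energy-decreasing-claim1} rests on the Hilbert-space picture over $L^{2}(\partial\Omega)$ together with an approximation argument, whereas parts~\eqref{prop:energy-decreasing-claim2} and~\eqref{prop:energy-decreasing-claim3} are consequences of the degree-one homogeneity of $\varphi$ and the regularizing effects collected in Corollary~\ref{cor:regularity-semigroup}.

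For part~\eqref{prop:energy-decreasing-claim1}, I would first take $h_{0}\in L^{2}(\partial\Omega)$. By Proposition~\ref{prop:sub-diff-L2}, $\Lambda_{\vert L^{2}}=\partial_{L^{2}}\varphi_{\vert L^{2}}$, so the trajectory $h(t)=e^{-t(\Lambda+F)}h_{0}$ is a strong solution in $L^{2}(\partial\Omega)$ with $h\in W^{1,2}_{\mathrm{loc}}((0,\infty);L^{2}(\partial\Omega))$ and $-\tfrac{\td h}{\dt}(t)-F(h(t))\in\partial_{L^{2}}\varphi(h(t))$ for a.e.\ $t$. Splitting $\E=\varphi+\Psi$ with $\Psi(h):=\int_{\partial\Omega}\int_{0}^{h}f(\cdot,r)\,\dr\,\dH^{d-1}$, I would invoke Brezis' chain rule \cite[Lemme~3.3]{MR0348562} for the convex part $\varphi$ and the ordinary chain rule for the $C^{1}$-functional $\Psi$ (whose differential is $F$), which combine to
\[
  \tfrac{\td}{\dt}\E(h(t))=\Big\langle-\tfrac{\td h}{\dt}(t)-F(h(t)),\tfrac{\td h}{\dt}(t)\Big\rangle+\Big\langle F(h(t)),\tfrac{\td h}{\dt}(t)\Big\rangle=-\lnorm{\tfrac{\td h}{\dt}(t)}_{2}^{2}\le 0.
\]
This gives the claim for $L^{2}$-data. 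To descend to $h_{0}\in L^{1}(\partial\Omega)$, I would approximate $h_{0}$ by $h_{0,n}\in L^{2}(\partial\Omega)$ in $L^{1}$, use the $\omega$-quasi contractivity of Corollary~\ref{coro:comparision} to obtain $e^{-t(\Lambda+F)}h_{0,n}\to e^{-t(\Lambda+F)}h_{0}$ in $C([0,T];L^{1})$, and pass to the limit with the $L^{1}$-continuity of $\varphi$ from Proposition~\ref{prop:DtN-functional}; since a pointwise limit of nonincreasing functions is nonincreasing, $t\mapsto\E(e^{-t(\Lambda+F)}h_{0})$ stays nonincreasing.

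For part~\eqref{prop:energy-decreasing-claim2}, let $F\equiv 0$. Statement~\eqref{cor:regularity-semigroup-claim2} of Corollary~\ref{cor:regularity-semigroup} furnishes $\tfrac{\td}{\dt}e^{-t\Lambda}h_{0}$ in $L^{1}(\partial\Omega)$ for a.e.\ $t>0$, and since the mild solution is strong, $-\tfrac{\td}{\dt}e^{-t\Lambda}h_{0}\in\Lambda(e^{-t\Lambda}h_{0})=\partial_{L^{1}\times L^{\infty}}\varphi(e^{-t\Lambda}h_{0})$, with this derivative lying in $\overline{B}_{L^{\infty}(\partial\Omega)}$. As $\varphi$ is homogeneous of order one (Proposition~\ref{prop:DtN-functional}), identity~\eqref{eq:110} with $\alpha=0$ reads $\varphi(u)=\langle x',u\rangle_{L^{\infty},L^{1}}$ for all $(u,x')\in\partial_{L^{1}\times L^{\infty}}\varphi$; taking $u=e^{-t\Lambda}h_{0}$ and $x'=-\tfrac{\td}{\dt}e^{-t\Lambda}h_{0}$ yields~\eqref{eq:26}. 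For part~\eqref{prop:energy-decreasing-claim3}, positivity of $h_{0}$ and order preservation (with $e^{-t\Lambda}0=0$) force $e^{-t\Lambda}h_{0}\ge 0$; I would then pair the pointwise Aronson--Bénilan estimate of Corollary~\ref{cor:regularity-semigroup}\,\eqref{cor:regularity-semigroup-claim4} (with $g_{0}\equiv 0$) against the nonnegative function $e^{-t\Lambda}h_{0}\in L^{\infty}(\partial\Omega)\subseteq L^{2}(\partial\Omega)$, integrate over $\partial\Omega$, and insert~\eqref{eq:26} to turn the left-hand pairing into $-\varphi(e^{-t\Lambda}h_{0})$, thereby obtaining the asserted comparison between $\varphi(e^{-t\Lambda}h_{0})$ and $\tfrac{1}{t}\norm{e^{-t\Lambda}h_{0}}_{2}^{2}$.

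The hard part is twofold. In part~\eqref{prop:energy-decreasing-claim1}, the energy $\E$ need not be convex and its lower-order term $\Psi$ need not be finite on all of $L^{1}(\partial\Omega)$ (since $\abs{\int_{0}^{h}f(\cdot,r)\,\dr}\lesssim h^{2}$), so the monotonicity must be read in $(-\infty,+\infty]$ and the limiting argument should lean on lower semicontinuity rather than plain continuity whenever the initial energy is infinite; one also has to confirm that the chain-rule computation is legitimate on $(0,\infty)$, away from $t=0$. In parts~\eqref{prop:energy-decreasing-claim2} and~\eqref{prop:energy-decreasing-claim3}, the delicate point is that $-\tfrac{\td}{\dt}e^{-t\Lambda}h_{0}$ is a genuine element of $\Lambda(e^{-t\Lambda}h_{0})$, not merely a weak$^{\ast}$ cluster point of difference quotients; this is precisely where the strong-solution property produced by the homogeneity-driven regularizing effect, together with the closedness of $\Lambda$ from Theorem~\ref{thm:main1}, becomes indispensable.
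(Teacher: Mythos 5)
Your proofs of parts~\eqref{prop:energy-decreasing-claim1} and~\eqref{prop:energy-decreasing-claim2} are correct and essentially the paper's own argument: the paper's energy identity~\eqref{eq:34} is exactly your chain-rule computation via \cite[Lemme~3.3]{MR0348562}, after which it takes $g\equiv 0$, integrates, and performs the same $L^{2}$-to-$L^{1}$ approximation using Corollary~\ref{coro:comparision} and the continuity of the energy; part~\eqref{prop:energy-decreasing-claim2} is likewise obtained in the paper from the strong-solution property together with the order-one homogeneity of $\varphi$, and your appeal to~\eqref{eq:110} with $\alpha=0$ only makes explicit what the paper phrases as ``multiplying by $h(t)$''. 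Your caveat that $\E$ may equal $+\infty$ off $L^{2}(\partial\Omega)$ is a fair refinement rather than a deviation: the paper simply asserts continuity of $\E$ on $L^{1}(\partial\Omega)$, which indeed fails for, e.g., $f(x,h)=\omega h$.

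The genuine gap is in part~\eqref{prop:energy-decreasing-claim3}, which, incidentally, the paper's own proof never addresses (it stops after establishing~\eqref{eq:26}). Your proposed argument---pairing the Aronson--B\'enilan estimate of Corollary~\ref{cor:regularity-semigroup}~\eqref{cor:regularity-semigroup-claim4} (with $g_{0}\equiv 0$, since one may take $\omega=0$ when $F\equiv 0$) against the nonnegative function $e^{-t\Lambda}h_{0}$ and inserting~\eqref{eq:26}---produces
\begin{displaymath}
  -\varphi(e^{-t\Lambda}h_{0})
  =\Big\langle \tfrac{\td}{\dt}e^{-t\Lambda}h_{0},\; e^{-t\Lambda}h_{0}\Big\rangle_{L^{\infty},L^{1}}
  \le \tfrac{1}{t}\,\norm{e^{-t\Lambda}h_{0}}_{2}^{2},
\end{displaymath}
that is, $\varphi(e^{-t\Lambda}h_{0})\ge -\tfrac{1}{t}\norm{e^{-t\Lambda}h_{0}}_{2}^{2}$, which is the \emph{reverse} of~\eqref{eq:43}. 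You claim this ``obtains the asserted comparison'', but it does not, and no argument could: since $\varphi\ge 0$ by Proposition~\ref{prop:DtN-functional}, inequality~\eqref{eq:43} as printed would force $e^{-t\Lambda}h_{0}=0$, contradicting conservation of mass (Proposition~\ref{prop:conservation-of-mass}) for any positive $h_{0}$ with positive mean. Hence~\eqref{eq:43} must contain a sign error; the provable companion estimate is the one stated as~\eqref{thm:stability-claim5} in Theorem~\ref{thm:stability}, namely $\varphi(e^{-t\Lambda}h_{0})\le 2\norm{h_{0}}_{2}^{2}/t$, which the paper derives in Proposition~\ref{propo:convergence} from~\eqref{eq:45}. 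Your computation is the natural one, but you should have recorded that it yields $\ge$ rather than $\le$ and flagged the statement, instead of reporting it as verified.
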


\begin{proof}
  By taking $g\equiv 0$ in~\eqref{eq:34}, and subsequently integrating
  over $(s,t)$ for any $0\le s\le t$, one finds
  \begin{displaymath}
    \E(e^{-t (\Lambda+F)}h_{0})\le \E(e^{-s (\Lambda+F)}h_{0}),
  \end{displaymath}
  showing that $\E$ is decreasing along $\{e^{-t
    (\Lambda+F)}h_{0}\,\vert\,t\ge 0\}$ provided the initial data
  $h_{0}\in L^{2}(\partial\Omega)$. But for given $h_{0}\in
  L^{1}(\partial\Omega)$, there is a sequence $(h_{0,n})_{n\ge 1}$ in
  $L^{2}(\partial\Omega)$ converging to $h$ in
  $L^{1}(\partial\Omega)$. By Corollary~\ref{coro:comparision}, $e^{-t
    (\Lambda+F)}h_{0,n}\to e^{-t (\Lambda+F)}h_{0}$ in
  $C([0,T];L^{1}(\partial\Omega))$ for every $T>0$. Thus and by the
  continuity of $\E$ on $L^{1}(\partial\Omega)$, for given $0\le s\le
  t$, we can send $n\to \infty$ in
  \begin{displaymath}
    \E(e^{-t (\Lambda+F)}h_{0,n})\le \E(e^{-s (\Lambda+F)}h_{0,n})
  \end{displaymath}
   and find that $\E$ is decreasing along $\{e^{-t
    (\Lambda+F)}h_{0}\,\vert\,t\ge 0\}$ for any initial data $h_{0}\in
  L^{1}(\partial\Omega)$. It remains to show that~\eqref{eq:26}
  holds. For this, we note that by Theorem~\ref{thm:main2}, for every
  $h_{0}\in L^{1}(\partial\Omega)$, $h(t):=e^{-t (\Lambda+F)}h_{0}$ is
  a strong solution of the Cauchy problem (in $L^{1}(\partial\Omega)$)
  \begin{displaymath}
    \begin{cases}
    \dfrac{\td h}{\dt}(t)+\Lambda h(t)\ni 0&
    \quad\text{ for $t\in(0,T)$,}\\
    \hspace{1.4cm} h(0)=h_{0}.& \quad\text{on $\partial\Omega$.}
  \end{cases}
  \end{displaymath}
  Hence, multiplying by $h(t)$ and using that $\varphi$ is homogeneous
  of order one, yields~\eqref{eq:26}.

\end{proof}

For the rest of this section, we focus on the case
$F\equiv 0$. Then, we have the following.

\begin{proposition}
  \label{prop:conservation-of-mass}
  The semigroup $\{e^{-t \Lambda}\}_{t\ge 0}$ generated
  by the negative Dirichlet-to-Neumann operator
$-\Lambda$ on $L^{1}(\partial\Omega)$
conserves mass; in other words, one has that
\begin{equation}
  \label{eq:38}
  \int_{\partial\Omega}h_{0}\,\dH^{d-1}=\int_{\partial\Omega}e^{-t \Lambda} h_{0}\,\dH^{d-1}
\end{equation}
for all $t\ge 0$ and $h_{0}\in L^{1}(\partial\Omega)$.
\end{proposition}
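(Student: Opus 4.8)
The plan is to show that the negative generator $-\Lambda$ produces only \emph{mass-neutral} velocities, i.e. that the net flux vanishes for every element of $\Lambda$, and then to integrate the governing differential inclusion in time. The crucial identity is the following: for every pair $(h,g)\in\Lambda$ one has
\begin{equation}
  \label{eq:flux-zero}
  \int_{\partial\Omega} g\,\dH^{d-1}=0.
\end{equation}
To see this, fix $(h,g)\in\Lambda$ and let $\z\in L^{\infty}(\Omega;\R^{d})$ be the divergence-free vector field with $\norm{\z}_{\infty}\le 1$, associated with a weak solution $u\in BV(\Omega)$, such that $g=[\z,\nu]$. Since $\divi(\z)=0\in L^{p}(\Omega)$, we have $\z\in X_{p}(\Omega)$ for any $1\le p\le d$. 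I would then apply the generalized integration by parts formula~\eqref{Green0} (Proposition~\ref{prop:ibp}) to $\z$ and the \emph{constant} function $w\equiv 1$, which belongs to $BV(\Omega)\cap L^{\infty}(\Omega)$ because $\Omega$ is bounded and $D1=0$. As $\divi(\z)=0$ and $(\z,D1)=0$, both terms on the left-hand side of~\eqref{Green0} vanish, so
\begin{displaymath}
  0=\int_{\partial\Omega}[\z,\nu]\cdot 1\,\dH^{d-1}=\int_{\partial\Omega} g\,\dH^{d-1},
\end{displaymath}
which is exactly~\eqref{eq:flux-zero}.

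Next I would integrate in time. Given $h_{0}\in L^{1}(\partial\Omega)$, Theorem~\ref{thm:main2}\eqref{thm:main2-claim1}, applied with forcing term $g\equiv 0\in L^{2}(0,T;L^{2}(\partial\Omega))$, guarantees that $h(t):=e^{-t\Lambda}h_{0}$ is a \emph{strong} solution of $\tfrac{\td h}{\dt}+\Lambda h\ni 0$ with $\tfrac{\td h}{\dt}\in L^{2}(0,T;L^{2}(\partial\Omega))$. Hence, for a.e.\ $t$, we have $h(t)\in D(\Lambda)$ and $-\tfrac{\td h}{\dt}(t)\in\Lambda h(t)$, so~\eqref{eq:flux-zero} yields $\int_{\partial\Omega}\tfrac{\td h}{\dt}(t)\,\dH^{d-1}=0$ for a.e.\ $t$. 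The linear functional $\ell(v):=\int_{\partial\Omega}v\,\dH^{d-1}$ is bounded on $L^{1}(\partial\Omega)$ since $\mathcal{H}^{d-1}(\partial\Omega)<\infty$, and a strong solution is absolutely continuous with $h(t)=h_{0}+\int_{0}^{t}\tfrac{\td h}{\ds}(s)\,\ds$ in $L^{1}(\partial\Omega)$ (note $\tfrac{\td h}{\dt}\in L^{2}(0,T;L^{2})\subseteq L^{1}(0,T;L^{1})$). Composing with $\ell$ and using that $\ell$ commutes with the Bochner integral gives
\begin{displaymath}
  \int_{\partial\Omega} e^{-t\Lambda}h_{0}\,\dH^{d-1}
  =\ell(h(t))=\ell(h_{0})+\int_{0}^{t}\ell\!\Big(\tfrac{\td h}{\ds}(s)\Big)\,\ds
  =\int_{\partial\Omega}h_{0}\,\dH^{d-1},
\end{displaymath}
which is precisely~\eqref{eq:38}.

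The substantive step is the flux identity~\eqref{eq:flux-zero}; everything else is bookkeeping. I expect the only points requiring care to be the admissibility of the constant function $w\equiv 1$ in Proposition~\ref{prop:ibp} (which is fine since $\Omega$ is bounded, so $1\in BV(\Omega)\cap L^{p'}(\Omega)$ and the right-hand side of~\eqref{Green0} is the genuine boundary integral of $[\z,\nu]$) and the justification that the orbit is an \emph{absolutely continuous} strong solution, so that the fundamental theorem of calculus applies to $t\mapsto\ell(h(t))$; this is supplied directly by Theorem~\ref{thm:main2}\eqref{thm:main2-claim1} in the present case $F\equiv 0$, $g\equiv 0$, $h_{0}\in L^{1}(\partial\Omega)$, so no separate density argument through $L^{2}(\partial\Omega)$ is needed.
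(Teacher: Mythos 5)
Your proof is correct and follows essentially the same route as the paper: both hinge on applying the generalized integration by parts formula~\eqref{Green0} to a divergence-free vector field $\z$ and the constant function $\mathds{1}$ (so that $\divi(\z)=0$ and $(\z,D\mathds{1})=0$ force $\int_{\partial\Omega}[\z,\nu]\,\dH^{d-1}=0$), combined with the strong-solution property from Theorem~\ref{thm:main2} to identify $-\tfrac{\td h}{\dt}(t)$ with such a trace a.e.\ in $t$, and then integrating in time. Your only departures are organizational — you isolate the zero-flux identity as a statement about all pairs $(h,g)\in\Lambda$ and spell out the absolute-continuity/Bochner-integral step that the paper leaves implicit — which is fine but not a different argument.
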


\begin{proof}
  Recall that by Theorem~\ref{thm:main2}, for every
  $h_{0}\in L^{1}(\partial\Omega)$, $h(t):=e^{-t \Lambda}h_{0}$ is
  a strong solution of the Cauchy problem (in $L^{1}(\partial\Omega)$)
  \begin{equation}
    \label{eq:35}
    \begin{cases}
    \dfrac{\td h}{\dt}(t)+\Lambda h(t)\ni 0&
    \quad\text{ for $t\in(0,T)$,}\\
    \hspace{1.7cm} h(0)=h_{0}.& \quad\text{on $\partial\Omega$.}
  \end{cases}
 \end{equation}
  Hence, for a.e. $t>0$, there is a weak solution $u_{h(t)}\in
  BV(\Omega)$ of Dirichlet problem~\eqref{eq:21N} and a vector field
  $\z_{h(t)}\in L^{\infty}(\Omega;\R^{d})$
  satisfying~\eqref{z42}-\eqref{z32} with boundary data $h(t)$, and
  the generalized co-normal derivative
  \begin{equation}
    \label{eq:36}
    [\z_{h(t)},\nu]=-\dfrac{\td
      h}{\dt}(t)\qquad\text{$\mathcal{H}^{d-1}$-a.e. on $\partial\Omega$.}
  \end{equation}
  Let $\mathds{1}_{\overline{\Omega}}$ denote the constant $1$
  function on $\overline{\Omega}$. Multiplying~\eqref{eq:36} by
  $\T(\mathds{1}_{\overline{\Omega}})=\mathds{1}_{\partial\Omega}$
  with respect to the $L^{2}$-inner product and then, integrating by
  parts (Proposition~\ref{prop:ibp}) yields that
  \begin{align*}
    -\dfrac{\td }{\dt}\int_{\partial\Omega}h(t)\,
    \mathds{1}\,\dH^{d-1}
   &= -\int_{\partial\Omega}\dfrac{\td h}{\dt}(t)\,
    \mathds{1}\,\dH^{d-1}\\
    &=\int_{\partial\Omega}[\z_{h(t)},\nu]\, \mathds{1}\,\dH^{d-1}\\
    &=\int_{\Omega}(\z_{h(t)},D \mathds{1})=0.
  \end{align*}
  Hence, integrating this equation over $(0,t)$ for any $t>0$, shows
  that~\eqref{eq:38} holds.
\end{proof}

Next, we establish the long-time convergence in $L^{q}(\partial\Omega)$ of the
semigroup $\{e^{-t \Lambda}\}_{t\ge 0}$.

\begin{proposition}
  \label{propo:convergence}
  Let $1\le q<\infty$, $\varphi$ given by \eqref{eq:107}, and $h_{0}\in
  L^{q}(\partial\Omega)$. Then, the following statements hold.
  \begin{enumerate}
  \item \label{propo:convergence-claim1} One has that
    \begin{equation}
      \label{eq:39}
      \lim_{t\to \infty}e^{-t  \Lambda}h_{0}=\overline{h_{0}}:=\tfrac{1}{\mathcal{H}^{d-1}(\partial\Omega)}
      \int_{\partial\Omega}e^{-t \Lambda} h_{0}\,\dH^{d-1}
      \qquad\text{in $L^{q}(\partial\Omega)$;}
    \end{equation}
   \item \label{propo:convergence-claim2} One has that
    \begin{equation}
      \label{eq:40}
      \lim_{t\to \infty}\varphi(e^{-t  \Lambda}h_{0})=\varphi(\overline{h_{0}})=0;
    \end{equation}
    \item \label{propo:convergence-claim5} (Entropy-Transport inequalities)
    There is a $C>0$ such that
    \begin{displaymath}
      \norm{e^{-t  \Lambda}h_{0}-\overline{h_{0}}}_{1}\le C\,
      \varphi(e^{-t  \Lambda}h_{0})\qquad\text{for all $t>0$;}
    \end{displaymath}
    Moreover, for every $1<q<r\le \infty$ and $h_{0}\in
    L^{r}(\partial\Omega)$, one has that
    \begin{displaymath}
      \norm{e^{-t  \Lambda}h_{0}-\overline{h_{0}}}_{q}
      \le \norm{h_{0}-\overline{h_{0}}}_{r}^{\frac{(q-1)r}{q(r-1)}}C\,\left(\varphi(e^{-t  \Lambda}h_{0})\right)^{\frac{r-q}{q(r-1)}}
    \end{displaymath}

    \item \label{propo:convergence-claim3} For every $h_{0}\in
      L^{2}(\partial\Omega)$, one has that
      \begin{equation}
        \label{eq:50}
      \varphi(e^{-t  \Lambda}h_{0})\le
      2\dfrac{\norm{h_{0}}_{2}^{2}}{t}
      \qquad\text{for all $t>0$.}
    \end{equation}
  \end{enumerate}
\end{proposition}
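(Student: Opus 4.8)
I would prove the four assertions in a logical rather than the listed order, since \eqref{propo:convergence-claim1} and \eqref{propo:convergence-claim2} rest on \eqref{propo:convergence-claim5} and \eqref{propo:convergence-claim3}. Throughout write $h(t):=e^{-t\Lambda}h_{0}$. The first thing to record is that every constant $c$ is an equilibrium: the pair $(u,\z)=(c,0)$ satisfies \eqref{z42}--\eqref{z32} with boundary datum $c$, so $(c,0)\in\Lambda$, whence $e^{-t\Lambda}c=c$ and, by \eqref{eq:107}, $\varphi(c)=0$. In particular $e^{-t\Lambda}\overline{h_{0}}=\overline{h_{0}}$, and by Proposition~\ref{prop:conservation-of-mass} one has $\overline{h(t)}=\overline{h_{0}}$ for all $t$. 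The key enabling step is then the \emph{static} entropy--transport inequality, which I claim holds for \emph{every} $h\in L^{1}(\partial\Omega)$, not merely along the flow. Fixing a weak solution $u_{h}$ of \eqref{eq:21N} with associated field $\z_{h}$, Theorem~\ref{thm:8} (cf.\ \eqref{eq:105}) gives
\begin{equation*}
  \varphi(h)=\int_{\Omega}\abs{Du_{h}}+\int_{\partial\Omega}\abs{h-\T(u_{h})}\,\dH^{d-1},
\end{equation*}
so each summand is bounded by $\varphi(h)$. Decomposing $h-\overline{h}=\big(h-\T(u_{h})\big)+\big(\T(u_{h})-\overline{\T(u_{h})}\big)+\big(\overline{\T(u_{h})}-\overline{h}\big)$ and estimating the three pieces by $\int_{\partial\Omega}\abs{h-\T(u_{h})}\,\dH^{d-1}$, the Poincar\'e inequality~\eqref{eq:61} applied to $u_{h}$, and $\mathcal{H}^{d-1}(\partial\Omega)\,\abs{\overline{\T(u_{h})}-\overline{h}}\le\int_{\partial\Omega}\abs{\T(u_{h})-h}\,\dH^{d-1}$ respectively, I obtain a constant $C>0$ with $\norm{h-\overline{h}}_{1}\le C\,\varphi(h)$. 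Applying this to $h=h(t)$ and using $\overline{h(t)}=\overline{h_{0}}$ yields the first inequality of \eqref{propo:convergence-claim5}; the interpolated one follows from $\norm{f}_{q}\le\norm{f}_{1}^{\frac{r-q}{q(r-1)}}\norm{f}_{r}^{\frac{(q-1)r}{q(r-1)}}$ with $f=e^{-t\Lambda}h_{0}-e^{-t\Lambda}\overline{h_{0}}$, since complete accretivity of $\Lambda$ gives $\norm{h(t)-\overline{h_{0}}}_{r}\le\norm{h_{0}-\overline{h_{0}}}_{r}$.

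\textbf{Energy decay and its vanishing.} For $h_{0}\in L^{2}(\partial\Omega)$, $h$ is a strong $L^{2}$--solution (Theorem~\ref{thm:main2}\eqref{thm:main2-claim2}), so \eqref{eq:26} reads $\tfrac{\td}{\dt}\tfrac12\norm{h(t)}_{2}^{2}=-\varphi(h(t))$. Integrating over $(0,t)$ and using that $s\mapsto\varphi(h(s))$ is nonincreasing (Proposition~\ref{prop:energy-decreasing}) gives $t\,\varphi(h(t))\le\int_{0}^{t}\varphi(h(s))\,\ds\le\tfrac12\norm{h_{0}}_{2}^{2}$, which is \eqref{eq:50} (in fact with the better constant $\tfrac12$). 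Next I would upgrade this to $\varphi(h(t))\to0$ for \emph{every} $h_{0}\in L^{1}(\partial\Omega)$: since $e^{-t\Lambda}0=0$, complete accretivity gives $\norm{h(t)}_{1}\le\norm{h_{0}}_{1}$, so the trajectory stays in a fixed ball on which the finite convex functional $\varphi$ is Lipschitz (Proposition~\ref{prop:DtN-functional}); approximating $h_{0}$ by $h_{0,n}\in L^{2}$ and combining $\abs{\varphi(h(t))-\varphi(h_{n}(t))}\le L\norm{h_{0}-h_{0,n}}_{1}$ (contraction) with $\varphi(h_{n}(t))\le\tfrac12\norm{h_{0,n}}_{2}^{2}/t\to0$ forces $\varphi(h(t))\to0$.

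\textbf{Convergence.} The static entropy--transport inequality then gives $\norm{h(t)-\overline{h_{0}}}_{1}\le C\,\varphi(h(t))\to0$, proving \eqref{propo:convergence-claim1} for $q=1$, and since $\varphi(\overline{h_{0}})=0$ this is exactly \eqref{propo:convergence-claim2}. For $1<q<\infty$ and $h_{0}\in L^{q}(\partial\Omega)$ I would run a second density argument: truncations $h_{0,n}\in L^{\infty}$ converge to $h_{0}$ in $L^{q}$; for each such datum the interpolated inequality of \eqref{propo:convergence-claim5} (with any $r>q$) together with $\varphi(h_{n}(t))\to0$ yields $\norm{h_{n}(t)-\overline{h_{0,n}}}_{q}\to0$; and the complete contraction $\norm{h(t)-h_{n}(t)}_{q}\le\norm{h_{0}-h_{0,n}}_{q}$ lets a $3\varepsilon$--argument conclude $\norm{h(t)-\overline{h_{0}}}_{q}\to0$.

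\textbf{Main obstacle.} The genuine difficulty is the entropy--transport inequality. The Poincar\'e estimate~\eqref{eq:61} controls only $\T(u_{h})$, whereas I need the datum $h$, and in general $\T(u_{h})\neq h$ because the Dirichlet condition is attained merely in the weak sense~\eqref{z42}. It is precisely the boundary--defect term $\int_{\partial\Omega}\abs{h-\T(u_{h})}\,\dH^{d-1}$ in the representation of $\varphi$ from Theorem~\ref{thm:8} that bridges this gap, so the argument hinges on that identity rather than on any trace attainment. A secondary obstacle is that the decay \eqref{eq:50} is available only for $L^{2}$ data, which is why both the vanishing of $\varphi$ and the $L^{q}$--convergence must be routed through density combined with the complete--contraction and interpolation estimates.
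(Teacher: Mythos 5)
Your proof is correct, but it reverses the paper's logical order and differs in two key steps. The paper proves \eqref{propo:convergence-claim1} first: for $h_{0}\in L^{2}(\partial\Omega)$ the convergence $e^{-t\Lambda}h_{0}\to\overline{h_{0}}$ is obtained by invoking Bruck's theorem \cite[Theorem~5]{MR377609} on semigroups generated by sub-differentials of even functionals (via Proposition~\ref{prop:sub-diff-L2} and the evenness of $\varphi$ from Proposition~\ref{prop:DtN-functional}); \eqref{propo:convergence-claim2} then follows from the continuity of $\varphi$; general $q$ is handled by the same density-plus-interpolation scheme you use, except interpolating between $L^{\infty}$ and $L^{2}$ rather than between $L^{\infty}$ and $L^{1}$. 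The entropy--transport inequality \eqref{propo:convergence-claim5} is proved in the paper by exactly your decomposition (the representation of $\varphi$ from Theorem~\ref{thm:8}, the Poincar\'e inequality~\eqref{eq:61}, and conservation of mass, Proposition~\ref{prop:conservation-of-mass}), but it is never used there to prove \eqref{propo:convergence-claim1}; and \eqref{eq:50} is deduced in the paper not from energy dissipation but from the homogeneity-based pointwise estimate~\eqref{eq:45} of Corollary~\ref{cor:regularity-semigroup}, applied to the vector field realizing $\Lambda^{\circ}e^{-t\Lambda}h_{0}$. Your route --- first \eqref{propo:convergence-claim5}, then \eqref{propo:convergence-claim3} via the dissipation identity~\eqref{eq:26} combined with the monotonicity of $t\mapsto\varphi(e^{-t\Lambda}h_{0})$ from Proposition~\ref{prop:energy-decreasing}, then vanishing of $\varphi$ by density, then convergence --- buys several things: it bypasses Bruck's theorem entirely; it is quantitative, giving the explicit rate $\norm{e^{-t\Lambda}h_{0}-\overline{h_{0}}}_{1}\le C\,\norm{h_{0}}_{2}^{2}/t$ for $L^{2}$ data, where the paper's argument yields no rate; it improves the constant in \eqref{eq:50} from $2$ to $\tfrac12$; and it supplies a proof of the interpolated inequality in \eqref{propo:convergence-claim5}, which the paper asserts but does not actually prove (its proof ends after the $L^{1}$ estimate). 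What the paper's route buys is that \eqref{propo:convergence-claim1} and \eqref{propo:convergence-claim2} are independent of \eqref{propo:convergence-claim5} and \eqref{propo:convergence-claim3}, since Bruck's theorem extracts the convergence directly from the abstract sub-differential structure.

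Two points to tighten. In the interpolation step the constant comes out as $C^{\frac{r-q}{q(r-1)}}$ rather than $C$; harmless, but worth a remark. And your claim that $\varphi$ is Lipschitz on balls of $L^{1}(\partial\Omega)$ deserves one line of justification: either invoke the standard fact that a convex functional bounded on bounded sets (here $0\le\varphi(h)\le\norm{h}_{1}$) is Lipschitz on bounded sets, or, more directly, observe that $\Phi_{h}(v)\le\Phi_{\hat{h}}(v)+\norm{h-\hat{h}}_{1}$ for every $v\in BV(\Omega)$, so that minimizing over $v$ and using Theorem~\ref{thm:8} gives the global bound $\abs{\varphi(h)-\varphi(\hat{h})}\le\norm{h-\hat{h}}_{1}$.
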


\begin{proof}
  We first establish~\eqref{eq:39} for
  $h_{0}\in L^{2}(\partial\Omega)$. Since the functional
  $\varphi$ given by~\eqref{eq:107} is even, and since the
  Dirichlet-to-Neumann operator $\Lambda_{\vert L^2}$ is the
  sub-differential operator of the restriction
  $\varphi_{\vert L^{2}}$ of $\varphi$ on $L^{2}(\partial\Omega)$, the
  limit~\eqref{eq:39} follows from a classic result due to
  Bruck~\cite[Theorem~5]{MR377609} in the Hilbert space
  theory. Moreover, by the continuity of $\varphi_{\vert L^{2}}$ on
  $L^{2}(\partial\Omega)$, it follows that~\eqref{eq:40} holds.

  Next, let $h_{0}\in L^{q}(\partial\Omega)$ for a given
  $1\le q\le \infty$.  One can always construct a sequence
  $(h_{0,n})_{n\ge 1}$ in
  $L^{\infty}(\partial\Omega)$ such that
  $h_{0,n}\to h_{0}$ in $L^{q}(\partial\Omega)$ and by the continuous
  embedding from $L^{q}(\partial\Omega)$ into $L^{1}(\partial\Omega)$,
  one also has that the mean-values
  $\overline{h_{0,n}}\to \overline{h_{0}}$ in $\R$ as $n\to
  \infty$. If $q=\infty$, then one simply choose the sequence
  $(h_{0,n})_{n\ge 1}$ given by $h_{0,n}\equiv h_{0}$ for all $n\ge
  1$. Then, for given $\varepsilon>0$, there is a
  $n_{0}=n_{0}(\varepsilon)\in \N$ large enough such that
  \begin{displaymath}
    \norm{h_{0,n}-h_{0}}_{q}<\dfrac{\varepsilon}{3}\quad\text{ and
    }\quad
    \norm{\overline{h_{0,\varepsilon}}-\overline{h_{0}}}_{q}<\dfrac{\varepsilon}{3}.
  \end{displaymath}
  Since each $e^{-t  \Lambda}$ is a contractive in $L^{q}(\partial\Omega)$,
  one has that
   \begin{align*}
     \norm{e^{-t  \Lambda}h_{0}-\overline{h_{0}}}_{q}
     &\le \norm{e^{-t  \Lambda}h_{0}-e^{-t\Lambda}h_{0,n_{0}}}_{q}
       +\norm{e^{-t
       \Lambda}h_{0, n_{0}}-\overline{h_{0, n_{0}}}}_{q}\\
     &\qquad +
       \norm{\overline{h_{0,n_{0}}}-\overline{h_{0}}}_{q}\\
     &\le \norm{h_{0}-h_{0, n_{0}}}_{q}
       +\norm{e^{-t
       \Lambda}h_{0, n_{0}}-\overline{h_{0,n_{0}}}}_{q}\\
     &\qquad +
       \norm{\overline{h_{0,n_{0}}}-\overline{h_{0}}}_{q}\\
     &\le 2\dfrac{\varepsilon}{3}
       +\norm{e^{-t \Lambda}h_{0, n_{0}}-\overline{h_{0,n_{0}}}}_{q}.
   \end{align*}
  Thus, in order to prove~\eqref{eq:39} in $L^{q}(\partial\Omega)$ for
  general $h_{0}\in L^{q}(\partial\Omega)$, it is sufficient to
  establish~\eqref{eq:39} for
  $h_{0}\in L^{\infty}(\partial\Omega)$.  So, let
  $h_{0}\in L^{\infty}(\partial\Omega)$. Since $h_{0}$ also belongs to
  $L^{2}(\partial\Omega)$, the first part of this proof implies that
  $e^{-t \Lambda}h_{0}\to \overline{h_{0}}$ in $L^{2}(\partial\Omega)$
  as $t\to \infty$. If $q<2$, then by the continuous embedding of
  $L^{2}(\partial\Omega)$ into $L^{q}(\partial\Omega)$, one has
  that~\eqref{eq:39} needs to be true also in this case. Thus, let's focus
  now on the case $2<q\le \infty$. Since $h_{0}\in
  L^{\infty}(\partial\Omega)$, by the contractivity property of $e^{-t
    \Lambda}$ in $L^{\infty}(\partial\Omega)$, and by the fact that
  \begin{displaymath}
    e^{-t \Lambda}(c\mathds{1}_{\partial\Omega})=c\mathds{1}_{\partial\Omega}
    \qquad\text{$\mathcal{H}^{d-1}$-a.e. on $\partial\Omega$ for all
      $t\ge 0$,}
  \end{displaymath}
  one sees that
  \begin{align*}
    \norm{e^{-t \Lambda}h_{0}-\overline{h_{0}}}_{q}
    &\le \norm{e^{-t
      \Lambda}h_{0}-\overline{h_{0}}}_{\infty}^{\frac{q-2}{q}}\,
      \norm{e^{-t \Lambda}h_{0}-\overline{h_{0}}}_{2}^{\frac{2}{q}}\\
    &\le \norm{h_{0}-\overline{h_{0}}}_{\infty}^{\frac{q-2}{q}}\,
      \norm{e^{-t \Lambda}h_{0}-\overline{h_{0}}}_{2}^{\frac{2}{q}}\to
    0
  \end{align*}
  as $t\to \infty$. This completes the proof of
  statement~\eqref{propo:convergence-claim1} and by using the
  continuity of the functional $\varphi$, it follows that
  \eqref{propo:convergence-claim1} implies
  \eqref{propo:convergence-claim2}. To see that
  \eqref{propo:convergence-claim5} holds, we recall that by Theorem~\ref{thm:8},
  \begin{equation}
    \label{eq:48}
    \varphi(e^{-t \Lambda}h_{0})=
    \int_{\Omega}\abs{Du_{e^{-t \Lambda}h_{0}}}+
    \int_{\partial\Omega}\abs{e^{-t \Lambda}h_{0}-\T(u_{e^{-t \Lambda}h_{0}})}\,\dH^{d-1}
  \end{equation}
  for every $t>0$, where $Du_{e^{-t \Lambda}h_{0}}\in BV(\Omega)$ denotes a
  weak solution of Dirichlet problem~\eqref{eq:21N} with boundary data
  $e^{-t \Lambda}h_{0}$. 
  Further, by the Poincar\'e trace-inequality~\eqref{eq:61} for
  $BV$-functions, there is a constant $C_{p}>0$ such that
  \begin{equation}
    \label{eq:49}
    \norm{\T(u_{e^{-t \Lambda}h_{0}})-\overline{\T(u_{e^{-t
            \Lambda}h_{0}})}}_{1}\le C_{p}\,\int_{\Omega}\abs{Du_{e^{-t
          \Lambda}h_{0}}}.
  \end{equation}
  Now, by using \eqref{eq:48}, \eqref{eq:49}, and \eqref{eq:38}, then one finds that
  \begin{align*}
    \int_{\partial\Omega}\abs{e^{-t\Lambda}h_{0}-\overline{h_{0}}}\,\dH^{d-1}
     &\le \int_{\partial\Omega}\abs{e^{-t \Lambda}h_{0}-\T(u_{e^{-t\Lambda}h_{0}})}\,\dH^{d-1}\\
     &\qquad   +\int_{\partial\Omega}\abs{\T(u_{e^{-t\Lambda}h_{0}})
       -\overline{\T(u_{e^{-t \Lambda}h_{0}})}}\,\dH^{d-1}\\
     &\qquad\qquad
       +\int_{\partial\Omega}\abs{\overline{\T(u_{e^{-t\Lambda}h_{0}})}-\overline{h_{0}}}\,\dH^{d-1}\\
    &\le \int_{\partial\Omega}\abs{e^{-t \Lambda}h_{0}-\T(u_{e^{-t\Lambda}h_{0}})}\,\dH^{d-1}\\
     &\qquad   + C_{p}\,\int_{\Omega}\abs{Du_{e^{-t
          \Lambda}h_{0}}}\\
     &\qquad\qquad
       +\labs{\int_{\partial\Omega}\big(\T(u_{e^{-t\Lambda}h_{0}})-h_{0}\big)\dH^{d-1}}\\
    &=\int_{\partial\Omega}\abs{e^{-t \Lambda}h_{0}-\T(u_{e^{-t\Lambda}h_{0}})}\,\dH^{d-1}\\
     &\qquad   + C_{p}\,\int_{\Omega}\abs{Du_{e^{-t
          \Lambda}h_{0}}}\\
     &\qquad\qquad
       +\labs{\int_{\partial\Omega}\big(\T(u_{e^{-t\Lambda}h_{0}})-e^{-t\Lambda}h_{0}\big)\dH^{d-1}}\\
    &\le 2\,\int_{\partial\Omega}\abs{e^{-t \Lambda}h_{0}-\T(u_{e^{-t\Lambda}h_{0}})}\,\dH^{d-1}\\
     &\qquad   + C_{p}\,\int_{\Omega}\abs{Du_{e^{-t\Lambda}h_{0}}}\\
    &\le (2+C_{p})\,\varphi(e^{-t\Lambda}h_{0})
  \end{align*}
  for all $t\ge 0$, proving~\eqref{propo:convergence-claim5}. Finally,
  to see that \eqref{eq:50} holds, one simply applies~\eqref{eq:45}
  to
  \begin{displaymath}
    [\z_{e^{-t\Lambda}h_{0}},\nu]=-\dfrac{\td
      h}{\dt}_{\! +}(t)\qquad\text{$\mathcal{H}^{d-1}$-a.e. on $\partial\Omega$,}
  \end{displaymath}
  where the vector field
  $\z_{e^{-t\Lambda}h_{0}}\in L^{\infty}(\Omega;\R^{d})$ is such that
  $[\z_{e^{-t\Lambda}h_{0}},\nu]=\Lambda^{\!\circ}(e^{-t\Lambda}h_{0})$.
  Then one finds that
  \begin{align*}
    \varphi(e^{-t\Lambda}h_{0})&=\int_{\partial\Omega}[\z_{e^{-t\Lambda}h_{0}},\nu]e^{-t\Lambda}h_{0}\,\dH^{d-1}\\
    &\le
      \int_{\partial\Omega}\abs{[\z_{e^{-t\Lambda}h_{0}},\nu]}\,\abs{e^{-t\Lambda}h_{0}}\,\dH^{d-1}\\
    &\le 2\int_{\partial\Omega}\dfrac{\abs{e^{-t\Lambda}h_{0}}^2}{t}\,\dH^{d-1}
  \end{align*}
  for all $t>0$. This completes the proof of this proposition.

\end{proof}

%
%


\providecommand{\bysame}{\leavevmode\hbox to3em{\hrulefill}\thinspace}

\end{document}